\numberwithin{equation}{section}
\theoremstyle{plain}
\newtheorem{theorem}{Theorem}[section]
\newtheorem{proposition}[theorem]{Proposition}
\newtheorem{lemma}[theorem]{Lemma}
\newtheorem{remark}[theorem]{Remark}
\title[Brezis-Nirenberg problem]{On Brezis' First Open Problem: A Complete Solution}
\author{Liming Sun}
\address{State Key Laboratory of Mathematical Sciences, Academy of Mathematics and Systems Science, Chinese Academy of Sciences, Beijing 100190, China}
\email{lmsun@amss.ac.cn}
\author{Jun-cheng Wei}
\address{ Department of Mathematics, Chinese University of Hong Kong, Shatin, Hong Kong}
\email{jcwei@math.ubc.ca}
\author{Wen Yang}
\address{Department of Mathematics, Faculty of Science and Technology, University of Macau, Taipa, Macau}
\email{wenyang@um.edu.mo}
\date{\today \,(Last Typeset)}
\subjclass[2010]{Primary 35J20, Secondary 35J25; 35J61}
\keywords{critical exponent, Brezis-Nirenberg problem, sign-changing solution}
\newcommand{\ba}{\begin{array}}
\newcommand{\ea}{\end{array}}
\newcommand{\bed}{\begin{aligned}}
\newcommand{\eed}{\end{aligned}}
\newcommand{\la}{\lambda}
\newcommand{\R}{{\mathbb R}}
\newcommand{\e}{\varepsilon}
\def \SK{\Sigma_K}
\def \w1{w}
\def \W2{W}
\DeclareMathOperator{\csch}{csch}
\DeclareMathOperator{\arcsinh}{arcsinh}
\begin{document}


\begin{abstract}
In 2023, H.\,Brezis \cite{brezis2023some} published a list of his ``favorite open problems", which he described as challenges he had ``raised throughout his career and has resisted so far". We provide a complete resolution to the first one--Open Problem 1.1--in Brezis's favorite open problems list: the existence of solutions to the long-standing Brezis-Nirenberg problem on a three-dimensional ball. Furthermore, using the building blocks of Del Pino-Musso-Pacard-Pistoia sign-changing solutions to the Yamabe problem,  we establish the existence of infinitely many sign-changing, nonradial solutions for the full range of the parameter.


\end{abstract}

\maketitle
\tableofcontents

\section{Introduction}

\subsection{Motivation and main results} Let $\Omega$ be a smooth and bounded domain in $\R^N$. In their seminal work, Brezis and Nirenberg \cite{brezis1983positive} studied the following problem:
\begin{align} \begin{cases}\label{BN} \Delta u + \lambda u + |u|^{\frac{4}{N-2}} u = 0 &\text{in } \Omega, \\
 u = 0 &\text{on } \partial \Omega, \end{cases} \end{align}
and established the existence of at least one positive solution under the following conditions: $0 < \lambda < \lambda_1$ if $N \geq 4$, and $0 < \lambda_* < \lambda < \lambda_1$ if $N = 3$. Here, $\lambda_1$ denotes the first eigenvalue of the Laplacian, and $\lambda_{*}$ is a domain-dependent constant, which was later quantified by Druet \cite{druet2002elliptic} in terms of Robin functions. When $\Omega$ is the unit ball in $\R^N$, Brezis and Nirenberg showed that $\lambda_{*}=\frac{\lambda_1}{4}$ and positive solution exists if and only if $ \la \in (\frac{\lambda_1}{4}, \lambda_1)$. Moreover, as a consequence of the classical Pohozaev’s identity,   solutions do not exist if $\la \leq 0$ and $\Omega$ is star-shaped.

Since the pioneering work of Brezis-Nirenberg \cite{brezis1983positive} in 1983, the study of nonlinear elliptic equations involving critical Sobolev exponents has been an area of intense research.
A natural question arises regarding the existence of \textit{sign-changing} solutions to (\ref{BN}). In the case of (\ref{BN}), several existence results have been established for dimensions \( N \geq 4 \). In these higher dimensions, sign-changing solutions can be obtained for every \( \lambda \in (0, \lambda_1 (\Omega)) \) and even for \( \lambda > \lambda_1 (\Omega) \) (see \cite{Capozzi1985, Cerami1984, Castro2003, Cerami1986, Clapp2005, DS2002, SZ2010}). In particular, Capozzi-Fortunato-Palmieri \cite{Capozzi1985} proved that for \( N=4 \) and \( \lambda >0 \), provided that \( \lambda \not\in \sigma(-\Delta) \) (the spectrum of \( -\Delta \) in \( H_0^1 (\Omega) \)), problem \eqref{BN} admits a nontrivial solution. The same conclusion holds for all \( \lambda >0 \) when \( N \geq 5 \). Clapp-Weth \cite{Clapp2005} considered the existence of multiple solutions to the Brezis-Nirenberg problem for dimensions $N\geq 4$.
However, the existence of sign-changing solutions in the three-dimensional case ($N=3$) poses challenges similar to those encountered for positive solutions, while also introducing additional complexities. This problem remains unresolved, even in the simplified scenario of a unit ball, and is the central focus of \textbf{Open Problem 1.1} as formulated by H. Brezis in \cite{brezis2023some}. Let \( \Omega \) be the unit ball \( B_1 \) in \( \mathbb{R}^3 \). Consider the following problem:
\begin{align}
\begin{cases}\label{main-eqn}
    \Delta u+\la u+u^5=0 &\text{in }B_1,\\
    u=0 &\text{on }\partial B_1.
\end{cases}
\end{align}

H.\,Brezis' first open problem, implicitly raised by Brezis and Nirenberg [Remark (6)(d),\cite{brezis1983positive}], is as follows:
\medskip

\noindent
{\bf Open Problem 1.1} (Implicit in \cite{brezis1983positive}) Assume that
\begin{equation}
\label{BNcon10}
    0<\la <\frac{\la_1}{4}.
    \end{equation}
Does there exist a non-trivial solution $ u \not \equiv 0$ to (\ref{main-eqn})?
\medskip

As remarked by H.\,Brezis, under the condition (\ref{BNcon10}), any solution to (\ref{main-eqn}), if exists, must be non-radial and sign-changing. When $\lambda>\lambda_1$ there are sign-changing solutions but there is no positive solution to \eqref{main-eqn}. See \cite{brezis1983positive}. They are obtained by bifurcation from non-radial sign-changing eigenfunctions.

Our main result provides a complete resolution to Brezis' Open Problem 1.1 and, in fact, establishes even more.

\begin{theorem}
\label{th1}
  Assume that
\begin{equation}
\label{BNcon1}
    0<\la <+\infty.
    \end{equation}
    Then there are infinitely many (sign-changing) solutions to (\ref{main-eqn}).
\end{theorem}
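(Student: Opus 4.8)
The plan is to produce, for every $\la>0$ and every sufficiently large integer $k$, a genuine solution $u_k$ of \eqref{main-eqn} that concentrates at the center of $B_1$ and that, after the rescaling $u_k(x)=\mu_0^{-1/2}v(x/\mu_0)$, converges as $\mu_0\to0$ to a Del Pino--Musso--Pacard--Pistoia sign-changing entire solution of the Yamabe equation $\Delta v+|v|^{4}v=0$ in $\R^3$: one positive ``central'' bubble surrounded by $k$ negative ``satellite'' bubbles placed at the vertices of a regular $k$-gon. Because this configuration is invariant under a dihedral group $\mathbb{D}_k$ but not under $O(3)$, and because the central and satellite bubbles carry opposite signs, each $u_k$ is automatically non-radial and sign-changing; since distinct values of $k$ force distinct symmetry groups and distinct energies, the family $\{u_k\}$ is infinite, which is Theorem~\ref{th1}. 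The conceptual point that lets the construction survive all the way down to $\la$ arbitrarily small---where, by Brezis--Nirenberg, no positive solution exists---is that in dimension three a single concentrating bubble cannot be balanced: its self-interaction with $\partial B_1$, governed by the Robin function $H_{B_1}(0)=\tfrac1{4\pi}$, is of the same order $\mu_0$ as the linear contribution $\la\,\mu_0$ and, for $\la<\la_1/4$, dominates it with the wrong sign. Replacing the single bubble by a DMPP cluster introduces the extra attractive interaction between the central bubble and the cloud of satellites, whose size and sign can be tuned through $k$ and through the internal scales of the cluster, and it is this term---absent in the positive-solution problem---that is made to absorb the excess of the Robin term over $\la$.

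Concretely, I would take as approximate solution $W=PU_{\mu_0,0}-\sum_{j=1}^{k}PU_{\mu,\xi_j}$, where $U_{\mu,\xi}(x)=3^{1/4}\mu^{1/2}(\mu^2+|x-\xi|^2)^{-1/2}$ is the standard bubble, $PU_{\mu,\xi}$ its projection onto $H^1_0(B_1)$ (so $\Delta PU_{\mu,\xi}+U_{\mu,\xi}^{5}=0$ in $B_1$ and $PU_{\mu,\xi}=0$ on $\partial B_1$, i.e.\ $PU_{\mu,\xi}=U_{\mu,\xi}-\varphi_{\mu,\xi}$ with $\varphi_{\mu,\xi}$ harmonic and of size $O(\mu^{1/2})$), and $\xi_j=\rho\bigl(\cos\tfrac{2\pi j}{k},\sin\tfrac{2\pi j}{k},0\bigr)$ sitting on a $\mathbb{D}_k$-symmetric polygon, with small parameters $\mu_0,\mu,\rho$ subject to a scale separation such as $\mu\ll\rho\ll\mu_0\ll1$ to be dictated by the DMPP balancing. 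Restricting to the subspace of $\mathbb{D}_k$-invariant functions, which collapses the otherwise large approximate kernel of the linearized operator to essentially the dilation mode of the central bubble together with the common dilation and common radial-translation modes of the satellites, I would run the standard scheme: invert the linearized operator on the orthogonal complement of the approximate kernel in suitable weighted $L^\infty$ (or $L^{6/5}$--$L^6$) norms and solve the auxiliary equation by a contraction mapping for a small remainder $\phi$, with $\|\phi\|$ controlled by the size of the error $E:=\Delta W+\la W+|W|^{4}W$.

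The error $E$ has three sources: the harmonic boundary corrections $\varphi_{\mu_0,0},\varphi_{\mu,\xi_j}$ (which do not enter $\Delta W$ but do enter $|W|^{4}W$); the linear term $\la W$; and the mutual interactions of the bubbles, which DMPP arranged to cancel at leading order on all of $\R^3$. Projecting $E$, together with the nonlinear corrections, onto the approximate kernel yields a reduced system of three real equations in $(\mu_0,\mu,\rho)$. The two equations coming from the satellite modes are, at leading order, the DMPP balancing relations---repulsion between neighboring satellites against attraction to the central bubble and to their own reflected images---and have a solution $(\mu(k),\rho(k))$ for each large $k$. The equation coming from the central dilation mode reads, schematically, $c_1 H_{B_1}(0)\,\mu_0-c_2\,\la\,\mu_0+c_3(\mu_0,\mu,\rho,k)=0$ with $c_3$ the central--satellite interaction, and the construction closes precisely because, along $(\mu(k),\rho(k))$, the term $c_3$ can be made of the size and sign needed to balance $c_1 H_{B_1}(0)-c_2\la$ for every $\la>0$. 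The main obstacle, and the reason dimension three resisted, is that \emph{all} of these quantities---$\int U_{\mu_0,0}^{5}\varphi_{\mu_0,0}$, $\int U_{\mu_0,0}^{5}U_{\mu,\xi_j}$, $\la\int U_{\mu_0,0}^{2}$, the satellite--satellite interactions---live at the same critical scale, the slow decay $|x|^{-1}$ of the three-dimensional bubble produces logarithmic losses in the linear theory and renders several of these integrals only conditionally small, and the sign that decides existence is hidden in the \emph{coefficients} of the leading terms rather than in their orders of magnitude. One therefore has to push the error estimates and the reduced-map (equivalently, the reduced-energy) expansions to an unusually high precision, uniformly in $k$, and exhibit the exact cancellation structure above.

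Once the reduced system is shown---by a degree-theoretic or fixed-point argument on its leading-order part---to admit a solution $(\mu_0,\mu,\rho)=(\mu_0(\la,k),\mu(k),\rho(k))$ for every $\la>0$ and every $k\ge k_0(\la)$, the Lyapunov--Schmidt machinery returns a true solution $u_k=W+\phi_k$ of \eqref{main-eqn}. An elementary check---distinct symmetry groups, or distinct and unbounded energies $\tfrac12\int_{B_1}|\nabla u_k|^{2}-\tfrac{\la}{2}\int_{B_1}u_k^{2}-\tfrac16\int_{B_1}|u_k|^{6}$ as $k\to\infty$---shows the $u_k$ are pairwise distinct, non-radial and sign-changing. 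In particular \eqref{main-eqn} has infinitely many solutions for every $\la\in(0,\infty)$, and for $\la$ in the range \eqref{BNcon10} this settles Brezis' Open Problem~1.1.
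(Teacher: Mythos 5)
Your proposal shares the paper's high-level strategy---sign-changing DMPP building blocks, Lyapunov--Schmidt reduction, a discrete symmetry parameter---but implements it very differently, and the decisive step does not close. The problem is the far-field decay of your ansatz. Rescaled by $\mu_0$, your $W = PU_{\mu_0,0}-\sum_j PU_{\mu,\xi_j}$ is to leading order a DMPP crown $q_m$, which is Kelvin-invariant about the unit sphere with $q_m(0)\approx 3^{1/4}\neq 0$; thus $q_m(z)\sim q_m(0)/|z|$ as $|z|\to\infty$, and $W$ keeps the $|x|^{-1}$ tail of a single positive bubble, merely with coefficient $q_m(0)$ instead of $3^{1/4}$. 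Both the Robin self-interaction $\int W^5\varphi$ and the linear term $\lambda\int W^2$ are governed by that tail with the \emph{same} ratio of constants as in the classical positive-bubble analysis, so the leading coefficient of $\mu_0$ in the reduced energy is $\mu_0[q_m(0)]^2\bigl(c_1 H_{B_1}(0)-c_2\lambda\bigr)$, whose sign is still decided by $\lambda\lessgtr\lambda_1/4$; no critical point in $\mu_0$ exists for $\lambda<\lambda_1/4$. The central--satellite attraction $c_3$ you invoke is not an independent knob: it is precisely the term that the DMPP balancing (against satellite--satellite repulsion, under the Kelvin constraint) fixes in order to make $q_m$ an entire Yamabe solution, and it does not reappear as a free $O(\mu_0)$ contribution in the $\mu_0$-equation; detuning $\mu,\rho$ feeds into the satellite modes, not into the leading $\mu_0$-balance.

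The paper escapes this with two devices you do not use. First, it takes one fixed non-degenerate crown $q=q_m$ and Kelvin-inverts it at a point $\hat\xi$ of its nodal set (Theorem \ref{th2.qm-nodal} exists precisely to make this legitimate), so $q(\hat\xi)=O(a)$ is small and the building block $Q_A$ decays like $|z|^{-2}$ rather than $|z|^{-1}$; this pushes the Robin-type term to order $\e^3$ while the $\lambda$-term is order $\e^2$, and the balance then works for \emph{every} $\lambda>0$ with $\e\sim K^{-3}$. Second, the bubbles are not placed at the origin: the authors explicitly remark that a single crown at the origin only covers sufficiently small $\lambda$ and defer that variant to another paper. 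Instead, $K$ alternating copies of $Q_A$ are placed near $\partial B_1$, one per sector $\Sigma_K$, and the positive term needed in the reduced energy comes from the mutual interaction of these $K$ beads (through $\gamma$ and $H_0^e$), with $K\to\infty$ and $m$ fixed. Your parameter $k$ (number of satellites of the crown) is the wrong one to send to infinity; the parameter that does the work is $K$ (number of beads in the necklace).
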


There has been extensive research aimed at finding nontrivial solutions to the Brezis-Nirenberg problem \eqref{main-eqn} in three dimensions. Most of these studies focus on cases where \( \lambda \) is greater than \( \lambda_1/4 \) or a small perturbation of \( \lambda_1/4 \) and the solutions are positive; for further details, we refer the reader to Del Pino-Dolbeault-Musso \cite{del2004brezis}, Musso-Salazar \cite{musso2018multispike}
and the references therein. For constructions in higher dimensions, we refer to Iacopetti-Vaira \cite{IV1, Vaira2018}, Musso-Pistoia \cite{Musso2002}, Musso-Serena-Vaira \cite{Musso2024}, Pistoia-Vaira \cite{PV2022}, Premoselli \cite{Prem2022}, Rey \cite{Rey1990} and Robert-Vetois \cite{RV2013, Robert2014}. For bubbling analysis to the Brezis-Nirenberg problem we refer to Bahri-Li-Rey \cite{Li1995}, Han \cite{Han1991}, K\"onig-Laurin \cite{Konig2024}, Frank-Konig-Kova\v{r}\'{\i}k \cite{Frank2024}, Malchiodi-Mayer \cite{Malchiodi2021}, Rey \cite{Rey1990}, Wei \cite{Wei1998} and references therein.   There are many related works on the Brezis-Nirenberg problem which are virtually impossible to give an exhaustive bibliography. In addition to the ones mentioned above, one can see \cite{Amadori_2021,Bartsch2006, Cao2021, Dammak2017, Hebey2004, Hebey2014, Musso2024, Thizy2018, Vaira2015} and references therein.

In this paper, we prove that for any positive \( \lambda \), there exist infinitely many solutions to \eqref{main-eqn}. Motivated by the nonexistence result of Brezis and Nirenberg for \( \lambda \in (0, \lambda_1/4) \), we focus on the existence of sign-changing solutions.  Unlike most of the existing literature mentioned before, which relies on the standard positive Talenti solution, the building blocks of our construction are the   {\em sign-changing} solutions to the Yamabe problem
\begin{equation}
\label{1.yamabe}
\Delta u + |u|^{\frac{4}{N-2}} u = 0 \quad \text{in} \quad \mathbb{R}^N
\end{equation}
constructed by Del Pino-Musso-Pacard-Pistoia \cite{del2011large} which we shall describe next.

\subsection{Nodal solutions to Yamabe problem and the non-degeneracy}

Problem (\ref{1.yamabe}) is a canonical equation in the so-called constant scalar curvature problem with conformal metrics, i.e. Yamabe problem. In an earlier seminal paper, Gidas-Ni-Nirenberg \cite{gidas1979symmetry} employed the moving plane method to classify all positive finite-energy solutions of the equation (\ref{1.yamabe}). They showed that all such solutions are given by  (the so-called Talenti bubbles)
\begin{equation}
\label{1.bubble}
u(z)= \left(\frac{\sqrt{N(N-2)}\lambda}{\lambda^2 + |z - a|^2}\right)^{\frac{N-2}{2}}, \quad \lambda>0, \quad a \in \mathbb{R}^N.
\end{equation}
In 1989, Caffarelli-Gidas-Spruck \cite{caffarelli1989asymptotic} extended this classification by removing the finite-energy assumption, obtaining the same result, and Chen-Li \cite{chen1991classification} gave a more simplified proof. The classification of solutions for polyharmonic equations with critical exponents was later established by the second author and Xu in \cite{wei1999classification}.

Concerning sign-changing solutions to (\ref{1.yamabe}), it is noteworthy that Ding \cite{ding1986conformally} was the first to employ variational methods to construct infinitely many conformally inequivalent sign-changing solutions with finite energy. Since then, the existence of sign-changing solutions to the Yamabe equation in the entire space has become an active research topic. Del Pino-Musso-Pacard-Pistoia \cite{del2011large} introduced a novel constructive approach that produces sign-changing solutions with large energy, where the energy densities concentrate along specific submanifolds of \( \mathbb{S}^N \). In particular, they constructed a solution resembling a positive bubble but crowned by \( m \) negative spikes arranged in a regular polygon with radius 1. Remarkably, this solution is invariant under both rotation and the Kelvin transformation.  Denote by \( \Sigma\) the set of nonzero finite-energy solutions to \eqref{1.yamabe}:
\begin{equation}
\label{1.yamabeset}
\Sigma:=\left\{Q\in\mathcal{D}^{1,2}(\mathbb{R}^N\setminus\{0\}):~\Delta Q+|Q|^{\frac4{N-2}}Q=0\right\}.
\end{equation}
In fact, one can see that equation \eqref{1.yamabe} is invariant under the four transformations: translation, dilation, orthogonal transformation and Kelvin transformation, we refer the readers to Section 2.3 for the explicit definition of these transformations. If $Q\in\Sigma$ we denote
$$L_Q\cdot=\Delta\cdot +\frac{N+2}{N-2}|Q|^{\frac{4}{N-2}}\cdot$$
 as the linearized operator around $Q$. Define the kernel space of $L_Q$:
\begin{equation}
\label{1.kernel}
\mathcal{Z}(Q)=\left\{f\in\mathcal{D}^{1,2}(\mathbb{R}^N):L_Qf=0\right\}.
\end{equation}
The elements in $\mathcal{Z}(Q)$ generated by the family of the aforementioned four transformations  define the following space
\begin{equation}
\label{1.kernel-s}
\widetilde{\mathcal{Z}}_Q=\mathrm{Span}\left\{
\begin{aligned}
&(2-N)z_jQ+|z|^2\partial_{z_j}Q-2z_jz\cdot\nabla Q,~\partial_{z_j}Q,~1\leq j\leq N,\\
&(z_j\partial_{z_\ell}-z_\ell\partial_{z_j})Q,~1\leq j<\ell\leq N,~\frac{N-2}{2}Q+z\cdot \nabla Q
\end{aligned}
\right\}.
\end{equation}
One can verify that the dimension of $\widetilde{\mathcal{Z}}_Q$ is at most $$ 2N + 1 + \frac{N(N-1)}{2}.$$
However, for the positive radial bubble solution $u$ given in \eqref{1.bubble}, the dimension of $\widetilde{\mathcal{Z}}_Q$ is found to be $N+1$. In general, the dimension of the solution may be smaller than $2N+1 +\frac{N(N-1)}{2}$.  Duyckaerts-Kenig-Merle introduced the following definition of non-degeneracy for a solution of equation \eqref{1.yamabe} in \cite{Duyckaerts2016focusing}: a solution \( Q \in \Sigma \) is said to be \textit{non-degenerate} if
\begin{equation}
\label{1.nondegenerate}
\mathcal{Z}(Q) = \tilde{\mathcal{Z}}(Q).
\end{equation}
In \cite{musso2015nondegeneracy}, Musso-Wei established that the solution constructed by Del Pino-Musso-Pacard-Pistoia is non-degenerate in the sense of Duyckaerts-Kenig-Merle. Using this non-degeneracy property of the crown solution, Musso-Wei \cite{musso2016sign} constructed a sign-changing solution to the Barhi-Coron problem, while Deng-Musso-Wei \cite{deng2019new} presented a novel solution to the scalar curvature problem, and Deng-Musso \cite{deng2021high} provided a new construction of sign-changing solutions to Coron's problem. The notion of non-degeneracy is not only significant in the study of elliptic partial differential equations but also serves as a fundamental component in proving soliton resolution for solutions to the energy-critical wave equation. This resolution relies on the compactness property established by Kenig and Merle \cite{kenig2006global,kenig2008global}.  For dimensions \( N = 3,4,5 \), and under the assumption of non-degeneracy, Duyckaerts et al. \cite{duyckaerts2012universality, duyckaerts2017soliton} demonstrated that any nontrivial solution to the energy-critical wave equation asymptotically decomposes into a finite sum of stationary solutions and solitary waves, which are Lorentz transforms of the former.

\subsection{Sketch of Proofs.} Going back to the proof of Theorem \ref{th1}, we remark that a key difficulty in dimension three is that the decay of solutions at infinity is not fast enough. Specifically, the standard bubble \eqref{1.bubble} decays only algebraically as \( 1/|z| \), and solving a Poisson equation with a source term of the form \( 1/|z| \) would lead to an ansatz that grows algebraically at infinity. This challenge is significantly more difficult in three dimensions than in higher dimensions. (Another reason we avoid using positive bubbles is more complex. See the explanation at the end for further details.)   Therefore, it is crucial to construct a suitable building block that is both sign-changing and exhibits a faster decay than the standard bubble.

Following this principle, a key observation in this paper is that we can apply an inversion transformation at the nodal set of the crown solution constructed by Del Pino-Musso-Pacard-Pistoia (see also \cite{yuan2022construction}). This transformation plays a central role in our strategy. Two essential properties ensure that this solution serves as an appropriate candidate. First, the crown solution remains invariant under Kelvin transformation, and Musso-Wei \cite{musso2015nondegeneracy} have proved its non-degeneracy in the sense of Duyckaerts-Kenig-Merle \cite{Duyckaerts2016focusing}. Second, as we will show in Section 2 (Theorem \ref{th2.qm-nodal}) by lengthy computations, the crown solution $q$ satisfies \( \nabla q(z) \neq 0 \) for each point $z$ in its nodal set, thus the nodal set forms a smooth compact Riemann surface.  See Figure \ref{fig:levelset}. These two factors ensure the robustness of the gluing and reduction procedure.

Once the appropriate building block is established, the next step is to determine where we put the configurations and construct a suitable approximate solution. Our approach is as follows: we seek solutions that are rotationally invariant in the first two variables and exhibit even symmetry in the third one. This symmetry assumption allows us to eliminate certain kernel elements from \( \tilde{\mathcal{Z}}(Q) \). To proceed, we divide the unit ball into \( K \) subdomains, where \( K \) is chosen to be a sufficiently large even number, serving as the perturbation parameter in our analysis. (The idea of using the number of bubbles as a parameter seems first due to Wei-Yan \cite{WY2010}). Each subdomain is a replica of the following region:
\begin{equation}
    \Sigma_K= \left\{ (r, \theta,z_3)\in B_1; 0<r<1, -\frac{\pi}{K} < \theta <\frac{\pi}{K} \right\}.
\end{equation}
See Figure \ref{fig:SigmaK} for illustration.
\begin{figure}[htp]
    \centering
    \includegraphics[width=0.27\textwidth]{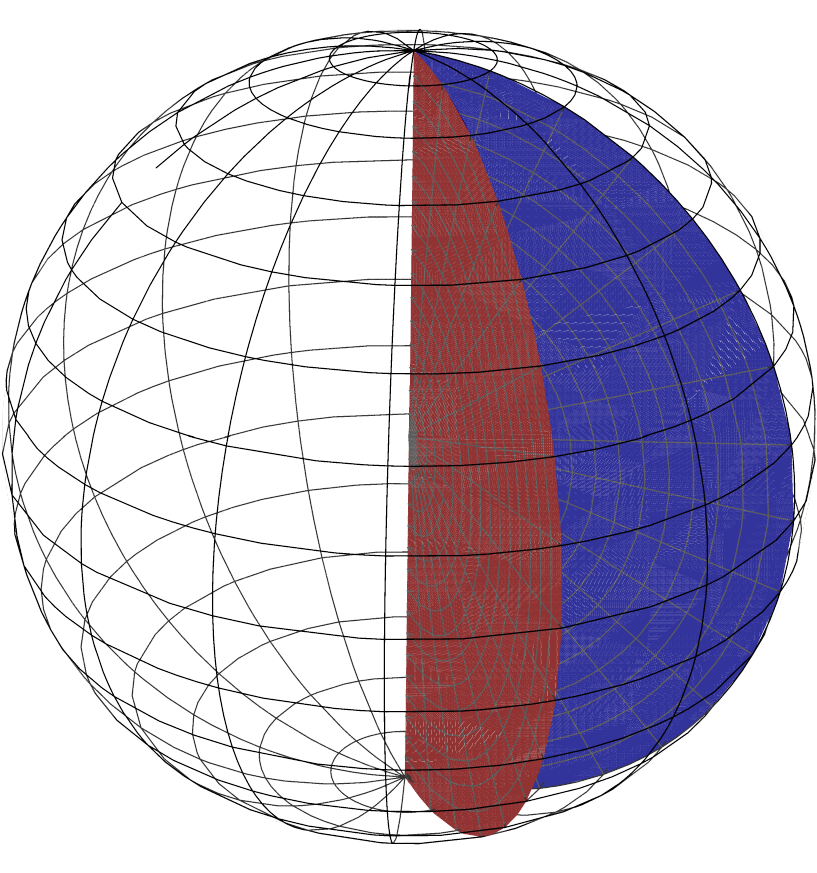}\hspace{2cm}
    \includegraphics[width=0.3\textwidth]{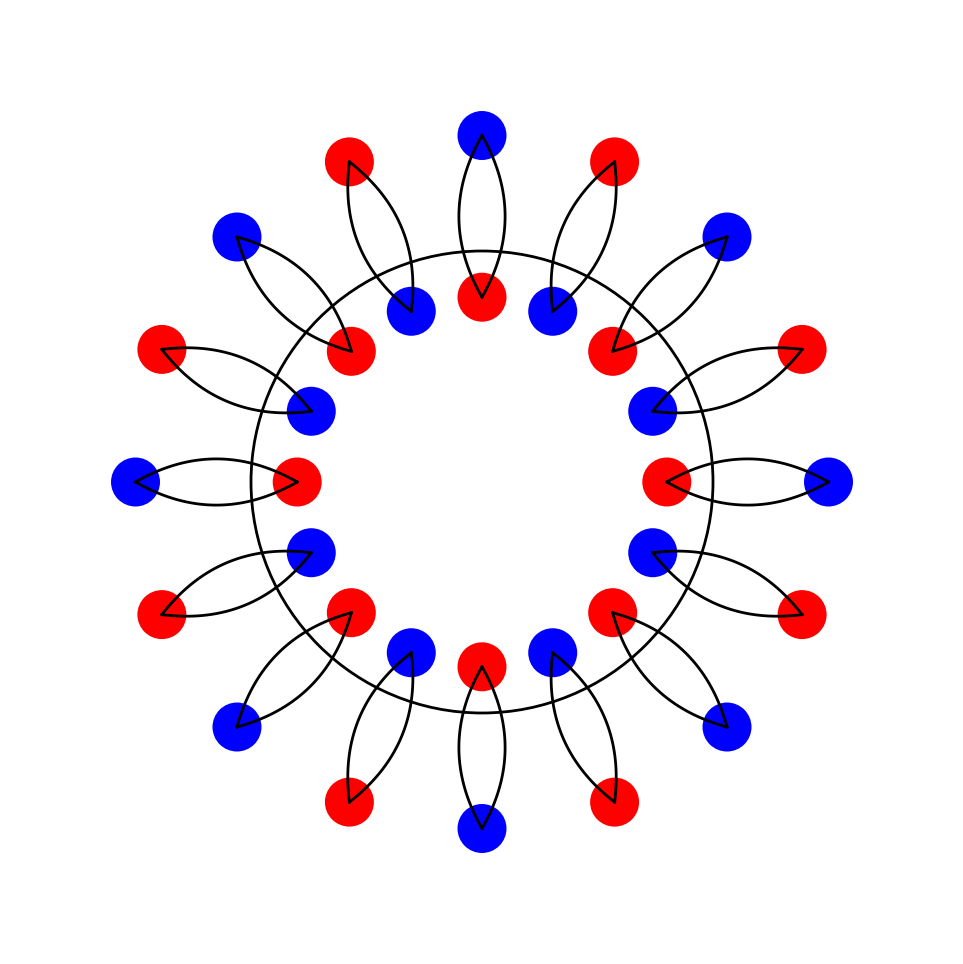}
    \caption{Left picture is the region $\Sigma_K$ and its  two boundaries inside $B_1$. Right one is the necklace solution.}
    \label{fig:SigmaK}
\end{figure}

 We will position a crown bubble (after applying the inversion transformation) in each sector in an alternating manner. In fact, they are arranged close to the boundary $\partial B_1$. These bubbles form {\bf necklace} pattern consisting alternating Del Pino-Musso-Pacard-Pistoia bubbles near the boundary. See Figure \ref{fig:SigmaK} for the right one.  Specifically, in the vicinity of the crown bubble located in \( \Sigma_K \), the neighboring two bubbles will have opposite signs. This setup reduces the problem to solving the following Dirichlet problem in \( \Sigma_K \):
\begin{equation}
\label{210}
\begin{cases}
\Delta u+\lambda u+ u^5=0 \ &\mbox{in} \quad \Sigma_K,\\
u=0 \ &\mbox{on} \quad \partial \Sigma_K.
\end{cases}
\end{equation}
With respect to \eqref{210}, we present the following existence result.
\begin{theorem}
\label{th1.sector}
    If $K$ is even and large enough, then \eqref{210} has a solution.
\end{theorem}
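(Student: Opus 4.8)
The plan is to produce a solution to \eqref{210} by a finite-dimensional Lyapunov--Schmidt reduction carried out inside the single sector $\Sigma_K$, treating $K$ itself as the large parameter, in the spirit of Wei--Yan \cite{WY2010}. The building block is the function $W$ obtained from the Del Pino--Musso--Pacard--Pistoia crown solution $q$ by the Kelvin transform centered at a point $p$ of its nodal set,
\[
W(x)=|x-p|^{-1}\,q\!\left(p+\frac{x-p}{|x-p|^{2}}\right).
\]
Since $q(p)=0$ and $\nabla q(p)\neq 0$ (Theorem~\ref{th2.qm-nodal}), $W$ extends to a smooth, sign-changing, finite-energy solution of \eqref{1.yamabe} on all of $\R^{3}$ which decays like $|x|^{-2}$ at infinity --- one power faster than a Talenti bubble --- so that in particular $\int_{\R^{3}}W^{2}<\infty$; moreover $W$ inherits the non-degeneracy of $q$ from \cite{musso2015nondegeneracy}, because the Kelvin transform maps $\Sigma$ to itself and conjugates $L_q$ to $L_W$ while permuting the generators in \eqref{1.kernel-s}, so that $\mathcal Z(q)=\widetilde{\mathcal Z}(q)$ forces $\mathcal Z(W)=\widetilde{\mathcal Z}(W)$. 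These two properties --- the extra decay, which keeps the Dirichlet correction $PW-W$ and all the bubble interactions small, and the finiteness of $\int W^{2}$, which makes the zeroth-order term $-\tfrac{\lambda}{2}\int u^{2}$ a genuine perturbation --- are exactly what makes the three-dimensional construction go through, and are the reasons a sign-changing inverted crown is used rather than a positive Talenti bubble.

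For the approximate solution I take $U_{\mu,t}:=P_{\Sigma_K}W_{\mu,\xi}$, where $W_{\mu,\xi}(x)=\mu^{-1/2}W\!\big(\tfrac{x-\xi}{\mu}\big)$, the center $\xi=\xi(t)$ lies on the axis $\{\theta=0,\,z_3=0\}$ of $\Sigma_K$ at distance comparable to $t$ from $\partial B_1$, $P_{\Sigma_K}$ denotes the Dirichlet projection (subtract the harmonic extension of the boundary trace), and the two parameters $\mu$ and $t$ are tied to $K$, both tending to $0$ as $K\to\infty$, at rates to be fixed later. One works in a symmetry class of $H^1_0(\Sigma_K)$ adapted at the same time to the sector (Dirichlet faces $\theta=\pm\pi/K$, evenness in $z_3$) and to the axial and reflection symmetries of the crown, so that all of $\widetilde{\mathcal Z}(W)$ is annihilated except a small-dimensional piece $\mathrm{Span}\{Z_1,\dots,Z_k\}$, in practice generated by the dilation mode $\partial_\mu U_{\mu,t}$ and the single admissible translation $\partial_t U_{\mu,t}$. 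Once a solution of \eqref{210} in this class is found, its odd reflection across the faces $\theta=\pm\pi/K$ yields the alternating necklace on $B_1$; this reflection closes up only when $K$ is even, which is why that hypothesis appears.

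The reduction itself is standard in outline. First one estimates the error $E:=\Delta U_{\mu,t}+\lambda U_{\mu,t}+U_{\mu,t}^{5}$ in a suitable weighted norm and shows $\|E\|=o(1)$ uniformly in $(\mu,t)$, the $|x|^{-2}$ decay being used to bound the contribution of $P_{\Sigma_K}$. Next one proves that the linearized operator $L\phi=\Delta\phi+\lambda\phi+5|U_{\mu,t}|^{4}\phi$, restricted to the symmetry class and to the $L^{2}$-orthogonal complement of $\{Z_1,\dots,Z_k\}$, is invertible with inverse bounded uniformly in $K$: after rescaling by $\mu$ near the bubble, $L$ is a perturbation of $L_W=\Delta+5|W|^{4}$ and the symmetry-reduced non-degeneracy of $W$ applies, while away from the bubble $L$ is a perturbation of $-\Delta-\lambda$, which is coercive on $H^1_0(\Sigma_K)$ once $K$ is large because the thin sector satisfies $\lambda_1(\Sigma_K)\gtrsim K^{2}\to\infty$ --- this is where the fixed, non-small value of $\lambda$ is absorbed; the two regimes are patched by a partition of unity. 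A contraction mapping then solves $L\phi=-E-N(\phi)$ modulo $\mathrm{Span}\{Z_i\}$, producing $\phi=\phi_{\mu,t}$ with $\|\phi_{\mu,t}\|=O(\|E\|)$. Finally, solving \eqref{210} in full is equivalent to making the projection of the equation onto each $Z_i$ vanish, hence to finding a critical point of the reduced energy $J_K(\mu,t)$ obtained by evaluating the functional of \eqref{210} at $U_{\mu,t}+\phi_{\mu,t}$; expanding, $J_K=k_0+\Phi_K(\mu,t)+\text{(lower order)}$, where $\Phi_K$ gathers the leading contributions --- the interaction with the two reflected neighbours, the reaction of the outer boundary $\partial B_1$, and the $\lambda$-term $-\tfrac{\lambda}{2}\mu^{2}\int_{\R^3}W^{2}$ --- and one checks that balancing these competing effects fixes admissible rates $\mu=\mu(K),\ t=t(K)$ at which $\Phi_K$ has a critical point stable under the lower-order remainder, giving a solution for every large even $K$.

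I expect the two genuinely hard steps to be the linear theory and the final energy expansion. The linear estimate has to be run on the degenerating domain $\Sigma_K$ while controlling simultaneously the large potential $5|U_{\mu,t}|^{4}$ near the shrinking bubble, the fixed coefficient $\lambda$, and the thinness of the sector, and it must be done in a norm compatible with a sign-changing profile decaying only like $|x|^{-2}$, so that the maximum-principle shortcuts available for positive Talenti bubbles are unavailable and one must rely throughout on the non-degeneracy of $W$ and on the orthogonality conditions. Equally delicate is making the expansion of $J_K$ sharp enough --- with the correct interplay of signs among the neighbour interaction, the boundary reaction, and the $\lambda$-term --- to guarantee a true, stable critical point in $(\mu,t)$; it is precisely here that choosing the inverted crown rather than a positive bubble is indispensable, since in three dimensions this is what keeps $PW-W$ and all the interaction quantities small and the reduced energy meaningful.
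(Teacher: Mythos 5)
Your overall strategy — invert the crown at a nodal point to get a fast-decaying sign-changing bubble, glue into $\Sigma_K$, run a Lyapunov–Schmidt reduction, minimize a reduced energy, and extend by odd reflection across $\theta=j\pi/K$ — coincides with the paper's plan. The gap is in the parameter count, and it is fatal as stated.

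You claim that after imposing the symmetries of $\Sigma_K$ (Dirichlet faces, $z_3$-evenness) together with ``the axial and reflection symmetries of the crown,'' the residual kernel of the linearized operator is two-dimensional, generated by $\partial_\mu U_{\mu,t}$ and $\partial_t U_{\mu,t}$. This is not correct. First, the $Z_m$-rotational symmetry of the crown $q$ is useless here: $m$ is a fixed large constant while $K\to\infty$, and after the Kelvin transform at a nodal point the $Z_m$-symmetry becomes an inversive symmetry that has no relation to the $K$-fold sector geometry. The only symmetries you can align with $\Sigma_K$ are $z_3\to -z_3$ and at most one dihedral reflection of $q$ through the plane containing the inversion point $p$ and the $z_3$-axis — and the latter only works if you select $p$ at one of the $2m$ special symmetric points of $\Gamma$ (you never say this, and for a generic $p\in\Gamma$ the transformed bubble $W$ has no such reflection symmetry at all, which would force you back to the paper's full six-dimensional reduction). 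Second, even after imposing both reflections, the even-in-$z_3$ kernel $\{Z_0,\dots,Z_5\}$ from Section 2.3 loses only the three modes that are odd under the dihedral reflection ($Z_2$ the tangential shift along $\Gamma$, $Z_4$ the transverse Kelvin translation, $Z_5$ the rotation). What survives is three-dimensional: $Z_0$ (dilation), $Z_3$ (radial Kelvin translation — your $\partial_t$), and crucially $Z_1=\partial_a Q_A\big|_{a=0}$, the derivative in the direction of moving the inversion point \emph{off} the nodal set along the normal $\nu(\xi)$. The mode $Z_1$ is even under both reflections (since $q$ is even and $\nu(\xi)$ lies in the reflection plane), so no symmetry kills it, and your two-parameter family $(\mu,t)$ contains no direction dual to $Z_1$: $\partial_\mu$ and $\partial_t$ move the bubble's scale and center, but leave the Kelvin inversion point pinned to $\Gamma$. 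Consequently you cannot make the $Z_1$-projection of the reduced equation vanish, and the reduction does not close. The paper handles exactly this mode through the parameter $a$ in $\hat\xi=\xi+a\nu(\xi)$, whose critical value is found (Theorem~\ref{th8.para}(2)) only to lie somewhere in the interior of $|a|\leq \delta^{-1}\e\log K$ — it is not automatically $0$, so the mode cannot be discarded. You would need at least a third parameter mimicking $a$, and with it a corresponding entry in the reduced energy expansion; without it the argument has a hole that no amount of careful linear theory or energy computation can fill.

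Two secondary remarks. Your Dirichlet projection $P_{\Sigma_K}$ (subtract the harmonic extension of the boundary trace) differs from the paper's $PQ_A$, which is defined by $\Delta PQ_A+\lambda PQ_A+Q_A^5=0$ in $\Sigma_K$ with zero boundary data, and therefore already absorbs the $\lambda$-correction $\psi_A$ and the contribution $T_A$ of the $K-1$ reflected bubbles; the latter cannot be ignored because the alternating necklace interaction is precisely what produces the dominant $\e^3\mathcal{C}_2(K,d)$-type term in the reduced energy, and it is balanced against $-\lambda\e^2 C_*(\xi)$ to fix $\e\sim K^{-3}$. Finally, the paper does not actually impose your extra reflection; it instead keeps all six parameters (and six matching orthogonality constraints) and locates the critical point by constrained minimization with six distinct scales, which also sidesteps the need to re-prove non-degeneracy in a symmetry-reduced class — a step your proposal would additionally require but does not address.
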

To prove Theorem \ref{th1.sector}, we must modify the alternating sum of crown bubbles (after inversion) to ensure that it satisfies the boundary conditions. As a result, the next-order correction terms \( T_A \), \( \varphi_A \), and \( \psi_A \) become essential, with their definitions provided in Section 3. In particular, \( T_A \) accounts for the influence of crown bubbles from the other sectors, while \( \varphi_A \) represents the effect of the mirror bubble of the one in \( \Sigma_K \) with respect to the boundary of the unit ball. Although the non-degeneracy of the solution has already been established by Musso-Wei in \cite{musso2015nondegeneracy}, analyzing the associated linearized problem remains challenging. The main difficulty arises from the necessity of positioning the center of the crown bubble close to the boundary. To tackle this, we decompose the perturbation into two components: one that resolves the problem in a small ball centered near the boundary and another that handles the remaining errors. By employing the classical inner-outer gluing method (see \cite{del2007concentration, kowalczyk2011giorgi}), we confirm that the resulting coupled system exhibits weak coupling, allowing us to reduce the original linearized problem to a standard form. Following the well-established procedure of analyzing the linearized operator and the nonlinear problem (see  Section 6), we further reduce the infinite-dimensional problem to a finite-dimensional one. For this finite-dimensional setting, we apply the localized energy method (see \cite{gui1999multiple}) to identify the critical points of the energy function with respect to the parameters.

We face two essential difficulties. First, due to the algebraic decay of the building blocks and the specific configuration of our approximation, the interaction between bubbles renders the energy computation particularly intricate. In particular, we must determine the asymptotic behavior of several types of series like
\begin{equation*}
\sum_{j=1}^{n-1}(-1)^{j}\left(x^2+\sin^2\tfrac{j\pi}{n}\right)^{-\frac{k}{2}}\quad\mbox{and}\quad
\sum_{j=1}^{n-1}\left(x^2+\sin^2\tfrac{j\pi}{n}\right)^{-\frac{k}{2}},\quad k\in\mathbb{N}.
\end{equation*}
(See Section 9.) It is fortunate that contour integration from complex analysis can be employed to derive an integral representation for the given series. Utilizing results on elliptic integrals, we establish a precise asymptotic behavior for these series under specific conditions on
$x$ and $n$. The detailed computations are provided in Section 9.

The second difficulty is that we have \textbf{six} parameter configuration spaces: they are respectively (1) scaling; (2) two translations; (3) one rotation; (4) two Kelvin transforms. These parameters have different scales, some of them may dominate others, so we have to find out the delicate small forces. After calculating the energy of the solution carefully, we analyze its leading order terms and identify the critical point through minimization, thereby proving the existence of a solution to \eqref{210}. The whole process is a careful balancing act, where we must figure out the proper order for each parameter. Roughly speaking, we find that the scaling parameter is of $O(K^{-3})$, the distance of the center of bubble to the boundary is of $O\left(K^{-1}\log K\right)$, the shift of the bubble in the $z_1$-axis is of $O(K^{-2}  \log K)$, and finally the rotation is found to be of $O(K^{-1} \log K)$. The Kelvin transform parameters are smaller in order.

Using the existence result from Theorem \ref{th1.sector}, we can get a solution to the Brezis-Nirenberg problem in $B_1$. Specifically, we extend $u$ by performing an odd reflection across each hyperplanes  $\theta=j\pi/K$, $j=1,3,5,\cdots,K-1$, consecutively. As a consequence, $u$ can be extended to a function in $B_1\setminus\{0\}$ and verifies \eqref{main-eqn} in $B_1\setminus\{0\}$. Since the solution we constructed is bounded, the singularity at the origin is removable. Consequently, we establish the existence of a nontrivial solution to \eqref{main-eqn}, thereby resolving Brezis's open problem.

\begin{remark}
The solution we constructed has a special shape, see Figure \ref{fig:SigmaK}, where blue and red colors represent the positive and negative bubbles, respectively.
\end{remark}


To end the introduction, we make a few comments. First, when
$\lambda$ is a sufficiently small positive number, an alternative type of solution can be constructed, where a single crown bubble is placed at the origin. We will report it elsewhere.  Second, for the Brezis-Nirenberg problem in higher dimensions, we can apply the same approach to construct infinitely many sign-changing solutions. Third, we believe that the procedure outlined in this paper can be adapted to other elliptic problems involving critical exponents. Finally, since the Del Pino-Musso-Pacard-Pistoia bubble has large Morse index,  the solutions we constructed carry very high Morse index. It remains open if one can give a positive answer to Brezis' Open Problem 1.1 by direct variational methods. The corresponding Brezis' problem in general three-dimensional domains is also challenging. By [Theorem 1.2, Brezis-Nirenberg \cite{brezis1983positive}], for strictly starshaped three dimensional domain $\Omega$, positive solution exists only if $ \lambda \geq \lambda_0 (\Omega)>0 $. (As a result and by a scaling argument, if we use positive bubble as building block in Theorem \ref{th1.sector}, then $ \lambda \geq c_0 K^2>0$.)   By Theorem \ref{th1}, we raise  the following generalized Brezis' Open Problem 1.1:

\medskip

\noindent
{\bf Generalized Brezis' Open Problem.} Let $\Omega $ be strictly star-shaped domain in ${\mathbb R}^3$. Then for every $\lambda>0$, problem (\ref{BN}) admits infinitely many solutions.

\medskip

The paper is organized as follows: In Section 2, we examine the crown solution constructed by Del Pino-Musso-Pacard-Pistoia \cite{del2011large} by analyzing its nodal set. In Sections 3 and 4 we introduce the approximate solution with correction terms and include the computation of the energy for the approximate solution. Section 5 presents the inner-outer gluing procedure, reducing the coupled system to the inner one. In Section 6, we consider the linear and nonlinear problems. In Section 7 we focus on reducing the infinite-dimensional problem to a finite-dimensional one, where we resolve the reduced problem by identifying the local minimum of the reduced energy and provide the proof of Theorem \ref{th1.sector}. Sections 8 and 9 focus on the delicate estimation of the correction terms and the analysis of certain series, respectively. Finally, some technical computations from Section 2 are included in the appendix.

\section{On the Del Pino-Musso-Pacard-Pistoia solution}
In this section, we give a detailed study on the Del Pino-Musso-Pacard-Pistoia solutions to equation \eqref{1.yamabe}. Del Pino, Musso, Pacard, and Pistoia discovered an interesting sign-changing solution to (\ref{1.yamabe}) characterized by a large number $m$ of bubbles. This presents the first semi-explicit construction of its kind, utilizing an approach that naturally reveals spectral information related to the linearized problem. Because of its distinctive shape, they refer to it as the crown solution. Recently, this type of crown solution has played a significant role in understanding the long-term dynamics in the corresponding Schr\"odinger equation. (See \cite{yuan2022construction}.) Moreover, it serves as the foundation for the current work. In this section, we begin by describing this solution $q_m$ and providing a more refined analysis of its asymptotic behavior. Additionally, we examine the nodal set of this solution, demonstrating that it forms a smooth manifold, a property that proves crucial in addressing the reduction problem. Finally, we offer an interpretation for the non-degeneracy of the solution.

In \cite{del2011large}, the authors proved that there exists $m_0$ such that for all integer $m>m_0$, there exists a solution $q_m$ of \eqref{1.yamabe} that can be represented as follows
\begin{align}\label{2.qm}
    q_m(z)=U_*(z)+\phi(z),
\end{align}
where
\begin{align*}
    U_*&=U(z)-\sum_{j=1}^mU_j(z)=U(z)-\sum_{j=1}^m\mu_m^{-\frac12}U\left(\frac{z-\xi_j}{\mu_m}\right)\\
    &=3^\frac14\left(\frac{1}{1+|z|^2}\right)^\frac12-3^\frac14\sum_{j=1}^m\mu_m^{-\frac12}\left(\frac{1}{1+\mu_m^{-2}|z-\xi_j|^2}\right)^\frac12,\\
    \phi(z)&=\sum_{j=1}^m \tilde \phi_j(z)+\psi(z).
\end{align*}

For $U_*$, the parameters $\mu_m$ and  $\xi_j$, $j=1,\cdots,m$ are chosen as the following
\begin{equation}
\label{2.xij}
\xi_j=\sqrt{1-\mu_m^2}\left(\cos\frac{2(j-1)\pi}{m},\sin\frac{2(j-1)\pi}{m},0\right),
\end{equation}
and (for the choice of $d_m$, see \cite[Page 2590]{del2011large})
\begin{equation}
\label{2.mum}
\mu_m=\frac{d_m^2}{m^2(\log m)^2}\quad \mbox{and}\quad
d_m=\sqrt{2}\dfrac{m\log m}{\sum_{j=1}^{m-1}{\csc\frac{j\pi}{m}}}+O\left(\frac{1}{m\log m}\right).
\end{equation}
It follows from \cite[Theorem 7]{blagouchine2024finite} that
\[\sum_{j=1}^{n-1} \csc \left(\frac{j \pi}{m}\right)=\frac{2 m}{\pi}\left(\log \frac{2 m}{\pi}+\gamma_0\right)+O\left(\frac{1}{m}\right), \quad \text { as } n \rightarrow \infty.\]
where $\gamma_0=\lim_{n\to \infty}(\sum_{j=1}^n\frac{1}{j}-\log n)=0.577\cdots$ is the Euler's constant.
Thus
\begin{align}
    d_m=\frac{\sqrt{2}}{2}\pi +O\left(\frac{1}{\log m}\right)
\end{align}

For the error terms $\tilde\phi_j,~j=1,\cdots,m,$ and $\psi$, we have
\begin{equation}
\label{2.phi}
|\nabla^\ell \tilde\phi_j(z)|\leq \frac{C}{1+\mu_m^{-1}|z-\xi_j|^{\ell+1}},\quad \ell=0,1,2,\quad j=1,\cdots,m,
\end{equation}
and
\begin{equation}
\label{2.psi}
|\psi(z)|\leq \frac{C}{\log m}\quad \mbox{and}\quad |\nabla\psi(z)|+|\nabla^2\psi(z)|\leq C.
\end{equation}
Although \cite{del2011large} does not provide explicit estimates for the derivatives of $\tilde{\phi}_j$ and $\psi$, these can be obtained using the equations they satisfy along with standard elliptic estimates. From the estimates in \eqref{2.phi} and \eqref{2.psi}, it is obvious that, away from the centers of the negative bubbles, $\psi(z)$ is the dominant term, with an order of $O\left(1/{\log m}\right)$. In fact, the leading-order behavior of $\psi(z)$ plays a crucial role in determining the nodal set of $q_m$. Therefore, it is necessary to determine the precise value of $\psi$ in this order for the points $z$ lying on the circle of radius $\sqrt{1-\mu_m^2}$ on the $z_1z_2$ plane. In Subsection 2.1, we will carry out this computation.
\medskip

The following theorem provides precise estimates of the nodal set of the Del Pino-Musso-Pacard-Pistoia solution. This will be crucially used in later constructions.

\begin{theorem}
\label{th2.qm-nodal}
When $m$ is large enough, $q_m$ has a smooth embedding and compact nodal set $ \mathcal{N}(q_m)$ such that $q_m(z)=0$ and $\nabla q_m(z)\neq 0$ for any $z\in\mathcal{N}(q_m)$. Near the center of each bump $\xi_j$, $\mathcal{N}(q_m)\cap\{z_3=0\}\sim \{|z-\xi_j|\sim 1/m\}$. \footnote{Indeed, we can prove that near the center of each bump $\xi_j$, the distance between the zero point $z$ of $q_m$ and $\xi_j$ is on the order of $1/m$. However,
it is neither possible nor necessary for our proof to show that the nodal set resembles a ball in topology; it suffices to confirm that it is a smooth Riemannian manifold.}
\end{theorem}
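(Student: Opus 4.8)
The plan is to analyze the nodal set of $q_m = U_* + \phi$ by separating the region near the negative bubbles $\xi_j$ from the complement, using the asymptotics in \eqref{2.phi}--\eqref{2.psi} together with the refined value of $\psi$ on the circle $|z'|=\sqrt{1-\mu_m^2}$, $z_3=0$, to be computed in Subsection~2.1. First I would establish the \emph{sign pattern} of $q_m$ on a coarse scale: away from all the balls $B(\xi_j, C\mu_m)$, the bubble sum $\sum_j U_j(z)$ is small (each $U_j(z) \sim \mu_m^{1/2}|z-\xi_j|^{-1}$ for $|z-\xi_j|\gg\mu_m$), so $q_m(z) = U(z) - \sum_j U_j(z) + O(1/\log m)$, and the competition is between the slowly-varying positive background $U(z)=3^{1/4}(1+|z|^2)^{-1/2}$, the aggregate tail of the negative bubbles, and the $O(1/\log m)$ correction $\psi$. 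On the unit sphere $|z'|=1$, $z_3=0$ (the ring where the $\xi_j$ accumulate), I expect the dominant balance to show $q_m$ is negative just outside the cloud of spikes and positive far away, which already forces a nodal surface roughly surrounding the necklace of spikes; the precise location comes from setting the leading terms equal, which is where the exact coefficient of $\psi$ enters. Near each $\xi_j$, in the blown-up variable $y = (z-\xi_j)/\mu_m$, $q_m$ looks like $-3^{1/4}(1+|y|^2)^{-1/2}$ plus the (positive) contributions of $U$, the other bubbles, and $\phi$, all of which are $O(\mu_m^{1/2})$ small after rescaling the negative bubble to size one — wait, more carefully: $U_j$ in self-similar coordinates is order $\mu_m^{-1/2}$, while $U(\xi_j)\sim 3^{1/4}$ and the neighboring bubbles contribute $\sim \mu_m^{1/2} m \log m$-type sums, so the negative spike genuinely dominates in a ball of radius $\sim\mu_m$ about $\xi_j$ and the zero of $q_m$ along $\{z_3=0\}$ occurs where $-3^{1/4}\mu_m^{-1/2}(1+\mu_m^{-2}|z-\xi_j|^2)^{-1/2}$ is cancelled by the $O(1)$ positive terms, i.e.\ at $|z-\xi_j|\sim \mu_m\cdot \mu_m^{-1/2} \sim \mu_m^{1/2} \sim 1/(m\log m)$; reconciling this with the claimed scale $1/m$ will require tracking the logarithmic factors in $\mu_m$ and $d_m$ carefully, which is exactly the kind of bookkeeping Subsection~2.1 supplies.

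Once the sign pattern and approximate location of $\mathcal{N}(q_m)$ are pinned down, I would prove $\nabla q_m \neq 0$ on the nodal set by a transversality argument: on the zero set I would show one particular directional derivative is bounded away from zero. Near $\xi_j$, in rescaled coordinates the leading term is the radial bubble profile $-3^{1/4}(1+|y|^2)^{-1/2}$, whose radial derivative is nonvanishing away from $y=0$; since the zero set sits at $|y|\sim$ (some fixed scale) $\gg 0$, the radial derivative of the rescaled $q_m$ there is $\sim c\mu_m^{-3/2}\neq 0$, and it dominates the derivatives of the $O(1)$-in-original-variables perturbations (which become $O(\mu_m)$ after the extra derivative in blown-up coordinates is accounted for against the $\mu_m^{-1/2}$ amplitude). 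In the far region, the gradient of the background $\nabla U$ plus the aggregate gradient of the negative bubble tails plus $\nabla\psi$ must be shown to be nonzero on $\{q_m=0\}$; here I would use the refined radial monotonicity: along the radial direction in the $z_1 z_2$-plane the function $U(z) - \sum_j U_j(z)$ is strictly monotone through the balance radius, with derivative of a definite sign and magnitude larger than $|\nabla\psi|\le C$ — this needs the computation that $\nabla\psi$'s leading part does not conspire to cancel it, which again relies on the explicit leading profile of $\psi$ from Subsection~2.1. Given $\nabla q_m\neq 0$ everywhere on the level set, the implicit function theorem gives that $\mathcal{N}(q_m)$ is a smooth embedded hypersurface; compactness follows because $q_m\to 3^{1/4}|z|^{-1}>0$ at infinity (from the Kelvin invariance / decay), so the zero set is contained in a fixed large ball.

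The main obstacle, I expect, is the delicate balancing of logarithmic factors in the near-spike analysis: the parameters satisfy $\mu_m = d_m^2/(m\log m)^2$ with $d_m \to \pi/\sqrt2$, so $\mu_m^{1/2}\sim \pi/(\sqrt2\, m\log m)$, which is $1/m$ only up to a $\log m$ — so getting the stated "$|z-\xi_j|\sim 1/m$" right (as the footnote hints, only the \emph{order} $1/m$ and smoothness are actually needed, not the topological type) demands identifying the precise constant multiplying the $O(1)$ positive terms near $\xi_j$, which in turn forces the sharp leading-order expansion of $\psi$ plus the sum $\sum_{k\neq j} U_k(\xi_j)$ over the other spikes — a sum of the type $\sum_k \csc(k\pi/m)$ whose asymptotics (quoted from \cite{blagouchine2024finite}) produce exactly the $\log m$ that must be reconciled. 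A secondary difficulty is ruling out spurious components or critical points of $q_m$ on the level set in the intermediate annuli $\mu_m \ll |z-\xi_j| \ll 1/m$ where neither the single-spike approximation nor the far-field approximation is cleanly dominant; I would handle this by a matching/interpolation estimate showing $q_m$ is strictly of one sign (negative) throughout that transition zone, using that both the spike tail and the background are monotone there and the error terms $\tilde\phi_j, \psi$ are lower order. The remaining steps — the implicit function theorem, compactness, embeddedness — are routine once the quantitative sign and gradient bounds are in hand.
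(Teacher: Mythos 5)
Your overall plan (sign analysis near vs.\ far from the $\xi_j$, then a transversality argument for $\nabla q_m\neq 0$, with the precise profile of $\psi$ from Subsection~2.1 as the key computational ingredient) is in the right spirit, but two of your central quantitative claims are wrong, and these are exactly the points the paper's proof hinges on.

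First, your ``balance'' near $\xi_j$ gives the wrong scale and masks the actual mechanism. You set the spike tail $\mu_m^{1/2}/|z-\xi_j|$ against ``$O(1)$ positive terms'' and deduce a nodal radius $\sim\mu_m^{1/2}\sim 1/(m\log m)$. But the $O(1)$ positive background $U(\xi_j)=3^{1/4}/\sqrt{2}$ and the $O(1)$ negative tail $\sum_{k\neq j}\mu_m^{1/2}/|\xi_j-\xi_k|$ cancel to leading order --- this is precisely how $d_m$ (hence $\mu_m$) is chosen in \cite{del2011large}, and it is made explicit in \eqref{2.com-13-red}. After this cancellation, the spike tail $\mu_m^{1/2}/|z-\xi_j|$ must balance terms of size $O(1/\log m)$ (the residual sum and $\psi$), giving $|z-\xi_j|\sim\mu_m^{1/2}\log m = d_m/m\sim 1/m$, which is the stated scale. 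Without noticing this cancellation you cannot place the nodal set correctly, and you cannot explain why the $1/\log m$-order term $\psi$ (rather than an $O(1)$ term) determines where the zero occurs.

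Second, your gradient argument both mislocates where the difficulty lies and uses a wrong asymptotic. The zero set is at $|z-\xi_j|\sim 1/m$, i.e.\ at $|y|=|z-\xi_j|/\mu_m\to\infty$ in the blown-up variable, so the rescaled radial derivative of the bubble at the zero set is $O(|y|^{-2})\to 0$, not $\sim c\neq 0$; the original-variable derivative of the spike tail is $\sim\mu_m^{1/2}m^2=d_m m/\log m$, not $\mu_m^{-3/2}$. More importantly, the ``radial'' derivative argument genuinely fails when the zero is very close to the circle $r=\sqrt{1-\mu_m^2}$: there the radial components of the spike contributions nearly cancel and $\nabla q_m\cdot z$ can be small, which is why the paper (Lemma~\ref{2.lemma-2}, Step~3) passes to the \emph{angular} derivative $\partial_\theta q_m$. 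To make that work one must know the zero's angle $\theta$ stays bounded away from the midpoint $\pi/m$, and \emph{that} is exactly what the precise computation of $\psi(\xi_0)$ from Subsection~2.1 buys (Lemma~\ref{2.lemma-1}, Step~3): it shows $q_m>0$ in a ball of radius $\sim 1/m$ around the midpoint $\xi_0$, so no zero lies there. Your proposed ``matching/interpolation estimate showing $q_m$ is strictly of one sign in the transition zone $\mu_m\ll|z-\xi_j|\ll 1/m$'' is therefore aimed at a nonproblem (the sign is indeed negative there, by Step~1 of Lemma~\ref{2.lemma-1}); the real issue is the gradient at the scale $1/m$ and small $h$, which your outline does not address. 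You also omit the Kelvin-transform reduction of $|z|<1$ to $|z|\geq 1$ and the separate $\partial_{z_3}$-argument for $z_3\neq 0$, both needed to cover all of $\mathcal{N}(q_m)$.
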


\begin{remark}When analyzing the behavior of the approximate solution \( U_* \), the nodal set exhibits a structure resembling a torus, as illustrated in the left image of Figure 2. However, the strong influence of \( \psi \) also affects the behavior of \( q_m(z) \) for \( |z| = \sqrt{1 - \mu_m^2} \). Specifically, \( q_m(z) \) is positive at the point
\[
z = \left(\sqrt{1 - \mu_m^2} \cos\frac{\pi}{m}, \, \sqrt{1 - \mu_m^2} \sin\frac{\pi}{m}, \, 0 \right).
\]
Consequently, when considering the nodal set of \( q_m(z) \) in the \( z_1z_2 \)-plane, the numerical simulations reveal that it consists of \( m \) circles surrounding the center of each negative bubble.
\end{remark}

\begin{figure}[ht]
\centering
\includegraphics[width=0.35\textwidth]{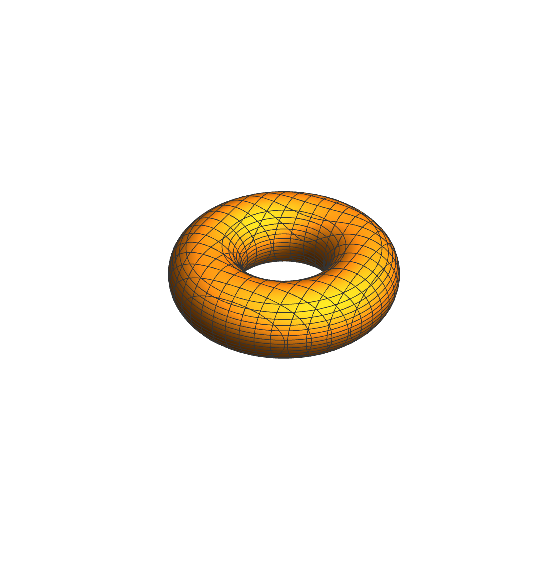}
\includegraphics[width=0.30\textwidth]{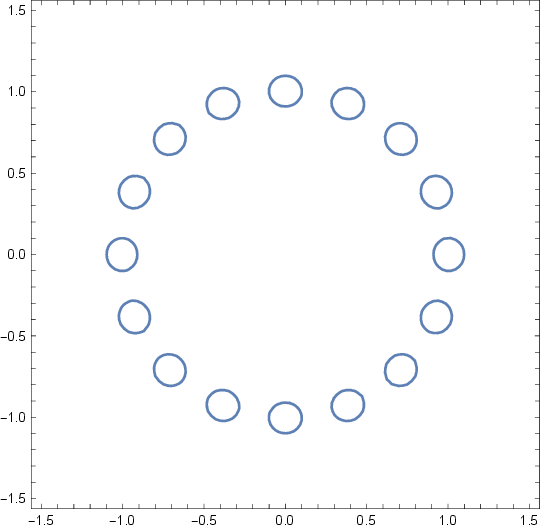}
\caption{The left one is the nodal sets of $U_*$ inside the cube $[-5/2,5/2]^3$ when $m=16$ and $d_m=\sqrt{2}m\log m\left(\sum_{j=1}^{m-1}\csc \frac{j\pi}{m}\right)^{-1}$. The right one is the intersection of nodal sets of $q_m$ with $z_1z_2$-plane.}
\label{fig:levelset}
\end{figure}

In Subsection 2.2 we shall give the proof of Theorem \ref{th2.qm-nodal} by analyzing the nodal set of $q_m(z)$ and its smoothness. First, we estimate the global term $\psi$ in the next subsection.

\subsection{Estimates of $\psi$}
Recall that $\psi$ satisfies (see \cite[(4.13)]{del2011large})
\begin{equation}
\label{2.psi-eq}
\begin{aligned}
\Delta \psi+5U^4\psi +V(z)\psi+5|U_*|^4\sum_j(1-\zeta_j)\tilde\phi_j+M(\psi)=0,
\end{aligned}
\end{equation}
where (we used the notations from there)
\begin{equation*}
V(z):=5(|U_*|^4-|U|^4)\left(1-\sum_{j=1}^m\zeta_j\right)-5U^4\sum_{j=1}^m\zeta_j,
\end{equation*}
\begin{equation*}
M(\psi):=\left(1-\sum_{j=1}^m\zeta_j\right)\left(E+N\left(\sum_{j=1}^m\tilde\phi_j+\psi\right)\right),
\end{equation*}
\begin{equation*}
E=\left(U-\sum_{j=1}^mU_j\right)^5-U^5-\sum_{j=1}^mU_j^5,
\end{equation*}
and
\begin{equation*}
\zeta_j(z)=\begin{cases}
\zeta(m\eta^{-1}|z|^{-2}|(z-\xi_j|z|)|),\quad &\mbox{if}\quad |z|>1,\\
\zeta(m\eta^{-1}|z-\xi_j|),\quad &\mbox{if}\quad |z|\leq1.
\end{cases}
\end{equation*}
Here $\zeta(s)$ is a smooth function such that $\zeta(s)=1$ for $s<1$ and $\zeta(s)=0$ for $s>2$, and $\eta$ is a small positive constant. It is not difficult to check that
\begin{equation}
\label{2.kelvin-f}
U(z)=\frac{1}{|z|}U\left(\frac{z}{|z|^2}\right),\quad U_j(z)=\frac{1}{|z|}U_j\left(\frac{z}{|z|^2}\right),~j=1,\cdots,m,
\end{equation}
and
\begin{equation}
\label{2.kelvin-f1}
\zeta_j(z)=\zeta_j\left(\frac{z}{|z|^2}\right),~j=1,\cdots,m.
\end{equation}
Together with the properties that the ansatz is $2\pi/m$-rotationally invariant in $z_1z_2$ plane and even symmetry with respect to the $z_3$ axis, one can solve $\psi$ from \eqref{2.psi-eq} uniquely in a suitable Sobolev space, which is the orthogonal complement of the space spanned by the kernels of the linear operator $\Delta+5U^4$. More precisely, the authors in \cite{del2011large} proved that
\begin{equation*}
\|\psi\|_{L_w^\infty}\leq C\left(\|V(z)\psi\|_{L_w^q}+5\left\||U_*|^4\sum_j\left(1-\zeta_j\right)\tilde\phi_j\right\|_{L_w^q}+\|M(\psi)\|_{L_w^q}\right),
\end{equation*}
where
\begin{equation*}
\|\psi\|_{L_w^\infty}:=\left\|(1+|z|)\psi\right\|_{L^\infty(\mathbb{R}^3)}\quad\mbox{and}\quad
\|h\|_{L_w^q}:=\left\|(1+|z|)^{5-\frac{6}{q}}h\right\|_{L^q(\mathbb{R}^3)}.
\end{equation*}
Particularly, for $q>\frac32$ it has been shown that
\begin{equation*}
\begin{aligned}
\left\|\left(1-\sum_{j=1}^m\zeta_j\right)
N\left(\sum_{j=1}^m\tilde\phi_j+\psi\right)\right\|_{L_w^q}
+\left\||U_*|^4\sum_j\left(1-\zeta_j\right)\tilde\phi_j\right\|_{L_w^q}+\|V(z)\psi\|_{L_w^q}\leq \frac{C}{(\log m)^2}.
\end{aligned}
\end{equation*}
In order to capture the leading behavior of $\psi$, we decompose $\psi=\psi_{d}+\psi_{s}$, where $\psi_d$ and $\psi_s$ are the solutions are used for solving the major error term $\left(1-\sum_{j=1}^m\zeta_j\right)E$ and the left terms in \eqref{2.psi-eq} respectively, i.e.,
\begin{equation}
\label{2.psi-m}
\Delta \psi_d+5U^4\psi_d+\left(1-\sum_{j=1}^m\zeta_j\right)E=0,
\end{equation}
and
\begin{equation}
\label{2.psi-s}
\Delta \psi_s+5U^4\psi_s+V(z)\psi+5|U_*|^4\sum_j(1-\zeta_j)\tilde\phi_j+M(\psi)-\left(1-\sum_{j=1}^m\zeta_j\right)E=0.
\end{equation}
It is straightforward to verify that both equations are solvable, since the source terms are orthogonal to the kernel of the linear operator $\Delta + 5U^4$. Moreover, if we impose the condition that both functions $\psi_d$ and $\psi_s$ lie in the orthogonal complement of the kernel of $\Delta + 5U^4$, it can be shown that
\begin{equation}
\label{2.psi-s-est}
\|\psi_{s}\|_{L^\infty(\mathbb{R}^3)}\leq\frac{C}{(\log m)^2}.
\end{equation}
In the following, we study $\psi_d$. By diving it into two parts $\psi_{d,1}$ and $\psi_{d,2}$
\begin{equation}
\label{2.psi-m1}
\Delta\psi_{d,1}+5U^4\psi_{d,1}-5U^4\sum_{j=1}^m\frac{3^\frac14\mu_m^\frac12}{|z-\xi_{j,0}|}=0,
\end{equation}
and
\begin{equation}
\label{2.psi-m2}
\Delta\psi_{d,2}+5U^4\psi_{d,2}+5U^4\sum_{j=1}^m\frac{3^\frac14\mu_m^\frac12}{|z-\xi_{j,0}|}+\left(1-\sum_{j=1}^m\zeta_j\right)E=0,
\end{equation}
where $\xi_{j,0}=\left(\cos\frac{2(j-1)\pi}{m},\sin\frac{2(j-1)\pi}{m},0\right)$. Note that $\xi_{j,0}$ is on the unit circle, while $\xi_j$ (see \eqref{2.xij}) is on the circle of radius $\sqrt{1-\mu^2}$. For \eqref{2.psi-m2}, one can check that
\begin{equation}
\label{2.psi-m2-0}
\left\|5U^4\sum_{j=1}^m\frac{3^\frac14\mu_m^\frac12}{|x-\xi_{j,0}|}+\left(1-\sum_{j=1}^m\zeta_j\right)E\right\|_{L_w^q}\leq \frac{C}{(\log m)^2}.
\end{equation}
The proof of (\ref{2.psi-m2-0}) is lengthy and thus left to Appendix \ref{appendix-A}.  From (\ref{2.psi-m2-0}) we see that
\begin{equation}
\label{2.psi-d2-est}
|\psi_{d,2}|_{L^\infty(\mathbb{R}^3)}\leq\frac{C}{(\log m)^2}.
\end{equation}
It remains to study $\psi_{d,1}$. In fact, we can find the explicit form of the solution for $z=\left(\cos\frac{\pi}{m},\sin\frac{\pi}{m},0\right)$. Indeed, we write
\begin{equation}
\psi_{d,1}=3^\frac14\mu_m^\frac12\sum_{j=1}^{\frac{m}{2}}\left(\psi_{d,1,j}+\frac{1}{|z-\xi_{j,0}|}+\frac{1}{|z-\xi_{j+\frac{m}{2},0}|}\right),
\end{equation}
where $\psi_{d,1,j}$ verifies
\begin{equation}
\Delta\psi_{d,1,j}+5U^4\psi_{d,1,j}=4\pi\left(\delta_{\xi_{j,0}}+\delta_{\xi_{j+\frac{m}{2},0}}\right),\quad j=1,\cdots,\frac{m}{2}.
\end{equation}
where $\delta_\xi$ is the Dirac function at $\xi$.
Particularly $\psi_{d,1,1}$ is given below
\begin{equation}
\psi_{d,1,1}(z)=-\frac{\sqrt{2}}{(1+|z|^2)^\frac12}\frac{8z_1^2-(1+|z|^2)^2}{(1+|z|^2)\sqrt{(1+|z|^2)^2-4z_1^2}},
\end{equation}
the other $\psi_{d,1,j},~j=2,\cdots,\frac{m}{2}$ can be derived from $\psi_{d,1,1}$ by a rotation of $\frac{2(j-1)\pi}{m}$. At $z=\hat\xi_0=\left(\cos\frac{\pi}{m},\sin\frac{\pi}{m},0\right)$, we can compute the value of $\psi_{d,1}$ and get that
\begin{equation}
\label{2.psim1-middle}
\begin{aligned}
\psi_{d,1}\left(\xi_0\right)
=~&{3^\frac14\mu_m^\frac12}\sum_{j=1}^{\frac{m}{2}}\left(
\frac{1-2\cos^2\left(\frac{2(j-1)\pi}{m}-\frac{\pi}{m}\right)}{\left|\sin\left(\frac{2(j-1)\pi}{m}-\frac{\pi}{m}\right)\right|}
+\frac{1}{\sin\left(\frac{(2j-1)\pi}{2m}\right)}\right)\\
=~&{3^\frac14\mu_m^\frac12}\sum_{j=1}^{\frac{m}{2}}\left(\frac{1}{\sin\left(\frac{(2j-1)\pi}{2m}\right)}
-\frac{1}{\left|\sin\left(\frac{2(j-1)\pi}{m}-\frac{\pi}{m}\right)\right|}\right)\\
&+{3^\frac14\mu_m^\frac12}\sum_{j=1}^{\frac{m}{2}}2\left|\sin\left(\frac{2(j-1)\pi}{m}-\frac{\pi}{m}\right)\right|.
\end{aligned}
\end{equation}
Together with the estimation \eqref{2.psi-s-est}, \eqref{2.psi-d2-est}, and \eqref{2.psi}, we have
\begin{equation}
\label{2.psi-middle}
\psi\left(\sqrt{1-\mu_m^2}\cos\frac{\pi}{m},\sqrt{1-\mu_m^2}\sin\frac{\pi}{m},0\right)=\psi_{d,1}(\xi_0)+O\left(\frac{1}{(\log m)^2}\right).
\end{equation}
We shall see that it plays an important role in determining the nodal set of the crown bubble solution $q_m$.

\subsection{On the nodal set of Del Pino-Musso-Pacard-Pistoia's solution}
In this subsection, we will study the nodal set of the solution by Del Pino-Musso-Pacard-Pistoia. To begin with, we present the following lemma to describe the location of the nodal set.
\begin{lemma}
\label{2.lemma-1}
When $m$ is large enough, $q_m$ has a compact nodal set $\mathcal{N}(q_m)$ such that for any  point $z$ in $\mathcal{N}(q_m)$, the following holds
$${\rm dist}\left(z,\left\{\xi_1,\cdots,\xi_m\right\}\right)\sim \frac{1}{m}.$$
In addition, there exists a small positive constant $c_0$ such that
\begin{equation}
\label{2.lem-eq}
\mathcal{N}(q_m)\cap \bigcup_{j=1}^{m} B_{c_0/m}\left(\sqrt{1-\mu_m^2}\cos\frac{2j-1}{m}\pi,\sqrt{1-\mu_m^2}\sin\frac{2j-1}{m}\pi,0\right)=\emptyset.
\end{equation}
\end{lemma}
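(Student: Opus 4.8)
The plan is to compare $q_m$ with its principal part $U_*=U-\sum_\ell U_\ell$ on several scales, using the pointwise remainder bounds \eqref{2.phi}--\eqref{2.psi}, the refined value of $\psi$ from Subsection 2.1, and the balancing encoded in the choice \eqref{2.mum} of $d_m$; write $\Gamma_m=\{|z|=\sqrt{1-\mu_m^2},\,z_3=0\}$ for the bump circle (parametrized by $\xi(\theta)$), $\xi_j$ for the bump nearest a given $z$, and $t=|z-\xi_j|$. First, since $q_m$ inherits the Kelvin invariance of its ingredients (cf.\ \eqref{2.kelvin-f}, \eqref{2.kelvin-f1}), $q_m(z)=|z|^{-1}q_m(z/|z|^2)$, and $U$ dominates $\sum_\ell U_\ell+\phi$ for $|z|$ large, so $q_m=3^{1/4}|z|^{-1}(1+o(1))>0$ there and, by inversion, for $|z|$ small; hence $\mathcal N(q_m)$ is compact. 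To localize it near $\Gamma_m$, fix a small $\varepsilon_0$: for $\mathrm{dist}(z,\Gamma_m)\ge\varepsilon_0$ one has $\sum_\ell U_\ell(z)=o(1)$, dominated by $U(z)\ge c(\varepsilon_0)>0$, so $q_m>0$; for $d:=\mathrm{dist}(z,\Gamma_m)\in[C_*/m,\varepsilon_0]$ the bound $U_\ell(z)\le 3^{1/4}\mu_m^{1/2}|z-\xi_\ell|^{-1}$ and a Riemann-sum comparison of $\sum_\ell|z-\xi_\ell|^{-1}$ with the potential of a uniform ring of mass $3^{1/4}\mu_m^{1/2}m=3^{1/4}d_m/\log m$, via the elliptic-integral asymptotics $\int_0^{2\pi}|z-\xi(\theta)|^{-1}\,d\theta=2\log\tfrac1d+O(1)$, give
\[
\sum_\ell U_\ell(z)\le\frac{3^{1/4}d_m}{\pi\log m}\bigl(\log\tfrac1d+C_1\bigr),\qquad U(z)=\frac{3^{1/4}}{\sqrt2}\bigl(1+O(d)\bigr);
\]
since $|\sum_\ell\tilde\phi_\ell|+|\psi|\le C/\log m$, splitting the range of $d$ (using $\log C_*$ large for $d$ close to $C_*/m$, $\log\log m\to\infty$ for $\tfrac{\log m}m\le d\le m^{-1/2}$, and $\varepsilon_0$ small for $d\ge m^{-1/2}$) gives $q_m(z)>0$ on $\{C_*/m\le d\le\varepsilon_0\}$. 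Thus every zero lies within $C_*/m$ of $\Gamma_m$, hence within $c_2/m:=(C_*+\pi)/m$ of $\xi_j$.

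For the lower bound, split $q_m(z)=-U_j(z)+\tilde\phi_j(z)+\widehat B_j(z)$ with $\widehat B_j=U-\sum_{\ell\ne j}U_\ell+\sum_{\ell\ne j}\tilde\phi_\ell+\psi$. From \eqref{2.phi}, $|\tilde\phi_j(z)|\le C\mu_m^{1/2}U_j(z)$ uniformly, so $-U_j+\tilde\phi_j\le-(1-o(1))U_j$. The choice \eqref{2.mum} gives $\mu_m^{1/2}\sum_{k=1}^{m-1}\csc\tfrac{k\pi}m=\sqrt2+O((m\log m)^{-1})$, so $\sum_{\ell\ne j}U_\ell(\xi_j)=\tfrac{3^{1/4}}{\sqrt2}\bigl(1+O((m\log m)^{-1})\bigr)$ cancels $U(\xi_j)=\tfrac{3^{1/4}}{\sqrt2}\bigl(1+O(\mu_m^2)\bigr)$ to high order; together with $|\nabla(U-\sum_{\ell\ne j}U_\ell)|\lesssim m/\log m$ near $\xi_j$ and \eqref{2.phi}--\eqref{2.psi} this yields $|\widehat B_j(z)|\le C_4/\log m$ on $\{t\le c_1/m\}$, with $C_4$ independent of $c_1$. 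Since $c_1/m\gg\mu_m$, on that set $U_j(z)\ge\tfrac{3^{1/4}d_m}{c_1\log m}(1-o(1))$, so choosing $c_1$ with $3^{1/4}d_m/c_1>2C_4$ forces $q_m(z)<0$ there. With the previous step this proves $\mathrm{dist}(z,\{\xi_1,\dots,\xi_m\})\sim 1/m$ for every $z\in\mathcal N(q_m)$; moreover $\mathcal N(q_m)\ne\emptyset$, because $q_m(\xi_j)<0$ while $q_m>0$ outside the $\varepsilon_0$-tube about $\Gamma_m$.

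For \eqref{2.lem-eq}, put $\widehat\xi_j=\bigl(\sqrt{1-\mu_m^2}\cos\tfrac{2j-1}m\pi,\ \sqrt{1-\mu_m^2}\sin\tfrac{2j-1}m\pi,\ 0\bigr)$. Combining the refined estimates of Subsection 2.1 — the value \eqref{2.psi-middle}, formula \eqref{2.psim1-middle}, the explicit solution $\psi_{d,1,1}$, and the $\tfrac{2\pi}m$-rotational and $z_3$-even symmetry — with the leading-order value of $U_*$ at $\widehat\xi_j$ yields $q_m(\widehat\xi_j)\ge c_5/\log m>0$ (this is the assertion of the Remark after Theorem \ref{th2.qm-nodal}). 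Near $\Gamma_m$, at distance $\gtrsim 1/m$ from the bumps, $|\nabla q_m|\lesssim m/\log m$ (the term $\sum_\ell\nabla U_\ell$ is $O(m/\log m)$, while $|\nabla U|,|\nabla\psi|=O(1)$ and $|\nabla\sum_\ell\tilde\phi_\ell|=o(1)$), so $q_m$ oscillates by at most $O(c_0/\log m)$ over $B_{c_0/m}(\widehat\xi_j)$; taking $c_0<c_5/C$ keeps $q_m>0$ on every $B_{c_0/m}(\widehat\xi_j)$, which is \eqref{2.lem-eq}.

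The main obstacle is the last step (and the sharp constant in the ring estimate): the cancellation forced by \eqref{2.mum} makes $\psi$ the decisive term near $\Gamma_m$, so one needs its \emph{sign} there — equivalently, the strict positivity $q_m(\widehat\xi_j)>0$ at the midpoints — which is invisible to the size bound \eqref{2.psi} and is extracted only from the explicit formula for $\psi_{d,1,1}$ together with the delicate trigonometric summations of Subsection 2.1.
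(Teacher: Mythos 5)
Your proposal is correct and follows essentially the same three-regime strategy as the paper's proof (Steps 1--3): negativity in a small ball around each $\xi_j$ by isolating the dominant negative bubble, positivity once the distance to the bubble circle exceeds $C_*/m$ by exploiting the balancing encoded in \eqref{2.mum}, and strict positivity at the midpoints $\hat\xi_j$ coming from the refined evaluation of $\psi$ in Subsection 2.1 together with the gradient bound $|\nabla q_m|\lesssim m/\log m$. The only notable deviation is in the intermediate regime: you appeal to a Riemann-sum comparison of $\sum_\ell|z-\xi_\ell|^{-1}$ with the logarithmic potential of a uniform ring (and the elliptic-integral asymptotics), while the paper works directly with the explicit trigonometric series, splitting off the nearest $m_1$ (resp.\ $m_2$) terms as in \eqref{2.com-7-1}--\eqref{2.com-14-red} and tuning $m_2$ and then $C_0$; both routes produce the same bound $U-\sum_\ell U_\ell\gtrsim(\log C_*-O(1))/\log m$ and require the same ``choose the constant large first'' logic, so this is a presentational rather than a mathematical difference.
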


\begin{proof}
Without loss of generality, we write
$z=(r\cos\theta,r\sin\theta,z_3)$ and assume that
$$|z-\xi_1|=\min\{|z-\xi_j|,~j=1,\cdots,m\},\quad\mbox{and}\quad \theta\in\left[0,\frac{\pi}{m}\right].$$
If $|z-\xi_1|\geq C$, then for sufficiently large $m$ we have
\begin{align*}
q_m(z)\geq 3^\frac14-3^\frac14\sum_{j=1}^m\frac{{\mu_m^\frac12}}{|z-\xi_j|}-O\left(\frac{1}{\log m}\right)>0.
\end{align*}
Hence, the zero point $z$ of $q_m(z)$ must be close to $\xi_1$. As a consequence, we may restrict our discussion for the point $z$ satisfying
\begin{equation}
\label{2.r-asy}
|z-\xi_1|=o_m(1)\quad\mbox{and}\quad |r-1|=o_m(1).
\end{equation}
In the following, we divide our discussion into three steps. First, we will prove that $q_m(z)$ is negative in the vicinity of $\xi_1$. The key point here is that when $z$ is extremely close to $\xi_1$, the effect of the other negative bubbles offsets the central positive bubble, making the negative bubble at $\xi_1$ the dominant term determining the sign of $q_m$. Next, we introduce a quantity $h$ (refer to \eqref{2.com-2}), which measures the distance from the point $z$ to the center of the negative bubble. We will show that when $h$ is not too small, the major contributions from all negative bubbles form a series. This series can be assessed using an integral representation and the asymptotic behavior of the elliptic integral, enabling us to figure out the leading term in the total sum of negative bubbles. Finally, we will analyze the behavior of $q_m(z)$ at the midpoint between $\xi_1$ and $\xi_2$, denoted by $\xi_0$. By using the estimate of $\psi$ at $\xi_0$ from \eqref{2.psi-middle}, we will compare $\psi$ to $U_* = U - \sum_{j=1}^m U_j$ and establish that $q_m$ is positive at $\xi_0$. Together with the evaluation of $\nabla q_m$, this analysis reveals a small region (a tiny ball) where $q_m$ remains positive.
\medskip

\noindent {\bf Step 1}. In this step, we shall prove that $q_m$ is negative within a small neighborhood of $\xi_1$,  i.e.,
\begin{equation}
\label{2.claim-0}
q_m(z)<0\quad \mbox{for}\quad |z-\xi_1|=\min_{j\in\{1,\cdots,m\}}|z-\xi_j|\leq \frac{c_1}{m},
\end{equation}
provided $c_1$ is sufficiently small. For $|z-\xi_1|\leq\frac{c_1}{m}$,  we have
$$|z-\xi_1|\leq \frac12|\xi_j-\xi_1|\quad\mathrm{and}\quad |z-\xi_j|\leq |\xi_j-\xi_1|+|z-\xi_1|,\quad j=2,\cdots,m.$$
Then using the simple inequality $\frac{1}{1+x}\geq1-x$ for $x\in(0,1)$ we have
\begin{equation}
\label{2.com-4}
\begin{aligned}
\left(\frac{1}{1+\mu_m^{-2}|z-\xi_j|^2}\right)^{\frac12}
&=\frac{\mu_m}{|z-\xi_j|}+O\left(\frac{1}{m^3(\log m)^6}\right)\\
&\geq\frac{\mu_m}{|\xi_j-\xi_1|+|z-\xi_1|}+O\left(\frac{1}{m^3(\log m)^6}\right)\\
&\geq \frac{\mu_m}{|\xi_j-\xi_1|}-\frac{\mu_m|z-\xi_1|}{|\xi_j-\xi_1|^2}+O\left(\frac{1}{m^3(\log m)^6}\right),~
\end{aligned}
\end{equation}
for any $j=2,\cdots,m$. Similarly, we can apply the inequality $\frac{1}{1-x}\leq 1+2x$ for $x\in\left(0,\frac12\right)$ to derive that
\begin{equation*}
\begin{aligned}
\frac{\mu_m}{|z-\xi_j|}\leq \frac{\mu_m}{|\xi_j-\xi_1|}+\frac{2\mu_m|z-\xi_1|}{|\xi_j-\xi_1|^2},\quad j=2,\cdots,m.
\end{aligned}
\end{equation*}
Together with \eqref{2.phi} and \eqref{2.psi}, we have
\begin{equation}
\label{2.com-5}
\begin{aligned}
q_m(z)\leq~&3^{\frac14}\left(\left(\frac{1}{1+|z|^2}\right)^\frac12
-\sum_{j=2}^m\frac{\mu_m^\frac12}{|\xi_1-\xi_j|}+\sum_{j=2}^m\frac{\mu_m^\frac12|z-\xi_1|}{|\xi_1-\xi_j|^2}\right)\\
&-3^\frac14\mu_m^{-\frac12}\left(\frac{1}{1+\mu_m^{-2}|z-\xi_1|^2}\right)^{\frac12}+O\left(\frac{1}{m(\log m)^5}\right)\\
&+O\left(\frac{1}{\log m}\right)+O\left(\frac{1}{1+\mu_m^{-1}|z-\xi_1|}\right)\\
&+O\left(\sum_{j=2}^m\frac{\mu_m}{|\xi_j-\xi_1|}+\sum_{j=1}^m\frac{\mu_m|z-\xi_1|}{|\xi_j-\xi_1|^2}\right).
\end{aligned}
\end{equation}
On the right hand side of \eqref{2.com-5}, using $|z-\xi_1|\leq\frac{c_1}{m} $we have
\begin{equation*}
\frac{1}{1+|z|^2}=\frac{1}{1+|\xi_1|^2}+O\left(\frac{1}{m}\right)
=\frac12+O\left(\frac{1}{m}\right),
\end{equation*}
the second and third terms in the bracket can be handled respectively as follows
$$\sum_{j=2}^m\frac{\mu_m^\frac12}{|\xi_1-\xi_j|}=\frac{\sqrt{2}}{2}+O\left(\frac{1}{m\log m}\right),$$
and
\begin{equation*}
\sum_{j=2}^m\frac{|z-\xi_1|}{m\log m|\xi_1-\xi_j|^2}\leq C\frac{m|z-\xi_1|}{\log m}.
\end{equation*}
Consider the fourth term on the right hand side of \eqref{2.com-5}, we have
\begin{equation*}
\mu_m^{-\frac12}\left(\frac{1}{1+\mu_m^{-2}|z-\xi_1|^2}\right)^{-\frac12}
\geq \min\left\{\frac{\sqrt{2}}{2}\mu_m^{-\frac12},
\frac{\sqrt{2}}{2}\frac{\mu_m^{\frac12}}{|z-\xi_1|}\right\}.
\end{equation*}
Putting all these information on the right hand side of \eqref{2.com-4} we get
\begin{equation}
\begin{aligned}
\label{2.com-6}
q_m(z)\leq &-3^\frac14\min\left\{\frac{\sqrt{2}}{2}\mu_m^{-\frac12},\frac{\sqrt{2}}{2}\frac{\mu_m^{\frac12}}{|z-\xi_1|}\right\}
+\frac{Cm|z-\xi_1|}{\log m}\\
&+O\left(\frac{1}{\log m}\right)+O\left(\min\left\{1,\frac{1}{\mu_m^{-1}|z-\xi_1|}\right\}\right).
\end{aligned}
\end{equation}
If $|z-\xi_1|\leq c_1/m$ with $c_1$ is small enough, we can see that the right-hand side of \eqref{2.com-6} is strictly negative for $m$ sufficiently large. Thus, we finish the proof of the claim \eqref{2.claim-0}.
\medskip

\noindent {\bf Step 2}. Let
\begin{equation}
\label{2.com-2}
h^2=\frac{(r-\sqrt{1-\mu_m^2})^2+z_3^2}{4r\sqrt{1-\mu_m^2}}.
\end{equation}
In this step we will show that
\begin{equation}
\label{2.claim-10}
q_m(z)~\mbox{is positive for}~h\geq \frac{C_0}{m}~\mbox{for}~C_0~\mbox{sufficiently large}.
\end{equation}
	To begin with, we will prove that
		\begin{equation}
			\label{2.claim-1}
			q_m(z)~\mbox{is positive for}~h\geq m^{-\alpha_1}~\mbox{with any}~\alpha_1<1.
		\end{equation}
		Based on the definition of $h$, we can express $|z-\xi_j|^2$ as follows
		\begin{equation}
			\label{2.com-1}
			\begin{aligned}
				|z-\xi_j|^2=4r\sqrt{1-\mu_m^2}\left(h^2+\sin^2\left(\frac{(j-1)\pi}{m}-\frac{\theta}{2}\right)\right).
			\end{aligned}
		\end{equation}
		Using \eqref{2.phi}, \eqref{2.com-4} and \eqref{2.com-1} we have
		\begin{equation}
			\label{2.com-7}
			\begin{aligned}
			q_m(z)=~&3^\frac14\left(\frac{1}{1+|z|^2}\right)^\frac12-
				\frac{3^\frac14\mu_m^{\frac12}}{2(r\sqrt{1-\mu_m^2})^\frac12}\sum_{j=1}^m\frac{1}{\left(h^2+\sin^2\left(\frac{(j-1)\pi}{m}-\frac{\theta}{2}\right)\right)^\frac12}\\
				&+\psi(z)+O\left(\frac{1}{m\log m}\right).
			\end{aligned}
		\end{equation}
		For the second term on the right hand side of \eqref{2.com-7}, we have
		\begin{equation}
		\label{2.com-7-1}
		\begin{aligned}
		\sum_{j=1}^m\frac{1}{\left(h^2+\sin^2\left(\frac{(j-1)\pi}{m}-\frac{\theta}{2}\right)\right)^\frac12}
		\leq~& \sum_{j=m_1+3}^{m-m_1}\frac{1}{\sin\left(\frac{(j-1)\pi}{m}-\frac{\theta}{2}\right)}+\frac{2m_1+2}{h}\\
		\leq~&\sum_{j=m_1+1}^{m-m_1-1}\frac{1}{\sin\frac {j\pi}{m}}+\frac{2m_1+2}{h},
		\end{aligned}
		\end{equation}
		where $m_1=\lfloor m^{1-\alpha_1}\rfloor.$ \footnote{$\lfloor x\rfloor$ is the greatest integer less than $x.$}
		Substituting  \eqref{2.mum} and \eqref{2.com-7-1} into \eqref{2.com-7} we have
		\begin{equation}
		\label{2.com-7-2}
		\begin{aligned}
		q_m(z)\geq~&3^\frac14\frac{\sqrt{2}}{2}-\frac{3^\frac14}{2}\mu_m^\frac12
		\sum_{j=m_1+1}^{m-m_1-1}\frac{1}{\sin\frac{j\pi}{m}}-3^\frac14\mu_m^\frac12\frac{m_1+1}{h}+o_m(1)\\
		\geq ~&3^\frac14\mu_m^\frac12\sum_{j=1}^{m_1}\frac{1}{\sin\frac{j\pi}{m}}
		+O\left(m\mu_m^\frac12\right)+o_m(1)\\
		\geq~&C\frac{\log m_1}{\log m}+o_m(1)>0,
		\end{aligned}
		\end{equation}
		where we also used that $|\psi|\leq \frac{C}{\log m}$ and
		$$\sum_{j=1}^{m_1}\frac{1}{j}\geq C\log m_1.$$		
		Next, we show that $q(z)$ is positive for $m^{-\alpha_1}\geq h\geq\frac{C_0}{m}$ for $C_0$ sufficiently large, where $m$ is large enough such that $m^{-\alpha_1}\geq\frac{C_0}{m}$. Then we have
		$$|z-\xi_1|\leq Cm^{-\alpha_1}.$$
		It implies
		$$|z|=1+O(m^{-\alpha_1})\quad\mbox{and}\quad r=1+O(m^{-\alpha_1}).$$
		Following almost the same argument as we did in \eqref{2.com-7-1}, we have
		\begin{equation}
				\label{2.com-12-red}
				\begin{aligned}
					q_m(z)
			\geq~&3^\frac14\frac{\sqrt{2}}{2}-3^\frac14\frac{\mu_m^{\frac12}}{2(r\sqrt{1-\mu_m^2})^\frac12}\sum_{j=m_2+1}^{m-m_2-1}\frac{1}{\sin\frac{j\pi}{m}}-3^\frac14\frac{\mu_m^{\frac12}(m_2+1)}{(r\sqrt{1-\mu_m^2})^\frac12h}\\
					&+\psi(z)+O\left(\frac{1}{m\log m}\right),
				\end{aligned}
			\end{equation}
			where $m_2$ is determined later. Using \eqref{2.mum}, we see that
			\begin{equation}
				\label{2.com-13-red}
				\begin{aligned}
					\frac{\sqrt{2}}{2}-\frac{\mu_m^{\frac12}}{2(r\sqrt{1-\mu_m^2})^\frac12}\sum_{j=m_2+1}^{m-m_2-1}\frac{1}{\sin\frac{j\pi}{m}}
					=\frac{d_m}{m\log m}\sum_{j=1}^{m_2}\frac{1}{\sin\frac{j\pi}{m}}
					+O\left(\frac{1}{m\log m}\right).
				\end{aligned}
			\end{equation}
			From \eqref{2.com-12-red} and \eqref{2.com-13-red} we have
			\begin{equation}
				\label{2.com-14-red}
				\begin{aligned}
					q_m(z)\geq~&3^\frac14\frac{d_m}{m\log m}\sum_{j=1}^{m_2}\frac{1}{\sin\frac{j\pi}{m}}
					-3^\frac14\frac{\mu_m^{\frac12}(m_2+1)}{(r\sqrt{1-\mu_m^2})^\frac12h}+\psi(z)+O\left(\frac{1}{m\log m}\right)\\
					\geq~&C\frac{\log m_2}{\log m}-C\frac{m_2 }{C_0\log m}+O\left(\frac{1}{\log m}\right),
				\end{aligned}
			\end{equation}
where we used $|\psi| \leq \frac{C}{\log m}$ and $h \geq \frac{C_0}{\log m}$. By first selecting a sufficiently large $m_2$ and then $C_0$ large enough, we can ensure that the right-hand side of \eqref{2.com-14-red} is positive. Together with \eqref{2.claim-1}, this allows us to establish \eqref{2.claim-10}, thereby completing the proof of this step.
\medskip

\noindent {\bf Step 3}. In this step, we shall show \eqref{2.lem-eq}. For convenience, we set $$\xi_0=\left(\sqrt{1-\mu_m^2}\cos\frac{\pi}{m},\sqrt{1-\mu_m^2}\sin\frac{\pi}{m},0\right).$$
First, we prove that $q_m(\xi_0)$ is positive. After straightforward calculation we have
\begin{equation}
\label{2.qmiddle-1}
\begin{aligned}
q_m(\xi_0)=~&3^\frac14\frac{\sqrt{2}}{2}-3^\frac14\mu_m^\frac12\sum_{j=1}^m\frac{1}{|\xi_0-\xi_j|}+\psi(\xi_0)+O\left(\frac{1}{m\log m}\right)\\
=~&3^\frac14\frac{\sqrt{2}}{2}-3^\frac14\mu_m^\frac12\sum_{j=1}^\frac{m}{2}\frac{1}{\sin\frac{(2j-1)\pi}{2m}}+\psi(\xi_0)+O\left(\frac{1}{m\log m}\right).
\end{aligned}
\end{equation}
By \eqref{2.psi-middle} and
\begin{equation}
\label{2.right-1}
\begin{aligned}
\sum_{j=1}^{\frac{m}2}2\left|\sin\left(\frac{2(j-1)\pi}{m}-\frac{\pi}{m}\right)\right|
=2\sin\frac{\pi}{m}+\frac{1-\cos\frac{(m-2)\pi}{m}}{\sin\frac{\pi}{m}}
=\frac{2m}{\pi}+O\left(\frac{1}{m}\right),
\end{aligned}
\end{equation}
we have
\begin{equation}
\label{2.qmiddle-2}
\begin{aligned}
q_m(\xi_0)=~3^\frac14\frac{\sqrt{2}}{2}-3^\frac14\mu_m^\frac12
\sum_{j=1}^{\frac{m}{2}}\frac{1}{\left|\sin\left(\frac{2(j-1)\pi}{m}-\frac{\pi}{m}\right)\right|}+3^\frac14\mu_m^\frac12\frac{2m}{\pi}+O\left(\frac{1}{m\log m}\right).
\end{aligned}
\end{equation}
Recall that (see \eqref{2.mum})
\begin{equation*}
\mu_m^\frac12=\frac{d_m}{m\log m}=\sqrt{2}\left(\sum_{j=1}^{m-1}\frac{1}{\sin\frac{j\pi}{m}}\right)^{-1}+O\left(\frac{1}{m\log m}\right),
\end{equation*}
then
\begin{equation}
\label{2.qmiddle-3}
\begin{aligned}
&\frac{\sqrt{2}}{2}-\mu_m^\frac12\sum_{j=1}^{\frac{m}{2}}\frac{1}{\left|\sin\left(\frac{2(j-1)\pi}{m}-\frac{\pi}{m}\right)\right|}
				\\
&=\mu_m^\frac12\sum_{j=1}^{\frac{m}{2}}\frac{1}{\sin\frac{j\pi}{m}}
-\mu_m^\frac12\sum_{j=1}^{\frac{m}{2}}\frac{1}{\left|\sin\left(\frac{2(j-1)\pi}{m}-\frac{\pi}{m}\right)\right|}
+O\left(\frac{1}{m\log m}\right)\\
&\geq -\mu_m^\frac12\frac{1}{\sin\frac{\pi}{m}}+O\left(\frac{1}{m\log m}\right),
\end{aligned}
\end{equation}
where we used that
\begin{equation*}
\frac{1}{\sin\frac{2j\pi}{m}}
\geq \frac{1}{\sin\frac{\left(m-2j-1\right)\pi}{m}},\quad j=1,2,\cdots,\frac{m}{2}.
\end{equation*}
Substituting \eqref{2.qmiddle-3} into \eqref{2.qmiddle-2} we have
\begin{equation}
\label{2.qmxi0}
q_m(\xi_0)\geq 3^\frac14\mu_m^\frac12\frac{m}{\pi}+O\left(\mu_m\right)\geq \frac{3^\frac14d_m}{2\pi\log m}.
\end{equation}
Consider the derivative of $q_m(z)$. By direct computation we have
\begin{equation}
\label{2.qm-d}
\begin{aligned}
\nabla q_m(z)=- \frac{3^\frac14z}{(1+|z|^2)^\frac32}+\sum_{j=1}^m\frac{3^\frac14\mu_m^{-\frac52}(z-\xi_j)}{(1+\mu_m^{-2}|z-\xi_j|^2)^\frac32}+\nabla\psi(z)+\sum_{j=1}^m\nabla\tilde\phi_j(z).
\end{aligned}
\end{equation}
For $z\in B_{c_2m^{-1}}\left(\xi_0\right)$ with $c_2$ small, we write
$$\nabla q_m(z)=3^\frac14(q_{m,1}(z),q_{m,2}(z),q_{m,3}(z)),$$
where
\begin{equation}
\label{2.cal-1}
\begin{aligned}
q_{m,1}(z)=~&-\frac{\sqrt{2}z_1}{4}+\mu_m^{\frac12}\sum_{j=1}^m\frac{(r-\sqrt{1-\mu_m^2})\cos\theta}{(4r\sqrt{1-\mu_m^2})^\frac32\left(h^2+\sin^2\left(\frac{(j-1)\pi}{m}-\frac{\theta}{2}\right)\right)^\frac32}\\
&+\mu_m^{\frac12}\sum_{j=1}^m
\frac{2\sqrt{1-\mu_m^2}\sin\left(\frac{(j-1)\pi}{m}+\frac{\theta}{2}\right)\sin\left(\frac{(j-1)\pi}{m}-\frac{\theta}{2}\right)}{(4r\sqrt{1-\mu_m^2})^\frac32\left(h^2+\sin^2\left(\frac{(j-1)\pi}{m}-\frac{\theta}{2}\right)\right)^\frac32}\\
&+3^{-\frac14}\partial_{z_1}\psi+3^{-\frac14}\sum_{j=1}^m\partial_{z_1}\tilde\phi_j+O\left(\frac{1}{m(\log m)^5}\right),
\end{aligned}
\end{equation}
\begin{equation}
\label{2.cal-2}
\begin{aligned}
q_{m,2}(z)=~&-\frac{\sqrt{2}z_2}{4}+\mu_m^{\frac12}\sum_{j=1}^m\frac{(r-\sqrt{1-\mu_m^2})\sin\theta}{(4r\sqrt{1-\mu_m^2})^\frac32\left(h^2+\sin^2\left(\frac{(j-1)\pi}{m}-\frac{\theta}{2}\right)\right)^\frac32}\\
&-\mu_m^{\frac12}\sum_{j=1}^m
\frac{2\sqrt{1-\mu_m^2}\cos\left(\frac{(j-1)\pi}{m}+\frac{\theta}{2}\right)\sin\left(\frac{(j-1)\pi}{m}-\frac{\theta}{2}\right)}{(4r\sqrt{1-\mu_m^2})^\frac32\left(h^2+\sin^2\left(\frac{(j-1)\pi}{m}-\frac{\theta}{2}\right)\right)^\frac32}\\
&+3^{-\frac14}\partial_{z_2}\psi+3^{-\frac14}\sum_{j=1}^m\partial_{z_2}\tilde\phi_j+O\left(\frac{1}{m(\log m)^5}\right),
\end{aligned}
\end{equation}
and
\begin{equation}
\label{2.cal-3}
\begin{aligned}
q_{m,3}(z)=~&-\frac{\sqrt{2}z_3}{4}+\mu_m^{\frac12}\sum_{j=1}^m\frac{z_3}{(4r\sqrt{1-\mu_m^2})^\frac32\left(h^2+\sin^2\left(\frac{(j-1)\pi}{m}-\frac{\theta}{2}\right)\right)^\frac32}\\
&+3^{-\frac14}\partial_{z_3}\psi+3^{-\frac14}\sum_{j=1}^{m}\partial_{z_3}\tilde\phi_j+O\left(\frac{1}{m(\log m)^5}\right).
\end{aligned}
\end{equation}
For $z\in B_{c_2m^{-1}}\left(\xi_0\right)$ with $c_2$ small, we have
$h+|z_3|\leq Cm^{-1}$ . Furthermore, we can derive the following estimate for $q_{m,\ell},~\ell=1,2,3$ from \eqref{2.phi}-\eqref{2.psi} and \eqref{2.cal-1}-\eqref{2.cal-3}
\begin{equation}
\label{2.cal-nabla}
\sum_{\ell=1}^3|q_{m,\ell}|\leq C\frac{m}{\log m}.
\end{equation}
As a consequence of \eqref{2.qmxi0} and \eqref{2.cal-nabla} we derive that
there exists a small positive generic constant $c_0$ such that
$$q_m(z)>0\quad \mbox{for}~z\in B_{c_0m^{-1}}\left(\xi_0\right).$$
Hence we proved \eqref{2.lem-eq} and it finishes the whole proof.
\end{proof}

In addition to Lemma \ref{2.lemma-1}, we can further show that the nodal
set $\mathcal{N}(q_m)$ is smooth.

\begin{lemma}
\label{2.lemma-2}
The nodal set $\mathcal{N}(q_m)$ of $q_m$ is a smooth Riemann surface whenever $m$ is sufficiently large, i.e., for any point $z$ of $\mathcal{N}(q_m)$, $\nabla q_m(z)\neq 0$.
\end{lemma}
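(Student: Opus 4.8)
The goal is the quantitative estimate $|\nabla q_m(z)|\gtrsim m/\log m$ for every $z\in\mathcal{N}(q_m)$; once this is known, $\mathcal{N}(q_m)=\{q_m=0\}$ is a smooth embedded hypersurface of $\mathbb{R}^3$ by the implicit function theorem, bounded (hence compact) by Lemma \ref{2.lemma-1}, and therefore a smooth (Riemann) surface, so that together with Lemma \ref{2.lemma-1} it yields Theorem \ref{th2.qm-nodal}. By Lemma \ref{2.lemma-1} and the $2\pi/m$-rotational invariance of $q_m$ we may assume $|z-\xi_1|=\min_{1\le j\le m}|z-\xi_j|\sim 1/m$; moreover Steps 1--2 in the proof of Lemma \ref{2.lemma-1} together with \eqref{2.lem-eq} confine $z$ to the set $\{\,c_1/m\le|z-\xi_1|,\ h\le C_0/m,\ \mathrm{dist}(z,\{\xi_0,\xi_0'\})\ge c_0/m\,\}$, where $h$ is as in \eqref{2.com-2} and $\xi_0,\xi_0'$ are the two points of the circle $\mathcal{C}:=\{z_1^2+z_2^2=1-\mu_m^2,\ z_3=0\}$ midway between $\xi_1$ and its neighbours. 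In particular $\mu_m^{-1}|z-\xi_j|\to\infty$ for every $j$, so $1+\mu_m^{-2}|z-\xi_j|^2=(1+o(1))\mu_m^{-2}|z-\xi_j|^2$.

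First suppose $h\ge M\log m/m^2$, with $M$ a large fixed constant. Let $\hat\nu$ be the unit vector at $z$ pointing away from the foot of $z$ on $\mathcal{C}$, and set $\partial_\nu q_m:=\nabla q_m\cdot\hat\nu$. Projecting \eqref{2.qm-d} onto $\hat\nu$ and using the elementary identities $(z-\xi_1)\cdot\hat\nu>0$, equal up to a factor $1+o(1)$ to $2(r\sqrt{1-\mu_m^2})^{1/2}h$ (this is, up to $o(1)$, the distance from $z$ to $\mathcal{C}$, hence positive), and $|z-\xi_j|^2=4r\sqrt{1-\mu_m^2}\,(h^2+\sin^2(\tfrac{(j-1)\pi}{m}-\tfrac\theta2))$, together with $|\nabla\psi|=O(1)$ and $|\sum_j\nabla\tilde\phi_j|=O((\log m)^{-2})$ from \eqref{2.psi}--\eqref{2.phi}, one obtains
\[
\partial_\nu q_m(z)=2\cdot 3^{1/4}(r\sqrt{1-\mu_m^2})^{1/2}\mu_m^{1/2}\,h\sum_{j=1}^m\frac{1}{|z-\xi_j|^3}+O(1),
\]
where the $O(1)$ absorbs, besides the two bounds above, the central-bubble term of \eqref{2.qm-d} and the contribution of $(\xi_1-\xi_j)\cdot\hat\nu$ (the latter is small because $\hat\nu$ is essentially normal to $\mathcal{C}$, so $|(\xi_1-\xi_j)\cdot\hat\nu|$ is of order $|\xi_1-\xi_j|^2$). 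Since $\sum_{j}|z-\xi_j|^{-3}\ge|z-\xi_1|^{-3}\gtrsim m^3$ and $r\sqrt{1-\mu_m^2}\to1$, this gives $\partial_\nu q_m(z)\gtrsim\mu_m^{1/2}h\,m^3+O(1)$, which by \eqref{2.mum} equals $(1+o(1))\,d_m\,hm^2/\log m+O(1)\ge(1+o(1))\,Md_m+O(1)>0$ for $m$ large.

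It remains to treat $h<M\log m/m^2$. Then $h^2=o(m^{-2})$, so $\sin^2\tfrac\theta2\sim|z-\xi_1|^2\sim m^{-2}$ forces $\theta$ to be of order $1/m$ and bounded away from $0$, while $\mathrm{dist}(z,\xi_0)\ge c_0/m$ (and the analogous bound on the $\xi_m$-side midpoint) forces $\theta$ to stay bounded away from $\pm\pi/m$. Differentiating \eqref{2.com-7} in $\theta$ — which annihilates both $(1+|z|^2)^{-1/2}$ and the prefactor $(r\sqrt{1-\mu_m^2})^{-1/2}$ — yields
\[
\partial_\theta q_m(z)=-\frac{3^{1/4}\mu_m^{1/2}}{2(r\sqrt{1-\mu_m^2})^{1/2}}\,\partial_\theta S(h,\theta)+O(1),\qquad S(h,\theta):=\sum_{j=1}^m\big(h^2+\sin^2(\tfrac{(j-1)\pi}{m}-\tfrac\theta2)\big)^{-1/2}.
\]
For fixed $h$, $S(h,\cdot)$ is even and $2\pi/m$-periodic with a strict maximum at $\theta=0$ and is strictly decreasing on $(0,\pi/m)$; moreover, when $\theta$ is bounded away from $0$ and $\pi/m$, one has the quantitative bound $|\partial_\theta S(h,\theta)|\gtrsim m^2$ — this is exactly where the sharp leading asymptotics of the $\csc$- and $\csc^3$-type sums established in Section 9, and the geometric input of Lemma \ref{2.lemma-1}, are used. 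Consequently $|\partial_\theta q_m(z)|\gtrsim\mu_m^{1/2}m^2\sim d_m\,m/\log m>0$. In both cases $|\nabla q_m(z)|\gtrsim m/\log m$, and the lemma follows.

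The crux is the estimate $|\partial_\theta S|\gtrsim m^2$ — equivalently, that the tangential derivative of $q_m$ does not vanish on the part of $\mathcal{N}(q_m)$ lying extremely close to $\mathcal{C}$. In three dimensions the Talenti bubbles decay only like $1/|z|$, so the companion bumps $\xi_2,\dots,\xi_m$ influence $\partial_\theta q_m$ at the same order $m^2$ as the nearest bump $\xi_1$; no soft size comparison is available, and one has to extract the exact leading constant of the relevant trigonometric sum and use that, by Lemma \ref{2.lemma-1}, the nodal set avoids the midpoints $\xi_0,\xi_0'$ — precisely the fact that makes the $\xi_1$-contribution win. (In the complementary regime treated above, the analogous difficulty is milder: there the decisive quantity $(z-\xi_1)\cdot\hat\nu$ is manifestly positive, so the $\xi_1$-term and all companion terms in $\partial_\nu q_m$ have the same sign and simply add up.)
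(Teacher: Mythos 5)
Your proof has a genuinely different first case from the paper, but leaves a real gap in the second.

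\textbf{Where you diverge (and improve).} The paper handles the ``transversal'' regime in three pieces: Step~1 uses $\partial_{z_3}q_m$ when $z_3\neq 0$, Step~2 uses the radial derivative $\nabla q_m\cdot z$ when $z_3=0$, $|z|\geq 1$, $h\gtrsim m^{-5/3}$, and the case $|z|<1$ is then reduced to $|z|>1$ via the Kelvin-invariance of $q_m$. You instead project onto the unit normal $\hat\nu$ to the circle $\mathcal{C}=\{|z'|=\sqrt{1-\mu_m^2},\,z_3=0\}$ at the foot $f$ of $z$, which packages the $z_3$-direction and the radial direction together and removes the need for the Kelvin step. The decomposition $(z-\xi_j)\cdot\hat\nu=|z-f|+(f-\xi_j)\cdot\hat\nu$, with $|(f-\xi_j)\cdot\hat\nu|\leq\frac{|r-\rho|}{|z-f|}\cdot 2\rho\sin^2(\tfrac{(j-1)\pi}{m}-\tfrac{\theta}{2})\lesssim|z-\xi_j|^2$ (so the ``cross'' part contributes $O(\mu_m^{1/2}\sum_j|z-\xi_j|^{-1})=O(1)$), does the job in the range $h\gtrsim \log m/m^2$, where $|z-f|\gg m^{-2}$ makes $(z-\xi_1)\cdot\hat\nu$ comparable to $|z-f|$. (Two minor inaccuracies: you write $(\xi_1-\xi_j)\cdot\hat\nu$ where the relevant quantity is $(f-\xi_j)\cdot\hat\nu$, and the stated uniform bound $|\nabla q_m|\gtrsim m/\log m$ is an overclaim — your first case only yields a lower bound of order $1$ when $h\sim\log m/m^2$, though nonvanishing is all that is needed.)

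\textbf{The gap.} In the small-$h$ regime you reduce the problem to showing $|\partial_\theta S(h,\theta)|\gtrsim m^2$ for $\theta$ bounded away from $0$ and $\pi/m$, but you \emph{assert} this rather than prove it, attributing it to ``the sharp leading asymptotics of the $\csc$- and $\csc^3$-type sums established in Section~9.'' This is not how the paper establishes the bound, and it is not a matter of plugging in asymptotics: the individual terms of $\partial_\theta S$ alternate in sign, and the $j=1$ and $j=2$ terms are both of order $m^2$ with opposite signs, so a soft size estimate cannot conclude. The paper's Step~3 makes this work through the explicit pairing (2.cal-10), which bounds the contribution of $j$ against that of $m-j+1$ for $j\geq 2$, reducing everything to the competition $\frac{\cos(\theta/2)}{\sin^2(\theta/2)}-\frac{\cos(\pi/m-\theta/2)}{\sin^2(\pi/m-\theta/2)}$ in (2.cal-11), which is then shown positive of size $c_3 m^2$ precisely because $\theta$ is pinned away from $\pi/m$ by Lemma~\ref{2.lemma-1}. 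This pairing is the heart of the lemma and needs to be carried out; without it, your case $h\lesssim\log m/m^2$ is unfinished. (Your side remark that $S(h,\cdot)$ is ``strictly decreasing on $(0,\pi/m)$'' is also unproved and is essentially equivalent to what has to be shown.)
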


\begin{proof}
We first show that $\nabla q_m(z)\neq 0$ for any zero point $z=(r\cos\theta,r\sin\theta,z_3)$ with $|z|\geq1$. Based on Lemma \ref{2.lemma-1} we may assume that
$$|z-\xi_1|=\inf_{j=1,\cdots,m}|z-\xi_j|\geq c_1m^{-1},~\quad \theta \in\left[0,\frac{\pi}{m}\right],~\quad\mbox{and}\quad h\leq C_0m^{-1}.$$
We will divide our argument into three steps based on the location of the zero point $z$. Initially, we will prove that $\nabla q_m(z) \neq 0$ when $z_3 \neq 0$, enabling us to focus our analysis on points $z$ within the $z_1z_2$ plane. In the second step, we will establish that the radial derivative of $q_m$ is strictly positive when $|z| \geq 1$ and $h$ is not too small. However, when $h$ is small, we observe that the series from the negative bubble exhibits rapid oscillations in the angular direction, provided $z$ is not close to $\xi_0$. Combined with the findings of Lemma \ref{2.lemma-1}, this allows us to prove that $\partial_\theta q_m(z)$ is nonzero for zero points in the $z_1z_2$ plane where $h$ is small.
\medskip

\noindent {\bf Step 1.} For any zero point $z$ of $q_m(z)$ with $z_3\neq 0$, $\partial_{z_3}q_m(z)\neq 0$. Using \eqref{2.cal-3} and the fact $\psi$ and $\tilde\phi_j$ is evenly symmetry with respect to $z_3$ we see that,
$$|\nabla \psi(z)|\leq C|z_3|\quad \mbox{and}\quad
\sum_{j=1}^m|\nabla\tilde\phi_j(z)|\leq C\frac{m}{(\log m)^2}|z_3|\quad \mbox{for}\quad |z-\xi_1|\geq \frac{c_0}{m}.$$
Hence
\begin{equation*}
\left|\frac{\partial q(z)}{\partial z_3}\right|\geq \left|\sum_{j=1}^m\frac{3^\frac14\mu_m^{\frac12}}{2\left(h^2+\sin^2\left(\frac{(j-1)\pi}{m}-\frac\theta2\right)\right)^\frac32}
	-C\frac{m}{(\log m)^2}\right||z_3|>0,
\end{equation*}
where we used that $\theta\in\left[0,\frac{\pi}{m}\right]$.
\medskip

\noindent {\bf Step 2.} For any zero point $z$ lying on the $z_1z_2$ plane, we show that $\nabla q_m(z)\neq 0$ with $h\geq Cm^{-\frac53}$. To be precise, if $z_3=0$ we have
	\begin{equation*}
			h^2=\frac{(r-\sqrt{1-\mu_m^2})^2}{4r\sqrt{1-\mu_m^2}}\quad\mbox{and}\quad r\geq1>\sqrt{1-\mu_m^2}.
		\end{equation*}
		Multiplying the above $\nabla q_m(z)$ by $z$ we have
		\begin{equation}
			\label{2.direct}
			\begin{aligned}
				\nabla q_m(z)\cdot z=-3^\frac14\frac{|z|^2}{(1+|z|^2)^\frac32}+3^\frac14\mu_m^{\frac12}\sum_{j=1}^m\frac{(z-\xi_j)\cdot z}{|z-\xi_j|^3}+O\left(1\right).
			\end{aligned}
		\end{equation}
		Let us compute $(z-\xi_j)\cdot z$,
		\begin{equation*}
			\begin{aligned}
				(z-\xi_j)\cdot z
				=r^2-r\sqrt{1-\mu_m^2}\cos\left(\frac{2(j-1)\pi}{m}-\theta\right)
				>0,
			\end{aligned}
		\end{equation*}
		where we used $r>\sqrt{1-\mu_m^2}$ due to that $r=|z|\geq1$. Particularly for $j=1$ we have
		\begin{equation}
			\label{2.cal-4}
			\begin{aligned}
				(z-\xi_1)\cdot z=r^2-r\sqrt{1-\mu_m^2}+2r\sqrt{1-\mu_m^2}\sin^2\frac{\theta}{2}\geq C(h+\theta^2).
			\end{aligned}
		\end{equation}
		Substituting \eqref{2.cal-4} into \eqref{2.direct}, we have
		\begin{equation}
			\label{2.cal-5}
			\begin{aligned}
				\nabla q_m(z)\cdot z&\geq -3^\frac14\frac{|z|^2}{(1+|z|^2)^\frac32}
				+\frac{C}{m\log m}\frac{h+\theta^2}{(h^2+\sin^2\theta)^\frac32}
				+O\left(1\right).
			\end{aligned}
		\end{equation}
		So if $h\geq  Cm^{-\frac{5}{3}}$,
		$$\frac{C}{m\log m}\frac{h+\theta^2}{(h^2+\sin^2\theta)^\frac32}\geq
		\min\left\{\frac{C}{h^2m\log m},C\frac{hm^2}{\log m}\right\}\gg1.$$
		Hence $\nabla q_m\neq 0$.
\medskip

\noindent {\bf Step 3.} It remains to study the zero point $z=(r\cos\theta,r\sin\theta,z_3)$ of $q_m(z)$ with\begin{equation}
			\label{2.cal-6}
			|z|=r\geq1,\quad  |z-\xi_1|\geq\frac{c_1}{m},\quad \theta\in\left(0,\frac{\pi}{m}\right],\quad \mbox{and}\quad 0\leq h\leq Cm^{-\frac53}.
		\end{equation}
By \eqref{2.lem-eq} we see that distances between all the zero points of $q_m(z)$ and $\xi_0$ is no less than $c_0/m$, i.e.,
$$\left|z-\xi_0\right|\geq\frac{c_0}{m}.$$
		For the zero point $z$ in \eqref{2.cal-6},  we have
		\begin{equation*}
			\begin{aligned}
				\frac{c_0}{m}\leq~& \left|z-\left(\sqrt{1-\mu_m^2}\cos\frac{\pi}{m},\sqrt{1-\mu_m^2}\sin\frac{\pi}{m},0\right)\right|\\
				\leq~& |z-\sqrt{1-\mu_m^2}\left(\cos\theta,\sin\theta,0\right)|
				+\sqrt{1-\mu_m^2}\left|\left(\cos\theta-\cos\frac{\pi}{m},\sin\theta-\sin\frac{\pi}{m}\right)\right|\\
				\leq~& Ch+C\left|\theta-\frac{\pi}{m}\right|.
			\end{aligned}
		\end{equation*}
		Therefore, the angle of the zero point must have a distance of the order $\frac{1}{m}$ to $\frac{\pi}{m}$. It implies that there exists a small positive constant $c_3$ such that $\left|\theta-\frac{\pi}{m}\right|\geq \frac{c_3\pi}{m}.$ Together with \eqref{2.cal-6} we can further restrict our discussion to
		\begin{equation}
			\label{2.cal-7}
			|z|=r\geq1,\quad  |z-\xi_1|\geq\frac{c_1}{m},\quad \theta\in\left(0,\frac{\pi(1-c_3)}{m}\right],\quad \mbox{and}\quad 0\leq h\leq Cm^{-\frac53}.
		\end{equation}
		Now we shall prove that for all the zero points $z$ satisfying \eqref{2.cal-7}, $\nabla q_m(z)\neq 0$. In fact, we compute $\partial_{\theta}q_m(z)=-\sin\theta\partial_{z_1}q_m(z)+\cos\theta \partial_{z_2}q_m(z)$. It is known that $$\partial_{\theta}\psi\left(\cos\frac{\pi}{m},\sin\frac{\pi}{m},0\right)=0$$ due to $\psi(z)$ being symmetric with respect to $\theta=\frac{\pi}{m}.$ Thus $\partial_\theta\psi(z)=O(\frac{1}{m})$ for all $z$ in \eqref{2.cal-7}. Using \eqref{2.cal-1} and \eqref{2.cal-2}, we have
		\begin{equation}
			\label{2.cal-8}
			\begin{aligned}
				\partial_\theta q_m(z)=&-
				3^\frac14\mu_m^{\frac12}\sum_{j=1}^m
				\frac{2\sqrt{1-\mu_m^2}\sin\left(\frac{(j-1)\pi}{m}-\frac{\theta}{2}\right)\cos\left(\frac{(j-1)\pi}{m}-\frac{\theta}{2}\right)}{(4r\sqrt{1-\mu_m^2})^\frac32\left(h^2+\sin^2\left(\frac{(j-1)\pi}{m}-\frac{\theta}{2}\right)\right)^\frac32}\\
                &+O\left(\frac{1}{(\log m)^2}\right).
\end{aligned}
\end{equation}
Using the fact that $h\leq Cm^{-\frac53}$ we have
\begin{equation}
\label{2.cal-9}
\begin{aligned}
\sum_{j=2}^m\frac{\sin\left(\frac{(j-1)\pi}{m}-\frac{\theta}{2}\right)\cos\left(\frac{(j-1)\pi}{m}-\frac{\theta}{2}\right)}{\left(h^2+\sin^2\left(\frac{(j-1)\pi}{m}-\frac{\theta}{2}\right)\right)^\frac32}=
\sum_{j=2}^m\frac{\cos\left(\frac{(j-1)\pi}{m}-\frac{\theta}{2}\right)}{\sin^2\left(\frac{(j-1)\pi}{m}-\frac{\theta}{2}\right)}+O\left({m^\frac23}\right).
\end{aligned}
\end{equation}
On the other hand, it is not difficult to check that
\begin{equation}
\label{2.cal-10}
-\frac{\cos\left(\frac{(m-j+1)\pi}{m}-\frac{\theta}{2}\right)}{\sin^2\left(\frac{(m-j+1)\pi}{m}-\frac{\theta}{2}\right)}
=\frac{\cos\left(\frac{(j-1)\pi}{m}+\frac{\theta}{2}\right)}{\sin^2\left(\frac{(j-1)\pi}{m}+\frac{\theta}{2}\right)}\geq \frac{\cos\left(\frac{j\pi}{m}-\frac{\theta}{2}\right)}{\sin^2\left(\frac{j\pi}{m}-\frac{\theta}{2}\right)},\quad j=2,\cdots,\frac{m}{2}.
\end{equation}
Substituting \eqref{2.cal-9} and \eqref{2.cal-10} into \eqref{2.cal-8}, we have
\begin{equation}
\label{2.cal-11}
\begin{aligned}
\partial_\theta q_m(z)
\geq~& {3^\frac14\mu_m^{\frac12}}\frac{2\sqrt{1-\mu_m^2}}{(4r\sqrt{1-\mu_m^2})^\frac32}\left(\frac{\cos\frac{\theta}{2}\sin\frac{\theta}{2}}{(h^2+\sin^2\frac{\theta}{2})^\frac32}-\frac{\cos\left(\frac{\pi}{m}-\frac{\theta}{2}\right)}{\sin^2\left(\frac{\pi}{m}-\frac{\theta}{2}\right)}\right)\\
&+O\left(\frac{1}{(\log m)^2}\right),
\end{aligned}
\end{equation}
If $\theta\in\left[\frac{\pi}{3m},\frac{\pi(1-c_3)}{m}\right]$, we have
\begin{equation}
\label{2.cal-12}
\begin{aligned}
\partial_\theta q_m(z)
\geq ~&{3^\frac14\mu_m^{\frac12}}\frac{2\sqrt{1-\mu_m^2}}{(4r\sqrt{1-\mu_m^2})^\frac32}\left(\frac{\cos\frac{\theta}{2}}{\sin^2\frac{\theta}{2}}-\frac{\cos\left(\frac{\pi}{m}-\frac{\theta}{2}\right)}{\sin^2\left(\frac{\pi}{m}-\frac{\theta}{2}\right)}\right)+O\left(\frac{1}{(\log m)^2}\right)\\
\geq~&{3^\frac14\mu_m^{\frac12}}\frac{2\sqrt{1-\mu_m^2}}{(4r\sqrt{1-\mu_m^2})^\frac32}\left(\frac{4}{\theta^2}-\frac{4}{\left(\frac{2\pi}{m}-\theta\right)^2}\right)+O\left(\frac{1}{(\log m)^2}\right)\\
\geq~&{3^\frac14\mu_m^{\frac12}}\frac{2\sqrt{1-\mu_m^2}}{(4r\sqrt{1-\mu_m^2})^\frac32}\left(\frac{16c_3m^2}{\pi^2(1-c_3^2)^2}\right)+O\left(\frac{1}{(\log m)^2}\right).
\end{aligned}
\end{equation}
While if $\theta\in(0,\frac{\pi}{3m})$, we have
\begin{equation}
\label{2.cal-13}
\begin{aligned}
\partial_\theta q_m(z)
\geq ~& {3^\frac14\mu_m^{\frac12}}\frac{2\sqrt{1-\mu_m^2}}{(4r\sqrt{1-\mu_m^2})^\frac32}\left(\min\left\{\frac{\theta}{4\sqrt{2}(h^2)^\frac32},
\frac{\sqrt{2}}{\theta^2}\right\}
-\frac{4}{\left(\frac{2\pi}{m}-\theta\right)^2}\right)\\
&+O\left(\frac{1}{(\log m)^2}\right)\\
\geq~&{3^\frac14\mu_m^{\frac12}}\frac{2\sqrt{1-\mu_m^2}}{(4r\sqrt{1-\mu_m^2})^\frac32}\left(\frac{9\sqrt{2}m^2}{\pi^2}-\frac{36m^2}{25\pi^2}\right)+O\left(\frac{1}{(\log m)^2}\right).
\end{aligned}
\end{equation}
As a consequence of \eqref{2.cal-11}-\eqref{2.cal-13}, for $z$ in \eqref{2.cal-7} we have
$$\partial_\theta q_m(z)=\frac{Cm}{ \log m}+O\left(\frac{1}{(\log m)^2}\right)<0.$$
Together with the conclusions in Step 1 and Step 2, we conclude that for all the zero point $|z|\geq 1$ of $q_m(z)$,  $|\nabla q_m(z)|\neq 0.$
\medskip

For the zero point $z$ of $q(z)$ with $|z|\leq 1$, due to the fact that the solution is invariant after the Kelvin transform, we see that $q_m(z^*)=0$ for $z^*=\frac{z}{|z|^2}$. We have already shown that $\nabla q_m(z^*)\neq 0$, using the fact $q_m(z)=\frac{1}{|z|}q_m\left(\frac{z}{|z|^2}\right)$ and $q_m(z^*)=q_m(z)=0$ for $z$ is zero point, we have
		\begin{equation*}
			\partial_{z_j^*}q_m(z^*)=\frac{1}{|z^*|}\partial_jq_m(z)\frac{1}{|z^*|^2}-2\frac{1}{|z^*|}\sum_{\ell=1}^3\partial_\ell q_m(z)\frac{z_j^*z_\ell^*}{|z^*|^4},\quad j=1,2,3.
		\end{equation*}
		As a consequence, if $\nabla q_m(z^*)\neq 0$ for $z^*=\frac{z}{|z|^2}$ is a zero point of $q_m(z)$, we must have $\nabla_m q(z)\neq 0$. Hence we finish the whole proof.
\end{proof}

\begin{proof}[Proof of Theorem \ref{th2.qm-nodal}.]
Theorem \ref{th2.qm-nodal} follows from Lemma \ref{2.lemma-1} and Lemma \ref{2.lemma-2}.
\end{proof}


\subsection{Non-degeneracy}\label{sec:non-deg}
In this subsection, we shall interpret the non-degeneracy in the sense of Duyckaerts-Kenig-Merle for our demands. From now on, we choose $m$ large enough and fix a function $q=q_m$. For simplicity of notation, we drop the index $m$ when there is no ambiguity.
\begin{align}\label{Yamabe-eqn}
    \Delta q+q^5=0 \quad \text{in}\quad \R^3.
\end{align}
As we have pointed out, the following transformations keep the solution set of the Yamabe equation \eqref{Yamabe-eqn} invariant.
\begin{enumerate}
    \item the translation $T_y:z\to z+y$ where $y\in \mathbb{R}^3$,
    \item the dilation $D_\e:z\to \e z$ for $\e>0$,
    \item the rotation  $ R_\theta :z\to    R_\theta z$ for $R_\theta\in SO(3)$,
    \item the inversion $J:z\to \frac{z}{|z|^2}$,
    \item the translation under inversion $\psi_\xi=J\circ T_{\xi}\circ J$.
\end{enumerate}
For any set of parameters $A=(y,R_\theta,\e,\xi )\in \R^3\times SO(3)\times \R_+\times \R^3$, we define the transformation $\mathscr{T}_A=   T_{-y} \circ R_\theta \circ D_\e\circ \psi_\xi$. Then $ \Theta_A (x)=|\det(\mathscr{T}'_A(x))|^\frac{1}{6}q(\mathscr{T}_A(x))$ is also a solution to the Yamabe equation \eqref{Yamabe-eqn}. Using $Jq=q$, we have
\begin{align*}
     \Theta_A (z)= \frac{\varepsilon^{\frac12}}{|z-y|}q\left(\frac{\e R_\theta(z-y)}{|z-y|^2}+\xi\right).
\end{align*}
Choosing $A$ near to $(0,Id,1,0)$, it generates a family of solutions near $q$. Taking the derivatives on each parameter in $A$, we obtain
10 functions
\begin{align}
\begin{split}\label{10func}
   &  - z_j q+|z|^2 \partial_{z_j} q-2 z_j z \cdot \nabla q, \quad \partial_{z_j} q, \quad 1 \leq j \leq 3, \\
& \left(z_j \partial_{z_\ell}-z_\ell \partial_{z_j}\right) q, \quad 1 \leq j<\ell \leq 3, \quad \frac{1}{2} q+z \cdot \nabla q.
\end{split}
\end{align}
One can verify that $L_qf =0$ for any $f$ in the above,
where $L_q$ is linearized operator near $q$
\begin{align*}
    L_q=-\Delta -5q^4.
\end{align*}
Note that these 10 functions are generated consecutively by $\psi_\xi$, translation, rotation, and dilation. The linear combination of these 10 functions form a subspace $\widetilde{\mathcal{Z}}_q$ of $D^{1,2}(\R^3)$ such that $L_q f=0$ for any $f\in \widetilde{\mathcal{Z}}_q$. Some of these 10 functions may be equal, therefore ${\rm dim}~\widetilde{\mathcal{Z}}_q\leq 10$.

Define the kernel space of $L_q$ by $\mathcal{Z}_q=\{f\in D^{1,2}(\R^3):L_qf=0\}$. Musso-Wei \cite{musso2015nondegeneracy} proved that
$\widetilde{\mathcal{Z}}_q=\mathcal{Z}_{q}$. The results there indicate that $-z_3 q+|z|^2\partial_{z_3}q-2z_3z\cdot\nabla q=\partial_{z_3}q$ in \eqref{10func}. We re-state their result as the following proposition.
\begin{proposition}[Non-degeneracy]\label{prop:non-degeneracy}
    When $m$ is large enough, $q_m$ is non-degenerate and
    $${\rm dim}~ \mathcal{Z}_{q_m}=9,$$
    where $q_m$ is the one constructed in \cite{del2011large} in $\R^3$.
\end{proposition}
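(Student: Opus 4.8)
The plan is to combine the non-degeneracy theorem of Musso-Wei \cite{musso2015nondegeneracy}, which gives $\mathcal{Z}_{q_m}=\widetilde{\mathcal{Z}}_{q_m}$, with a count of the linear relations among the ten geometric kernel functions in \eqref{10func} that span $\widetilde{\mathcal{Z}}_{q_m}$. It is convenient to view $\widetilde{\mathcal{Z}}_{q_m}$ as the image of the linear map $X\mapsto X\cdot q_m$ which sends an element of the $10$-dimensional conformal Lie algebra $\mathfrak{so}(4,1)$ of $\R^3\cup\{\infty\}$ (generated by the vector fields of $T_y$, $D_\e$, $R_\theta$, $\psi_\xi$) to the corresponding function in \eqref{10func}; the kernel of this map is exactly the Lie algebra $\mathfrak{s}$ of the conformal stabilizer of $q_m$, so $\dim\mathcal{Z}_{q_m}=10-\dim\mathfrak{s}$ and it suffices to prove $\dim\mathfrak{s}=1$.

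First I would exhibit one element of $\mathfrak{s}$. Let $V=(|z|^2-1)\partial_{z_3}-2z_3\,z\cdot\nabla$, the conformal vector field whose weighted action on $q_m$ equals $-z_3q_m+|z|^2\partial_{z_3}q_m-2z_3\,z\cdot\nabla q_m-\partial_{z_3}q_m$; equivalently $V$ is the infinitesimal generator of the flow $t\mapsto\psi_{te_3}\circ T_{-te_3}$. A standard computation shows that, under stereographic projection $\mathbb{S}^3\to\R^3$, $V$ generates the circle of rotations of $\mathbb{S}^3$ fixing pointwise the great circle $C=\{(z_1,z_2,0):z_1^2+z_2^2=1\}$ and rotating the normal $2$-plane at each of its points. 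Since $q_m$ concentrates along $C$ and, to leading order, is a superposition of standard bubbles centred near $C$ (at the points $\xi_j$, together with their Kelvin companions) -- each such bubble being invariant under rotations about an axis through its centre -- the leading ansatz is $V$-invariant up to small error; as the Del Pino-Musso-Pacard-Pistoia scheme can be run $V$-equivariantly (the building blocks, the weighted norms and the projected linear problem all respect this rotation), $q_m$ is exactly $V$-invariant. Differentiating at $t=0$ gives $V\cdot q_m=0$, that is,
\[
-z_3q_m+|z|^2\partial_{z_3}q_m-2z_3\,z\cdot\nabla q_m=\partial_{z_3}q_m,
\]
the relation recorded by Musso-Wei. (For the radial bubble $U$ the three fields $(|z|^2-1)\partial_{z_j}-2z_j\,z\cdot\nabla$, $j=1,2,3$, all annihilate $U$, matching the $6$-dimensional stabilizer $\mathfrak{so}(4)$ and $\dim\mathcal{Z}_U=4$.)

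Next I would show $\mathfrak{s}=\R V$, i.e.\ there is no further relation. None of the remaining nine functions of \eqref{10func} vanishes, since $q_m$ is invariant under none of the corresponding one-parameter groups -- translations, dilations, the in-plane rotation generated by $(z_1\partial_{z_2}-z_2\partial_{z_1})q_m$, the two off-plane rotations, or the inversion flows $\psi_{te_1},\psi_{te_2}$ -- because the concentration set of $q_m$ (the finitely many points $\xi_j$, with their Kelvin image) has no continuous conformal symmetry other than the rotations fixing $C$ pointwise. To rule out relations among the nine, I would separate them by parity under $z_3\mapsto-z_3$: six are even ($\partial_{z_1}q_m$, $\partial_{z_2}q_m$, $(z_1\partial_{z_2}-z_2\partial_{z_1})q_m$, $\tfrac12q_m+z\cdot\nabla q_m$, and $-z_jq_m+|z|^2\partial_{z_j}q_m-2z_j\,z\cdot\nabla q_m$ for $j=1,2$) and three are odd ($\partial_{z_3}q_m$, $(z_1\partial_{z_3}-z_3\partial_{z_1})q_m$, $(z_2\partial_{z_3}-z_3\partial_{z_2})q_m$, which are what remain after removing the relation above); a putative vanishing combination inside either block, localized near the central bubble (at the origin and at infinity) and near a spike $\xi_j$, forces all coefficients to vanish upon comparison with the explicit kernels of $\Delta+5U^4$ and $\Delta+5U_j^4$ and the decay bounds \eqref{2.phi}, \eqref{2.psi}. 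Thus $\dim\widetilde{\mathcal{Z}}_{q_m}=9$, and by non-degeneracy $\dim\mathcal{Z}_{q_m}=9$.

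The real obstacle is the non-degeneracy identity $\mathcal{Z}_{q_m}=\widetilde{\mathcal{Z}}_{q_m}$ itself, which rests on the delicate large-$m$ Fourier-mode spectral analysis of Musso-Wei \cite{musso2015nondegeneracy} and which we take as a black box; given it, the only substantive point above is checking that the full conformal stabilizer of the crown solution is the single circle generated by $V$ and that the remaining nine kernel functions are linearly independent.
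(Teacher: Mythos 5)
Your proposal relies on Musso--Wei \cite{musso2015nondegeneracy} for the non-degeneracy identity $\mathcal{Z}_{q_m}=\widetilde{\mathcal{Z}}_{q_m}$, which is exactly what the paper does: the paper gives no independent proof and explicitly re-states Musso--Wei's theorem, citing it both for $\mathcal{Z}_{q_m}=\widetilde{\mathcal{Z}}_{q_m}$ and for the relation $-z_3q+|z|^2\partial_{z_3}q-2z_3z\cdot\nabla q=\partial_{z_3}q$. Your proof therefore coincides with the paper's on the one genuinely hard step and fills in a dimension count that the paper leaves to the reference.

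One point in your reconstruction should be sharpened. You say the leading ansatz $U_*=U-\sum_jU_j$ is $V$-invariant only ``up to small error.'' In fact the invariance is exact: the Del Pino--Musso--Pacard--Pistoia choice $|\xi_j|^2+\mu_m^2=1$ makes each individual spike $U_j$ satisfy $-z_3U_j+|z|^2\partial_{z_3}U_j-2z_3z\cdot\nabla U_j=\partial_{z_3}U_j$ (a short computation using the analogous identity for $U$ and the radiality of $U$), and the central bubble $U$ does too. Geometrically, under the lift to $\mathbb{S}^3$ the spike $U_j$ becomes a function of $x\cdot\hat\xi_j$ alone, whose concentration point lies on the great circle $C=\{x_3=x_4=0\}$ that the $V$-rotation fixes pointwise. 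Recognizing this exactness is what makes your equivariant-construction argument sound, rather than a perturbative one. For the upper bound $\dim\mathfrak{s}\le 1$, the stabilizer argument you mention in passing is the cleanest route: any continuous conformal symmetry of $q_m$ must fix the finite concentration set on $C$, hence fix $C$ pointwise, and the connected pointwise stabilizer of a circle in $\mathrm{Conf}(\mathbb{S}^3)$ is precisely the one-parameter group of normal-plane rotations; the parity-and-localization alternative you sketch is workable but would require matching expansions against the four-dimensional kernels of $\Delta+5U^4$ and $\Delta+5U_j^4$ near each concentration point, which you have not carried out.
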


In the following, we will construct a solution to the Brezis-Nirenberg problem. Note that $q$ is even for $z_3$. We will use  Lyapunov-Schmidt reduction (or gluing method) to seek a solution that is also even for $z_3$. First, we need a family of bubbles that are even for $z_3$ and depend on finite-dimensional parameters. The construction will be similar to $\Theta_A$ in the above, but in this case we need to restrict the rotation and translation to the $z_1z_2$-plane and choose the translation in a subtle way.

We define
\begin{align}\label{def:Gamma}
    \Gamma= \mathcal{N}(q)\cap\{x\in \R^3:(z_1,z_2,0)\}
\end{align}
which is the intersection of the nodal set of $q$ with the $z_1z_2$-plane. For any set of parameters $A=(\e,\xi,a,b,\beta )\in \R\times \Gamma\times \R\times\R^2 \times \mathbb{S}$, we define
\begin{align}\label{def:Q_A}
    Q_A(z)=\frac{\varepsilon^{\frac12}}{|z-b|}q\left(\frac{\e R_\beta(z-b)}{|z-b|^2}+\xi+a\nu(\xi)\right),
\end{align}
where $\nu(\xi)=\frac{\nabla q(\xi)}{|\nabla q(\xi)|}$ is the unit normal to $\Sigma$.
Here $R_\beta$ is the rotation matrix in $z_1z_2$-plane by angle $\beta$
\begin{align}\label{def:rotation}
R_\beta=\begin{pmatrix}
\cos \beta & -\sin\beta & 0 \\
\sin \beta & \cos\beta & 0 \\
0 & 0 & 1
\end{pmatrix}.
\end{align}
In the notation of  \eqref{def:Q_A}, we also will treat a vector in the $z_1z_2$-plane as a vector in three-dimensional space with the last coordinate being 0. For example, $b=(b_1,b_2)$ also denotes $b=(b_1,b_2,0)$.

The family of $Q_A$ plays a vital role in this paper. Note that in the particular case $a=0$, $\e=1$ and $b=0$, $Q_A(z)=O(|z|^{-2})$ as $z\to \infty$ uniformly for $\xi\in \Gamma$ and $\beta\in \mathbb{S}$. This is a fast decay bubble. When we perturb $a$ a little bit, $Q_A(z)$ has leading order $a|z|^{-1}$ when $z\to \infty$. However, if $a$ is small enough, it is still fast decaying inside $B_1$.


In the following, we will need the kernel of the $L_{Q_A}$ for $A=(1,\xi,0,0,\theta_*)$ where $\theta_*=\theta_*(\xi)$ is chosen such that $R_{\theta_*}^T\nabla q(\xi)=|\nabla q(\xi)|(1,0,0)^T$. The purpose of $\theta_{*}(\xi)$ is to fix the direction of $Q_A$ as $\xi$ runs in the nodal set of $q$. Since $\xi$ is the coordinates of the zero set of $q$, then $(q_{\nu(\xi)},q_\xi,q_{z_3})^T=|\nabla q(\xi)|(1,0,0)^T=R_{\theta_*}^T\nabla q(\xi)$. Thus
\begin{align*}
q_{\nu(\xi)}=\nabla q(\xi)\cdot(\cos\theta_*,\sin\theta_*,0)^T,~
q_\xi=\nabla q(\xi)\cdot(-\sin\theta_*,\cos\theta_*,0)^T,~
q_{z_3}=\nabla q(\xi)\cdot(0,0,1)^T.
\end{align*}
The associate kernels of the linearized kernels are given by
\begin{equation*}
Z_0(z)=\frac{\partial}{\partial \e}Q_{A}|_{\e=1,b=0,a=0,\beta=\theta_*}=\frac{1}{2|z|}q\left(R_{\theta_*}\frac{z}{|z|^2}+\xi\right)-\frac{1}{|z|^3}R_{\theta_*}^T\nabla q\left(R_{\theta_*}\frac{z}{|z|^2}+\xi\right)\cdot\left(\begin{matrix}
z_1\\ z_2\\0
\end{matrix}\right),
\end{equation*}
\begin{equation*}
Z_1(z)=\frac{\partial}{\partial a}Q_{A}|_{\e=1,b=0,a=0,\beta=\theta_*}=\frac{1}{|z|}q_{\nu(\xi)}\left(R_{\theta_*}\frac{z}{|z|^2}+\xi\right),
\end{equation*}
\begin{equation*}
Z_2(z)=\frac{\partial}{\partial \xi}Q_{A}|_{\e=1,b=0,a=0,\beta=\theta_*}=\frac{1}{|z|}q_{\xi}\left(R_{\theta_*}\frac{z}{|z|^2}+\xi\right),
\end{equation*}
\begin{equation*}
Z_3(z)=\frac{\partial}{\partial b_1}Q_{A}|_{\e=1,b=0,a=0,\beta=\theta_*}
=\frac{2z_1}{|z|^2}\frac{\partial Q_A}{\partial\e}-\frac{1}{|z|^2}\frac{\partial Q_A}{\partial a}=\frac{2z_1}{|z|^2}Z_0-\frac{1}{|z|^2}Z_1,
\end{equation*}
\begin{equation*}
Z_4(z)=\frac{\partial}{\partial b_2}Q_{A}|_{\e=1,b=0,a=0,\beta=\theta_*}
=\frac{2z_2}{|z|^2}\frac{\partial Q_A}{\partial\e}-\frac{1}{|z|^2}\frac{\partial Q_A}{\partial \xi}=\frac{2z_2}{|z|^2}Z_0-\frac{1}{|z|^2}Z_2,
\end{equation*}
\begin{equation*}
Z_5(x)=\frac{\partial}{\partial\beta}Q_A|_{\e=1,b=0,a=0,\beta=\theta_*}
=\frac{z_1}{|z|^2}\frac{\partial Q_A}{\partial \xi}-\frac{z_2}{|z|^2}\frac{\partial Q_A}{\partial a}=
\frac{z_1}{|z|^2}Z_2-\frac{z_2}{|z|^2}Z_1.
\end{equation*}
There are three functions we did not list here, which come from the differentiation of translation in $z_3$, rotation in $z_1z_3$-plane, and rotation in $z_2z_3$-plane. They are all odd in $z_3$ and play no role in our proof.

\section{Preliminary on Approximate Solutions}
In this section, we shall modify the family of bubbles $Q_A$, defined at \eqref{def:Q_A}, to satisfy a similar equation to the Brezis-Nirenberg problem in $\Sigma_K$ and Dirichlet boundary conditions. This is the first step in the gluing process. More precisely, we define the approximate solution $PQ_A$ (or the projection of $Q_A$) to be
\begin{equation}
\begin{cases}\label{301}
\Delta PQ_A +\lambda PQ_A + Q_A^5=0 \ &\mbox{in} \ \Sigma_K,\\ PQ_A=0 \ &\mbox{on} \ \partial \Sigma_K.
\end{cases}
\end{equation}
At the end of this section, we will prove $PQ_A$ is $Q_A$ summing other terms with good control.


First, let us set up some notations which will be used for the rest of this paper.
Let $z=(z_1,z_2,z_3)^T=(z_1+iz_2,z_3)^T$, where $i=\sqrt{-1}$ is the imaginary number. We abuse the notation $ze^{i\alpha}=((z_1+iz_2)e^{i\alpha},z_3)^T$ and $\bar z=(z_1-iz_2,z_3)^T=(z_1,-z_2,z_3)^T$. Using this notation convention and definition of $R_\beta$ in \eqref{def:rotation}, we have $R_\beta z=ze^{i\beta}$. All vectors in $\R^3$ are column vectors without explicit mention.

We define
\begin{align}\label{def:SK}
    \Sigma_K= \{ (r, \theta,z_3)\in B_1; 0<r<1, -\theta_0 < \theta <\theta_0 \}\quad \text{where}\quad \theta_0= \frac{\pi}{K}.
\end{align}
For any function $u$ in $B_1$, we define an operation
\begin{equation}
\label{ext}
u^e (z)= u(z)- u(\bar{z} e^{2i \theta_0}) + u(z e^{4i\theta_0})-...+ u(z e^{ 2(K-2) \theta_0})- u(\bar{z} e^{2(K-1) i \theta_0}).
\end{equation}
In the sum notation, it reads that
\begin{align*}
    u^e(z)=\sum_{j=0}^{K/2-1}[u(ze^{4ji\theta_0})-u(\bar z e^{(4j+2)i\theta_0})].
\end{align*}
This operation creates a function which is odd under the reflection along each ray $re^{ji\theta_0}$, $j=1,3,5,\cdots,K-1$.
That is,
\[u^e(r e^{ji\theta_0-i\theta},z_3)=-u^e(r e^{ji\theta_0+i\theta},z_3),\quad j=1,3,5,\cdots,K-1,\]
In particular
$$ u^e(r e^{ji\theta_0},z_3)= 0\quad j=1,3,5,\cdots,K-1.$$



We shall use the following family of bubbles described in \eqref{def:Q_A} in the subsection \ref{sec:non-deg},
\begin{align*}
    Q_A(z)=\frac{\varepsilon^{\frac12}}{|z-b|}q\left(\frac{\e R_{\beta}(z-b)}{|z-b|^2}+\xi+a\nu(\xi)\right),
\end{align*}
where $A=(\e,\xi,a,b,\beta)$ and $ \beta=\theta_*(\xi)+\hat \beta$ where $R_{\theta_*(\xi)}^T\nabla q(\xi)=|\nabla q(\xi)|(1,0,0)^{T}$. The purpose of $\theta_{*}(\xi)$ is to fix the ``direction" of $Q_A$ as $\xi$ runs in the nodal set of $q$.

We will make some constraints on the parameters of $A$.
 The rationale for selecting these constraints is to make sure some functional (see $\Psi(A)$ in \eqref{PsiA-re}) has an infimum achieved inside it. It will become clear throughout the computations in sections 3 and 4, culminating in the proof of Theorem \ref{th8.para}.
Denote $b=|b|e^{i\alpha_b}$ and $d=(1-|b|^2)/(2|b|)$. We make the following constraint.
\begin{align}
\begin{split}\label{constraint-m}
     &\e K^{3}\in [\delta,\delta^{-1}],\quad \xi\in \Gamma, \quad |a|\leq \delta^{-1}\e \log K,\quad d\in \left[ \tfrac{\log K-\log\log K}{K},\tfrac{\log K}{K}\right],\\
    & |\alpha_b|\leq \delta^{-1/2} K^{-2}\log K,\quad  |\hat \beta|\leq  \delta^{-1/2}K^{-1}\log K.
\end{split}
\end{align}
where $\delta\in(0,1)$ is some fixed number that will be determined later. We always choose $K$ that is sufficiently large and even. All the constants $C$ in this paper are independent of $K$ and $\delta$.

Under these constraints, we have
\begin{align*}
    |q(\hat \xi)|\leq C|a|\leq C \delta^{-1}\e\log K,\quad \mbox{dist}(b,\partial \Sigma_K)\geq \frac{1}{2K}.
\end{align*}
Hereafter, we adopt the following notations for the remainder of this paper.
\begin{align}\label{nt-wW}
    \hat \xi=\xi+a\nu(\xi),\quad \w1=R_\beta^T \nabla q(\hat \xi),\quad \W2=R_\beta^T\nabla^2q(\hat \xi)R_\beta.
\end{align}
Note that $|w|=|\nabla q(\hat \xi)|\leq C$ and $|W|=|\nabla^2q(\hat \xi)|\leq C$ for a constant $C$ just depend on $q$. Moreover, since $q$ is even for $z_3$, then $w_3=0$ and $w=(|\nabla q(\hat \xi)|e^{i\alpha_w},0)$. By our definition of $\theta_*(\xi)$, we have $\alpha_w=\hat \beta$.

Define $Q_A^e$ by the operation \eqref{ext},
\begin{align}\label{def:QAe}
    Q_A^e=\sum_{j=0}^{K/2-1}[Q_A(ze^{4ji\theta_0})-Q_A(\bar z e^{(4j+2)i\theta_0})]=:Q_A-T_A,
\end{align}
where $T_A$ is the summation of other bubbles (centered on other copies of $\Sigma_K$ in $B_1$)
\begin{align}\label{def:TA}
        T_A=Q_A(\bar z e^{2i\theta_0})-\sum_{j=1}^{K/2-1}[Q_A(z e^{4ji\theta_0})-Q_A(\bar ze^{(4j+2)i\theta_0})].
\end{align}

We define the following key function, $\gamma(z, p)$, which is used to bound the influence of other bubbles in $\Sigma_K$.
\begin{align}\label{def:gamma1-1}
    \gamma(z,p):=\frac{1}{|\bar z e^{2i\theta_0}-p|}-\sum_{j=1}^{K/2-1}\left(\frac{1}{|z e^{4ji\theta_0}-p|}-\frac{1}{|\bar z e^{(4j+2)i\theta_0}-p|}\right).
\end{align}
We can have a lower bound for each denominator.
\begin{lemma}Suppose $b$ satisfies \eqref{constraint-m}. Then for any $z\in \SK$,
\begin{align}
\begin{split}\label{lowerbd-|z-b|}
    |z e^{4ji\theta_0}-b|&\geq C\sin 2j\theta_0,\quad \text{for}\quad 1\leq j\leq K/2-1, \\
    |\bar z e^{(4j+2)\theta_0}-b|&\geq C \sin (2j+1)\theta_0,\quad \text{for}\quad 0\leq j\leq K/2-1.
    \end{split}
\end{align}
\end{lemma}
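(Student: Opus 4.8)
The estimates \eqref{lowerbd-|z-b|} are purely geometric facts about the position of the rotated copies $ze^{4ji\theta_0}$ and $\bar z e^{(4j+2)i\theta_0}$ relative to the point $b$, which by \eqref{constraint-m} lies near the boundary of the central sector $\Sigma_K$. The plan is to exploit that $b=|b|e^{i\alpha_b}$ has $|\alpha_b|\le \delta^{-1/2}K^{-2}\log K$, hence $b$ is extremely close to the positive $z_1$-axis, which is the bisector of $\Sigma_K$; meanwhile each $z\in\Sigma_K$ has angular coordinate $\theta\in(-\theta_0,\theta_0)$ with $\theta_0=\pi/K$. First I would record that $1-|b|^2 = 2|b|d$ with $d\in[(\log K-\log\log K)/K,\log K/K]$, so that $|b|=1+O(K^{-1}\log K)$ and in particular $\tfrac12\le |b|\le 1$ for $K$ large; thus $b$ sits at distance $O(K^{-1}\log K)$ inside the unit circle, essentially on the $z_1$-axis.

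Next I would estimate the angular separation. For $1\le j\le K/2-1$, the point $ze^{4ji\theta_0}$ has angular coordinate $\theta+4j\theta_0$ in the $z_1z_2$-plane, where $|\theta|<\theta_0$, so its argument differs from that of $b$ (which is $\alpha_b=O(K^{-2}\log K)$) by an amount comparable to $4j\theta_0 = 2j\cdot(2\pi/K)$, modulo the symmetry of the circle. Since the chord length between two points of moduli $r_1=|ze^{4ji\theta_0}|=|z|\le 1$ and $r_2=|b|\le 1$ subtending an angle $\phi$ satisfies $|r_1e^{i\phi_1}-r_2e^{i\phi_2}|^2 = (r_1-r_2)^2 + 2r_1r_2(1-\cos\phi) \ge 2r_1r_2(1-\cos\phi)\ge c\, r_1 r_2\,\mathrm{dist}(\phi,2\pi\mathbb Z)^2$, and here the angular gap $\phi = \theta+4j\theta_0-\alpha_b$ satisfies $\mathrm{dist}(\phi,2\pi\mathbb Z)\ge c\,j\theta_0\ge c\sin(2j\theta_0)$ (using $j\le K/2-1$ so $2j\theta_0 \le \pi - 2\theta_0$ stays away from a full turn, and $\theta_0\le |\phi|$ because $4j\theta_0$ dominates the $O(\theta_0)$ error from $\theta$ and the much smaller $\alpha_b$), one gets the first bound after absorbing the $z_3$-component which only helps. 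The second bound is identical: $\bar z e^{(4j+2)i\theta_0}$ has argument $-\theta+(4j+2)\theta_0$, whose distance to $\alpha_b\bmod 2\pi$ is comparable to $(2j+1)\theta_0$, hence to $\sin((2j+1)\theta_0)$, for $0\le j\le K/2-1$ (note the range makes $(2j+1)\theta_0\in(0,\pi)$, so $\sin$ is the right comparison and never vanishes).

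I would organize this as a single short computation: set $p=ze^{4ji\theta_0}$ (resp. $\bar z e^{(4j+2)i\theta_0}$), write $p=(p'e^{i\phi_p},p_3)$, $b=(|b|e^{i\alpha_b},0)$, and bound
\[
|p-b|^2 \ge |p'e^{i\phi_p}-|b|e^{i\alpha_b}|^2 = (|p'|-|b|)^2 + 2|p'||b|\bigl(1-\cos(\phi_p-\alpha_b)\bigr)\ge 2|p'||b|\bigl(1-\cos(\phi_p-\alpha_b)\bigr),
\]
then use $1-\cos t\ge \tfrac{2}{\pi^2}\,\mathrm{dist}(t,2\pi\mathbb Z)^2$ together with the lower bounds on $\mathrm{dist}(\phi_p-\alpha_b,2\pi\mathbb Z)$ derived above and $|p'||b|\ge c$ (from $|z|=|p|$ bounded below on $\Sigma_K$ away from the origin — or, if $z$ is near the origin, handle that case separately since then $|p-b|\ge |b|-|p|\ge c$ is immediate). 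The main obstacle, though minor, is the bookkeeping near the two endpoints $j$ close to $0$ and $j$ close to $K/2$: one must check that the angular perturbations $\theta=O(\theta_0)$ and $\alpha_b=O(K^{-2}\log K)$ do not eat the whole gap $j\theta_0$ when $j=1$ (they do not, since $4\theta_0 - \theta_0 - \alpha_b \ge 2\theta_0 > 0$), and that for $j$ near $K/2$ the relevant quantity is $\sin(2j\theta_0)$ rather than $2j\theta_0$ itself because $2j\theta_0$ approaches $\pi$; this is exactly why the conclusion is phrased with $\sin$. Once these two boundary cases are dispatched, the constant $C$ is uniform in $z\in\Sigma_K$, $K$, and $j$.
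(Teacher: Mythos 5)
Your plan coincides with the paper's proof: both expand $|ze^{4ji\theta_0}-b|^2$ via the law of cosines into $(r-|b|)^2+z_3^2+4r|b|\sin^2\bigl(2j\theta_0+\tfrac12\alpha_z-\tfrac12\alpha_b\bigr)$, split off the case $r<\tfrac14$ where $|ze^{4ji\theta_0}-b|\ge|b|-r\ge\tfrac14$ is trivial, and for $r\ge\tfrac14$ compare the $\sin^2$ factor with $\sin^2 2j\theta_0$ using $|\alpha_z\pm\alpha_b|\le 2\theta_0$. The intermediate inequality $\mathrm{dist}(\phi,2\pi\mathbb{Z})\ge c\,j\theta_0$ written in your middle paragraph is actually false for $j$ near $K/2$ (there $4j\theta_0$ is close to $2\pi$ so its distance to $2\pi\mathbb{Z}$ is only $O(\theta_0)$, not $O(j\theta_0)$), but you flag and correct this yourself in the closing paragraph, so the argument is essentially the same as the paper's and sound once that line is adjusted.
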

\begin{proof}
        Suppose $z=(z_1,z_2,z_3)=(re^{i\alpha_z},z_3)$ and $b=(b_1,b_2,0)=(|b|e^{i\alpha_b},0)$. Then
\begin{align}
\begin{split}\label{z-b-1}
    |ze^{4ji\theta_0}-b|^2&=|z|^2+|b|^2-2r |b|\cos(4j\theta_0+\alpha_z-\alpha_b)\\
    &=(r-|b|)^2+z_3^2+4r|b|\sin^2(2j\theta_0+\tfrac{1}{2}\alpha_z-\tfrac{1}{2}\alpha_b),
\end{split}
\end{align}
and
\begin{align}
    \begin{split}\label{z-b-2}
        |\bar ze^{(4j+2)i\theta_0}-b|^2
    &=(r-|b|)^2+z_3^2+4r|b|\sin^2((2j+1)\theta_0-\tfrac{1}{2}\alpha_z-\tfrac{1}{2}\alpha_b).
    \end{split}
\end{align}

If $r<\frac14$, then $|ze^{4ji\theta_0}-b|\geq\frac14$ for $1\leq j\leq K/2-1$ and $|\bar ze^{(4j+2)i\theta_0}-b|\geq\frac14$ for any $0\leq j\leq K/2-1$. Obviously, \eqref{lowerbd-|z-b|} holds.

If $r\geq \frac14$, then using \eqref{z-b-1} and \eqref{z-b-2} and $|b|\geq \tfrac12$, we obtain that
\begin{align*}
    |ze^{4ji\theta_0}-b|^2&
    \geq \tfrac12 \sin^2(2j\theta_0+\tfrac{1}{2}\alpha_z-\tfrac12\alpha_b),\quad 1\leq j\leq K/2-1,\\
    |\bar ze^{(4j+2)i\theta_0}-b|^2
    &\geq \tfrac12\sin^2((2j+1)\theta_0-\tfrac12\alpha_z-\tfrac12\alpha_b),\quad 0\leq j\leq K/2-1.
\end{align*}
Since $|\alpha_z\pm\alpha_b|\leq 2\theta_0$, then \eqref{lowerbd-|z-b|} also holds in this case.
\end{proof}

\begin{lemma}\label{lem:bd-TA}
    Under the constraint \eqref{constraint-m}, for any $z\in \SK$, one has
\begin{equation}
\label{TA-expn}
\begin{aligned}
T_A(z)=~&\e^{\frac12}q(\hat \xi)\gamma(z,b)+\e^{\frac32}\w1\cdot \nabla_p\gamma(z,b)+\frac16\e^{\frac52}\W2_{ij}\partial_{p_ip_j}^2\gamma(z,b)+O(\e^{\frac72}K^4).
\end{aligned}
\end{equation}
In particular, one has $|T_A|= O(\e^{\frac32}K^2)$. Here $\nabla_p\gamma(z,b)$ is the abbreviation of $\nabla_p\gamma(z,p)|_{p=b}$.
\end{lemma}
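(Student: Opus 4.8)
The plan is to derive the expansion \eqref{TA-expn} from the definition \eqref{def:TA} by writing each summand $Q_A(\,\cdot\,)$ in terms of $q$ evaluated at a point that, for $z\in\Sigma_K$, lies in a small neighborhood of $\hat\xi=\xi+a\nu(\xi)$, and then Taylor-expanding $q$ around $\hat\xi$. The starting point is the explicit formula
\begin{align*}
Q_A(z)=\frac{\e^{\frac12}}{|z-b|}\,q\!\left(\frac{\e R_\beta(z-b)}{|z-b|^2}+\hat\xi\right),
\end{align*}
so that for each $j$ the term $Q_A(ze^{4ji\theta_0})$ equals $\e^{\frac12}|ze^{4ji\theta_0}-b|^{-1}q\big(\e R_\beta(ze^{4ji\theta_0}-b)|ze^{4ji\theta_0}-b|^{-2}+\hat\xi\big)$, and similarly for the $\bar z e^{(4j+2)i\theta_0}$ terms. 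The key quantitative input is the previous lemma: under \eqref{constraint-m}, $|ze^{4ji\theta_0}-b|\geq C\sin 2j\theta_0$ and $|\bar z e^{(4j+2)i\theta_0}-b|\geq C\sin(2j+1)\theta_0$, so the argument shift $\e R_\beta(\cdot)/|\cdot|^2$ has size $O(\e/\sin(\text{multiple of }\theta_0))\leq O(\e K)$, which is small since $\e=O(K^{-3})$. Thus $q$ is being evaluated within $O(\e K)$ of $\hat\xi$, legitimizing a third-order Taylor expansion with remainder controlled by $\|\nabla^3 q\|_{L^\infty}$ times $(\e K)^3$.

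First I would Taylor expand $q\big(\hat\xi+\tfrac{\e R_\beta v}{|v|^2}\big)=q(\hat\xi)+\nabla q(\hat\xi)\cdot\tfrac{\e R_\beta v}{|v|^2}+\tfrac12\nabla^2q(\hat\xi)\big[\tfrac{\e R_\beta v}{|v|^2},\tfrac{\e R_\beta v}{|v|^2}\big]+O(\e^3|v|^{-3})$, where $v=ze^{4ji\theta_0}-b$ (resp.\ $\bar z e^{(4j+2)i\theta_0}-b$). Using $\nabla q(\hat\xi)\cdot R_\beta v=(R_\beta^T\nabla q(\hat\xi))\cdot v=w\cdot v$ and $\nabla^2q(\hat\xi)[R_\beta v,R_\beta v]=(R_\beta^T\nabla^2q(\hat\xi)R_\beta)[v,v]=W[v,v]$ with the notation \eqref{nt-wW}, each summand becomes
\begin{align*}
\frac{\e^{\frac12}q(\hat\xi)}{|v|}+\frac{\e^{\frac32}\,w\cdot v}{|v|^3}+\frac{\e^{\frac52}}{2}\frac{W[v,v]}{|v|^4}+O\!\left(\frac{\e^{\frac72}}{|v|^4}\right).
\end{align*}
Next I would recognize that $\sum_{\text{terms}}(\pm)|v|^{-1}$ is exactly $\e^{\frac12}q(\hat\xi)\gamma(z,b)$ by the definition \eqref{def:gamma1-1} of $\gamma$; that the linear terms $\sum(\pm)\,w\cdot v/|v|^3$ assemble into $-\e^{\frac32}w\cdot\nabla_p(\text{of the }|v|^{-1}\text{ sum})=\e^{\frac32}w\cdot\nabla_p\gamma(z,b)$, since $\nabla_p|z'-p|^{-1}=(z'-p)/|z'-p|^3$ and the sign convention in $\gamma$ matches; and that the quadratic terms assemble into $\tfrac16\e^{\frac52}W_{ij}\partial^2_{p_ip_j}\gamma(z,b)$ — here one must check that $\partial^2_{p_ip_j}|z'-p|^{-1}=\big(3(z'-p)_i(z'-p)_j-\delta_{ij}|z'-p|^2\big)/|z'-p|^5$ produces, after contracting with $W_{ij}$ and using $\operatorname{tr}W=\Delta q(\hat\xi)$, the stated coefficient $\tfrac16$ (the factor accounts for the difference between $\tfrac12 W[v,v]/|v|^4$ and $\tfrac16 W_{ij}\partial^2_{p_ip_j}|v|^{-1}$; a short computation shows these differ by a term proportional to $\Delta q(\hat\xi)$ divided by a power of $|v|$, which is itself $O(\e^{\frac52}K^3)$ and thus can be absorbed, or else one notes $\Delta q(\hat\xi)=-q(\hat\xi)^5$ is harmless given the constraint on $q(\hat\xi)$).

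Finally I would estimate the error terms. The Taylor remainders sum to $\sum_{j}O(\e^{\frac72}|v_j|^{-4})$; since $|v_j|^{-1}\leq C/\sin(j\theta_0)$ and $\sum_{j=1}^{K/2-1}\sin^{-4}(j\theta_0)\leq C K^4$ (a standard estimate, or via Section 9), this sum is $O(\e^{\frac72}K^4)$, matching the claimed error. Along the way I would also verify the cruder bound $|T_A|=O(\e^{\frac32}K^2)$ directly: $|T_A|\leq\sum_j \e^{\frac12}|v_j|^{-1}(|q(\hat\xi)|+C)\leq C\e^{\frac12}\sum_{j=1}^{K/2}\sin^{-1}(j\theta_0)\leq C\e^{\frac12}K\log K$ — wait, this gives $\e^{\frac12}K\log K$, so to get $\e^{\frac32}K^2$ one uses instead that the leading term $\e^{\frac12}q(\hat\xi)\gamma(z,b)$ has $q(\hat\xi)=O(\e\log K)$ and $\gamma(z,b)=O(K\log K)$ roughly, while the $\e^{\frac32}w\cdot\nabla_p\gamma$ term is $O(\e^{\frac32}\cdot K^2)$ since $|\nabla_p\gamma|\lesssim\sum \sin^{-2}(j\theta_0)\lesssim K^2$; so the dominant piece is $O(\e^{\frac32}K^2)$ as claimed, the $\e^{\frac12}q(\hat\xi)\gamma$ piece being $O(\e^{\frac32}K(\log K)^2)$, which is smaller. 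The main obstacle I anticipate is precisely this bookkeeping of competing scales — correctly identifying which term dominates requires the sharp asymptotics of the trigonometric sums $\sum\sin^{-k}(j\theta_0)$, $k=1,2,3,4$, deferred to Section 9, and care is needed because $q(\hat\xi)$ can be as large as $O(\e\log K)$ rather than zero (it vanishes only when $a=0$), so the $q(\hat\xi)\gamma$ term cannot simply be dropped but must be shown subdominant. The rest is a routine, if lengthy, Taylor expansion and term-by-term matching against the definition of $\gamma$.
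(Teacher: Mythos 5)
Your proposal follows essentially the same route as the paper: Taylor-expand $q$ about $\hat\xi$, recognize the resulting terms as derivatives of $|z-p|^{-1}$ so that the sum over images reconstitutes $\gamma$ and its $p$-derivatives, invoke the lower bounds $|ze^{4ji\theta_0}-b|\gtrsim\sin 2j\theta_0$ to control the remainders by $\sum_j\sin^{-4}(j\theta_0)=O(K^4)$, and use $|\nabla^m_p\gamma|\lesssim K^{m+1}$ together with $|q(\hat\xi)|=O(|a|)=O(\e\log K)$ to extract the bound $|T_A|=O(\e^{3/2}K^2)$. One caveat: your first suggested disposal of the $\Delta q(\hat\xi)/|v|^3$ discrepancy (``$O(\e^{5/2}K^3)$ and thus can be absorbed'') does not work, since $\e^{5/2}K^3\gg\e^{7/2}K^4$ under \eqref{constraint-m}; only your second observation — that $\Delta q(\hat\xi)=-q(\hat\xi)^5=O(|a|^5)$ by the Yamabe equation, which makes this contribution $O(\e^{15/2}K^3(\log K)^5)\ll\e^{7/2}K^4$ — is correct, and it is exactly the estimate the paper uses.
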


\begin{proof}
Using Taylor expansion of $q$ near $\hat \xi$, we have the expansion of $Q(z)$ for any $z$ satisfying $|z-b|\geq \e$
\begin{align}
\begin{split}\label{QA-expn-m}
    Q_A (z)&=\frac{\varepsilon^{\frac12}}{|z-b|}\left[ q(\hat \xi)+\frac{\e w\cdot(z-b)}{|z-b|^2}+\frac{\e^2(z-b)^TW(z-b)}{2|z-b|^4}+O\left(\frac{\e^{3}}{|z-b|^3}\right)\right].
     \end{split}
\end{align}
It readily verifies that
\begin{align}
\label{nabla(z-p)}
        \nabla_{p_j}\left(\frac{1}{|z-p|}\right)=\frac{(z-p)_j}{|z-p|^3}, \quad
        \nabla_{p_jp_\ell}^2\left(\frac{1}{|z-p|}\right)=\frac{-\delta_{j\ell}}{|z-p|^{3}}+\frac{3(z-p)_j(z-p)_\ell}{|z-p|^{5}}.
\end{align}
Using these identities, one can rewrite \eqref{QA-expn-m} as
\begin{align*}
    Q_A(z)=&\ \frac{\e^{\frac12}q(\hat \xi)}{|z-b|}+\e^{\frac32} w\cdot \nabla_p\left(\frac{1}{|z-p|}\right)\Big|_{p=b}\\
    &+\frac16\e^{\frac52}W_{j\ell}\left[\partial_{p_jp_\ell}^2\left(\frac{1}{|z-p|}\right)+\frac{\delta_{j\ell}}{|z-p|^3}\right]\Big|_{p=b}+O\left(\frac{\e^{\frac{7}{2}}}{|z-b|^4}\right).
\end{align*}
When $b\in \Sigma_K$ satisfies \eqref{constraint-m}, then \eqref{lowerbd-|z-b|} implies $|ze^{4ji\theta_0}-b|\geq C/K\gg \e$ and $|\bar ze^{(4j+2)i\theta_0}-b|\geq C/K\gg \e$.
Thus the corresponding expansion of $Q_A(ze^{4ji\theta_0})$ and $Q_A(\bar z e^{(4j+2)i\theta_0})$ from the above hold uniformly for $z\in \SK$. Recall the definition of $T_A$ in \eqref{def:TA}. We have
\begin{equation*}
\begin{aligned}
T_A=~&\e^{\frac12}q(\hat \xi)\gamma(z,b)+\e^{\frac32}\w1\cdot \nabla_p\gamma(z,p)|_{p=b}+\frac16\e^{\frac52}\W2_{ij}\partial_{p_ip_j}^2\gamma(z,p)|_{p=b}\\
&+O(\e^{\frac52}\gamma_3(z,b)|\Delta q(\hat\xi)|+\e^{\frac72}\gamma_4(z,b)),
\end{aligned}
\end{equation*}
where $\gamma_3(z,b)=\sum_{j=0}^{K/2-1}|\bar z e^{(4j+2)i\theta_0}-b|^{-3}+\sum_{j=1}^{K/2-1}|ze^{4ji\theta_0}-b|^{-3}$
and $\gamma_4(z,p)$ is the corresponding one with the exponent replaced by $-4$. Again, \eqref{lowerbd-|z-b|} implies that
\begin{align*}
    \gamma_3(z,b)\leq C K^3\sum_{j=1}^{K/2-1}j^{-3}\leq CK^3,\quad \gamma_4(z,b)\leq CK^4.
\end{align*}
Thus $\e^{\frac52}\gamma_3(z,b)|\Delta q(\hat\xi)|+\e^{\frac72}\gamma_4(z,b)=O(\e^{\frac52}K^3|a|^5+\e^{\frac72}K^4)=O(\e^{\frac72}K^4)$. Therefore \eqref{TA-expn} is proved.

Finally, it follows from  Lemma \ref{lem:gamma-first} that $|\nabla^m\gamma(z,p)|_{p=b}|\leq CK^{m+1}$ for $m=0,1,2$. Applying these in \eqref{TA-expn} and using the constraint \eqref{constraint-m}, we get
\begin{align*}
   |T_A|\leq C(\e^{\frac12}|q(\hat \xi)|K+\e^{\frac32}K^2+\e^{\frac52}K^3+\e^{\frac72}K^4)\leq C \e^{\frac32}K^2.
\end{align*}
 \end{proof}

Suppose that $G(z,p)$ is the Green's function for $B_1$ in $\R^3$.
We denote by $H_0(z,p)$ the regular part of the Green's function, namely
\[H_0(z,p)=4\pi \left(\frac{1}{4\pi |z-p|}-G(z,p)\right).\]
It is well-known that in the unit ball case
\begin{align}\label{H0-expn}
    H_0(z,p)=\left||z|p-\frac{z}{|z|}\right|^{-1}=\frac{1}{\sqrt{1-2z\cdot p+|z|^2|p|^2}}.
\end{align}
We apply operation \eqref{ext} to $H_0$ and denote it as $H_0^e(z,p)$
\begin{align}\label{def:H0e}
    H_0^e(z,p)=\sum_{j=0}^{K/2-1}[H_0(ze^{4ji\theta_0},p)-H_0(\bar ze^{(4j+2)i\theta_0},p)].
\end{align}
Such a function is used to control the influence of $H_0$ from other copies of $\Sigma_K$ in $B_1$.

To get $PQ_A$ in \eqref{301}, we first solve $\varphi_A$ from
\begin{equation}
\begin{cases}
\Delta \varphi_{A} =0  \quad &\mathrm{in}\quad B_1,\\
 \varphi_{A} = Q_A^e  ~&\mathrm{on}\quad \partial B_1,
\end{cases}
\end{equation}
where $Q_A^e$ is defined in \eqref{def:QAe}.

\begin{lemma}\label{lem:phiA-m}
 Under the constraint \eqref{constraint-m}, for any $z\in \SK$, one has
    \begin{align}\label{phiA-expn}
    \varphi_A(z)&=\e^{\frac12}  q(\hat \xi) H_0^e(z,b)+\e^{\frac{3}{2}} \w1\cdot\nabla_p H_0^e(z,b)+\frac16\e^{\frac52}\W2_{ij}\partial_{p_ip_j}^2H_0^e(z,b)+O(\e^{\frac72}K^4),
\end{align}
where $H_0^e(z,p)$ is defined in \eqref{def:H0e}. In particular, one has $|\varphi_A(z)|=O(\e^{\frac32}K^{2})$.
\end{lemma}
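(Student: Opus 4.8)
\textbf{Proof proposal for Lemma \ref{lem:phiA-m}.}

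The plan is to exploit the fact that $\varphi_A$ is the harmonic extension of the boundary data $Q_A^e$ and that on $\partial B_1$ the kernel $\frac{1}{|z-p|}$ coincides with its reflected counterpart $H_0(z,p)$. First I would recall from \eqref{def:QAe} that $Q_A^e=Q_A-T_A$ equals $\sum_{j=0}^{K/2-1}[Q_A(ze^{4ji\theta_0})-Q_A(\bar z e^{(4j+2)i\theta_0})]$, and apply the Taylor expansion \eqref{QA-expn-m} of $Q_A$ around $\hat\xi$ to each summand; this is legitimate because, by \eqref{lowerbd-|z-b|}, every center $b$ (after the appropriate rotation/reflection) stays at distance $\geq C/K\gg\e$ from $\partial B_1$, so $|z-b|\geq C/K$ uniformly for $z\in\partial B_1$. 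This rewrites $Q_A^e$ on $\partial B_1$, up to an error $O(\e^{7/2}K^4)$, as $\e^{1/2}q(\hat\xi)$ times a reflected sum of $\frac{1}{|z-p|}$, plus $\e^{3/2}w$ paired against a reflected sum of $\nabla_p\frac{1}{|z-p|}$, plus the second-order term with $W_{ij}\,\partial_{p_ip_j}^2\frac1{|z-p|}$ (absorbing the $\delta_{j\ell}/|z-b|^3$ correction into the $\frac16 W_{ij}$ bookkeeping exactly as in the proof of Lemma \ref{lem:bd-TA}).

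Next, the key identity: for $z\in\partial B_1$ one has $|z|=1$, hence by \eqref{H0-expn}, $H_0(z,p)=\frac{1}{\sqrt{1-2z\cdot p+|p|^2}}=\frac{1}{|z-p|}$. Therefore the reflected sums of $\frac{1}{|z-p|}$ that appear in the boundary expansion are, on $\partial B_1$, exactly the reflected sums of $H_0(z,p)$, i.e. $H_0^e(z,b)$ and its $p$-derivatives $\nabla_p H_0^e(z,b)$, $\partial^2_{p_ip_j}H_0^e(z,b)$. (One commutes the $p$-derivative with the finite reflected sum, and with the operation \eqref{ext}, which is legitimate since everything is smooth in $p$ near $b$ at fixed $z\in\partial B_1$.) Thus on $\partial B_1$,
\begin{align*}
Q_A^e(z)=\e^{\frac12}q(\hat\xi)H_0^e(z,b)+\e^{\frac32}w\cdot\nabla_pH_0^e(z,b)+\tfrac16\e^{\frac52}W_{ij}\partial^2_{p_ip_j}H_0^e(z,b)+O(\e^{\frac72}K^4).
\end{align*}
Now I would invoke uniqueness/linearity of the Dirichlet problem in $B_1$: since $H_0(\cdot,p)$ is harmonic in $B_1$ for each fixed $p$ (being the regular part of the Green's function), so are $H_0^e(\cdot,p)$ and, differentiating under the (finite) sum in $p$, so are $\nabla_pH_0^e(\cdot,b)$ and $\partial^2_{p_ip_j}H_0^e(\cdot,b)$. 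Hence the right-hand side above, minus the error, is precisely the harmonic function in $B_1$ with the stated boundary values, so by the maximum principle $\varphi_A$ differs from it by a harmonic function bounded on $\partial B_1$ by $O(\e^{7/2}K^4)$, giving \eqref{phiA-expn} throughout $B_1$, in particular on $\Sigma_K$.

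Finally, for the bound $|\varphi_A(z)|=O(\e^{3/2}K^2)$ I would estimate $H_0^e$ and its $p$-derivatives. The point is that $H_0^e(z,b)$ is the reflected sum \eqref{def:H0e}, and by the same computation as in \eqref{z-b-1}--\eqref{z-b-2} one gets $|ze^{4ji\theta_0}-b^*|\geq C\sin 2j\theta_0$ type lower bounds for the relevant reflected centers on $\partial B_1$, hence $|H_0^e(z,b)|\leq CK\sum_{j=1}^{K/2-1}j^{-1}\leq CK\log K$ — or, more cleanly, I expect a bound $|\nabla_p^m H_0^e(z,b)|\leq CK^{m+1}$ for $m=0,1,2$ completely parallel to the estimate $|\nabla^m\gamma(z,p)|_{p=b}|\leq CK^{m+1}$ quoted at the end of the proof of Lemma \ref{lem:bd-TA} (indeed $H_0^e$ on $\partial B_1$ is $\gamma$ on $\partial B_1$, and both are harmonic inside). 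Plugging these into \eqref{phiA-expn} together with $|q(\hat\xi)|\leq C\delta^{-1}\e\log K$, $|w|\leq C$, $|W|\leq C$ and the constraint \eqref{constraint-m} yields $|\varphi_A|\leq C(\e^{3/2}K^2\log K+\e^{3/2}K^2+\e^{5/2}K^3+\e^{7/2}K^4)\leq C\e^{3/2}K^2$ after absorbing logarithms; if the cruder $K\log K$ bound on $H_0^e$ is all that is available the first term reads $\e^{3/2}K^2\log K$, which still fits the claimed order once one recalls $\e K^3\sim 1$ so $\e^{1/2}|q(\hat\xi)|K\lesssim \e^{3/2}K\log K\ll\e^{3/2}K^2$. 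The main obstacle is therefore not the harmonic-extension step, which is essentially forced, but obtaining the sharp $O(K^{m+1})$ bounds on the derivatives of $H_0^e$ in $p$: one must control the alternating reflected sum carefully near $\partial B_1$, where the individual reflected singularities cluster, and this is exactly the kind of estimate deferred to the companion lemma (Lemma \ref{lem:gamma-first}) for $\gamma$; I would reduce the $H_0^e$ estimate to that one by noting the two functions agree on $\partial B_1$ and are both harmonic in $B_1$, so the maximum principle transfers the bound.
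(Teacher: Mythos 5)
Your derivation of the expansion \eqref{phiA-expn} is essentially identical to the paper's: Taylor-expand $Q_A^e$ on $\partial B_1$ using \eqref{QA-expn-m}, observe from \eqref{H0-expn} that $H_0(z,p)=|z-p|^{-1}$ when $|z|=1$ so the boundary data matches $H_0^e(\cdot,b)$ and its $p$-derivatives up to an $O(\e^{7/2}K^4)$ remainder, and invoke harmonicity of $H_0^e(\cdot,b)$, $\nabla_pH_0^e(\cdot,b)$, $\partial^2_{p_ip_j}H_0^e(\cdot,b)$ plus the maximum principle. The bookkeeping of the trace term $\Delta q(\hat\xi)/|z-b|^3$ into the error is also handled the same way as the paper.

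However, the last step — the device you propose to get the bound $|\varphi_A|=O(\e^{3/2}K^2)$ by ``transferring'' the estimate of Lemma \ref{lem:gamma-first} from $\gamma$ to $H_0^e$ via the maximum principle — does not work, for two concrete reasons. First, $\gamma(\cdot,b)$ is \emph{not} harmonic in $B_1$: each term $|ze^{4ji\theta_0}-b|^{-1}$ has a singularity at $z=be^{-4ji\theta_0}$, and all these rotated/reflected images of $b$ lie inside $B_1$ (since $|b|<1$). Only $H_0^e(\cdot,b)$ is harmonic there, its singularities being at the exterior Kelvin points $b^*e^{-4ji\theta_0}$ etc. Second, the two functions do not even agree on $\partial B_1$: writing out the definitions gives $H_0^e(z,b)\big|_{\partial B_1}=|z-b|^{-1}-\gamma(z,b)\big|_{\partial B_1}$, so they differ by the self-interaction term $|z-b|^{-1}$, which is as large as $\sim K/\log K$ near the bubble. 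So the maximum-principle transfer you propose as a shortcut would be incorrect on both counts. Fortunately, your primary suggestion — a direct estimate of $H_0^e$ and its $p$-derivatives via lower bounds of the form $|ze^{4ji\theta_0}-b^*|\ge C\sqrt{(1-|b|)^2+\sin^2 2j\theta_0}$ — is exactly what the paper does in Lemma \ref{lem:H0e-d}; it gives $|\nabla_p^m H_0^e|\le CK^{m+1}/(\log K)^{m}$ (you only need $\lesssim K^{m+1}$), which, combined with $|q(\hat\xi)|\lesssim\e\log K$, yields $|\varphi_A|\lesssim \e^{3/2}K(\log K)^2+\e^{3/2}K^2+\e^{5/2}K^3+\e^{7/2}K^4\lesssim\e^{3/2}K^2$ as claimed. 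So the proof goes through once the maximum-principle transfer is replaced by that direct estimate.
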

\begin{proof}
    When $z\in \partial B_1$, one has $\e/|z-b|\leq C\e K\ll 1$. Using the Taylor expansion of $q$ near $\hat \xi$, we have the expansion of $Q_A(z)$ in \eqref{QA-expn-m}.
Using \eqref{H0-expn}, one knows that, $H_0(z,p)=|z-p|^{-1}$ for $z\in \partial B_1$. Thus \eqref{nabla(z-p)} implies
\[\nabla_{p_j}H_0(z,b)=\frac{(z-b)_j}{|z-b|^3},\quad \nabla_{p_jp_\ell}^2H_0(z,b)=\frac{-\delta_{j\ell}}{|z-b|^{3}}+\frac{3(z-b)_j(z-b)_\ell}{|z-b|^{5}}.\]
Comparing the expansion of $Q_A(z)$ in \eqref{QA-expn-m}, we found that $Q_A$ on $\partial B_1$ can be approximated by $H_0(z,b)$ and its derivatives. Similar things also hold for $Q_A(z e^{4ji\theta_0})$ and $Q(\bar z e^{(4j+2)i\theta_0})$.
Thus if we let
\begin{align*}
    \varphi_A=\e^{\frac12}    q(\hat \xi) H_0^e(z,b)+\e^{\frac32} \w1\cdot\nabla_p H_0^e(z,b)+\frac16\e^{\frac52}\W2_{ij}\partial_{p_ip_j}^2H_0^e(z,b)+f_A(z),
\end{align*}
then $f_A$ satisfies $\Delta f_A(z)=0$ in $\Sigma_K$ and
\begin{align*}
    f_A(z)&=\left[Q_A(z)-\frac{\e^{\frac12}q(\hat \xi)}{|z-b|}-\frac{\e^{\frac32}w\cdot(z-b)}{|z-b|^3}-\frac16\frac{\e^{\frac52}(z-b)^TW(z-b)}{|z-b|^5}-\frac16\frac{\e^{\frac52}\Delta q(\hat \xi)}{|z-b|^3}\right]^e
\end{align*}
for $z\in \partial \SK$.
Note that on $\partial \SK$,
\begin{align*}
    |f_A(z)|&\leq C\left(\frac{\e^{\frac52}|\Delta q(\hat \xi)|}{|z-b|^3}+\frac{\e^{\frac{7}{2}}}{|z-b|^4}\right)^e\leq C\e^{\frac52}|q(\hat \xi)|^5K^3+C\e^{\frac72}K^4\leq C\e^{\frac72}K^4.
\end{align*}
By maximum principle, we have $|f_A(z)|= O(\e^{\frac72}K^4)$ for $z\in B_1$. Finally, one can get $|\varphi_A|=O(\e^{\frac32}K^2)$ by using Lemma \ref{lem:H0e-d}, as did for $T_A$.
\end{proof}

Using the symmetry of $Q_A^e$ and the uniqueness of $\varphi_A$, $\varphi_A$ enjoys the same symmetry as $Q_A^e$. Thus  $\varphi_A(z)=0$ for $z\in \partial\Sigma_K\cap \{|z|<1\}$, i.e., the two portions of the boundary of $\Sigma_K$ inside $B_1$. Since $Q_A^e=0$ on these two portions of the boundary and $\varphi_A=Q_A^e$ on $\partial B_1$, then
\begin{equation}\label{eq:varphi-m}
\begin{cases}
\Delta \varphi_{A} =0  \quad &\mathrm{in}\quad \Sigma_K,\\
 \varphi_{A} = Q_A^e  ~&\mathrm{on}\quad \partial \Sigma_K.
\end{cases}
\end{equation}

Let $PQ_A=Q_A^e-\varphi_A-\psi_A$. Using the above equation and the one of $PQ_A$ in \eqref{301},
$\psi_A$ must satisfy
\begin{align}\label{psi-F}
    \begin{cases}
        \Delta \psi_A+\lambda \psi_A=\la (Q_A^e-\varphi_A)+F&\text{in }\Sigma_K,\\
        \psi_A=0 &\text{on }\partial \Sigma_K,
    \end{cases}
\end{align}
where
\begin{align}\label{def:F}
    F(z)=Q_A(\bar z e^{2i\theta_0})^5-\sum_{j=1}^{K/2-1}\left[Q_A(ze^{4ji\theta_0})^5-Q_A(\bar ze^{(4j+2)i\theta_0})^5\right].
\end{align}
We want to derive the estimates of $\psi_A$. First, we consider $F$. Note that \eqref{QA-expn-m} leads to
    \begin{align}\label{QA-exterior}
        |Q_A(z)|\leq \frac{\e^{\frac12}|a|}{|z-b|}+ C\frac{\e^{\frac{3}{2}}}{|z-b|^2},\quad \text{for any }|z-b|\geq \e.
    \end{align}
    For any $z\in \Sigma_K$, this applies to $Q(\bar z e^{(4j+2)i\theta_0})$ and $Q(z e^{4ji\theta_0})$ since \eqref{lowerbd-|z-b|}.
    \begin{align}\label{F-bd}
        |F(z)|\leq C\e^{\frac52}|a|^5\gamma_5(z,b)+\e^{\frac{15}{2}}\gamma_{10}(z,b)\leq C\e^{\frac32}K^2.
    \end{align}


\begin{lemma}\label{lem:psiA-m}
 Under the constraint \eqref{constraint-m}, for any $z\in \Sigma_K$, one has $|\psi_A(z)|=O(\e^{\frac32}|\log \e|)$.
\end{lemma}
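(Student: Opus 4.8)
The plan is to view \eqref{psi-F} as a linear Dirichlet problem for the operator $\Delta + \lambda$ in $\Sigma_K$ and estimate $\psi_A$ by a combination of the maximum principle and an explicit supersolution (Newtonian-potential) construction. The right-hand side splits into two pieces: the zeroth-order term $\lambda(Q_A^e - \varphi_A)$ and the quintic-interaction term $F$. For the second piece, \eqref{F-bd} already gives the pointwise bound $|F(z)| \le C\varepsilon^{3/2}K^2$ on $\Sigma_K$, which is a bounded forcing term on a domain of diameter $O(1)$ contained in $B_1$; since $\lambda$ is fixed (below the first Dirichlet eigenvalue of $B_1$, and hence of the smaller domain $\Sigma_K$, for $K$ large), solving $\Delta v + \lambda v = F$ with zero boundary data gives $\|v\|_{L^\infty} \le C\|F\|_{L^\infty} \le C\varepsilon^{3/2}K^2 \le C\varepsilon^{3/2}$ by the constraint $\varepsilon K^3 \in [\delta,\delta^{-1}]$. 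So the contribution of $F$ is harmless and of the right order.

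The delicate part is the term $\lambda(Q_A^e - \varphi_A) = \lambda(Q_A - T_A - \varphi_A)$. By Lemma \ref{lem:bd-TA} and Lemma \ref{lem:phiA-m} the pieces $T_A$ and $\varphi_A$ are $O(\varepsilon^{3/2}K^2) = O(\varepsilon^{3/2})$, so they again contribute at the $O(\varepsilon^{3/2})$ level through the same bounded-forcing argument. What remains is $\lambda Q_A$ itself. Here I would use the exterior estimate \eqref{QA-exterior}, which gives $|Q_A(z)| \le C\varepsilon^{1/2}|a|/|z-b| + C\varepsilon^{3/2}/|z-b|^2$ for $|z-b|\ge \varepsilon$, together with the fact that near $b$ (where $|z-b| \lesssim \varepsilon$) one has $|Q_A| \le C\varepsilon^{-1/2}$ on a ball of radius $O(\varepsilon)$. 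The point is that $Q_A$ is not bounded uniformly, so I cannot simply invoke $\|v\|_\infty \le C\|F\|_\infty$; instead I construct an explicit barrier. For the singular part near $b$, the Newtonian potential in $\R^3$ of the source $\lambda Q_A \mathbf 1_{\{|z-b|<\varepsilon\}}$ is bounded by $C\lambda \varepsilon^{-1/2}\varepsilon^2 = C\varepsilon^{3/2}$; for the tail $\varepsilon^{1/2}|a|/|z-b|$, convolving $|z-b|^{-1}$ against the Green kernel over $\Sigma_K \subset B_1$ produces at worst a $\log$ factor, giving a bound $C\varepsilon^{1/2}|a||\log\varepsilon| \le C\varepsilon^{3/2}\log K \cdot |\log\varepsilon| $, and using $|a| \le \delta^{-1}\varepsilon\log K$ together with $|\log\varepsilon| \sim 3\log K$ one checks this is $O(\varepsilon^{3/2}|\log\varepsilon|)$ (indeed with room to spare); for the $\varepsilon^{3/2}|z-b|^{-2}$ tail, $|z-b|^{-2}$ is locally integrable in $\R^3$ with a $\log$-type potential as well, again yielding $O(\varepsilon^{3/2}|\log\varepsilon|)$. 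Summing these barriers and invoking the maximum principle for $\Delta + \lambda$ on $\Sigma_K$ (valid since $\lambda$ is below the principal eigenvalue of $\Sigma_K$ for $K$ large) gives $|\psi_A(z)| \le C\varepsilon^{3/2}|\log\varepsilon|$.

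The main obstacle is the bookkeeping in the barrier construction for the $Q_A$ term: one must handle the region $|z-b|\lesssim\varepsilon$ (where $Q_A$ is large but the measure is tiny) separately from the algebraically decaying tail, and one must make sure the $\log$ that appears when integrating $|z-b|^{-1}$ and $|z-b|^{-2}$ against the Green's function of $B_1$ really is $|\log\varepsilon|$ (equivalently $\log K$) and not something worse — this is where dimension three is tight. A clean way to organize this is to note $\psi_A = -\lambda G_{\Sigma_K} * (Q_A^e - \varphi_A) - G_{\Sigma_K} * F$ where $G_{\Sigma_K}$ is the Green's function for $\Delta + \lambda$ on $\Sigma_K$, bound $G_{\Sigma_K}(z,y) \le C|z-y|^{-1}$, and then the whole estimate reduces to the elementary potential-theoretic inequalities $\int_{B_1}|z-y|^{-1}|y-b|^{-1}\,dy \le C$, $\int_{B_1}|z-y|^{-1}|y-b|^{-2}\,dy \le C|\log\operatorname{dist}|$, and $\int_{B_\varepsilon(b)}|z-y|^{-1}\,dy \le C\varepsilon^2$, which are standard. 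I would also remark that the same symmetry argument used for $\varphi_A$ shows $\psi_A$ inherits the odd-reflection symmetry of $Q_A^e$, so that the boundary condition on the two interior faces of $\Sigma_K$ is automatic — but this is not needed for the stated $L^\infty$ bound.
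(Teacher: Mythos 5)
Your approach is genuinely different from the paper's: you work in the original variables, bound the Green's function of $\Sigma_K$ pointwise by $C|z-y|^{-1}$, and reduce the estimate to elementary potential inequalities. The paper instead rescales to $y=(z-b)/\e$, writes $\psi_A=\e^{3/2}\tilde\psi$, solves the bubble part explicitly as a Newtonian potential $\tilde\psi_0$ (which exhibits the logarithmic growth responsible for $|\log\e|$), and closes the argument for the remainder $\tilde\psi_1$ via a parabolic barrier $C|\log\e|\bigl[(3\pi)^2-(K\e y_2)^2\bigr]$ whose Laplacian is $\sim-(K\e)^2$. Your treatment of the singular piece $\lambda Q_A$ (split into the $|z-b|<\e$ core and the $\e^{1/2}|a|/|z-b|+\e^{3/2}/|z-b|^2$ tail, integrated against the Green's kernel) is sound and delivers $O(\e^{3/2}|\log\e|)$.

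However, there is a genuine gap in your handling of the bounded-forcing pieces $F$, $T_A$, $\varphi_A$, which are all of size $O(\e^{3/2}K^2)$. You write $\|v\|_{L^\infty}\le C\|F\|_{L^\infty}\le C\e^{3/2}K^2\le C\e^{3/2}$ and attribute the last step to the constraint $\e K^3\in[\delta,\delta^{-1}]$. That inequality is false: the constraint gives $K^2\sim\e^{-2/3}$, hence $\e^{3/2}K^2\sim\e^{5/6}\gg\e^{3/2}|\log\e|$. The resolution requires the thin-domain gain that is absent from your argument: since $\Sigma_K$ has width $O(1/K)$, the principal Dirichlet eigenvalue satisfies $\lambda_1(\Sigma_K)\gtrsim K^2$ and the maximum-principle constant improves to $\|v\|_{L^\infty}\le CK^{-2}\|F\|_{L^\infty}$, giving $O(\e^{3/2})$. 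This is exactly what the paper's barrier supplies (it is also the content of Lemma~\ref{leg.1} in rescaled form). Note that your proposed ``clean'' organization via $G_{\Sigma_K}(z,y)\le C|z-y|^{-1}$ cannot recover this: that bound, combined with $\int_{\Sigma_K}|z-y|^{-1}\,dy\le C/K$, only yields $\e^{3/2}K$, one power of $K$ short. One genuinely needs the extra decay of the Green's function of the thin slab away from the diagonal (equivalently, an explicit barrier), not just the near-diagonal $|z-y|^{-1}$ bound.
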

\begin{proof}


Let $y=(z-b)/\e$. Consider $\psi_A(z)=\e^{\frac32}\tilde \psi((z-b)/\e)$,
then $\Delta_z \psi_A+\la \psi_A=\e^{-\frac12}[\Delta_y\tilde \psi+\la \e^2\tilde \psi]$. Thus
\begin{align*}
    \begin{cases}
        \Delta_y \tilde \psi +\lambda \e^2 \tilde \psi=\la \frac{1}{|y|}q\left(\frac{R_\beta y}{|y|^2}+\hat \xi\right)+O(\e^2K^2)&\text{in }\tilde \Sigma_K,\\
        \tilde \psi_A=0 &\text{on }\partial \tilde \Sigma_K,
    \end{cases}
\end{align*}
where $\tilde{\Sigma}_K=\{y=(z-b)/\e: z\in \SK\}$. Here we used $|T_A(z)|+|\varphi_A(z)|+|F(z)|=O(\e^{\frac32}K^2)$ for $z\in \SK$.
Let
$$\tilde\psi_0(y)=\int_{\mathbb{R}^3}\frac{1}{4\pi|y-z|}\frac{1}{|z|}q\left(\frac{R_\beta z}{|z|^2}+\hat \xi\right)dz.$$
Then it is easy to see that $\tilde\psi_0(y)$ satisfies the equation $\Delta_y \tilde \psi_{0}+\frac{1}{|y|}q\left(\frac{R_\beta y}{|y|^2}+\hat \xi\right)=0$. One
can verify that $|\tilde \psi_0|\leq C(1+\log \sqrt{1+y^2})$. Now let $\tilde \psi=\la \tilde \psi_0+\tilde \psi_1$. Then $\tilde \psi_1$ satisfies
\begin{align*}
    \Delta_y \tilde \psi_1+\lambda \e^2\tilde \psi_1=O(\e^2K^2).
\end{align*}
Note that $B_{1/(2K\e)}(0)\subset \tilde \Sigma_K\subset \{|y_2|\leq \frac{2\pi}{K\e}\}$. Thus, on $\partial\tilde\Sigma_K$, one has    $\tilde \psi_1=O(|\log \e|)$. We construct a barrier function
\begin{align*}
    \bar\psi_1(y)=C|\log \e|[(3\pi)^2-(K \e y_2)^2].
\end{align*}
One can verify that
\begin{align*}
    \Delta_y \bar\psi_1+\la \e^2\bar\psi_1\leq C|\log \e|[-2(K\e)^2+\la \e^2 (3\pi)^2]\leq -C(K\e)^2,
\end{align*}
provided $K$ is large enough. Moreover, $\bar\psi_1\geq C|\log \e|$ on $\partial\tilde\Sigma_K$.

Note that $\tilde \Sigma_K\subset \{|y_2|\leq 2\pi/(K\e)\}$ implies that the first eigenvalue $\lambda_1(\tilde \Sigma_K)\geq (K\e/2\pi )^2$. Thus $\Delta_y+\la \e^2$ satisfies the maximum principle on $\tilde \Sigma_K$ when $K$ is large enough.
Therefore $|\tilde \psi_1(y)|\leq C|\bar\psi_1|=O(|\log \e|)$. Note that $|\tilde \psi_0|\leq C(1+\log \sqrt{1+y^2})\leq C|\log \e|$. The proof is complete.
\end{proof}
To summarize Lemma \ref{lem:bd-TA}, Lemma \ref{lem:phiA-m} and Lemma \ref{lem:psiA-m} in this section, we have the following proposition.
\begin{proposition}\label{prop:PQA=}
    Assume $A$ satisfies the constraint \eqref{constraint-m}. The solution of  \eqref{301} can be written $PQ_A=Q_A-\tilde T_A$ where $\tilde T_A= T_A-\varphi_A-\psi_A$ satisfying
\begin{align}\label{bd-tTA}
|\tilde T_A|\leq C\e^{\frac32}K^2.
\end{align}

\end{proposition}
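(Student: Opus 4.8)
The plan is to read off Proposition~\ref{prop:PQA=} directly from the three decomposition lemmas already established in this section, so the argument is essentially bookkeeping. Recall the construction: we first solve the harmonic extension problem for $\varphi_A$ with boundary data $Q_A^e$ on $\partial B_1$, observe (using the symmetry of $Q_A^e$ under the operation \eqref{ext} and uniqueness of the harmonic extension) that $\varphi_A$ vanishes on the two radial portions of $\partial\Sigma_K$, hence solves \eqref{eq:varphi-m}. Then we set $PQ_A=Q_A^e-\varphi_A-\psi_A$, which forces $\psi_A$ to satisfy \eqref{psi-F}. By \eqref{def:QAe} we have $Q_A^e=Q_A-T_A$, so that
\begin{align*}
PQ_A=Q_A-T_A-\varphi_A-\psi_A=Q_A-\tilde T_A,\qquad \tilde T_A:=T_A-\varphi_A-\psi_A,
\end{align*}
which is the claimed form. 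It remains only to collect the pointwise bounds.

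First I would verify that $PQ_A$ so defined indeed solves \eqref{301}. Applying $\Delta+\lambda$ to $PQ_A=Q_A^e-\varphi_A-\psi_A$ in $\Sigma_K$: the term $\Delta\varphi_A=0$ by \eqref{eq:varphi-m}; the term $(\Delta+\lambda)(-\psi_A)$ contributes $-\lambda(Q_A^e-\varphi_A)-F$ by \eqref{psi-F}; and $(\Delta+\lambda)Q_A^e$ expands, using that each translated/reflected copy $Q_A(\cdot e^{\cdots})$ solves the pure Yamabe equation $\Delta w+w^5=0$, into $\lambda Q_A^e$ plus the sum of the fifth powers of all the copies, which by \eqref{def:F} is exactly $-Q_A^5+F$ up to sign conventions—so the total is $-Q_A^5$, matching \eqref{301}. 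The boundary condition $PQ_A=0$ on $\partial\Sigma_K$ holds because $Q_A^e=0$ on the two radial faces and $\varphi_A=Q_A^e$, $\psi_A=0$ there, while on $\partial B_1$ we have $Q_A^e=\varphi_A$ and $\psi_A=0$. Uniqueness of the solution to \eqref{301} (the operator $\Delta+\lambda$ being invertible on $\Sigma_K$ with Dirichlet data, since $\lambda_1(\Sigma_K)\to\infty$ as $K\to\infty$) then pins down $PQ_A$.

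For the estimate \eqref{bd-tTA}, I would simply add the three bounds: Lemma~\ref{lem:bd-TA} gives $|T_A|\le C\e^{3/2}K^2$, Lemma~\ref{lem:phiA-m} gives $|\varphi_A|\le C\e^{3/2}K^2$, and Lemma~\ref{lem:psiA-m} gives $|\psi_A|\le C\e^{3/2}|\log\e|$. Under the constraint \eqref{constraint-m} we have $\e K^3\in[\delta,\delta^{-1}]$, so $\e\sim K^{-3}$ and hence $|\log\e|\sim \log K\ll K^2$; therefore $|\psi_A|\le C\e^{3/2}K^2$ as well, and the triangle inequality yields $|\tilde T_A|\le C\e^{3/2}K^2$.

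There is no genuine obstacle here: all the analytic content—the Taylor expansions of $Q_A$ near $\hat\xi$, the maximum-principle barriers for $\varphi_A$ and $\psi_A$, the series bounds $|\nabla^m\gamma|\le CK^{m+1}$ and the analogous bounds for $H_0^e$—has been discharged in the preceding lemmas. The only points requiring a line of care are (i) making the cancellation of fifth-power source terms in the verification of \eqref{301} explicit, which is immediate from the definition \eqref{def:F} of $F$ together with \eqref{def:QAe}, and (ii) noting that $|\log\e|=O(K^2)$ so that the $\psi_A$ contribution does not dominate; both are routine. Thus the proposition is proved by assembling Lemmas~\ref{lem:bd-TA}, \ref{lem:phiA-m} and \ref{lem:psiA-m}.
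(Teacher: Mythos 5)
Your proposal is correct and takes essentially the same route as the paper: Proposition~\ref{prop:PQA=} is stated there with no separate proof, prefaced only by the remark that it ``summarizes'' Lemmas~\ref{lem:bd-TA}, \ref{lem:phiA-m}, and \ref{lem:psiA-m}, which is precisely the assembly you carry out. Your explicit check that $PQ_A=Q_A^e-\varphi_A-\psi_A$ solves \eqref{301} and the observation that $|\log\e|\sim\log K\ll K^2$ under \eqref{constraint-m} make the bookkeeping transparent; note only that the sign convention should read $\tilde T_A=T_A+\varphi_A+\psi_A$ (as later used in Section~5), though this has no effect on the bound \eqref{bd-tTA}.
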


\begin{remark}\label{rmk:Ta-C1}
    By elliptic theory, it is not hard to show that the dependence of $\tilde T_A$ on the parameters in $A$ is at least $C^1$.
\end{remark}

\section{Energy expansion}
In this section, we shall compute the energy of the approximate solution and find its leading-order term with respect to the parameters.

Define the energy of $PQ_A$ as
\begin{align*}
    J(PQ_A)
   &=\frac{1}{2}\int_{\Sigma_K}(|\nabla PQ_A|^2-\la |PQ_A|^2)dz-\frac{1}{6}\int_{\Sigma_K}(PQ_A)^6dz\\
   &=\frac{1}{2}\int_{\Sigma_K}Q_A^5PQ_Adz-\frac{1}{6}\int_{\Sigma_K}(PQ_A)^6dz.
\end{align*}
For the first term on the right-hand side
\begin{align*}
    &\int_{\Sigma_K}Q_A^5PQ_A dz=\int_{\Sigma_K}Q_A^5(Q_A-\tilde T_A)dz=\int_{\Sigma_K}\left(Q_A^6-Q_A^5\tilde T_A\right)dz.
\end{align*}
For the second term
\begin{align*}
    &\int_{\Sigma_K}(PQ_A)^6dz=\int_{\Sigma_K}(Q_A-\tilde T_A)^6dz=\int_{\Sigma_K}\left(Q_A^6-6Q_A^5\tilde T_A\right)dz+I,
\end{align*}
where
\begin{align*}
    I&=\int_{\Sigma_K}\left(15Q_A^4(\tilde T_A)^2-20Q_A^3(\tilde T_A)^3 +15 Q_A^2(\tilde T_A)^4-6 Q_A(\tilde T_A)^5+(\tilde T_A)^6\right)dz.
\end{align*}

To estimate each term, we need the following lemma.
\begin{lemma}\label{lem:Qk-rough}
Assuming the constraints \eqref{constraint-m} are satisfied, we have
    \begin{align}
    \int_{\SK}|Q_A|^\ell dz&\leq C\begin{cases}\e^{\frac{3}{2}}K^{-1}\log K& \text{if }\ell=1,\\\e^{3-\frac{\ell}{2}}&\text{if }\ell=2,3,4,5,\end{cases}\label{Q234}\\
    \int_{\Sigma_K}Q_A^6dz&=\int_{\R^3}q(z)^6dz+O(\e^{9}K^9),\label{Q6}\\
    \int_{\Sigma_K}Q_A^5dz&=\e^{\frac12}  4\pi q(\hat \xi)+O(\e^{\frac{15}{2}}K^7),\label{Q5-1}\\
    \int_{\SK}Q_A^5(z)(z-b)dz&=\e^{\frac32}  4\pi R_\beta^T\nabla q(\hat \xi)+O(\e^{\frac{15}{2}}K^6),\label{Q5-2}\\
    \int_{\SK}|Q_A|^5|z-b|^2dz&=O(\e^{\frac52}).\label{Q5-3}
\end{align}
\end{lemma}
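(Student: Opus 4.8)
The plan is to exploit the scaling structure of $Q_A$. Recall from \eqref{def:Q_A} that $Q_A(z)=\e^{1/2}|z-b|^{-1}q(\e R_\beta(z-b)|z-b|^{-2}+\hat\xi)$, and that $q$ satisfies the Kelvin-invariance $q(z)=|z|^{-1}q(z/|z|^2)$. Applying the change of variables $y=\e R_\beta(z-b)/|z-b|^2+\hat\xi$ (equivalently $z-b=\e R_\beta(y-\hat\xi)/|y-\hat\xi|^2$), whose Jacobian is $\e^3|y-\hat\xi|^{-6}$, one obtains for any power $\ell$
\[
\int_{\Sigma_K}|Q_A|^\ell\,dz=\e^{3-\ell/2}\int_{\Omega_A}\frac{|y-\hat\xi|^{\ell}}{\e^{\ell}}\,|q(y)|^\ell\,\frac{\e^3}{|y-\hat\xi|^6}\Big/\Big(\tfrac{\e}{|y-\hat\xi|}\Big)^{\ell}\cdots
\]
— more precisely, after carefully tracking the powers one finds $\int_{\Sigma_K}|Q_A|^\ell dz=\e^{(6-3\ell)/2+\cdots}$; the clean way is to note $Q_A=\Theta_A$ is, up to the Kelvin transform and Euclidean motions, a rescaling of $q$, so $\int_{\R^3}|Q_A|^6=\int_{\R^3}|q|^6$ exactly, while lower powers pick up the scaling weight $\e^{3-\ell/2}$ from a single near-origin bubble of width $\e$. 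I would first record the exact identity $\int_{\R^3}Q_A^6\,dz=\int_{\R^3}q^6\,dz$ (conformal invariance of the Sobolev functional), then \eqref{Q6} follows by estimating the tail $\int_{\R^3\setminus\Sigma_K}Q_A^6$: since $\Sigma_K$ contains $B_{c/K}(b)$ and $Q_A$ near $b$ looks like a bubble of scale $\e$ while away from $b$ one has the decay \eqref{QA-exterior}, the complement contributes $O((\e K)^9)$, using that $|z-b|\gtrsim 1/K$ on $\partial\Sigma_K$ and $Q_A^6\lesssim \e^3|z-b|^{-6}$ there integrated over a region of diameter $O(1)$; being slightly more careful with the $|a|$-term and the Kelvin image gives the stated $O(\e^9K^9)$.

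For \eqref{Q5-1} and \eqref{Q5-2}, the idea is that $Q_A^5$ is, after the substitution, essentially a smoothed Dirac mass. Using the equation $\Delta q+q^5=0$ and $\int_{\R^3}q^5\,dz=-\int_{\R^3}\Delta q\,dz$, together with the Kelvin identity, one shows $\int_{\R^3}Q_A^5\,dz=\e^{1/2}\int_{\R^3}|z-b|^{-5}q(\cdots)^5\,dz=\e^{1/2}\cdot 4\pi q(\hat\xi)$ exactly — this is because $\e^{-5/2}Q_A^5$ is the image under the conformal map of $q^5$, and $q^5$ integrates against the constant $1$ pulled back to $|z-b|^{-1}$-type weight, evaluated at the image of infinity which is $\hat\xi$; concretely, $\int Q_A^5 = -\e^{1/2}\int \Delta_z[\,|z-b|^{-1}q(\e R_\beta(z-b)/|z-b|^2+\hat\xi)\,]\,dz$ up to the harmonic correction, and the distributional Laplacian of $|z-b|^{-1}$ produces $4\pi\delta_b$, whence $4\pi q(\hat\xi)$. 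The tail estimate over $\R^3\setminus\Sigma_K$, bounded by $\int_{|z-b|\gtrsim 1/K}\e^{5/2}|z-b|^{-10}dz=O(\e^{5/2}K^7)$, times the outer factor $\e^{1/2}$... here one must instead use that near $\partial B_1$ the contribution is cut, and redo the bound on $\Sigma_K^c\cap B_1$ plus the Kelvin part to land on $O(\e^{15/2}K^7)$. Identity \eqref{Q5-2} is the same computation with the extra weight $(z-b)$: one gets $\int Q_A^5(z-b)\,dz = \e^{3/2}\cdot 4\pi\nabla q(\hat\xi)$ rotated by $R_\beta^T$, since $-\int(z-b)\Delta(|z-b|^{-1}g)\,dz$ after integrating by parts twice lands on $\nabla g$ at $b$, and the chain rule converts $\nabla_z$ at $b$ into $R_\beta^T\nabla q(\hat\xi)$ because the inner map has derivative $\e^{-1}R_\beta^{-1}=\e^{-1}R_\beta^T$ near $z=b$ (the singular point of the inversion maps to $\hat\xi$, the image of $\infty$). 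The remaining estimates \eqref{Q234} for $\ell=1,\dots,5$ and \eqref{Q5-3} follow directly from the substitution: $\int|Q_A|^\ell dz=\e^{(6-3\ell)/2}\cdots$ — for $\ell\ge 2$ the integrand $|q(y)|^\ell|y-\hat\xi|^{\ell-6}\e^{3-\ell/2}$... I would split $\R^3$ into $\{|y-\hat\xi|\le 1\}$ where $|q|^\ell$ is bounded and $|y-\hat\xi|^{\ell-6}$ is integrable for $\ell\le 5$ in three dimensions (since $6-\ell>3$ fails, wait — $6-\ell\ge1$, integrable near $\hat\xi$ iff $6-\ell<3$, i.e. $\ell>3$; for $\ell\le 3$ one uses that $q(y)$ vanishes... no: one uses the Kelvin form so the singularity at $\hat\xi$ in $y$-coordinates corresponds to $|z-b|\to\infty$, harmless), and $\{|y-\hat\xi|\ge1\}$ where $|q(y)|\lesssim|y|^{-1}$ gives convergence for $\ell\ge2$; for $\ell=1$ the outer region diverges logarithmically, truncated at the scale where $|z-b|\sim 1/K$ (equivalently $|y-\hat\xi|\sim \e K$), producing the $\log K$ factor, hence $\e^{3/2}K^{-1}\log K$. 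For \eqref{Q5-3}, the weight $|z-b|^2$ against $|Q_A|^5$ gives $\e^{1/2}\int|z-b|^2|z-b|^{-5}|q(\cdots)|^5 = \e^{1/2}\e^{2}\int|y-\hat\xi|^{-1}\cdots = O(\e^{5/2})$ since that integral converges.

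The main obstacle I anticipate is the bookkeeping of the tail/truncation errors: the domain $\Sigma_K$ is not all of $\R^3$, it is cut both by $\partial B_1$ (distance $\gtrsim 1/K$ from $b$ by \eqref{constraint-m}) and by the two planar faces $\theta=\pm\theta_0$, and $Q_A$ has the slow $|z-b|^{-1}$ decay coming from the $a\nu(\xi)$ shift. So I must consistently use \eqref{QA-exterior}, $|Q_A(z)|\le \e^{1/2}|a||z-b|^{-1}+C\e^{3/2}|z-b|^{-2}$, and combine it with $|a|\le \delta^{-1}\e\log K$ and $|z-b|\gtrsim 1/K$ on $\partial\Sigma_K$ to check that every error genuinely has the claimed power of $\e$ and $K$ — in particular that the $|a|$-term never dominates, which is why the $\log K$ appears only in the $\ell=1$ case and the errors in \eqref{Q5-1}–\eqref{Q5-2} come out as high powers $\e^{15/2}$. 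The exact identities (conformal invariance for $\ell=6$, the $4\pi\delta$ computation for $\ell=5$) are routine once the change of variables is set up correctly; everything reduces to verifying that the discarded region contributes a negligible, explicitly-powered remainder.
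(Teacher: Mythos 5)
Your plan is essentially the paper's proof: split $\int_{\Sigma_K}=\int_{\R^3}-\int_{\R^3\setminus\Sigma_K}$ for $\ell=5,6$, use the conformal change of variables for the exact identities (conformal invariance for $\ell=6$, the Newtonian-potential identity $\int|y-\hat\xi|^{-1}q^5\,dy=4\pi q(\hat\xi)$ for $\ell=5$), and for $\ell\leq 5$ decompose near/far from $b$ at scale $\e$ using the pointwise bound \eqref{QA-exterior} together with $|a|\lesssim\e\log K$. A couple of your intermediate bookkeeping steps are off (e.g.\ the weight in the $y$-variable for \eqref{Q5-3} should be $|y-\hat\xi|^{-3}$, not $|y-\hat\xi|^{-1}$, and your wavering about integrability near $\hat\xi$ for $\ell\leq 3$ is resolved precisely by the smallness $|q(\hat\xi)|=O(|a|)$ that \eqref{QA-exterior} encodes), but the strategy and conclusions match the paper's.
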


\begin{proof}
To prove \eqref{Q234}. Since $q$ is Kelvin invariant, then $q(z)=O(|z|^{-1})$ as $z\to \infty$.
    When $|z-b|<\e$, one has
    \[\left|q\left(\frac{\e(z-b)}{|z-b|^2}+\hat \xi\right)\right|\leq C\frac{|z-b|}{\e},\]
    then  $|Q_A(z)|\leq \e^{-\frac12}$ and
    \[\int_{|z-b|<\e}|Q_A|^\ell dz\leq C\e^{3-\frac{\ell}{2}}.\]
    When $\e<|z-b|$, we have \eqref{QA-exterior}. One can integrate its right-hand side respectively.
    \begin{align*}
        \e^{\frac32\ell}\int_{\{z\in \Sigma_K:|z-b|>\e\}} \frac{dz}{|z-b|^{2\ell}}\leq \begin{cases}C\e^{\frac{3}{2}}K^{-1}\log K& \text{if }\ell=1,\\C\e^{3-\frac{\ell}{2}}&\text{if }\ell\geq 2.\end{cases}
    \end{align*}
    \begin{align*}
        \e^{\frac{\ell}{2}}|a|^\ell\int_{\{z\in \Sigma_K:|z-b|>\e\}}\frac{dz}{|z-b|^\ell}\leq \begin{cases}
          C\e^{\frac{1}{2}}|a| K^{-1} &\text{if }\ell=1,\\
          C\e|a|^2 K^{-1}\log K &\text{if }\ell=2,\\
          C\e^{\frac{3}{2}}|a|^3|\log \e|&\text{if }\ell=3,\\
          C\e^{3-\frac{\ell}{2}}|a|^\ell&\text{if }\ell=4,5.
        \end{cases}
    \end{align*}
    where we have used the estimates
     \begin{align}\label{def:rhob}
        \rho_2(b):=\frac{1}{4\pi}\int_{\Sigma_K}\frac{dz}{|z-b|^2}\leq C\frac{\log K}{K},\quad \rho_1(b):=\frac{1}{4\pi}\int_{\Sigma_K}\frac{dz}{|z-b|}\leq \frac{C}{K}.
    \end{align}
     The proof of \eqref{Q234} is complete by combining the above three equations.

To prove \eqref{Q6} and \eqref{Q5-1}, we split the integral
$$\int_{\SK}Q_A^\ell dz= \int_{\R^3 }Q_A^\ell dz-\int_{\R^3\setminus \SK}Q_A^\ell dz\quad \mbox{for}\quad \ell=5,6.$$
On one hand, we use \eqref{QA-exterior} to get
 \begin{align*}
    \int_{\R^3\setminus \SK} Q_A^\ell dz\leq C\e^{\frac{3\ell}{2}}K^{2\ell-3}+C \e^\ell |a|^\ell K^{\ell-3}\leq C\e^{\frac{3\ell}{2}}K^{2\ell-3},\quad \ell=5,6.
\end{align*}
On the other hand, making a change of variables, $z=b+\e \frac{ R_\beta^T x}{|x|^2}$, one obtains that
\begin{align*}
    \int_{\R^3}Q_A^6 dz=\int_{\R^3}q( z+\hat \xi)^6dz=\int_{\R^3}q(z)^6dz.
\end{align*}
 and
    \begin{align*}
    \int_{\R^3} Q_A^5dz&=\int_{\R^3}\frac{\e^{\frac52}}{|z-b|^5}\left(q\left(\frac{\e  (z-b)}{|z-b|^2}+\hat \xi\right)\right)^5dz\\
    &=\e^{\frac12}\int_{\R^3}\frac{1}{|z|}[q( z+\hat \xi)]^5dz=\e^{\frac12}\int_{\R^3}\frac{1}{|z-\hat \xi|}q(z)^5dz=  \e^{\frac12}4\pi q(\hat \xi).
\end{align*}
 where we have used \[\int_{\R^3}\frac{1}{|z-\xi|}q(z)^5dz=  4\pi q(\xi)\] for any $\xi$ in the last step.
This completes \eqref{Q6} and \eqref{Q5-1}.

The proof of \eqref{Q5-2} and \eqref{Q5-3} is similar to the previous proofs. We omit it.
\end{proof}



It follows from \eqref{bd-tTA} and \eqref{Q234} that $I=O(\e^4K^4)$.
Therefore
\begin{align}\label{JPQ-1m}
    J(PQ_A)=\frac{1}{3}\int_{\Sigma_K}Q_A^6dz+\frac{1}{2}\int_{\Sigma_K}Q_A^5(T_A+\varphi_A+\psi_A)dz+O(\e^{4}K^4).
\end{align}
We will compute the first two terms on the right-hand side.

\begin{lemma}\label{lem:QA5phiA}
Under the constraint \eqref{constraint-m}, we have
    \begin{align*}
        \frac{1}{4\pi}\int_{\Sigma_K}Q_A^5\varphi_Adz&=\e  [q(\hat \xi)]^2H_0^e(b,b)+\e^2  q(\hat \xi)\w1\cdot[\nabla_z H_0^e(b,b)+\nabla_p H_0^e(b,b)]\\
        &\quad +\e^3 \w1^T\nabla_{z,p}^2H_0^e(b,b)w+O(\e^4K^4).
    \end{align*}
\end{lemma}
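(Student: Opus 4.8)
Proof proposal for Lemma 4.3 (the statement computing $\frac{1}{4\pi}\int_{\Sigma_K} Q_A^5\varphi_A\,dz$).

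The plan is to feed the expansion of $\varphi_A$ from Lemma~\ref{lem:phiA-m} into the integral and then integrate against $Q_A^5$ using the moment estimates from Lemma~\ref{lem:Qk-rough}. First I would write $\varphi_A(z) = \e^{\frac12}q(\hat\xi)H_0^e(z,b) + \e^{\frac32}\w1\cdot\nabla_p H_0^e(z,b) + \frac16\e^{\frac52}\W2_{ij}\partial^2_{p_ip_j}H_0^e(z,b) + O(\e^{\frac72}K^4)$, and expand each of $H_0^e(z,b)$, $\nabla_p H_0^e(z,b)$, $\partial^2_{p_ip_j}H_0^e(z,b)$ in a Taylor series in $z$ around the center $z=b$: for instance $H_0^e(z,b) = H_0^e(b,b) + \nabla_z H_0^e(b,b)\cdot(z-b) + \frac12 (z-b)^T\nabla_z^2 H_0^e(b,b)(z-b) + \dots$. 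The point is that $H_0^e$ and its derivatives are smooth in $z$ near $b$ with derivative bounds $|\nabla^m H_0^e(z,p)|\le CK^{m+1}$ (Lemma~\ref{lem:H0e-d}), so the Taylor remainders are controlled by powers of $K$ times powers of $|z-b|$.

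Next I would substitute these Taylor expansions into $\int_{\Sigma_K}Q_A^5\varphi_A\,dz$ and collect terms by the power of $(z-b)$ that appears, pairing them with the corresponding moment integral:
\begin{align*}
\int_{\Sigma_K}Q_A^5\,dz &= \e^{\frac12}4\pi q(\hat\xi) + O(\e^{\frac{15}{2}}K^7),\\
\int_{\Sigma_K}Q_A^5(z-b)\,dz &= \e^{\frac32}4\pi R_\beta^T\nabla q(\hat\xi) + O(\e^{\frac{15}{2}}K^6) = \e^{\frac32}4\pi \w1 + O(\e^{\frac{15}{2}}K^6),\\
\int_{\Sigma_K}|Q_A|^5|z-b|^2\,dz &= O(\e^{\frac52}),
\end{align*}
from Lemma~\ref{lem:Qk-rough}. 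The leading term comes from $[\e^{\frac12}q(\hat\xi)H_0^e(b,b)]\cdot[\e^{\frac12}4\pi q(\hat\xi)] = 4\pi\e[q(\hat\xi)]^2 H_0^e(b,b)$. The $\e^2$-order term has two contributions: from $\e^{\frac12}q(\hat\xi)\nabla_z H_0^e(b,b)\cdot(z-b)$ integrated against $Q_A^5$ (giving $\e^2 q(\hat\xi)\w1\cdot\nabla_z H_0^e(b,b)$ after a $4\pi$), and from $\e^{\frac32}\w1\cdot\nabla_p H_0^e(b,b)$ integrated against $Q_A^5$ (giving $\e^2 q(\hat\xi)\w1\cdot\nabla_p H_0^e(b,b)$); together these produce $4\pi\e^2 q(\hat\xi)\w1\cdot[\nabla_z H_0^e(b,b)+\nabla_p H_0^e(b,b)]$. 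The $\e^3$-order terms combine: the quadratic Taylor term of $H_0^e(z,b)$ paired with $\int |Q_A|^5|z-b|^2$, the linear-in-$z$ expansion of $\nabla_p H_0^e$ paired with $\int Q_A^5(z-b)$, and the $\e^{\frac52}$ Hessian term paired with $\int Q_A^5$. Using $\int_{\R^3} Q_A^5 (z-b)_i(z-b)_j\,dz$-type identities (computed by the Kelvin change of variables $z = b + \e R_\beta^T x/|x|^2$ as in the proof of Lemma~\ref{lem:Qk-rough}), these three pieces should organize into $4\pi\e^3\,\w1^T\nabla^2_{z,p}H_0^e(b,b)\w1$; the symmetry $\partial^2_{p_ip_j}H_0^e$ versus the mixed Hessian $\nabla^2_{z,p}H_0^e$ and the trace term $\Delta_z H_0^e$ versus $\Delta_p H_0^e$ need to be tracked carefully here, using that $H_0^e$ is harmonic in its first argument (so traces of second derivatives vanish or combine cleanly).

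The main obstacle, I expect, is the bookkeeping at order $\e^3$: one must be sure that the remainder terms — Taylor remainders of $H_0^e$ (which cost $K^{m+1}$ per derivative), the $O(\e^{\frac72}K^4)$ tail of $\varphi_A$, and the tails of the moment identities like $O(\e^{\frac{15}{2}}K^7)$ — all combine to give an error no worse than $O(\e^4 K^4)$ under the constraint $\e K^3\in[\delta,\delta^{-1}]$. For example, a quadratic Taylor remainder of $H_0^e$ contributes $\e^{\frac12}\cdot CK^3 \cdot \int |Q_A|^5|z-b|^2 = O(\e^{\frac12}K^3\e^{\frac52}) = O(\e^3 K^3) = O(\e^{\frac{8}{3}}\cdot\e^{\frac13}K^3)$, which under $\e K^3\sim 1$ is $O(\e^3K^3)\le O(\e^4K^4)\cdot(\e K)^{-1}$... so one must check this is actually absorbed — more precisely $\e^3K^3 = \e^4K^4\cdot(\e K)^{-1}$ and $\e K = O(K^{-2})$, hence $\e^3 K^3 = O(\e^4K^4\cdot K^2)$ which is \emph{larger}; so in fact the cubic term $\e^3\w1^T\nabla^2_{z,p}H_0^e(b,b)\w1$ is genuinely of order $\e^3K^3$ and must be kept, while the quartic and higher Taylor remainders (order $\e^{\frac12}K^4\int|Q_A|^5|z-b|^3 = O(\e^4K^4)$ or smaller, since $\int|Q_A|^5|z-b|^3 = O(\e^3)$ by a similar Kelvin computation) land exactly at the claimed error. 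I would therefore carry the Taylor expansion of each piece of $\varphi_A$ to exactly the order needed so that the truncation error is $O(\e^4K^4)$, compute the three explicit moment integrals $\int Q_A^5\,dz$, $\int Q_A^5(z-b)\,dz$, $\int Q_A^5(z-b)\otimes(z-b)\,dz$ exactly via the Kelvin substitution, and assemble the result, checking the harmonicity of $H_0^e$ in its first slot to simplify the Hessian contractions.
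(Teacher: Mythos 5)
Your overall strategy matches the paper's: substitute the expansion of $\varphi_A$ from Lemma~\ref{lem:phiA-m}, Taylor-expand $H_0^e$ and its derivatives in $z$ around $b$, pair with the moment integrals of $Q_A^5$ from Lemma~\ref{lem:Qk-rough}, and track errors. However, there is a genuine gap in the bookkeeping at order $\e^3$, and a related slip in the error estimates.

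You claim that the $\e^3$ term in the answer is assembled from three contributions (the quadratic Taylor term of $H_0^e(z,b)$ against $\int Q_A^5(z-b)\otimes(z-b)$; the linear Taylor term of $\nabla_p H_0^e$ against $\int Q_A^5(z-b)$; and the $\frac16\e^{\frac52}W_{ij}\partial^2_{p_ip_j}H_0^e$ piece of $\varphi_A$ against $\int Q_A^5$), and that these ``should organize into'' $4\pi\e^3w^T\nabla^2_{z,p}H_0^e(b,b)w$, perhaps with the help of harmonicity of $H_0^e$. This is not what happens. Only the middle one contributes at the leading $\e^3K^3$ order: it produces $\e^{\frac32}w\cdot[\nabla^2_{z,p}H_0^e(b,b)\cdot\e^{\frac32}4\pi w]=4\pi\e^3w^T\nabla^2_{z,p}H_0^e(b,b)w$ directly, with $|w|^2=|\nabla q(\hat\xi)|^2=O(1)$. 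The other two are suppressed by a factor of $q(\hat\xi)$: since $\hat\xi=\xi+a\nu(\xi)$ with $\xi\in\Gamma=\mathcal N(q)\cap\{z_3=0\}$ and $|a|\leq\delta^{-1}\e\log K$, one has $|q(\hat\xi)|=O(\e\log K)$. Thus the quadratic Taylor term of $\e^{\frac12}q(\hat\xi)H_0^e(z,b)$ gives $\e^{\frac12}q(\hat\xi)\cdot O(K^3)\cdot O(\e^{\frac52})=O(\e^4K^3\log K)=O(\e^4K^4)$, and the $\frac16\e^{\frac52}W_{ij}\partial^2_{p_ip_j}H_0^e(b,b)\int Q_A^5\,dz$ piece gives $\e^{\frac52}\cdot O(K^3)\cdot\e^{\frac12}4\pi q(\hat\xi)=O(\e^4K^3\log K)=O(\e^4K^4)$. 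Both land squarely in the error; no cancellation, harmonicity, or trace identity is needed, and no nontrivial combination of three pieces occurs.

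This same missing $q(\hat\xi)$ factor is the source of your worry that the quadratic Taylor remainder is ``genuinely of order $\e^3K^3$ and must be kept.'' You estimate it as $\e^{\frac12}\cdot CK^3\cdot\int|Q_A|^5|z-b|^2=O(\e^3K^3)$, forgetting that the coefficient in $\varphi_A$ multiplying $H_0^e(z,b)$ is $\e^{\frac12}q(\hat\xi)$, not $\e^{\frac12}$. Restoring the $q(\hat\xi)=O(\e\log K)$ factor turns $\e^3K^3$ into $\e^4K^3\log K=O(\e^4K^4)$, so the remainder is absorbed into the claimed error. Your subsequent inference that this shows ``the cubic term $\e^3w^T\nabla^2_{z,p}H_0^e(b,b)w$ must be kept'' is a non sequitur: that term is not the quadratic Taylor remainder of $H_0^e(z,b)$, but the linear Taylor term of $\nabla_p H_0^e(z,b)$ paired with the first moment of $Q_A^5$, which is $O(\e^3K^3)$ for the independent reason that it carries $|w|^2=O(1)$ rather than $|q(\hat\xi)|^2$. (Minor: your claim $\int|Q_A|^5|z-b|^3=O(\e^3)$ should be $O(\e^{\frac72})$, but this overestimate is harmless.)
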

\begin{proof}
Recall the expansion of $\varphi_A$ in \eqref{phiA-expn}. Then applying Lemma \ref{lem:Qk-rough} yields
\begin{align}
\begin{split}\label{Q5varphi-m}
    \int_{\Sigma_K}Q_A^5\varphi_Adz&
    =\e^{\frac12}  q(\hat \xi)\int_{\Sigma_K}Q_A^5H_0^e(z,b)dz+ \e^{\frac32}\w1\cdot \int_{\Sigma_K}Q_A^5\nabla_pH_0^e(z,b)dz\\
    &\quad +\frac16\e^{\frac52}\W2_{ij}\int_{\Sigma_K}Q_A^5 \partial_{p_ip_j}^2H_0^e(z,b)dz +O(\e^4K^4).
    \end{split}
\end{align}

Let us compute each term on the right-hand side. Using Lemma \ref{lem:H0e-d}, one has the Taylor expansion \[H_0^e(z,b)=H_0^e(b,b)+\nabla_zH_0^e(b,b)\cdot (z-b)+O(|z-b|^2K^{3}).\]
Applying \eqref{Q5-1}-\eqref{Q5-3}, we have
\begin{align*}
    \int_{\Sigma_K}Q_A^5H_0^e(z,b)dz&=H_0^e(b,b)\int_{\Sigma_K}Q_A^5dz+\nabla_z H_0^e(b,b)\cdot \int_{\Sigma_K}Q_A^5(z-b)dz+O(\e^{\frac52}K^{3})\\
    &=\e^{\frac12}4\pi q(\hat\xi)H_0^e(b,b)+\e^{\frac32} 4\pi w\cdot  \nabla_zH_0^e(b,b)+O(\e^{\frac52}K^{3}).
\end{align*}
Similarly, using $\nabla_pH_0^e(z,b)=\nabla_pH_0^e(b,b)+\nabla_{z,p}^2H_0^e(b,b)\cdot (z-b)+O(|z-b|^2K^{4})$,
\begin{align*}
    \int_{\Sigma_K}Q_A^5\nabla_pH_0^e(z,b)dz&=\nabla_pH_0^e(b,b)\int_{\Sigma_K}Q_A^5dz+\nabla_{z,p}^2H_0^e(b,b)\cdot \int_{\Sigma_K}Q_A^5 (z-b)dz+O(\e^{\frac52}K^4)\\
    & =\e^{\frac12}4\pi q(\hat \xi) \nabla_p H_0^e(b,b)+\e^{\frac32}4\pi \partial_{z,p}^2H_0^e(b,b)R_\beta^T\nabla q(\hat \xi)+O(\e^{\frac52}K^4)
\end{align*}
and
\begin{align*}
    \int_{\Sigma_K}Q_A^5\partial_{p_ip_j}^2H_0^e(z,b)dz&=\int_{\Sigma_K}Q_A^5\partial_{p_ip_j}^2H_0^e(b,b)dz+\nabla_z\partial_{p_ip_j}^2H_0^e(b,b)\cdot\int_{\Sigma_K}Q_A^5 (z-b)dz\\
    &\quad+O(\e^{\frac52}K^{5})\\
    &=O(\e^{\frac12}|q(\hat \xi)|K^2)+O(\e^{\frac32}K^4)+O(\e^{\frac52}K^{5})=O(\e^{\frac32}K^4).
\end{align*}
Inserting the above three estimates back to \eqref{Q5varphi-m}, we can get the conclusion.
\end{proof}

\begin{lemma}\label{lem:QA5TA}
Under the constraint \eqref{constraint-m}, we have
\begin{align*}
    \frac{1}{4\pi}\int_{\Sigma_K}Q_A^5T_Adz&=\e [q(\hat \xi)]^2\gamma(b,b)+\e^2 q(\hat \xi)w\cdot[\nabla_z\gamma(b,b)+\nabla_p\gamma(b,b)]\\
    &\quad +\e^3w^T\nabla_{z,p}^2\gamma(b,b)w+O(\e^4 K^4).
\end{align*}
\end{lemma}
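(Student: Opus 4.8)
The plan is to mirror, almost verbatim, the scheme used for Lemma~\ref{lem:QA5phiA}, simply replacing the regularized Green kernel $H_0^e$ by the auxiliary function $\gamma$. First I would insert the expansion of $T_A$ from Lemma~\ref{lem:bd-TA} into the integral, obtaining
\begin{align*}
\int_{\Sigma_K}Q_A^5 T_A\,dz &= \e^{\frac12}q(\hat\xi)\int_{\Sigma_K}Q_A^5\gamma(z,b)\,dz + \e^{\frac32}\w1\cdot\int_{\Sigma_K}Q_A^5\nabla_p\gamma(z,b)\,dz \\
&\quad + \tfrac16\e^{\frac52}\W2_{ij}\int_{\Sigma_K}Q_A^5\partial^2_{p_ip_j}\gamma(z,b)\,dz + O\!\Big(\e^{\frac72}K^4\int_{\Sigma_K}|Q_A|^5\,dz\Big),
\end{align*}
where the last error is already $O(\e^4K^4)$ since $\int_{\Sigma_K}|Q_A|^5\,dz=O(\e^{1/2})$ by~\eqref{Q234}.

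Next I would Taylor-expand each $\gamma$-factor in its \emph{first} variable around $z=b$. From Lemma~\ref{lem:gamma-first} one has $|\nabla_z^m\gamma(z,b)|\leq CK^{m+1}$ near $z=b$ (each summand of $\gamma$ is harmonic in $z$ with its singularity at distance $\gtrsim j/K$ from $b$), so
\[
\gamma(z,b)=\gamma(b,b)+\nabla_z\gamma(b,b)\cdot(z-b)+O(|z-b|^2K^3),
\]
and similarly $\nabla_p\gamma(z,b)=\nabla_p\gamma(b,b)+\nabla_{z,p}^2\gamma(b,b)(z-b)+O(|z-b|^2K^4)$ and $\partial^2_{p_ip_j}\gamma(z,b)=\partial^2_{p_ip_j}\gamma(b,b)+O(|z-b|K^4)$. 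Plugging these in and using~\eqref{Q5-1}, \eqref{Q5-2}, \eqref{Q5-3} together with $\w1=R_\beta^T\nabla q(\hat\xi)$ yields
\begin{align*}
\e^{\frac12}q(\hat\xi)\int_{\Sigma_K}Q_A^5\gamma(z,b)\,dz &= 4\pi\e[q(\hat\xi)]^2\gamma(b,b)+4\pi\e^2 q(\hat\xi)\,\w1\cdot\nabla_z\gamma(b,b)+O(\e^3 q(\hat\xi)K^3),\\
\e^{\frac32}\w1\cdot\int_{\Sigma_K}Q_A^5\nabla_p\gamma(z,b)\,dz&=4\pi\e^2 q(\hat\xi)\,\w1\cdot\nabla_p\gamma(b,b)+4\pi\e^3\w1^T\nabla_{z,p}^2\gamma(b,b)\w1+O(\e^4K^4),\\
\tfrac16\e^{\frac52}\W2_{ij}\int_{\Sigma_K}Q_A^5\partial^2_{p_ip_j}\gamma(z,b)\,dz&=O(\e^3 q(\hat\xi)K^3)+O(\e^4K^4).
\end{align*}
Finally, the constraint $|a|\leq\delta^{-1}\e\log K$ from~\eqref{constraint-m} gives $|q(\hat\xi)|\leq C\e\log K$, hence $\e^3 q(\hat\xi)K^3=O(\e^4K^3\log K)=O(\e^4K^4)$; adding the three lines, recombining $\nabla_z\gamma(b,b)+\nabla_p\gamma(b,b)$, and dividing by $4\pi$ gives the stated identity.

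The argument is routine given the earlier lemmas; the only delicate point is the error bookkeeping — checking that every remainder collapses into $O(\e^4K^4)$. This rests on three ingredients: the polynomial-in-$K$ derivative bounds for $\gamma$ (Lemma~\ref{lem:gamma-first}), the decay estimates $\int_{\Sigma_K}|Q_A|^5\,dz=O(\e^{1/2})$ and $\int_{\Sigma_K}|Q_A|^5|z-b|^2\,dz=O(\e^{5/2})$ from Lemma~\ref{lem:Qk-rough}, and the smallness $|q(\hat\xi)|\lesssim\e\log K$; the last is precisely what allows the cross term $\e^3 q(\hat\xi)K^3$ to be absorbed, so one should keep the factor $q(\hat\xi)$ explicit in the error terms rather than bounding it by a constant prematurely.
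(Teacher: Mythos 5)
Your proposal is correct and follows exactly the route the paper intends: the paper's own ``proof'' of Lemma~\ref{lem:QA5TA} is the one-sentence remark that it is ``almost identical'' to Lemma~\ref{lem:QA5phiA}, using the expansion \eqref{TA-expn} of $T_A$ in place of \eqref{phiA-expn} of $\varphi_A$ and the derivative bounds for $\gamma$ from Lemma~\ref{lem:gamma-first} in place of those for $H_0^e$. Your write-up is precisely a spelled-out version of that substitution, with correct error bookkeeping (in particular, the observation that $|q(\hat\xi)|\lesssim\e\log K$ is what lets the cross term $\e^3 q(\hat\xi)K^3$ be absorbed into $O(\e^4K^4)$), so nothing is missing.
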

\begin{proof}
The proof is almost identical to the previous one.
One should use $T_A$ in \eqref{TA-expn} and the estimates of $\gamma$ in Lemma \ref{lem:gamma-first}. We leave the details to the readers.
\end{proof}

\begin{lemma}\label{lem:QA5psiA}
Under the constraint \eqref{constraint-m}, we have
    \begin{align*}
        \int_{\Sigma_K}Q_A^5\psi_Adz=-\la \int_{\Sigma_K}Q_A^2dz+O(\e^3K\log K).
    \end{align*}
\end{lemma}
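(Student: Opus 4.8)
The plan is to exploit the equation \eqref{psi-F} satisfied by $\psi_A$ together with an integration-by-parts identity pairing $\psi_A$ against $Q_A^5$. The key observation is that $Q_A$ solves $\Delta Q_A + Q_A^5 = 0$ in $\R^3$ (since $q$ solves the Yamabe equation and $Q_A$ is obtained from $q$ by a conformal transformation), so $Q_A^5 = -\Delta Q_A$. Therefore, writing $PQ_A = Q_A^e - \varphi_A - \psi_A$, I would compute
\begin{align*}
    \int_{\Sigma_K} Q_A^5 \psi_A \, dz = -\int_{\Sigma_K} (\Delta Q_A)\psi_A \, dz = -\int_{\Sigma_K} Q_A (\Delta \psi_A)\, dz + (\text{boundary terms}),
\end{align*}
and substitute $\Delta \psi_A = -\la \psi_A + \la(Q_A^e - \varphi_A) + F$ from \eqref{psi-F}. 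This yields
\begin{align*}
    \int_{\Sigma_K} Q_A^5 \psi_A \, dz = -\la \int_{\Sigma_K} Q_A \psi_A\, dz + \la\int_{\Sigma_K} Q_A(Q_A^e - \varphi_A)\,dz + \int_{\Sigma_K} Q_A F\, dz + (\text{boundary terms}).
\end{align*}

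Next I would estimate each piece. Since $Q_A^e - Q_A = -T_A$ and $|T_A| = O(\e^{3/2}K^2)$ (Lemma \ref{lem:bd-TA}), $|\varphi_A| = O(\e^{3/2}K^2)$ (Lemma \ref{lem:phiA-m}), and $|F| = O(\e^{3/2}K^2)$ (see \eqref{F-bd}), while $\int_{\Sigma_K}|Q_A|\,dz \leq C\e^{3/2}K^{-1}\log K$ by \eqref{Q234}, the terms $\la\int Q_A(-T_A-\varphi_A)$ and $\int Q_A F$ are all of order $O(\e^3 K\log K)$. The term $\la\int_{\Sigma_K} Q_A \cdot Q_A\, dz = \la\int_{\Sigma_K}Q_A^2\,dz$ is precisely the main term; note $\int_{\Sigma_K}Q_A^2\,dz = O(\e^2)$ by \eqref{Q234}, which is the dominant contribution. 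Finally, $|\psi_A| = O(\e^{3/2}|\log\e|)$ by Lemma \ref{lem:psiA-m}, so $\la\int_{\Sigma_K}Q_A\psi_A\,dz \leq C\e^{3/2}|\log\e|\cdot \e^{3/2}K^{-1}\log K = O(\e^3 K^{-1}(\log K)^2)$, which is absorbed into $O(\e^3 K\log K)$ (recalling $\e K^3 \sim 1$, so $\e \sim K^{-3}$ and these error terms are genuinely lower order than $\e^2$).

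The main obstacle I anticipate is controlling the boundary terms arising from the two integrations by parts. On the portions of $\partial\Sigma_K$ lying on the two flat faces $\theta = \pm\theta_0$, one must use that $\psi_A = 0$ there (from \eqref{psi-F}), so the term $\int_{\partial\Sigma_K}(\partial_\nu Q_A)\psi_A$ vanishes there, but one still picks up $-\int_{\partial\Sigma_K}Q_A(\partial_\nu\psi_A)$; however $Q_A$ need not vanish on those faces. On $\partial B_1$, $\psi_A = 0$ as well. The resolution is that $Q_A$ restricted to $\partial\Sigma_K$ is small: on the flat faces, $|z - b| \geq c/K$ so by \eqref{QA-exterior} $|Q_A| \leq C\e^{1/2}|a|K + C\e^{3/2}K^2 = O(\e^{3/2}K^2)$ (using $|a| \leq \delta^{-1}\e\log K$), and combined with gradient estimates on $\psi_A$ (which follow from standard elliptic estimates applied to \eqref{psi-F}, giving $|\nabla\psi_A| = O(\e^{3/2}K|\log\e|)$ on the rescaled region, hence appropriate bounds after rescaling) and the boundary measure $O(K^{-1})$, one checks these boundary contributions are also $O(\e^3 K\log K)$ or smaller. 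Care is needed to organize the rescaling $y = (z-b)/\e$ consistently with the bound in Lemma \ref{lem:psiA-m} when estimating normal derivatives; alternatively one may avoid the second integration by parts altogether by writing $\int Q_A^5\psi_A = \int(-\Delta Q_A)\psi_A = \int \nabla Q_A\cdot\nabla\psi_A - \int_{\partial\Sigma_K}(\partial_\nu Q_A)\psi_A$ and then $= \int\nabla Q_A\cdot\nabla\psi_A$ since $\psi_A=0$ on $\partial\Sigma_K$, and separately $\int\nabla Q_A\cdot\nabla\psi_A = -\int Q_A\Delta\psi_A + \int_{\partial\Sigma_K}Q_A\partial_\nu\psi_A$; this makes the sole boundary term $\int_{\partial\Sigma_K}Q_A\partial_\nu\psi_A$ explicit and is the cleanest route.
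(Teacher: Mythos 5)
Your approach is genuinely different from the paper's, and the difference matters. The paper substitutes $Q_A^5 = -\Delta PQ_A - \lambda PQ_A$ (from \eqref{301}) rather than $Q_A^5 = -\Delta Q_A$. This seemingly minor change is the crux: since both $PQ_A$ and $\psi_A$ satisfy homogeneous Dirichlet conditions on $\partial\Sigma_K$, the double integration by parts
\[
\int_{\Sigma_K}(\Delta PQ_A+\lambda PQ_A)\psi_A\,dz=\int_{\Sigma_K}PQ_A(\Delta\psi_A+\lambda\psi_A)\,dz
\]
generates \emph{no} boundary terms whatsoever; one then only has to insert $\Delta\psi_A+\lambda\psi_A=\lambda(Q_A^e-\varphi_A)+F$, expand, and estimate volume integrals against $\int_{\Sigma_K}|Q_A|\,dz\leq C\e^{3/2}K^{-1}\log K$ and the sup bounds on $\tilde T_A,T_A,\varphi_A,F$. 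Your route with $Q_A$, by contrast, inherits the single boundary term $\int_{\partial\Sigma_K}Q_A\,\partial_\nu\psi_A\,dS$, which you correctly isolate as the delicate piece.

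The gap is that the bound on $\partial_\nu\psi_A|_{\partial\Sigma_K}$ you need is not established anywhere in the paper: Lemma \ref{lem:psiA-m} gives only $\|\psi_A\|_{L^\infty(\Sigma_K)}=O(\e^{3/2}|\log\e|)$, obtained by a barrier argument, and the proof as written says nothing about the gradient. Upgrading to a boundary gradient estimate requires additional work (a Schauder or barrier argument for $\nabla\tilde\psi$ near $\partial\tilde\Sigma_K$, together with care near the edges of the wedge $\Sigma_K$ where the flat faces meet the spherical cap), and even granting the natural estimate $|\partial_\nu\psi_A|\lesssim K\e^{3/2}|\log\e|$, the boundary term comes out to
\[
\int_{\partial\Sigma_K}|Q_A|\,|\partial_\nu\psi_A|\,dS\;\lesssim\;K\e^{3/2}|\log\e|\cdot\int_{\partial\Sigma_K}\frac{\e^{3/2}}{|z-b|^2}\,dS\;\lesssim\;\e^3K(\log K)^2,
\]
which exceeds the lemma's stated error $O(\e^3K\log K)$ by a factor of $\log K$ (though it would still be absorbed in the downstream error of \eqref{JPQ-412}). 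Your interior-term accounting is correct and mirrors the paper's, but the boundary term is a genuine obstacle that the projected bubble $PQ_A$ was designed to eliminate, and your proposal leaves it unresolved.
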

\begin{proof}
Using \eqref{301} and \eqref{psi-F}
\begin{align}
    \begin{split}\label{QA5=QA2}
    \int_{\Sigma_K}Q_A^5\psi_Adz&=-\int_{\Sigma_K}(\Delta PQ_A+\la PQ_A)\psi_Adz=-\int_{\Sigma_K}PQ_A(\Delta \psi_A+\la \psi_A)dz\\
    &=-\la\int_{\Sigma_K}(Q_A-\tilde T_A)(Q_A-T_A-\varphi_A)dz+\int_{\Sigma_K}PQ_A  Fdz\\
    &=-\la \int_{\Sigma_K}Q_A^2dz+\la \int_{\SK}Q_A(\tilde T_A+T_A+\varphi_A)dz+\int_{\Sigma_K}PQ_A  Fdz,
    \end{split}
\end{align}
where $F$ is defined in \eqref{def:F}.
Recall \eqref{bd-tTA}, Lemma \ref{lem:bd-TA}, Lemma \ref{lem:phiA-m} and \eqref{Q234},
\begin{align*}
    \left|\int_{\Sigma_K}Q_A(\tilde T_A+T_A+\varphi_A)dz\right|\leq C\e^{\frac32}K^2\int_{\Sigma_K}|Q_A|dz\leq C\e^3K\log K.
\end{align*}
Using \eqref{F-bd}, \eqref{Q234} and \eqref{bd-tTA}, one has
\begin{align*}
    \left|\int_{\Sigma_K}PQ_A Fdz\right|\leq C\e^{\frac{3}{2}}K^{2}\int_{\Sigma_K}|PQ_A|dz\leq C\e^3K\log K.
\end{align*}
Therefore, plugging in the above estimates back to \eqref{QA5=QA2},
\begin{align*}
    \int_{\Sigma_K}Q_A^5\psi_Adz=-\la \int_{\Sigma_K}Q_A^2dz+O(\e^3K\log K ).
\end{align*}

\end{proof}
Now, let us compute $\int_{\SK}Q_A^2dz$. We recall $\rho_2(b)$ in \eqref{def:rhob}.
Taking the derivative to $b$ on both sides, we get
\begin{align*}
    \nabla\rho_2(b)=\frac{1}{4\pi}P.V.\int_{\SK}\frac{2(z-b)}{|z-b|^4}dz=-\frac{1}{4\pi}P.V.\int_{\R^3\setminus \SK}\frac{2(z-b)}{|z-b|^4}dz.
\end{align*}
It is not hard to show that $|\nabla \rho_2(b)|\leq C\log K$ in the constraint \eqref{constraint-m}.

 \begin{lemma}\label{lem:int-QA-m}
Assume \eqref{constraint-m}, we have
   \begin{align*}
        \frac{1}{4\pi}\int_{\SK}Q_A^2dz&=
     \e^2P.V.\int_{\R^3}\frac{[q(z+ \xi)]^2}{4\pi |z|^4}dz+O(\e^3K(\log K)^2).
    \end{align*}
\end{lemma}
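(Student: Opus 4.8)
The plan is to evaluate the left-hand side by the inversion change of variables already used in the proof of Lemma~\ref{lem:Qk-rough}, and then estimate the cost of passing to the stated full-space integral. Writing $x=\e R_\beta(z-b)/|z-b|^2$, equivalently $z=b+\e R_\beta^T x/|x|^2$, one has $|z-b|=\e/|x|$ and $dz=\e^3|x|^{-6}\,dx$, so that $Q_A(z)^2\,dz=\e^2|x|^{-4}q(x+\hat\xi)^2\,dx$ and
\begin{align*}
\frac{1}{4\pi}\int_{\SK}Q_A^2\,dz=\e^2\int_{D}\frac{q(x+\hat\xi)^2}{4\pi|x|^4}\,dx,\qquad
D:=\Big\{\tfrac{\e R_\beta(z-b)}{|z-b|^2}:z\in\SK\setminus\{b\}\Big\}.
\end{align*}
Thus it suffices to show that replacing $D$ by $\R^3$ and $\hat\xi$ by $\xi$ costs $O(\e K(\log K)^2)$. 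Here $P.V.$ is read as $\lim_{\rho\to0^+}\int_{\R^3\setminus B_\rho(0)}$; since $q(\xi)=0$ for $\xi\in\Gamma$ (Lemma~\ref{2.lemma-1}) one has $|q(x+\xi)|\le C|x|$ for small $|x|$, so $q(x+\xi)^2/|x|^4=O(|x|^{-2})$ near $0$, while the Kelvin invariance gives $q(z)=O(|z|^{-1})$, $\nabla q(z)=O(|z|^{-2})$ and hence $q(x+\xi)^2/|x|^4=O(|x|^{-6})$ at infinity, so that principal value is an absolutely convergent integral.

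Two elementary facts about $D$ do the job. Since $\SK\cup\{b\}\subset B_1$ we have $|z-b|\le2$, so $D\subset\{|x|\ge\e/2\}$. Since $\operatorname{dist}(b,\partial\SK)\ge\tfrac1{2K}$ and $b$ lies in the interior of $\SK$, every $z\notin\SK$ has $|z-b|\ge\tfrac1{2K}$, whence $\R^3\setminus D\subset B_{2\e K}(0)$. Now split
\begin{align*}
\int_{D}\frac{q(x+\hat\xi)^2}{4\pi|x|^4}\,dx-\int_{\R^3}\frac{q(x+\xi)^2}{4\pi|x|^4}\,dx
=\int_{D}\frac{q(x+\hat\xi)^2-q(x+\xi)^2}{4\pi|x|^4}\,dx-\int_{\R^3\setminus D}\frac{q(x+\xi)^2}{4\pi|x|^4}\,dx.
\end{align*}
For the second term, on $\R^3\setminus D\subset B_{2\e K}(0)$ one has $|q(x+\xi)|\le C|x|$, so its modulus is at most $C\int_{B_{2\e K}(0)}|x|^{-2}\,dx=O(\e K)$. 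For the first term, use $q(x+\hat\xi)^2-q(x+\xi)^2=(q(x+\hat\xi)-q(x+\xi))(q(x+\hat\xi)+q(x+\xi))$ with $|q(x+\hat\xi)-q(x+\xi)|\le C|a|$ (from the global bound on $\nabla q$ and $|\hat\xi-\xi|=|a|\le\delta^{-1}\e\log K$), and $|q(x+\hat\xi)+q(x+\xi)|\le C(|a|+|x|)$ for $|x|\le1$, $\le C(1+|x|)^{-1}$ for $|x|\ge1$ (again using $q(\xi)=0$ and the decay of $q$). On $D\cap\{\e/2\le|x|\le1\}$ the integrand is then $O(|a|^2|x|^{-4}+|a||x|^{-3})$, with integral $O(|a|^2\e^{-1}+|a|\,|\log\e|)$; on $\{|x|\ge1\}$ it is $O(|a||x|^{-5})$, with integral $O(|a|)$. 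Since $\e K^3\in[\delta,\delta^{-1}]$ forces $|\log\e|\le C\log K$ and $|a|\le C\e\log K$, every one of these terms is $O(\e(\log K)^2)$. Adding up, the displayed difference is $O(\e K)$, and multiplying by $\e^2$ gives the claimed identity (with error $O(\e^3K)\subseteq O(\e^3K(\log K)^2)$).

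The argument is essentially bookkeeping once the change of variables is in place; the one genuinely delicate point is that the transformed integrand $q(x+\hat\xi)^2/|x|^4$ has a non-integrable singularity at $x=0$ (because $q(\hat\xi)\ne0$ in general), so one cannot extend $D$ to $\R^3$ before replacing $\hat\xi$ by $\xi$ — the domain $D$ must be kept until after that substitution. It is precisely the interplay between the excised ball $\{|x|\lesssim\e\}$ and the size $|a|\lesssim\e\log K$ of the normal shift — producing the term $|a|^2/\e$ — combined with $|\log\e|\sim\log K$, that accounts for the $(\log K)^2$, while the factor $K$ is the crude price of $D$ missing the large ball $B_{2\e K}(0)$ around the origin.
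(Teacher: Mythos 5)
Your proof is correct, and it takes a genuinely different decomposition from the paper's. The paper first subtracts the constant $q(\hat\xi)^2$, writing $Q_A^2=\frac{\varepsilon q(\hat\xi)^2}{|z-b|^2}+\frac{\varepsilon}{|z-b|^2}\bigl(q(\cdot)^2-q(\hat\xi)^2\bigr)$; the separated piece is absorbed into $\varepsilon q(\hat\xi)^2\rho_2(b)$, and the remainder is extended to $\R^3$ (giving a genuine principal-value integral of $[q(z+\hat\xi)^2-q(\hat\xi)^2]/|z|^4$) minus an exterior integral over $\R^3\setminus\Sigma_K$, which is treated by a Taylor expansion that extracts a factor $|a|$ from $q(\hat\xi)$. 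You instead keep $q(\hat\xi)^2$ inside, change variables on $\Sigma_K$ directly, and exploit that the image domain $D$ is automatically bounded away from the singularity ($D\subset\{|x|\ge\varepsilon/2\}$) while missing only $B_{2\varepsilon K}(0)$; you then swap $\hat\xi\mapsto\xi$ on $D$ and add back $B_{2\varepsilon K}(0)$. Both are valid, and your route is arguably more transparent since it never needs the principal-value cancellation near $x=0$. The price is a slightly weaker remainder: the boundary term $\int_{\R^3\setminus D}q(x+\xi)^2|x|^{-4}dx$ is bounded only by $O(\varepsilon K)$, so you obtain $O(\varepsilon^3K)$ overall, whereas the paper's exterior Taylor expansion, keeping the extra factor $|a|\sim\varepsilon\log K$ coming from $q(\hat\xi)$, yields the sharper implicit error $O(\varepsilon^3(\log K)^2)$. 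Since the lemma only claims $O(\varepsilon^3K(\log K)^2)$, your bound is of course sufficient.
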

\begin{proof}
Note that
\begin{align*}
    \frac{1}{4\pi}\int_{\SK}Q_A^2dz=\frac{1}{4\pi}\int_{\SK}\frac{\e}{|z-b|^2}q\left(\frac{\e R_\beta(z-b)}{|z-b|^2}+\hat \xi\right)^2dz.
\end{align*}
Notice that $\frac{1}{4\pi}\int_{\SK}\frac{\e q(\hat \xi)^2}{|z-b|^2}dz= \e [q(\hat \xi)]^2\rho_2(b)=O(\delta^{-2}\e^3K^{-1}(\log K)^2)$.
It suffices to estimate
\begin{align*}
    I=\frac{1}{4\pi}\int_{\SK}\frac{\e}{|z-b|^2}\left(q\left(\frac{\e R_\beta(z-b)}{|z-b|^2}+\hat \xi\right)^2-q(\hat \xi)^2\right)dz.
\end{align*}
Again, we split the integral into two on $\R^3$ and $\R^3\setminus \SK$. First, using a change of variable,
\begin{align*}
    &~\int_{\R^3}\frac{\e}{|z-b|^2}\left[q(\tfrac{\e R_\beta(z-b)}{|z-b|^2}+\hat \xi)^2-q(\hat \xi)^2\right]dz=\e^2P.V.\int_{\R^3}\frac{[q(z+\hat \xi)]^2-[q(\hat \xi)]^2}{|z|^4}dz\\
    &=\e^2P.V.\int_{\R^3}\frac{[q(z+\xi)]^2}{|z|^4}dz+\e^2P.V.\int_{\R^3}\frac{[q(z+\hat \xi)]^2-[q(\hat \xi)]^2-[q(z+\xi)]^2}{|z|^4}dz\\
    &=\e^2P.V.\int_{\R^3}\frac{[q(z+\xi)]^2}{|z|^4}dz+O(\e^2|a|).
\end{align*}
Second, since $\e/|z-b|\lesssim  1$ for $z\in \R^3\setminus \SK$, then
\begin{align*}
    q\left(\frac{\e R_\beta(z-b)}{|z-b|^2}+\hat \xi\right)^2-q(\hat \xi)^2=2q(\hat \xi)\nabla q(\hat \xi)\cdot \frac{\e R_\beta (z-b)}{|z-b|^2}+O(\e^2|z-b|^{-2})
\end{align*}
Therefore
\begin{align*}
    &\frac{1}{4\pi}\int_{\R^3\setminus\SK}\frac{\e}{|z-b|^2}\left(q\Big(\frac{\e R_\beta(z-b)}{|z-b|^2}+\hat \xi\Big)^2-q(\hat \xi)^2\right)dz\\
    &=\e^2 q(\hat \xi)\w1\cdot \frac{1}{2\pi }P.V.\int_{\R^3\setminus \SK}\frac{ (z-b)}{|z-b|^4}+O(\e^3)\\
    &=-\e^2 q(\hat \xi)\w1\cdot \nabla \rho_2(b)+O(\e^3)
    =O(\e^2|a|\log K).
\end{align*}
Combining the two results, the proof is complete.
\end{proof}

Inserting Lemma \ref{lem:QA5phiA}, Lemma \ref{lem:QA5TA}, Lemma \ref{lem:QA5psiA} and Lemma \ref{lem:int-QA-m} to \eqref{JPQ-1m}, we obtain
\begin{align}\label{JPQ-412}
    J(PQ_A)=\frac13\int_{\R^3}q^6dz+2\pi \Psi(A)+O(\e^3K(\log K)^2),
\end{align}
where
\begin{align}\label{PsiA-re}
\begin{split}
    \Psi(A)&=\e[q(\hat \xi)]^2\mathcal{H}(b,b)+\e^2 q(\hat \xi)w\cdot [\nabla_z\mathcal{H}(b,b)+\nabla_p\mathcal{H}(b,b)]\\
    &\quad +\e^3 w^T  \nabla_{z,p}^2\mathcal{H}(b,b)w-\la\e^2C_*(\xi).
    \end{split}
\end{align}
and $\mathcal{H}(z,p)=H_0^e(z,p)+\gamma(z,p)$ and
\begin{align*}
    C_*(\xi)=P.V.\int_{\R^3}\frac{[q(z+ \xi)]^2}{4\pi |z|^4}dz.
\end{align*}

\begin{remark}\label{rmk:JPQA-C1}
    Clearly, $\Psi(A)$ depends smoothly on the parameters of $A$. By elliptic theory and Remark \ref{rmk:Ta-C1}, it is not hard to show that the dependence of $J(PQ_A)$ on the parameters in $A$ is at least $C^1$.
\end{remark}

\section{Gluing Procedure}
In this section, we outline the gluing procedure by separating the perturbation into inner and outer components. Specifically, we will prove that the inner part is the dominant term.

Let $PQ_A$ denote the approximate solution defined in section 3
\begin{equation}
\label{g.sol}
PQ_A=Q_A-\tilde T_A=Q_A-T_A-\varphi_A-\psi_A.
\end{equation}
We introduce the following form of $PQ_A$ for convenience
\begin{align*}
PQ_{A'}=~&\e^\frac12Q_A-\e^\frac12T_A-\e^\frac12\varphi_A-\e^\frac12\psi_A\\
=~&Q_{A'}-\e^\frac12T_A-\e^\frac12\varphi_A-\e^\frac12\psi_A\\
=~&\frac{1}{|y-b_\e|}q\left(\frac{R_\beta(y-b_\e)}{|y-b_\e|^2}+\e^{-1}\xi+\e^{-1}a\nu(\xi)\right)-\e^{\frac12}\tilde T_A
\end{align*}
where $y=\frac{z}{\e}$ and $b_\e=\frac{b}{\e}.$ The Brezis-Nirenberg problem is equivalent to finding $\phi$ such that
\begin{equation}
\label{g.eq}
\Delta_y(PQ_{A'}+\phi)+\lambda\e^2(PQ_{A'}+\phi)+(PQ_{A'}+\phi)^5=0,
\end{equation}
which can be rewritten as
\begin{equation}
\label{g.eq-1}
\begin{cases}
\Delta_y\phi+\lambda\e^2\phi+5PQ_{A'}^4\phi=-E-N_\e(\phi)\quad &\mbox{in}\quad \Sigma_{K,\e},\\
\phi=0  &\mbox{on}\quad \partial\Sigma_{K,\e},
\end{cases}
\end{equation}
where $\Sigma_{K,\e}=\{y\mid \e y\in  \Sigma_k\}$,
\begin{align*}
E=&~\Delta PQ_{A'}+\lambda\e^2 PQ_{A'}+PQ_{A'}^5=PQ_{A'}^5-Q_{A'}^5\\
=&-5\e^\frac12Q_{A'}^4(T_A+\varphi_A+\psi_A)+O(\e^4K^4)Q_{A'}^3+O(\e^{6}K^6)Q_{A'}^2\\
&+O(\e^8K^8)Q_{A'}+O(\e^{10}K^{10}),
\end{align*}
and
\begin{align*}
N_\e(\phi)=(PQ_{A'}+\phi)^5-PQ_{A'}^5-5PQ_{A'}^4\phi.
\end{align*}
It is important to mention how the functions $PQ_{A'}$ and $Q_{A'}$ depend on the parameters $A'=(\Lambda,\e^{-1}a,\e^{-1}\xi,b_\e,\beta)$. Particularly, the dependence of $Q_{A'}$ on $\Lambda$ can be understood as follows:
\begin{equation*}
Q_{A'}=\left.\frac{\Lambda^\frac12}{|y-b_\e|}q\left(\frac{R_\beta\Lambda(y-b_\e)}{|y-b_\e|^2}+\e^{-1}\xi+\e^{-1}a\nu(\xi)\right)\right|_{\Lambda=1}.
\end{equation*}
Since we scale  $Q_A$ by $\e$ for the space variable, so the parameter $\Lambda$ does not appear in $Q_{A'}$ and we will not carry it in the expression of $Q_{A'}$ in the following argument.

We separate $\phi$ in the following form
\begin{equation}
\label{g.decom}
\phi(y)=\eta_{2r}^{\bar\e}(y)\phi_{\rm in}(y)+\phi_{\rm out}(y),
\end{equation}
where $\bar\e=\e K$ and
$$\eta_{2r}^{\bar \e}(y)=\begin{cases}
1,\quad &\mbox{if}\quad \bar\e|y-b_\e|\leq 2r,\\
\\
0,\quad &\mbox{if}\quad \bar\e|y-b_\e|\geq 4r.
\end{cases}$$
for some $r\in\left(0,\frac{1}{32}\right)$ and $\phi_{\rm in}(y)$, $\phi_{\rm out}(y)$ satisfy the following equation respectively
\begin{equation}
\label{g.lin-in}
\begin{cases}
\begin{aligned}
\Delta_y\phi_{\rm in}+\lambda\e^2\phi_{\rm in}+5PQ_{A'}^4\phi_{\rm in}
=&-\eta_r^{\bar\e}\left(N_\e(\eta_{2r}^{\bar\e}\phi_{\rm in}+\phi_{\rm out})+E\right)\\
&-5\eta_r^{\bar\e}PQ_{A'}^4\phi_{\rm out}
\end{aligned}
&\mbox{in}\quad B_{\frac{4r}{\bar \e}}(b_\e),\\
\\
\phi_{\rm in}=0~ &\mbox{on}\quad\partial B_{\frac{4r}{\bar \e}}(b_\e),
\end{cases}
\end{equation}
and
\begin{equation}
\label{g.lin-out}
\begin{cases}
\begin{aligned}
&\Delta_y\phi_{\rm out}+\lambda\e^2\phi_{\rm out}+5(1-\eta_r^{\bar\e})PQ_{A'}^4\phi_{\rm out}\\
&=-(1-\eta_{r}^{\bar\e})\left(E+N(\eta_{2r}^{\bar\e}\phi_{\rm in}+\phi_{\rm out})\right)+\Delta\eta_{2r}^{\bar \e}\phi_{\rm in}+2\nabla\eta_{2r}^{\bar\e}\nabla \phi_{\rm in}
\end{aligned}~&\mbox{in}\quad {\Sigma_{K,\e}},\\
\phi_{\rm out}=0 &\mbox{on}\quad \partial\Sigma_{K,\e}.
\end{cases}
\end{equation}
It is crucial to see that $|\nabla \eta_{2r}^{\bar\e}|\leq C\bar\e$ and $|\Delta\eta_{2r}^{\bar\e}|\leq C\bar\e^2$, which makes equations \eqref{g.lin-in} and \eqref{g.lin-out} weakly coupled.  Next we shall reduce this system to a single problem in the ball. To do this, we first pick out a small $\phi_{\rm in}$ and solve $\phi_{\rm out}$ from \eqref{g.lin-out}. We shall need the fact that the operator $\Delta_y+\lambda\e^2$ satisfies the Maximum principle in $\Sigma_{K,\e}$ provided $K$ is large enough and the following lemma
\begin{lemma}
\label{leg.1}
Let $\phi$ solve
\begin{equation*}
\begin{cases}
\Delta \phi+\lambda\e^2\phi+f=0\quad &\mbox{in}\quad \Sigma_{K,\e},\\
\phi=0  &\mbox{on}\quad \partial\Sigma_{K,\e}.
\end{cases}
\end{equation*}
Then we have
$$\|\phi\|_{L^\infty(\Sigma_{K,\e})}\leq \frac{C}{\bar\e^2}\|f\|_{L^\infty(\Sigma_{K,\e})}.$$
\end{lemma}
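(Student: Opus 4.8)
The plan is to produce an explicit barrier for the operator $\Delta+\lambda\e^2$ on the thin domain $\Sigma_{K,\e}$, exactly as was done for $\tilde\psi_1$ in the proof of Lemma \ref{lem:psiA-m}. The crucial geometric fact is that $\Sigma_{K,\e}=\{y:\e y\in\Sigma_K\}$ is contained in a slab of width $O(1/(K\e))=O(1/\bar\e)$ in one of the coordinate directions; say $\Sigma_{K,\e}\subset\{|y_2|\le 2\pi/(K\e)\}=\{|y_2|\le C/\bar\e\}$ (after the rotation that aligns the bisecting ray of the sector with the $y_1$-axis). Consequently the first Dirichlet eigenvalue satisfies $\lambda_1(\Sigma_{K,\e})\ge c\,\bar\e^{\,2}$, and since $\lambda\e^2=o(\bar\e^2)$ for $K$ large, the operator $\Delta+\lambda\e^2$ obeys the maximum principle on $\Sigma_{K,\e}$; this is the statement quoted just before the lemma.

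First I would normalize: set $M=\|f\|_{L^\infty(\Sigma_{K,\e})}$ and assume $M<\infty$ (otherwise there is nothing to prove). Next I would introduce the barrier
\begin{align*}
 \bar\phi(y)=\frac{CM}{\bar\e^{\,2}}\Bigl[\Bigl(\tfrac{3\pi}{\bar\e}\Bigr)^{2}-y_2^{\,2}\Bigr]\cdot\bar\e^{\,2}
 =CM\Bigl[(3\pi)^2-(\bar\e\, y_2)^2\Bigr]/\bar\e^{\,2},
\end{align*}
or more transparently $\bar\phi(y)=\frac{CM}{\bar\e^2}\bigl[(3\pi)^2-(\bar\e y_2)^2\bigr]$, which is nonnegative on $\Sigma_{K,\e}$ because $|\bar\e y_2|\le 2\pi<3\pi$ there. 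A direct computation gives
\begin{align*}
 \Delta\bar\phi+\lambda\e^2\bar\phi
 = \frac{CM}{\bar\e^2}\bigl[-2\bar\e^2+\lambda\e^2\bigl((3\pi)^2-(\bar\e y_2)^2\bigr)\bigr]
 \le -CM,
\end{align*}
for $K$ large enough, since $\lambda\e^2=\lambda\bar\e^2 K^{-2}\ll\bar\e^2$. Hence $\Delta(\bar\phi\pm\phi)+\lambda\e^2(\bar\phi\pm\phi)\le -CM\pm(-f)\le 0$ in $\Sigma_{K,\e}$ once $C$ is chosen with $CM\ge M$, and $\bar\phi\pm\phi=\bar\phi\ge 0$ on $\partial\Sigma_{K,\e}$ because $\phi=0$ there. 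Applying the maximum principle for $\Delta+\lambda\e^2$ on $\Sigma_{K,\e}$ yields $|\phi|\le\bar\phi\le \frac{C'M}{\bar\e^2}$ on $\Sigma_{K,\e}$, which is the claimed bound with a relabeled constant.

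The only genuine point requiring care — and the step I would flag as the main obstacle — is justifying that $\Delta+\lambda\e^2$ really does satisfy the maximum principle on $\Sigma_{K,\e}$ uniformly in $K$; this rests on the eigenvalue lower bound $\lambda_1(\Sigma_{K,\e})\ge c\bar\e^2$, which in turn follows from the one-dimensional Poincar\'e inequality on the slab $\{|y_2|\le C/\bar\e\}$ (the Faber--Krahn / monotonicity of $\lambda_1$ under domain inclusion). Everything else is the routine barrier argument above; no elliptic regularity beyond the comparison principle is needed, and the precise shape of $\Sigma_{K,\e}$ enters only through the width of the containing slab.
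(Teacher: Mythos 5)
Your proof is correct and follows essentially the same route as the paper: both construct the explicit quadratic barrier $\frac{C}{\bar\e^2}\bigl[\mathrm{const}-(\bar\e y_2)^2\bigr]\|f\|_{L^\infty}$ on the thin sector (the paper writes it in cylindrical form as $\bigl(\tfrac{2\pi^2}{\bar\e^2}-r^2\sin^2\theta\bigr)\|f\|_{L^\infty}$, which is the same function since $r\sin\theta=y_2$), check that $\Delta+\lambda\e^2$ applied to it is strictly negative because $\lambda\e^2=\lambda\bar\e^2/K^2\ll\bar\e^2$, and invoke the maximum principle to conclude. Your additional remark justifying the maximum principle through the Poincar\'e-type lower bound $\lambda_1(\Sigma_{K,\e})\gtrsim\bar\e^2$ for the slab matches the observation the paper makes in the proof of Lemma \ref{lem:psiA-m} and states without proof just before Lemma \ref{leg.1}.
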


\begin{proof}
We construct a function
$$w=\frac{2\pi^2}{\bar\e^2}-r^2\sin^2\theta.$$
After straightforward computation, we check that
\begin{equation*}
\Delta w+\frac{\lambda\bar\e^2}{K^2} w=-2+\frac{\lambda\bar\e^2}{K^2}\left(\frac{2\pi^2}{\bar\e^2}-r^2\sin^2\theta\right)<-1
\end{equation*}
provided $K$ is large enough. Thus, by maximum principle, we derive that
$$\left(\frac{2\pi^2}{\bar\e^2}-r^2\sin^2\theta\right)\|f\|_{L^\infty(\Sigma_{K,\e})}$$ defines a super-solution and the conclusion follows easily.
\end{proof}

By Lemma \ref{leg.1}, we get that
\begin{equation}
\label{6.lin-out-2}
\begin{aligned}
\|\phi_{\rm out}\|_{L^\infty(\Sigma_{K,\e})}
\leq ~&\frac{C}{\bar\e^2}\left\|\left(1-\eta_r^{\bar\e}\right){PQ}_{A'}^4\phi_{\rm out}\right\|_{L^\infty(\Sigma_{K,\e})}\\
&+\frac{C}{\bar\e^2}\left\|\left(1-\eta_r^{\bar\e}\right)(E+N(\eta_{2r}^{\bar\e}\phi_{\rm in}+\phi_{\rm out}))\right\|_{L^\infty(\Sigma_{K,\e})}\\
&+\frac{C}{\bar\e^2}\left\|\nabla\eta_{2r}^{\bar\e}\nabla_y\phi_{\rm in}\right\|_{L^\infty(\Sigma_{K,\e})}+\frac{C}{\bar\e^2}\|\Delta \eta_{2r}^{\bar\e}\phi_{\rm in}\|_{L^\infty(\Sigma_{K,\e})}.
\end{aligned}
\end{equation}
If we presumably assume that
\begin{equation}
\label{g.assu}
\|\phi_{\rm in}\|_*:=\sup_{y\in B_{4r/\bar\e}}
\|\langle y-b_\e\rangle \phi_{\rm in}\|+\sup_{y\in B_{4r/\bar\e}}
\|\langle y-b_\e\rangle^2 \nabla\phi_{\rm in}\|~\mbox{is small}.
\end{equation}
Then we can apply the contraction mapping principle and the term $N_\e$ has a powerlike behavior with power greater than $1$ to deduce a unique (small) solution $\phi_{\rm out}$ with
\begin{equation}
\label{g.existence}
\|\phi_{\rm out}(\phi_{\rm in})\|_{L^\infty(\Sigma_{K,\e})}
\leq \frac{C}{\bar\e^2}\|\left(1-\eta_r^{\bar\e}\right)E\|_{L^\infty(\Sigma_{K,\e})}+C\bar\e\|\phi_{\rm in}\|_*.
\end{equation}
Given any two functions $\phi_{\rm in,1}$ and $\phi_{\rm in, 2}$, we see that the corresponding solution of \eqref{g.lin-out} satisfies a Lipschitz condition of the form
\begin{equation}
\label{g.lip}
\|\phi_{\rm out}(\phi_{\rm in,1})-\phi_{\rm out}(\phi_{\rm in,2})\|_{L^\infty(\Sigma_{K,\e})}
\leq C\bar\e\|\phi_{\rm in,1}-\phi_{\rm in,2}\|_*.
\end{equation}
In addition, one can follow the similar process to derive that
\begin{equation}
\label{g.der.existence}
\begin{aligned}
\|\nabla_{A'}\phi_{\rm out}(\phi_{\rm in})\|_{L^\infty(\Sigma_{K,\e})}
\leq ~&\frac{C}{\bar\e^2}\|\nabla_{A'}((1-\eta_r^{\bar\e})E)\|_{L^\infty(\Sigma_{K,\e})}+C\bar\e\|\phi_{\rm in}\|_*\\
&+C\bar\e\|\nabla_{A'}\phi_{\rm in}\|_*+C\bar\e^2\|\phi_{\rm out}(\phi_{\rm in})\|_{L^\infty(\Sigma_{K,\e})},
\end{aligned}
\end{equation}
and
\begin{equation}
\label{g.der.lip}
\begin{aligned}
\|\nabla_{A'}(\phi_{\rm out}(\phi_{\rm in,1})-\phi_{\rm out}(\phi_{\rm in,2}))\|_{L^\infty(\Sigma_{K,\e})}
\leq ~&C\bar\e\|\phi_{\rm in,1}-\phi_{\rm in,2}\|_*\\
&+C\bar\e\|\nabla_{A'}(\phi_{\rm in,1}-\phi_{\rm in,2})\|_*.
\end{aligned}
\end{equation}
Then the full problem can be reduced to solving the nonlocal problem in the ball $B_{4r/\bar\e}(b_\e)$,
\begin{equation}
\label{g.final-lin}
\begin{cases}
\begin{aligned}
\Delta\phi_{\rm in}+\lambda\e^2\phi_{\rm in}
+5PQ_{A'}^4\phi_{\rm in}=&-\eta_r^{\bar\e}N_\e(\phi_{\rm in}+\phi_{\rm out}(\phi_{\rm in}))\\
&-\eta_r^{\bar\e}(E+5PQ_{A'}^4\phi_{\rm out}(\phi_{\rm in}))
\end{aligned}
~&\mbox{in}\quad B_{4r/\bar\e}(b_\e),\\
\phi_{\rm in}=0  \quad &\mbox{on}\quad \partial B_{4r/\bar\e}(b_\e).
\end{cases}
\end{equation}
Next section, we shall study \eqref{g.final-lin} and investigate the related linear problem.

\section{The linear and nonlinear problem}

In this section, we will study the linearized problem associated with \eqref{g.final-lin} as well as the nonlinear problem. Given that this material is now well-established and standard, we will state the main result and provide a concise outline of its proof.  For the details we refer the readers to \cite{del2003two,musso2016sign,wei2005arbitrary}.

We first study the following linearized problem
\begin{align}
\label{l.lin}
\begin{cases}
\Delta \phi+\la\e^2 \phi+5PQ_{A'}^4\phi=h+\sum\limits_{j=0}^5 c_j  PQ_{A'}^4\hat Z_j(y),&\text{in }B_{4r/\bar\e}(b_\e),\\
\phi=0&\text{on }\partial B_{4r/\bar\e}(b_\e),\\
\int_{B_{4r/\bar\e}(b_\e)} \phi PQ_{A'}^4 \hat Z_j(y)dy=0,\quad j=0,1,\cdots,5,
\end{cases}
\end{align}
where $A'=(\Lambda,\e^{-1}a,\e^{-1}\xi,b_\e,\beta)$ and $\hat Z_j(y)=\chi(y)Z_j(y)$. Here $Z_j(y)$ is the kernel function introduced in section 2.3 \footnote{The parameters $(1,\xi,0,0,\theta_*)$ are replaced by $(\Lambda,\e^{-1}\xi,\e^{-1}a,b_\e,\beta)$.} and $\chi(y)$ is the characteristic function such that
\begin{equation*}
\chi(y)=\begin{cases}
1,\quad y\in B_{2r/\bar\e}(b_\e),\\
0,\quad y\in B_{4r/\bar\e}(b_\e)^c.
\end{cases}
\end{equation*}
To state the main result concerning the linearized problem, we introduce the following weighted function space:
\begin{align}
\label{l.h}
\|h\|_{**}=\sup_{y\in B_{4r/\bar\e}(b_\e)}\left|\langle y-b_\e\rangle^{3+2\sigma}h(y)\right|,
\end{align}
\begin{align}
\label{l.phi}
\|\phi\|_{*}=\sup_{y\in B_{4r/\bar\e}(b_\e)}|\langle y-b_\e\rangle \phi(y)|+\sup_{y\in B_{4r/\bar\e}(b_\e)}|\langle y-b_\e\rangle ^{2}\nabla\phi(y)|,
\end{align}
where $\sigma$ is a sufficiently small positive number and $\langle y\rangle=\sqrt{1+|y|^2}$. The first result of this section is the following.

\begin{proposition}
\label{prl.1}
Suppose that the parameters $A$ and $\e$ satisfy the relation in \eqref{constraint-m}. Then there exists $K_0$ large enough such that for all $K>K_0$ and all $h\in C^\alpha (B_{4r/\bar\e}(b_\e))$ which is even in $z_3$, the problem \eqref{l.lin} has a unique solution $\phi\equiv L_{\e}(h)$ which is even in $z_3$, and
    \begin{align*}
        \|\phi\|_*\leq C\|h\|_{**},\quad |c_{j}|\leq C\|h\|_{**},
    \end{align*}
    and
    \begin{equation*}
    \|\nabla_{A'}\phi\|_*\leq C\|h\|_{**}.
    \end{equation*}
\end{proposition}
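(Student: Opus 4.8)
The plan is to follow the standard linear theory around a non-degenerate bubble, the two essential inputs being the non-degeneracy of $q=q_m$ (Proposition \ref{prop:non-degeneracy}) and the fact that in the rescaled variable of Section 5 the domain $B_{4r/\bar\e}(b_\e)$ exhausts $\R^3$ as $K\to\infty$, since $\bar\e=\e K=O(K^{-2})\to 0$; correspondingly, after the change of variables that turns $Q_{A'}$ into $q$ (an inversion followed by a rotation and a translation), the linearized operator $\Delta+\la\e^2+5PQ_{A'}^4$ is a small perturbation of $\Delta+5q^4$ on a domain invading $\R^3$. First I would establish the a priori bound $\|\phi\|_*\le C\|h\|_{**}$ and $|c_j|\le C\|h\|_{**}$ with $C$ independent of $K$; then existence and uniqueness follow from Fredholm's alternative; and finally differentiating the equation in $A'$ gives the bound on $\|\nabla_{A'}\phi\|_*$.

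For the a priori estimate I would argue by contradiction: suppose there are $K_n\to\infty$, parameters $A_n$ obeying \eqref{constraint-m}, data $h_n$ with $\|h_n\|_{**}\to 0$, and $z_3$-even solutions $\phi_n$ with $\|\phi_n\|_*=1$. The multipliers are controlled by testing \eqref{l.lin} against each $\hat Z_k$: integrating by parts (boundary terms vanish since $\hat Z_k$ is supported in $B_{2r/\bar\e}(b_\e)$ and $\phi_n=0$ on the outer sphere) and using $(\Delta+5Q_{A'_n}^4)Z_k=0$ together with the orthogonality conditions yields $\sum_{j}c_j^n\int PQ_{A'_n}^4\hat Z_j\hat Z_k\,dy=-\int h_n\hat Z_k\,dy+o(1)$, where the $o(1)$ absorbs $\la\e^2$, the cutoff commutator $[\Delta,\chi]Z_k$ (supported where $Z_k$ is small), and $5(PQ_{A'_n}^4-Q_{A'_n}^4)$ (controlled via $\e^{1/2}|\tilde T_{A_n}|\le C\e^2K^2\to0$ from Proposition \ref{prop:PQA=}); since $\int h_n\hat Z_k\to0$ by the weight $3+2\sigma$ and the Gram matrix on the left converges to the invertible (by non-degeneracy) Gram matrix of the $z_3$-even kernel of $\Delta+5q^4$, I get $c_j^n\to0$. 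Next, interior elliptic estimates and $\|\phi_n\|_*=1$ give, after passing to the bubble frame and a subsequence, $\phi_n\to\phi_\infty$ in $C^1_{\mathrm{loc}}$, where $\phi_\infty\in D^{1,2}(\R^3)$ is even in $z_3$, orthogonal to all $Z_j$, and solves $\Delta\phi_\infty+5q^4\phi_\infty=0$ in $\R^3$; Proposition \ref{prop:non-degeneracy}, applied in the $z_3$-even class where $\mathrm{Span}\{Z_0,\dots,Z_5\}$ is the whole kernel (the three odd generators being discarded), forces $\phi_\infty=0$. Finally, since $\Delta+\la\e^2+5PQ_{A'}^4$ obeys the maximum principle for $K$ large (as in Lemma \ref{leg.1}), a barrier argument shows that $\|\phi_n\|_*$ is bounded by the sup of $|\phi_n|$ on a fixed large ball about $b_\e$ plus $\|h_n\|_{**}+\max_j|c_j^n|$, all tending to $0$, which contradicts $\|\phi_n\|_*=1$.

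Granting the a priori bound, existence and uniqueness of $\phi=L_\e(h)$ follow from the Fredholm alternative in the Hilbert space obtained as the orthogonal complement of $\mathrm{Span}\{\hat Z_0,\dots,\hat Z_5\}$ inside the $z_3$-even part of $H^1_0(B_{4r/\bar\e}(b_\e))$, with $h\in C^\alpha$ giving $\phi$ classical; this is carried out as in \cite{del2003two,musso2016sign,wei2005arbitrary}. For the parameter derivative I would differentiate \eqref{l.lin} with respect to a component of $A'$: $\partial_{A'}\phi$ solves an equation of the same form, with right-hand side gathering $\partial_{A'}(5PQ_{A'}^4)\phi$, the terms $(\partial_{A'}c_j)PQ_{A'}^4\hat Z_j$ and $c_j\,\partial_{A'}(PQ_{A'}^4\hat Z_j)$, and the contributions from the $A'$-dependence of the domain and of $\chi$, all of $\|\cdot\|_{**}$-size $\le C(\|h\|_{**}+\|\phi\|_*)\le C\|h\|_{**}$; since $\partial_{A'}\phi$ need not be orthogonal to the $\hat Z_j$, I would write $\partial_{A'}\phi=\phi^\perp+\sum_j d_j\hat Z_j$, estimate the $d_j$ by differentiating the relations $\int\phi\,PQ_{A'}^4\hat Z_j\,dy=0$, and apply the a priori estimate to $\phi^\perp$, concluding $\|\nabla_{A'}\phi\|_*\le C\|h\|_{**}$.

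The step I expect to be the main obstacle is the blow-up analysis: one has to carry the inversion and the subsequent rotation/translation through the limit to correctly identify the operator $\Delta+5q^4$ while verifying that the correction terms $\tilde T_A$ (hence $T_A$, $\varphi_A$, $\psi_A$) and the zeroth-order term $\la\e^2$ drop out, and one must invoke the non-degeneracy of $q$ precisely in the $z_3$-even symmetry class so that exactly the six kernel generators $Z_0,\dots,Z_5$ appear in \eqref{l.lin}. The uniformity of $C$ in $K$ is automatic because all quantities are measured in the $\bar\e$-rescaled variables, in which the bubble lives at unit scale.
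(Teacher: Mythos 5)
Your proposal follows essentially the same route as the paper's proof: a compactness/contradiction argument where the multipliers $c_j$ are first controlled by testing against $\hat Z_\ell$, a blow-up to a solution of $\Delta\phi_\infty+5q^4\phi_\infty=0$ which the non-degeneracy of $q$ (restricted to the $z_3$-even class) kills, existence via Fredholm's alternative in the constrained $H^1_0$-space, and differentiation of the equation in $A'$ for the parameter estimate. The only cosmetic difference is how the local smallness near $b_\e$ is propagated to the full weighted norm: the paper introduces an intermediate weaker norm $\|\cdot\|_\mu$ and uses a Green's-representation decay estimate to upgrade to $\|\cdot\|_*$, whereas you invoke a barrier/maximum-principle argument directly; both are standard and give the same conclusion.
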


\begin{proof}
We shall prove the proposition by contradiction, we mainly follow \cite[Proposition 4.2]{musso2016sign} to sketch the major steps. Suppose that there exists a sequence $K=K_n\to\infty$ such that there are functions $\phi_n$ and $h_n$ with $\|\phi_n\|_*=1$ and $\|h_n\|_{**}=o(1)$ such that
\begin{equation}
\label{l.linphi}
\begin{cases}
\Delta\phi_n+\lambda\e^2\phi_n+5PQ_{A'}^4\phi_n=h_n+\sum\limits_{j=0}^5c_{j}PQ_{A'}\hat Z_j\quad &\mbox{in}~B_{4r/\bar\e}(b_\e),\\
\phi_n=0\quad &\mbox{on}~\partial B_{4r/\bar\e}(b_\e),\\
\int_{B_{4r/\bar\e}(b_\e)}\phi_nPQ_{A'}^4\hat Z_jdx=0\quad\mbox{for}\quad j=0,\cdots,5,
\end{cases}
\end{equation}
for certain constants $c_j$, we shall prove that $\|\phi_n\|_*\to0$ to derive a contradiction.
\medskip

\noindent {\bf Step 1}. We first establish the following:
\begin{align*}
\|\phi_{n}\|_\mu:=\sup_{y\in B_{4r/\bar\e}(b_\e)}\left|\langle y-b_\e\rangle^{1-2\mu}\phi_{n}(y)\right|+\sup_{y\in B_{4r/\bar\e}(b_\e)}\left|\langle y-b_\e\rangle^{2-2\mu}D\phi_{n}(y)\right|\to0
\end{align*}
with $\mu>0$ being a small fixed number. We shall also prove this statement by contradiction. Without loss of generality, we may take $\|\phi_{n}\|_\mu=1$. Multiplying the equation against $\hat Z_\ell$, integrating by parts twice, we get that
\begin{equation*}
\begin{aligned}
\sum_{j=0}^5c_j\int_{B_{\frac{4r}{\bar\e}}(b_\e)}PQ_{A'}^4\hat Z_j\hat Z_\ell dy=\int_{B_{\frac{4r}{\bar\e}}(b_\e)}(\Delta \hat Z_\ell+\lambda \e_n^2\hat Z_\ell+5PQ_{A'}\hat Z_\ell)\phi_{n}dy-\int_{B_{\frac{4r}{\bar\e}}(b_\e)}h_{n}\hat Z_\ell dy.
\end{aligned}
\end{equation*}
By straightforward calculation one can verify that $\big(\int_{B_{{4r}/{\bar\e}}(b_\e)}PQ_{A'}^4\hat Z_j\hat Z_ldy\big)_{j\ell}$ is an invertible matrix. While, one can easily prove that the right-hand side of the above equation is bounded by
\begin{equation*}
o(1)\|\phi_n\|_\mu+C\|h_n\|_{**}.
\end{equation*}
Thus, we conclude that
\begin{equation}
|c_j|\leq C(\|h_n\|_{**}+o(1)\|\phi_n\|_\mu),\quad j=0,1,\cdots,5,
\end{equation}
so that $c_{j}=o(1).$ Then by Green's representation formula we can derive the following estimation
$$|\phi_n(y)|+\langle y-b_\e\rangle|D\phi_n(y)|\leq C(\|\phi_n\|_\mu+\|h_n\|_{**})\langle y-b_\e\rangle^{-1}.$$
In particular,
\begin{equation*}
\langle y-b_\e\rangle^{1-2\mu}|\phi_n(y)|\leq C\langle y-b_\e\rangle^{-2\mu}.
\end{equation*}
Since $\|\phi_n\|_\mu=1$, we assume that $\|\phi_n\|_{L^\infty(B_R(0))}>\gamma$ for certain $R>0$ and $\gamma>0$ independently of $\e$. Then local elliptic estimates and the bounds above yield that, up to a subsequence $\phi_n(y+b_\e)$ converges uniformly over compact sets of $\mathbb{R}^3$ to a nontrivial solution $\phi$ of
\begin{equation}
\label{5.equ-m}
\Delta\phi+5Q^4\phi=0,\quad |\phi(y)|\leq C|y|^{-1}.
\end{equation}
Due to the non-degenerate result in Proposition \ref{prop:non-degeneracy} and $h$ is even in $z_3$, we have that $\phi$ is a linear combination of the functions $Z_j,~j=0,\cdots,5$, defined in Section 2.3. On the other hand, by the dominated convergence theorem, we see that (after passing to a subsequence if necessary) the limit function $\phi$ is perpendicular to these kernels $Z_j,~j=0,1,\cdots,5$.
Hence the only possibility is that $\phi\equiv 0$, which is a contradiction. This yields the proof of $\|\phi_n\|_\mu\to0$. Moreover, we have
$$\|\phi_n\|_*\leq C(\|h_n\|_{**}+\|\phi_n\|_\mu),$$
hence $\|\phi_n\|_*\to0.$
\medskip

\noindent {\bf Step 2}.
In this step, we shall prove the existence of $\phi$ to \eqref{l.linphi} in the following function space
$$H=\left\{\phi\in H_0^1\left(B_{{4r}/{\bar\e}}(b_\e)\right)\left|\int_{B_{{4r}/{\bar\e}}(b_\e)}\phi PQ_{A'}^4\hat Z_j\phi dy=0,~\forall\, j=0,\cdots,5\right.\right\}$$
endowed with the usual inner product
$$\langle \phi,\psi\rangle= \int_{B_{{4r}/{\bar\e}}(b_\e)}\nabla\phi\nabla\psi dy.$$
Problem \eqref{l.linphi} expressed in weak form is equivalent to that of finding a $\phi\in H$ such that
$$\langle \phi,\psi\rangle =\int_{B_{{4r}/{\bar\e}}(b_\e)}(\lambda\e^2\phi+5PQ_{A'}^4\phi-h)\psi dy,\quad\forall \psi\in H.$$
Using the Riesz's representation theorem, the above equation could be rewritten in $H$ as the following form
\begin{equation}
\label{5.equ-riesz}
\phi=T_\e(\phi)+\tilde h
\end{equation}
with certain $\tilde h\in H$ which depends linearly in $h$ and where $T_\e$ is a compact operator in $H$. As a consequence of Fredholm's alternative principle and the fact that $\|\phi_n\|_*\to 0$ as $\|h_n\|_{**}\to0$, we conclude that for each $h$, problem \eqref{l.linphi} admits a unique solution, and the estimate follows easily.
\medskip

\noindent {\bf Step 3}. In this step we shall study the dependence of $\phi$ on $A'=(\Lambda,\e^{-1}a,\e^{-1}\xi,\e^{-1
}b,\beta)$ with $(\e,a,\xi,b,\beta)\in\mathbb{R}\times\mathbb{R}\times\Gamma\times\mathbb{R}^2\times\mathbb{R}$. Let us define $A'=(A'_0,\cdots,A'_5)$ the components of $A'$. We differentiate $\phi$ with respect to $A'_\ell$ and  set formally $Z_{\phi,\ell}=\frac{\partial}{\partial A'_{\ell}}\phi,~\ell=0,\cdots,5$.

We define $c_{\ell,j}$ so that
\begin{equation}
\label{5.rel}
\int_{B_{{4r}/{\bar\e}}(b_\e)}PQ_{A'}^4\hat Z_m\big(Z_{\phi,\ell}-\sum_{j=0}^5c_{\ell,j}\hat Z_j\big)dy=0,\quad \forall \ell, j.
\end{equation}
This amounts to solving a linear system in the constants $c_{\ell,j}$,
\begin{equation}
\label{5.coefficient}
\sum_{j}c_{\ell,j}\int_{B_{{4r}/{\bar\e}}(b_\e)}PQ_{A'}^4\hat Z_m\hat Z_j dy=\int_{B_{{4r}/{\bar\e}}(b_\e)}\phi\frac{\partial}{\partial A_\ell'}(PQ_{A'}^4\hat Z_m) dy,
\end{equation}
which follows by a direct differentiation with respect to $A_\ell'$ of the orthogonal conditions $\int_{B_{4r/\bar \e}(b_\e)}\phi PQ_{A'}^4\hat Z_mdx=0$. Arguing as before, we see that the above equation is uniquely solvable and that
$$c_{\ell,j}=O(\|\phi\|_*)$$
uniformly for parameters $A'$ in the considered region. Thus
\begin{equation*}\eta:=Z_{\phi,\ell}-\sum_{j=0}^5c_{\ell,j}\hat Z_j \in H_0^1\left(B_{{4r}/{\bar\e}}(b_\e)\right)\quad\mbox{and}\quad
\int_{B_{{4r}/{\bar\e}}(b_\e)}\eta PQ_{A'}^4\hat Z_j dx=0,\quad \forall j=0,\cdots,5.
\end{equation*}
On the other hand, a direct  but tedious computation shows that
\begin{equation}
\label{5.eta}
\Delta\eta+\lambda\e^2\eta+5PQ_{A'}^4\eta=f+\sum_{j=0}^5d_jPQ_{A'}^4\hat Z_j\quad\mbox{in}\quad B_{\frac{4r}{\bar\e}}(b_\e),
\end{equation}
where $d_j=\frac{\partial}{\partial A_\ell'}c_j$ and
\begin{equation}
\label{5.eta-f}
\begin{aligned}
f=-\sum_{j=0}^5c_{\ell,j}(\Delta +\lambda_n\e_n^2+5PQ_{A'}^4)\hat Z_j+\sum_{j=0}^5c_j\partial_{A_\ell'}(PQ_{A'}^4\hat Z_j)-5(\partial_{A_\ell'}PQ_{A'}^4)\phi.
\end{aligned}
\end{equation}
Thus, we have $\eta=L_\e(f)$. Consider the function $f$, we have
$$\left\|\phi\partial_{A_\ell'}(PQ_{A'}^4)\right\|_{**}\leq C\|\phi\|_*,\quad \left|\partial_{A_\ell'}(PQ_{A'}^4\hat Z_j)\right|\leq C(1+|y|^2)^{-3},$$
hence
$$\|c_j\partial_{A_\ell'}PQ_{A'}^4\hat Z_j\|_{**}\leq C\|h\|_{**},$$
which dues to that $c_i=O(\|h\|_{**})$. Lastly, by direct computation we have
\begin{align*}
\left|(\Delta +\lambda\e^2+5PQ_{A'}^4)\hat Z_j\right|
\leq C\e^2(1+|y|^2)^{-\frac12}\leq C\bar\e^{1-2\sigma}(1+|y|^2)^{-\frac32-\sigma},
\end{align*}
which leads to
$$\left\|\sum_{j=0}^5c_{\ell,j}(\Delta +\lambda\e^2+5PQ_{A'}^4)\hat Z_j\right\|_{**}\leq C\|h\|_{**}.$$
Thus we conclude that
$$\|f\|_{**}\leq C\|h\|_{**}.$$
Next, we define
$$Z_{\phi,\ell}=L_\e(f)+\sum_jc_{\ell,j}PQ_{A'}^4\hat Z_j,$$
with $c_{\ell,j}$ given by relations \eqref{5.coefficient} and $f$ given by \eqref{5.eta-f}, we check that indeed $Z_{\phi,\ell}=\partial_{A_\ell’}\phi$. In fact, $Z$ depends continuously on the parameters $A$ and $h$ with respect to the norm $\|\cdot\|_*$ and $\|Z_{\phi,\ell}\|_*\leq C\|h\|_{**}$ for parameters in the considered region. Hence, we finish the proof.
\end{proof}

Next, we shall solve the nonlinear problem
\begin{equation}
\label{n.non-lin}
\begin{cases}
\begin{aligned}
\left(\Delta+\lambda\e^2
+5PQ_{A'}^4\right)\phi_{\rm in}=-\eta_r^{\bar\e}(\bar N_\e(\phi_{\rm in})+E)+\sum_{j=0}^5c_jPQ_{A'}^4\bar Z_j
\end{aligned}
~&\mbox{in}~ B_{\frac{4r}{\bar\e}}(b_\e),\\
\phi_{\rm in}=0  ~&\mbox{on}~ \partial  B_{\frac{4r}{\bar\e}}(b_\e),
\end{cases}
\end{equation}
where
\begin{equation*}
\bar N_\e(\phi_{\rm in})=N_\e(\phi_{\rm in}+\phi_{\rm out}(\phi_{\rm in}))+5PQ_{A'}^4\phi_{\rm out}(\phi_{\rm in}).
\end{equation*}

\begin{proposition}
\label{prop.der}
Suppose that the parameters $A$ and $\e$ satisfy the relation in \eqref{constraint-m}. Then there exists $K_0$ large enough such that for all $K\geq K_0$, there is a unique solution $\phi_{\rm in}(A')$ to problem \eqref{n.non-lin} with
\begin{equation}
\label{n.est-1}
\|\phi_{\rm in}\|_*\leq C\e^2K^2,\quad \mbox{and}\quad \|\nabla_{A'}\phi_{\rm in}\|_*\leq C\e^2K^2.
\end{equation}
\end{proposition}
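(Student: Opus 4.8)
The plan is to solve the nonlinear problem \eqref{n.non-lin} by a Banach fixed-point argument built on the linear solver $L_\e$ of Proposition~\ref{prl.1}. Given $\phi$ even in $z_3$ with $\|\phi\|_*\le\rho$ on $B_{4r/\bar\e}(b_\e)$, let $\phi_{\rm out}(\phi)$ be the function produced by the outer solver of Section~5, so that \eqref{g.existence}--\eqref{g.lip} hold, and set
\begin{align*}
\mathcal{T}(\phi):=L_\e\!\left(-\eta_r^{\bar\e}\big(\bar N_\e(\phi)+E\big)\right),\qquad \bar N_\e(\phi)=N_\e\big(\phi+\phi_{\rm out}(\phi)\big)+5PQ_{A'}^4\phi_{\rm out}(\phi).
\end{align*}
A fixed point of $\mathcal{T}$ in $\mathcal{B}_\rho=\{\|\phi\|_*\le\rho\}$ is exactly a solution of \eqref{n.non-lin}, the $c_j$ being the Lagrange multipliers built into $L_\e$ (they are killed later, in the finite-dimensional reduction). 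Since $q$ is even in $z_3$ and the parameters in $A$ are confined to $\Gamma\subset\{z_3=0\}$ and to the $z_1z_2$-plane, the error $E$ and (by uniqueness in \eqref{g.lin-out}) $\phi_{\rm out}(\phi)$ are even in $z_3$; hence $\mathcal{T}$ preserves the even-in-$z_3$ subspace and Proposition~\ref{prl.1} applies.

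The argument rests on two estimates. The first is the size of the error in $\|\cdot\|_{**}$: writing $E=PQ_{A'}^5-Q_{A'}^5=(Q_{A'}-\e^{\frac12}\tilde T_A)^5-Q_{A'}^5$, using $|\e^{\frac12}\tilde T_A|\le C\e^2K^2$ from \eqref{bd-tTA} and the pointwise bound $|Q_{A'}(y)|\le C\langle y-b_\e\rangle^{-1}$, the leading term $Q_{A'}^4\,\e^{\frac12}\tilde T_A$ contributes $\sup_{|y-b_\e|\le 4r/\bar\e}\langle y-b_\e\rangle^{-1+2\sigma}\e^2K^2\le C\e^2K^2$, while the higher-order terms carry extra powers of $\e^2K^2$ and are negligible since $\e K^3\sim1$; thus $\|\eta_r^{\bar\e}E\|_{**}\le C\e^2K^2$, and the same computation together with Remark~\ref{rmk:Ta-C1} gives $\|\nabla_{A'}(\eta_r^{\bar\e}E)\|_{**}\le C\e^2K^2$. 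The second is the nonlinear estimate: from $|N_\e(\phi)|\lesssim |PQ_{A'}|^3\phi^2+|\phi|^5$, the weighted norms, and \eqref{g.existence}--\eqref{g.lip} — which in particular yield $\|\phi_{\rm out}(\phi)\|_{L^\infty}\le C(\e^4K^4+\bar\e\|\phi\|_*)=o(\e^2K^2)$, confirming that the inner part dominates — one obtains, for $\phi,\phi_1,\phi_2\in\mathcal{B}_\rho$,
\begin{align*}
&\big\|\eta_r^{\bar\e}\bar N_\e(\phi)\big\|_{**}\le C\big(\|\phi\|_*^2+\e^4K^4\big),\\
&\big\|\eta_r^{\bar\e}\big(\bar N_\e(\phi_1)-\bar N_\e(\phi_2)\big)\big\|_{**}\le C(\rho+\bar\e)\,\|\phi_1-\phi_2\|_*.
\end{align*}
Combining these with Proposition~\ref{prl.1} (which gives $\|\mathcal{T}(\phi)\|_*\le C\|\eta_r^{\bar\e}(\bar N_\e(\phi)+E)\|_{**}$) and choosing first $M$ large and then $K$ large so that $\rho=M\e^2K^2$ and $\bar\e=\e K$ are small, one checks that $\mathcal{T}$ maps $\mathcal{B}_\rho$ into itself and is a contraction there. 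The resulting unique fixed point $\phi_{\rm in}=\phi_{\rm in}(A')$ satisfies $\|\phi_{\rm in}\|_*\le C\e^2K^2$.

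For the parameter derivative I would differentiate the identity $\phi_{\rm in}=\mathcal{T}(\phi_{\rm in})$ in each component of $A'=(\Lambda,\e^{-1}a,\e^{-1}\xi,b_\e,\beta)$, getting $(I-\partial_\phi\mathcal{T})\,\partial_{A'}\phi_{\rm in}=\partial_{A'}\mathcal{T}$. Since $\|\partial_\phi\mathcal{T}\|\le C(\rho+\bar\e)<\tfrac12$, the operator $I-\partial_\phi\mathcal{T}$ is boundedly invertible, so $\|\partial_{A'}\phi_{\rm in}\|_*\le 2\|\partial_{A'}\mathcal{T}\|_*$. The explicit $A'$-dependence of $\mathcal{T}$ is controlled by $\|\nabla_{A'}L_\e(h)\|_*\le C\|h\|_{**}$ (Proposition~\ref{prl.1}), the bound $\|\nabla_{A'}(\eta_r^{\bar\e}E)\|_{**}\le C\e^2K^2$ already noted, the bounds on $\nabla_{A'}(PQ_{A'}^4\hat Z_j)$, and the outer-derivative estimates \eqref{g.der.existence}--\eqref{g.der.lip}; the only mildly delicate point is that \eqref{g.der.existence} carries $\|\nabla_{A'}\phi_{\rm in}\|_*$ on its right-hand side with coefficient $C\bar\e$, which is absorbed into the left since $\bar\e\to0$. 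This gives $\|\nabla_{A'}\phi_{\rm in}\|_*\le C\e^2K^2$, completing the proof.

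The step I expect to be the main obstacle is the sharp weighted error estimate $\|\eta_r^{\bar\e}E\|_{**}\le C\e^2K^2$ and its $A'$-derivative: this is the precise point where the fast decay of $Q_A$ (from $\xi\in\Gamma$), the bounds \eqref{bd-tTA} on the correction terms $T_A,\varphi_A,\psi_A$ established in Section~3, and the scaling budget $\e K^3\sim1$ of \eqref{constraint-m} must all fit together to keep the error small enough for the contraction to close. Everything after that is the standard inner--outer gluing machinery of \cite{del2003two,musso2016sign,wei2005arbitrary} together with the contraction principle.
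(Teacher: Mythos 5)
Your proposal is essentially the same argument as the paper's: both solve \eqref{n.non-lin} by a Banach fixed-point argument built on the linear solver $L_\e$ (called $T_\e$ in the paper's proof) from Proposition~\ref{prl.1}, and both rest on the key weighted error bound $\|\eta_r^{\bar\e}E\|_{**}\le C\e^2K^2$. The one cosmetic difference is that the paper first splits $\phi_{\rm in}=\phi_{\rm in,0}+\phi_{\rm in,n}$ with $\phi_{\rm in,0}=T_\e(\eta_r^{\bar\e}E)$ and runs the contraction on the remainder $\phi_{\rm in,n}$, whereas you contract directly on $\phi_{\rm in}$; these are interchangeable. For the parameter derivative, the paper refers to the literature, while you sketch the implicit-function-theorem route (differentiate the fixed-point identity and invert $I-\partial_\phi\mathcal{T}$, absorbing the $\bar\e\|\nabla_{A'}\phi_{\rm in}\|_*$ term from \eqref{g.der.existence}); that is exactly what the cited references do, so this fills in rather than departs from the paper. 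Your explicit remark that the even-in-$z_3$ symmetry is preserved by $\mathcal{T}$ (needed to invoke Proposition~\ref{prl.1}) is a small but correct addition the paper leaves implicit.
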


\begin{proof}
Here we shall give the details for the estimation on $\phi$, For the derivation of the estimation for the derivative of $\phi$ with respect to the parameters $A'$, we refer the readers to \cite{del2003two,musso2016sign,wei2005arbitrary} for the details.

Consider the right-hand side of \eqref{n.non-lin}, we
observe that
\begin{align*}
E=Q_{A'}^5-PQ_{A'}^5=~&5\e^\frac12PQ_{A'}^4(T_A+\varphi_A+\psi_A)
+O(\e^4K^4)Q_{A'}^3+O(\e^{6}K^6)Q_{A'}^2\\
&+O(\e^8K^8)Q_{A'}+O(\e^{10}K^{10}).
\end{align*}
Then we have
$$\left\|\eta_r^{\bar\e}E\right\|_{**}\leq C\e^2K^2.$$
To estimate $\bar N_\e(\phi_{\rm in})$, we first see that
\begin{equation*}
\begin{aligned}
\left|\eta_r^{\bar\e}\bar N_\e(\phi_{\rm in})\right|\leq ~&C|PQ_{A'}|^4|\phi_{\rm out}|+C|PQ_{A'}|^3(|\phi_{\rm in}|^2+|\phi_{\rm out}|^2)\\
&+C|PQ_{A'}|^2(|\phi_{\rm in}|^3+|\phi_{\rm out}|^3)+C|PQ_{A'}|(|\phi_{\rm in}|^4+|\phi_{\rm out}|^4)\\
&+C(|\phi_{\rm in}|^5+|\phi_{\rm out}|^5).
\end{aligned}
\end{equation*}
Hence
$$\left\|\eta_r^{\e}\bar N_\e(\phi_{\rm in})\right\|_{**}\leq C\bar\e^4+C\bar\e^{2-\sigma}(\|\phi_{\rm in}\|_{*}^2+\|\phi_{\rm in}\|_{*}^3+\|\phi_{\rm in}\|_{*}^4+\|\phi_{\rm in}\|_{*}^5).$$

Next we shall show that equation \eqref{n.non-lin} has a unique solution $\phi_{\rm in}=\phi_{\rm in,0}+\phi_{\rm in,n}$ with
\begin{equation}
\label{n.def-in0}
\phi_{\rm in,0}=T_\e(\eta_r^{\bar\e}E),
\end{equation}
as the required properties. Here $T_\e$ denotes the linear operator defined in Proposition \ref{prl.1}. That is $T_\e(\eta_r^{\bar\e}E)=\phi_{\rm in,0}$. As a consequence, $\phi_{\rm in}=\phi_{\rm in,0}+\phi_{\rm in,n}$ is a solution to \eqref{n.non-lin} if and only if
\begin{equation}
\label{n.fixed}
\phi_{\rm in,n}=-T_\e(\bar N_\e(\phi_{\rm in,0}+\phi_{\rm in,n}))=\mathcal{M}_\e(\phi_{\rm in,n}).
\end{equation}
Then we need to show that the operator $\mathcal{M}_\e$ defined above is a contraction inside a property chosen region. Since $\|\eta_r^{\bar\e}E\|_{**}\leq C\e^{2}K^2$, the result of Proposition \ref{prl.1} gives that
$$\|T_\e(\eta_r^{\bar\e}E)\|_*\leq C\e^{2}K^2,$$
and
\begin{equation}
\label{n.higher}
\|\eta_r^{\e}\bar N_\e(\phi_{\rm in,0}+\phi_{\rm in,n})\|_{**}\leq C(\bar\e\e^2K^2+\bar\e\|\phi_{\rm in,n}\|_*+\bar\e^{2-\sigma}\|\phi_{\rm in,n}\|_*^2).
\end{equation}
We shall study the problem \eqref{n.fixed} in the following function space
\begin{equation}
\label{n.setf}
\mathscr{F}=\left\{f\in H_0^1(\Sigma_{K,\e}):\|f\|_*\leq C\e^2K^2\right\}.
\end{equation}

From Proposition \ref{prl.1} and \eqref{n.higher} we conclude that, for $K$ sufficiently large and any $\bar\phi\in \mathscr{F}$ we have
$$\|\mathcal{M}_\e(\bar\phi)\|_*\leq C\e^2K^2.$$
If we choose $R$ large enough in the definition of $\mathscr{F}$, see \eqref{n.setf}, then we get that $\mathcal{M}_\e$ maps $\mathscr{F}$ into itself. Now we shall show that the map $\mathcal{M}_\e$ is a contraction, for any $K$ sufficiently large, and it will imply that $\mathcal{M}_\e$ has a unique fixed point in $\mathscr{F}$ and hence problem \eqref{n.non-lin} has a unique solution. For any $\bar\phi_1,~\bar\phi_2$ in $\mathscr{F}$ we have
$$\|\mathcal{M}_\e(\bar\phi_1)-\mathcal{M}_\e(\bar\phi_2)\|_*\leq C\| \bar N_\e(\phi_{\rm in,0}+\bar\phi_1)-N_\e(\phi_{\rm in,0}+\bar\phi_2)\|_{**},$$
now we just need to check that $\bar N_\e$ is a contraction in its corresponding norms. By definition of $\bar N_\e$, we have
\begin{equation*}
\begin{aligned}
&\left|N_\e(\phi_{\rm in,0}+\bar\phi_1+\phi_{\rm out}(\phi_{\rm in,0}+\bar\phi_1))-N_\e(\phi_{\rm in,0}+\bar\phi_2+\phi_{\rm out}(\phi_{\rm in,0}+\bar\phi_2))
\right|\\
&\leq C PQ_{A'}^3|\bar\phi||\bar\phi_1-\bar\phi_2|
\end{aligned}
\end{equation*}
for some $\bar\phi$ in the segment joining $\phi_{\rm in,0}+\bar\phi_1+\phi_{\rm out}(\phi_{\rm in,0}+\bar\phi_1)$ and $\phi_{\rm in,0}+\bar\phi_2+\phi_{\rm out}(\phi_{\rm in,0}+\bar\phi_2)$, and
\begin{equation*}
\begin{aligned}
\left|PQ_{A'}^4\phi_{\rm out}(\phi_{\rm in, 0}+\bar\phi_1)
-PQ_{A'}^4\phi_{\rm out}(\phi_{\rm in, 0}+\bar\phi_1)
\right|\leq C\bar\e PQ_{A'}^4|\bar\phi_1-\bar\phi_2|.
\end{aligned}
\end{equation*}
Therefore, we can conclude that there exists $c\in(0,1)$  such that
\begin{equation}
\label{n.con}
\|\bar N_{\e}(\phi_{\rm in,0}+\bar\phi_1)-\bar N_{\e}(\phi_{\rm in,0}+\bar\phi_2)\|_{**}\leq c\|\bar\phi_1-\bar\phi_2\|_*.
\end{equation}
This concludes the proof of existence of $\phi$ solution to \eqref{n.non-lin}, and the first estimate in \eqref{n.est-1}.
\end{proof}




\section{The finite-dimensional reduction and the critical point}
In this section, we will first set up the reduction that transforms the original infinite-dimensional problem into a finite-dimensional one. Then, we will determine the critical points of the energy with respect to the parameters, using these to establish the proof of Theorem \ref{th1.sector}.


 Suppose that $\phi_{A'}$ is the solution of \eqref{n.non-lin}. Let $u_\e(y)=PQ_{A'}(y)+\phi_{A'}(y)$, then
\begin{align*}
    \Delta_y u_\e (y)+\la \e^2 u_\e{y}+u_{\e}(y)^5=\sum_{j=0}^{5}c_jPQ_{A'}^4\bar Z_j.
\end{align*}
Note that $u_\e$ will satisfy \eqref{g.eq} if the Lagrange multiplier $c_j=0$ for $j=0,1,\cdots,5$. The following reduction Lemma says that this is equivalent to the criticality of $A'$ in a finite-dimensional space. Returning to the original variable $z$ and $A$ (before scaling), denoting $PQ_A(z)=\e^{-1/2}PQ_{A'}(z/\e)$ and $\phi_A(z)=\e^{-1/2}\phi_{A'}(z/\e)$, then $PQ_A(z)+\phi_A(z)$ will be a solution to the Brezis-Nirenberg problem under the criticality of $A$.
\begin{lemma}
\label{le.finite-p}
$u_\e=PQ_{A'}+\phi_{A'}(y)$ is a solution of problem \eqref{g.eq} if and only if $A'$ is a critical point of the energy
\begin{equation*}
\begin{aligned}
J_\e(u_\e):=\frac{1}2\int_{\Sigma_{K,\e}}|\nabla_yu_\e|^2dy-\frac{\lambda\e^2}{2}\int_{\Sigma_{K,\e}}u_\e^2dy-\frac16\int_{\Sigma_{K,\e}}u_\e^6dy.
\end{aligned}
\end{equation*}
Equivalently, $PQ_A(z)+\phi_A(z)$ is a solution to \eqref{210} if and only if $A$ is a critical point of the energy $J(PQ_A+\phi_A)=J_\e(u_\e)$ where
\begin{align*}
    J(PQ_A+\phi_A):=&~\frac{1}2\int_{\Sigma_{K}}|\nabla_z(PQ_{A}+\phi_A)|^2dz
-\frac\lambda2\int_{\Sigma_K}(PQ_A+\phi_A)^2dz\\
&-\frac16\int_{\Sigma_K}(PQ_A+\phi_A)^6dz.
\end{align*}
\end{lemma}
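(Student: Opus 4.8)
The plan is to establish the equivalence via the standard Lyapunov–Schmidt reduction argument, showing that the Lagrange multipliers $c_j$ vanish precisely when $A'$ is a critical point of the reduced energy $J_\e(u_\e)$, and then transferring this back to the original variables. First I would note that $u_\e = PQ_{A'} + \phi_{A'}$ by construction solves
\begin{equation*}
\Delta_y u_\e + \la \e^2 u_\e + u_\e^5 = \sum_{j=0}^5 c_j PQ_{A'}^4 \bar Z_j \quad \text{in } B_{4r/\bar\e}(b_\e),
\end{equation*}
together with the orthogonality conditions $\int \phi_{A'} PQ_{A'}^4 \hat Z_j = 0$, so that $u_\e$ genuinely solves \eqref{g.eq} if and only if all $c_j = 0$. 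The ``only if'' direction is immediate. For the ``if'' direction, one differentiates the functional $F(A') := J_\e(PQ_{A'} + \phi_{A'})$ with respect to each parameter $A'_\ell$, $\ell = 0, \dots, 5$. Writing $DJ_\e(u_\e)[\cdot]$ for the Fréchet derivative of $J_\e$ and using the chain rule,
\begin{equation*}
\partial_{A'_\ell} F(A') = DJ_\e(u_\e)\big[\partial_{A'_\ell}(PQ_{A'} + \phi_{A'})\big] = -\sum_{j=0}^5 c_j \int_{\Sigma_{K,\e}} PQ_{A'}^4 \bar Z_j \,\partial_{A'_\ell}(PQ_{A'} + \phi_{A'}) \, dy,
\end{equation*}
where the last equality uses the equation for $u_\e$ (the interior equation with the right-hand side $\sum c_j PQ_{A'}^4 \bar Z_j$) and the fact that $u_\e = 0$ on the boundary so no boundary terms appear.

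The heart of the matter is then to show that the $6 \times 6$ matrix
\begin{equation*}
M_{\ell j} := \int_{\Sigma_{K,\e}} PQ_{A'}^4 \bar Z_j \, \partial_{A'_\ell}(PQ_{A'} + \phi_{A'}) \, dy
\end{equation*}
is invertible (in fact, close to a nonsingular diagonally-dominant matrix) for $K$ large. For this one uses that $\partial_{A'_\ell} PQ_{A'} = Z_\ell + (\text{lower order})$ — the derivatives of $Q_{A'}$ with respect to its parameters are exactly the kernel functions $Z_j$ introduced in Section 2.3, plus corrections coming from $\tilde T_A = T_A - \varphi_A - \psi_A$ which are of order $O(\e^{3/2} K^2)$ by Proposition~\ref{prop:PQA=} — while $\partial_{A'_\ell} \phi_{A'}$ is small in the $\|\cdot\|_*$ norm by the bound $\|\nabla_{A'}\phi_{\rm in}\|_* \leq C\e^2 K^2$ from Proposition~\ref{prop.der} (together with the corresponding control on $\phi_{\rm out}$ from \eqref{g.der.existence}). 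Hence $M_{\ell j}$ equals $\int PQ_{A'}^4 \bar Z_j \bar Z_\ell \, dy$ up to a small perturbation, and the latter matrix is invertible by the non-degeneracy Proposition~\ref{prop:non-degeneracy} — more precisely because the six retained kernels $Z_0,\dots,Z_5$ (even in $z_3$) are linearly independent and, after the cutoff $\chi$, still form an essentially diagonal Gram matrix with respect to the weight $PQ_{A'}^4$ (a direct computation as used already in the proof of Proposition~\ref{prl.1}). Consequently $\partial_{A'_\ell} F(A') = 0$ for all $\ell$ forces $\sum_j c_j M_{\ell j} = 0$, hence $c_j = 0$ for all $j$, which is what we wanted.

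Finally, the transfer to the original variables is a change-of-variables bookkeeping step: since $PQ_A(z) = \e^{-1/2} PQ_{A'}(z/\e)$ and $\phi_A(z) = \e^{-1/2}\phi_{A'}(z/\e)$ with $y = z/\e$, the scaling-invariance of the Dirichlet energy together with the $\e^2$-rescaling of the linear term and the $\e^6$-homogeneity absorbed into the $\e^{-1/2}$ factor gives exactly $J(PQ_A + \phi_A) = J_\e(u_\e)$ as asserted, and the parameter map $A \mapsto A'$ is a smooth diffeomorphism onto its image, so criticality is preserved under it. I expect the main obstacle to be the invertibility of $M_{\ell j}$: one must check carefully that the perturbations — the correction $\tilde T_A$ inside $PQ_{A'}$, the cutoff $\chi$ and $\eta_r^{\bar\e}$, the smallness of $\nabla_{A'}\phi_{A'}$, and the $\lambda\e^2$-term — are all of strictly lower order than the diagonal entries of $\int PQ_{A'}^4 \bar Z_j \bar Z_\ell$, uniformly in the parameter range \eqref{constraint-m}; once this is in hand the rest is routine. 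The remaining computations (the precise form of $\partial_{A'_\ell}(PQ_{A'})$ and the Gram matrix entries) are standard and can be cited from \cite{del2003two, musso2016sign, wei2005arbitrary}.
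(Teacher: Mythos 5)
Your proposal is correct and follows essentially the same Lyapunov--Schmidt reduction route as the paper: observe that $u_\e$ solves \eqref{g.eq} iff all $c_j=0$, differentiate $J_\e(PQ_{A'}+\phi_{A'})$ in $A'_\ell$ to express $\partial_{A'_\ell}J_\e$ as a linear combination $\sum_j c_j M_{\ell j}$, and conclude $c_j=0$ from invertibility of the matrix $M$ (close to the Gram matrix $\int PQ_{A'}^4 \hat Z_j \hat Z_\ell$), using $\partial_{A'_\ell}PQ_{A'}=\hat Z_\ell + o(1)$ and the smallness of $\nabla_{A'}\phi_{A'}$ from Proposition~\ref{prop.der}. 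Your write-up is, if anything, slightly more explicit than the paper's about the invertibility of $M$ (the paper packages this by replacing the $o(1)$ perturbation with its projection $o(1)\Xi$ onto the span of the $\hat Z_j$'s), and you correctly identify the Gram-matrix computation rather than the non-degeneracy statement of Proposition~\ref{prop:non-degeneracy} as the actual mechanism; both are fine, and the remaining bookkeeping (scaling back from $y$ to $z$) is handled the same way.
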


\begin{proof}
Let $A_\ell',~\ell=0,\cdots,5$ be the elements of $A'$. Considering the derivative of $I$ with respect to $A_\ell'$, we see that $\frac{\partial I}{\partial A_\ell'}=0$ is equivalent to say that $\frac{\partial J_\e(PQ_{A'}+\phi_{A'})}{\partial A_\ell'}=0$. Next we compute
$$\frac{\partial }{\partial A_\ell'}J_\e(PQ_{A'}+\phi_{A'})=DJ_\e(PQ_{A'}+\phi_{A'})\left[\frac{\partial}{\partial A_\ell'}PQ_{A'}+\frac{\partial }{\partial A_\ell'}\phi_{A'}\right].$$
On the other hand, one sees that
$$\frac{\partial PQ_{A'}}{\partial A_\ell'}=\hat Z_2(y)+o(1).$$
Using Proposition \ref{prop.der} and estimates \eqref{g.existence}-\eqref{g.der.lip}, one can show
$$\left\|\frac{\partial \phi_{A'}}{\partial A_\ell'}\right\|_{**}=O(\e^2 K^2)\quad\mbox{as}\quad K\to\infty,$$
then we get that $\frac{\partial I}{\partial A_\ell'}=0$ is equivalent to say $DJ_\e(PQ_{A'}+\phi_{A'})[\hat Z_{\ell}+o(1)]=0$. As a consequence, we can draw a conclusion that $\nabla_A I=0$ is equivalent to
\begin{equation}
\label{reduce.eq-1}
DJ_\e(PQ_{A'}+\phi_{A'})\left[\hat Z_\ell+o(1)\right]=0,\quad \forall \ell=0,\cdots,5.
\end{equation}
From the fact that $DJ_\e(PQ_{A'}+\phi_{A'})[g]=0$ for all functions such that $\int_{\Sigma_{K,\e}}PQ_{A'}^4\hat Z_\ell gdy=0$, we can see that \eqref{reduce.eq-1} can be written as
\begin{equation}
\label{reduce.eq-2}
DJ_\e(PQ_{A'}+\phi_{A'})\left[\hat Z_\ell+o(1)\Xi\right]=0,\quad \forall \ell=0,\cdots,5,
\end{equation}
where $\Xi$ is a uniformly bounded function, that belongs to the vector space generated by the functions $\hat Z_\ell.$ Then $\nabla I(A')=0$ is equivalent to
$$DJ_\e(PQ_{A'}+\phi_{A'})[\hat Z_\ell]=0,\quad \forall \ell=0,\cdots,5.$$
By definition of $c_\ell$ in \eqref{n.non-lin}, then we readily derive that this is equivalent to $c_\ell=0$ for all $\ell$ and it finishes the proof.
\end{proof}



In the following calculation, we shall see the major in the expansion of $J(PQ_A+\phi_A)$ is $J(PQ_A)$.
\begin{align*}
    J(PQ_A+\phi_A)=~&J_\e(PQ_{A'}+\phi_{A'})\\
    =~&\frac{1}2\int_{\Sigma_{K,\e}}|\nabla_y(PQ_{A'}+\phi_{A'})|^2dy
-\frac{\lambda\e^2}2\int_{\Sigma_{K,\e}}(PQ_{A'}+\phi_{A'})^2dy\\
&-\frac16\int_{\Sigma_{K,\e}}(PQ_{A'}+\phi_{A'})^6dy.
\end{align*}
By expanding all terms and grouping them, we get
\begin{equation}
\label{6.1}
\begin{aligned}
J_\e(PQ_{A'}+\phi_{A'})=~&J_\e(PQ_{A'})-\int_{\Sigma_{K,\e}}(\Delta PQ_{A'}+\lambda\e^2PQ_{A'}+PQ_{A'}^5)\phi_{A'}dy\\
&-\frac{1}{2}\int_{\Sigma_{K,\e}}(\Delta\phi_{A'}+\lambda\e^2\phi_{A'}+5PQ_{A'}^4\phi_{A'})\phi_{A'}dy \\
&-\frac{1}{6}\int_{\Sigma_{K,\e}}(20PQ_{A'}^3\phi_{A'}^3+15PQ_{A'}^2\phi_{A'}^4+6PQ_{A'}\phi_{A'}^5+\phi_{A'}^6)dy .
\end{aligned}
\end{equation}
It is known that $\|\phi_A\|_*\leq C\e^2K^2$, one can easily show that
\begin{equation*}
\left|\int_{\Sigma_{K,\e}}(20PQ_{A'}^3\phi_{A'}^3+15PQ_{A'}^2\phi_{A'}^4+6PQ_{A'}\phi_{A'}^5+\phi_{A'}^6) dy\right|\leq C\e^{6}K^6.
\end{equation*}
Consider the second term on the right-hand side of \eqref{6.1}, we have
\begin{equation}
\begin{aligned}
\Delta PQ_{A'}+\lambda\e^2 PQ_{A'}+PQ_{A'}^5=E(PQ_{A'}),
\end{aligned}
\end{equation}
where
\begin{align*}
    E(PQ_{A'})=PQ_{A'}^5-Q_{A'}^5=&-5\e^\frac12Q_{A'}^4(T_A+\varphi_A+\psi_A)+O(\e^4K^4)Q_{A'}^3+O(\e^6K^6)Q_{A'}^2\\
    &+O(\e^8K^8)Q_{A'}+O(\e^{10}K^{10}).
\end{align*}
Then
\begin{equation}
\begin{aligned}
&\int_{\Sigma_{K,\e}}(\Delta PQ_{A'}+\lambda\e^2 PQ_{A'}+PQ_{A'}^5)\phi_{A'}dy\\
&=-5\int_{\Sigma_{K,\e}}\e^\frac12Q_{A'}^4(T_{A}+\varphi_A+\psi_A)\phi_{A'}dy+O(\e^6K^6)\\
&=O(\e^4K^4)\int_{\Sigma_{K,\e}}Q_{A'}^4dy +O(\e^6K^6)=O(\e^4K^4).
\end{aligned}
\end{equation}
While for the third term on the right hand side of \eqref{6.1},
\begin{equation}
\begin{aligned}
\Delta \phi_{A'}+\lambda\e^2\phi_{A'}+5PQ_{A'}^4\phi_{A'}=E(PQ_{A'})+N(\phi_{A'})+\sum_{j=0}^5c_jPQ_{A'}^4\hat Z_j.
\end{aligned}
\end{equation}
As the above computation,
\begin{equation}
\begin{aligned}
\int_{\Sigma_{K,\e}}E(PQ_{A'})\phi_{A'}dy=O(\e^4K^4).
\end{aligned}
\end{equation}
While for the higher order term $N(\phi_{A'})$, it is easy to see that
\begin{equation}
\int_{\Sigma_{K,\e}}N(\phi_{A'})\phi_{A'}dy=O(\e^6K^6).
\end{equation}
The multiplicative of $c_jPQ_A^4\hat Z_j$ and $\phi_{A'}$ is obvious zero due to the setting of $\phi_{A'}$. Therefore, we conclude that
\begin{equation}
\int_{\Sigma_{K,\e}}(\Delta \phi_{A'}+\lambda\e^2\phi_{A'}+5PQ_{A'}^4\phi_{A'})\phi_{A'}dy=O(\e^4K^4).
\end{equation}
Thus we conclude that
\begin{equation}
J(PQ_A+\phi_A)=J_\e(PQ_{A'}+\phi_{A'})=J_\e(PQ_{A'})+O(\e^4K^4)
=J(PQ_A)+O(\e^4K^4).
\end{equation}
\smallskip

\begin{theorem}
\label{th8.para}There exists $\delta$ small such that for $K$ large enough,
    the $\inf_{A\in \mathscr{C}}J(PQ_A+\phi_A)$ is achieved in the interior of the set $\mathscr{C}$ defined by \eqref{constraint-m}, i.e.
    \begin{align}
\begin{split}\label{constraint-C-8}
    &\e K^{3}\in [\delta,\delta^{-1}],\quad \xi\in \Gamma, \quad |a|\leq \delta^{-1}\e \log K,\quad d\in \left[ \tfrac{\log K-\log\log K}{K},\tfrac{\log K}{K}\right],\\
    & |\alpha_b|\leq \delta^{-1/2} K^{-2}\log K,\quad  |\hat\beta|\leq  \delta^{-1/2}K^{-1}\log K.
\end{split}
\end{align}
\end{theorem}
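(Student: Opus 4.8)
The plan is to deduce Theorem~\ref{th1.sector} from Lemma~\ref{le.finite-p} by showing that the reduced energy attains its infimum over $\mathscr{C}$ at an interior point. Combining the expansion $J(PQ_A+\phi_A)=J(PQ_A)+O(\e^4K^4)$ proved just above with \eqref{JPQ-412}, and using $\e K^{3}\in[\delta,\delta^{-1}]$ so that $\e^4K^4=o(\e^3K(\log K)^2)$, one has, uniformly for $A\in\mathscr{C}$,
\[
J(PQ_A+\phi_A)=\tfrac13\int_{\R^3}q^6\,dz+2\pi\,\Psi(A)+O\!\left(\e^3K(\log K)^2\right).
\]
The first term is a constant, and by Remarks~\ref{rmk:Ta-C1} and~\ref{rmk:JPQA-C1} the map $A\mapsto J(PQ_A+\phi_A)$ is continuous on the compact set $\overline{\mathscr{C}}$ (note $\Gamma$ is a compact $1$-manifold without boundary), so the infimum is attained at some $A^{*}\in\overline{\mathscr{C}}$; everything reduces to showing that $A^{*}$ lies in the relative interior of $\mathscr{C}$ once $\delta$ is small and $K$ is large.

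The core of the argument is to compute the leading order of $\Psi(A)$ in each of the six parameter directions. The inputs are: the sharp asymptotics of $\mathcal{H}(b,b)=H_0^e(b,b)+\gamma(b,b)$ and of $\nabla\mathcal{H}(b,b)$, $\nabla^2_{z,p}\mathcal{H}(b,b)$ furnished by Section~8, which rest ultimately on the precise evaluation in Section~9 (via contour integration and elliptic integrals) of the sums $\sum_j(-1)^j\csc\frac{j\pi}{K}$, $\sum_j\csc\frac{j\pi}{K}$ and their $x\neq0$ analogues; the identity $H_0(b,b)=(1-|b|^2)^{-1}$ with $1-|b|^2=2d|b|$; the vanishing $q(\xi)=0$ for $\xi\in\Gamma$, which gives $q(\hat\xi)=a|\nabla q(\xi)|+O(a^2)$ with $|\nabla q(\xi)|$ bounded below by Theorem~\ref{th2.qm-nodal}; and the expansions $|w|=|\nabla q(\xi)|+O(a)$, $\alpha_w=\hat\beta+O(a)$. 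Substituting into \eqref{PsiA-re} and rescaling $\Lambda=\e K^{3}$ yields $\Psi(A)=\Psi_0(K)+\mathcal{E}(\Lambda,d,a,\alpha_b,\hat\beta,\xi)+(\text{l.o.t.})$, with the following structure for the leading part $\mathcal{E}$: (i) through $-\la\e^2C_*(\xi)$ together with the $\e^3$ Hessian term it is a non-degenerate function of $\Lambda$ with minimum strictly inside $(\delta,\delta^{-1})$ once $\delta$ is small (the $\Lambda^2$-coefficient is $-\la C_*(\xi)<0$ and the $\Lambda^3$-coefficient is positive); (ii) in $a$ it is, to leading order, a strictly convex quadratic whose vertex is $O(\e\log K)$ with an absolute constant, hence inside $|a|<\delta^{-1}\e\log K$ for $\delta$ small; (iii) in $\alpha_b$ and $\hat\beta$ it is, to leading order, a non-degenerate quadratic — the curvature produced by the combination of $\alpha_w=\hat\beta+O(a)$, of $\cos(\alpha_w-\alpha_b)$-type factors, and of the angular derivatives of $\mathcal{H}$ supplied by Section~9 — with vertices of size $O(K^{-2}\log K)$ and $O(K^{-1}\log K)$, inside the windows of \eqref{constraint-m}; (iv) in $d$ the attractive self-image contribution $\sim\e[q(\hat\xi)]^2(1-|b|^2)^{-1}$ and the $\e^3$ Hessian term balance against the residual (incompletely cancelled) necklace-neighbor contribution, producing a strict interior minimum $d^{*}=\frac{\log K-c_0\log\log K+o(1)}{K}$ for a universal constant $c_0\in(0,1)$; (v) $\xi\mapsto\mathcal{E}$ restricted to the compact $\Gamma$ attains its minimum, which is automatically a critical point along $\Gamma$.

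Granted the above, the localization is standard. Fix $\delta$ small enough that the ($\delta$-independent, order-one) rescaled vertices in (i)--(iii) lie well inside their boxes and that $c_0\in(0,1)$ strictly; then let $K$ be large. On each face of $\partial\mathscr{C}$ the value of $J(PQ_A+\phi_A)$ exceeds its value at the candidate interior point by the corresponding leading variation of $\Psi$, and one checks that every such variation dominates the error $O(\e^3K(\log K)^2)=O(K^{-8}(\log K)^2)$ from the displayed expansion; the tightest comparison occurs in the $d$-direction, over the narrow interval $[\tfrac{\log K-\log\log K}{K},\tfrac{\log K}{K}]$, where it still holds with room to spare. Hence $\inf_{\partial\mathscr{C}}J(PQ_A+\phi_A)>\inf_{\mathrm{int}\,\mathscr{C}}J(PQ_A+\phi_A)$, so $A^{*}\in\mathrm{int}\,\mathscr{C}$. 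Being an interior minimum, $A^{*}$ is a critical point of $A\mapsto J(PQ_A+\phi_A)$ on the full parameter manifold, hence all Lagrange multipliers vanish and $PQ_{A^{*}}+\phi_{A^{*}}$ solves \eqref{210}; this proves Theorem~\ref{th1.sector}.

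The main obstacle is the second step. The six parameters live at widely separated scales, and — crucially — the boundary-reflection part $H_0^e$ and the direct-interaction part $\gamma$ of $\mathcal{H}$ cancel at leading order, so the force that pins each parameter down is a residual that only surfaces after the sharp evaluation of the trigonometric/elliptic sums of Section~9. Tracking which residual dominates in which direction, and in particular pinning the constant $c_0$ that locates $d^{*}$ inside the window $[\tfrac{\log K-\log\log K}{K},\tfrac{\log K}{K}]$, is the delicate balancing act referred to in the introduction.
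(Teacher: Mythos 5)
Your proposal follows the same route as the paper: attain the infimum on the closed set by continuity, expand $J(PQ_A+\phi_A)=\tfrac13\int q^6+2\pi\Psi(A)+O(\e^3K(\log K)^2)$, and then compare each boundary face of $\mathscr{C}$ against an interior candidate using the Section~8--9 asymptotics of $\mathcal{H}$ and of the alternating trigonometric sums, finally invoking Lemma~\ref{le.finite-p} to pass from an interior minimizer to a genuine solution. The skeleton matches, but a few of your quantitative narratives are off: the vertex in $a$ is not $O(\e\log K)$ with an absolute constant but rather $O(\e^2K^2)=o(\e\log K)$ (the paper sidesteps this by simply comparing $|a|=\delta^{-1}\e\log K$ against $a=0$); $H_0^e$ and $\gamma$ do not cancel against each other at leading order at the self-point --- both are positive and add, with cancellations happening \emph{within} each sum due to the alternating signs; and the $d$-balance that pins $d^*=\tfrac{\log K-\frac12\log\log K}{K}$ comes entirely from within $\mathcal{C}_2(K,d)/|b|^3$ (the $d$-linear correction from $|b|^{-3}$ against the exponential tail $(Kd)^{-1/2}e^{-Kd}$ of the image sums), not from the small $\e[q(\hat\xi)]^2(1-|b|^2)^{-1}$ term. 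Also, the tightest margins are in the $a$ and $(\alpha_b,\hat\beta)$ directions (a factor $\delta^{-1}$ over the error), not in $d$ (margin $\e^3K^2\log\log K$ versus error $\e^3K(\log K)^2$, a polynomial-in-$K$ gap). None of this alters the logic; filled in carefully as the paper does, the argument closes.
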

\begin{proof}
     Since the constraint set $\mathscr{C}$ is closed, the infimum of $J(PQ_A+\phi_A)$ over $A \in \mathscr{C}$ is attained at some point
     $A = (\e, \xi, a, d,\alpha_b, \hat\beta) \in \mathscr{C}$. Note that since we denote $b=|b|e^{i\alpha_b}$ and $d=(1-|b|^2)/b$,  then it is equivalent to write $A=(\e, \xi, a, d,\alpha_b, \hat\beta)$ and $A=(\e,\xi,a,b,\hat\beta)$. Recall that our notation convention \eqref{nt-wW} implies $\alpha_w=\hat \beta$.

     We will prove that for each point on the boundary $\partial \mathscr{C}$ there is another interior point whose value is strictly smaller than that. Thus, the infimum must be achieved in the interior.
    First, let us recall that
    \begin{align}\label{J-full}
        J(PQ_A+\phi_A)=J(PQ_A)+O(\e^4K^4)=\frac13\int_{\R^3}q^6dz+2\pi \Psi(A)+O(\e^3K
        (\log K)^2).
    \end{align}
    Notice that $\frac13\int_{\R^3}q^6$ is a constant and does not depend on $A$. We need to study $\Psi(A)$  where
    \begin{align*}
    \Psi(A)=~&\e [q(\hat \xi)]^2\mathcal{H}(b,b)+\e^2 q(\hat \xi)w\cdot [\nabla_z\mathcal{H}(b,b)+\nabla_p\mathcal{H}(b,b)]\\
    &+\e^3w^T\nabla^2_{z,p}\mathcal{H}(b,b)w-\lambda \e^2C_*(\xi).
    \end{align*}
It is essential to determine the exact order of each term involving $\mathcal{H}$. To maintain the flow of the proof, we defer this analysis to the following sections and list the results here. First, it follows from Lemma \ref{lem:gamma(b,b)} and Lemma \ref{lem:H0e(b,b)} that
    \begin{align*}
        \mathcal{H}(b,b)=\gamma(b,b)+H_0^e(b,b)=\frac{1}{2|b|}[\hat S_1(K)+S_1(K,d)+O(K^3\alpha_b^2)].
    \end{align*}
    For later references, we denote the ``leading term" as $\mathcal{C}_0:=\hat S_1(K)+S_1(K,d)$. The order of $\mathcal{C}_0$ can be obtained by Lemma \ref{lem:csc3csc5} and Lemma \ref{lem:alt.sum-d} in Section \ref{sec:series}.
    \begin{align*}
    \mathcal{C}_0(K,d)=\hat S_1(K)+S_1(K,d)=\frac{K}{\pi}\log 4+O((\log K)^{1/2}).
\end{align*}
    Second, by Lemma \ref{lem:w_dot_z} and Lemma \ref{lem:w_dot_zH}, we have
    \begin{align*}
    &w\cdot \nabla_z\mathcal{H}(b,b)=w\cdot \nabla_z\gamma(b,b)+w\cdot \nabla_zH_0^e(b,b)=O(K(\log K)^{1/2}).
    \end{align*}
    Third, using Lemma \ref{lem:wTn2w-g} and Lemma \ref{lem:wTn2w-h},
    \begin{align*}
        &w^T\nabla^2_{z,p}\mathcal{H}(b,b)=\frac{|w|^2}{8|b|^3}\left[\mathcal{C}_2(K,d)+(\alpha_w,\alpha_b)\mathcal{A}^\gamma (\alpha_w,\alpha_b)^T+O(Kd^{-2}\alpha_w^2+Kd^{-4}\alpha_b^2)\right],
    \end{align*}
    where
    \begin{align*}
        \mathcal{C}_2(K,d)=\hat S_1(K)+\hat S_3(K)+S_1(K,d)-(d+\sqrt{1+d^2})^2S_3(K,d)+3(d^2+d^4)S_5(K,d),
    \end{align*}
    \begin{align*}
    \mathcal{A}^\gamma
    &=\begin{pmatrix}
        S_3^o(K)-2S_1^o(K)+3\hat S_3^e(K)& -3\hat S_3(K)+3S_1^o(K)\\
        -3\hat S_3(K)+3S_1^o(K)& \frac32[4S_5^o(K)+S_3^o(K)-3S_1^o(K)]+3\hat S_3^e(K)
    \end{pmatrix}.
\end{align*}
    Again, Lemma \ref{lem:csc3csc5}, Lemma \ref{lem:rough-Sk} and Lemma \ref{lem:alt.sum-d} imply that
    \begin{equation}
    \label{mathC2-2}
    \begin{aligned}
        \mathcal{C}_2(K,d)&=\frac{3\zeta(3)}{2\pi^3}K^3+O(K\log K)+\sqrt{\frac{8}{\pi}}K^3(Kd)^{-\frac12}e^{-Kd}(1+O((Kd)^{-1})\\
        &=\frac{3\zeta(3)}{2\pi^3}K^3+O(K^2(\log K)^{1/2}),
    \end{aligned}
    \end{equation}
    \begin{align}\label{Ag-2}
        \mathcal{A}^\gamma=\begin{pmatrix}
        \frac{5\zeta(3)}{2\pi^3}K^3+O(K\log K)& -\frac{6\zeta(3)}{\pi^3}K^{3}+O(K\log K)\\
        -\frac{6\zeta(3)}{\pi^3}K^{3}+O(K\log K)& \frac{93\zeta(5)}{8\pi^5}K^{5}+O(K^3)
    \end{pmatrix},
    \end{align}
where $\zeta(\cdot)$ is the Riemann zeta function.

Now, we decompose
\[\Psi(A)=\Psi(A_0)+\Psi(A)-\Psi(A_0)\]
where $A_0=(\e,\xi,a,b_0,0)$ and $b_0=(|b|,0,0)$.
\begin{align}\label{PsiA0}
    \Psi(A_0)=\e [q(\hat \xi)]^2\frac{\mathcal{C}_0(K,d)}{2|b|}+\frac{|w|^2}{8 |b|^3}\e^3\mathcal{C}_2(K,d)-\lambda \e^2 C_*(\xi)+O(\delta^{-1}\e^3K (\log K)^2),
\end{align}
\begin{align}
\begin{split}\label{PsiA-PsiA0-all}
    &\Psi(A)-\Psi(A_0)=\e^3 (\alpha_w,\alpha_b)\mathcal{A}^\gamma (\alpha_w,\alpha_b)^T\\
    &\quad +O\left(\e [q(\hat\xi)]^2K^3 \alpha_b^2+\e^2 q(\hat \xi)Kd^{-1}(\alpha_w^2+d^{-2}\alpha_b^2))+\e^3Kd^{-2}(\alpha_w^2+d^{-2}\alpha_b^2)\right).
    \end{split}
\end{align}
Using the explicit orders of $\mathcal{A}^\gamma$ and the bounds in $\mathscr{C}$, we have rough estimates
\begin{align}\label{PsiA-PsiA0}
     |\Psi(A)-\Psi(A_0)|\leq C\delta^{-1}\e^3K(\log K)^2.
\end{align}

In the following, we will prove that if $\delta$ is sufficiently small and $K$ sufficiently large then any point on the boundary $\partial \mathscr{C}$ is strictly greater than some interior point. Since $\Gamma$ is a closed curve, $\xi$ is always in the interior. It remains to consider the variation of $J(PA_A+\phi_A)$ for $\e, a,d,\alpha_b,\alpha_w$. We denote $C_q=\frac13\int_{\R^3}q^6dz$ for short.

(1) Consider the variation of $\e$, and  fix all the other variables in $A$. Thus one can think of $J(PQ_A+\phi_A)=\mathcal{J}_1(\e)$ as a function of $\e$ on $[\delta K^{-3},\delta^{-1}K^{-3}]$. If $\e =\delta K^{-3}$, then using \eqref{J-full}, \eqref{PsiA0}, \eqref{PsiA-PsiA0}, and the definition of $\mathscr{C}$ in \eqref{constraint-C-8},
    \begin{align*}
        \mathcal{J}_1(\delta K^{-3})&\geq C_q -2\pi \lambda \delta^2K^{-6} C_*(\xi)+O(\delta^2 K^{-8}(\log K)^2).
    \end{align*}

    If $\e= \delta^{-1}K^{-3}$, then the same estimates yields
    \begin{align*}
        \mathcal{J}_1(\delta^{-1}K^{-3})\geq C_q+2\pi\left(\frac{|w|^2}{8 |b|^3}\delta^{-3}K^{-6}\mathcal{C}_2(K,d)-\lambda \delta^{-2}K^{-6}C_*(\xi)\right)+O(\delta^{-4} K^{-8}(\log K)^2).
    \end{align*}
    However, one can choose another $\e_*=\delta_*K^{-3}$ with
    $\delta_*={16 |b|^3\lambda C_*(\xi)}K^3/(3|w|^2\mathcal{C}_2(K,d))$ and compute its energy. Notice that $\e_*[q(\hat \xi)]^2\mathcal{C}_0(K,d)/(2|b|)\leq C\delta^{-2}(\e_*)^3K(\log K)^2$, then
    \begin{align*}
        \mathcal{J}_1(\delta_*K^{-3})&\leq C_q+2\pi\left(\frac23\lambda(\e_*)^2C_*(\xi)-\lambda (\e_*)^2 C_*(\xi)\right)+O(\delta^{-4}K^{-8}(\log K)^2)\\
        &\leq C_q-\frac{\pi}{2}\lambda (\e_*)^2C_*(\xi)=C_q-\frac{\pi}2\lambda \delta_*^2C_*(\xi)K^{-6}.
    \end{align*}
    Comparing the order $K^{-6}$ of three cases, one can choose a sufficiently small $\delta$ satisfying $\delta<\delta_*<\delta^{-1}$ and sufficiently large $K$ such that
    \[\mathcal{J}_1(\delta_*K^{-3})<\min\{\mathcal{J}_1(\delta K^{-3}),\mathcal{J}_1(\delta^{-1}K^{-3})\}.\]

    (2) Consider the variation of $a$, and  fix all the other variables in $A$. Thus one can think of $J(PQ_A+\phi_A)=\mathcal{J}_2(|a|)$ as a function of $|a|$ on $[0,\delta^{-1}\e \log K]$.  If $|a|=\delta^{-1}\e \log K$. Plugging in $q(\hat \xi)=q(\xi+a\nu(\xi))=a|\nabla q(\xi)|+O(a^2)$ and
    \begin{align*}
        |w|^2&=|\nabla q(\hat \xi)|^2=|\nabla q(\xi)|^2+2a |\nabla q(\xi)|\nu(\xi)^T (\nabla^2 q(\xi))\nu(\xi)+O(a^2)
    \end{align*}
    to \eqref{PsiA0} yields
    \begin{align*}
        \mathcal{J}_2(\delta^{-1}\e(\log K)^{1/2})=&\  C_q+2\pi\left(\frac{|\nabla q(\xi)|^2\e^3 K\log K\log4}{2\pi|b|\delta^2} +\frac{|\nabla q(\xi)|^2}{8|b|^3}\e^3\mathcal{C}_2(K,d)-\lambda \e^2 C_*(\xi)\right)\\
        &+O(\delta^{-1}\e^3K(\log K)^2).
    \end{align*}
    However, if we choose $a=0$ then
    \begin{align*}
        \mathcal{J}_2(0)\leq C_q+2\pi\left(\frac{|\nabla q(\xi)|^2}{8|b|^3}\e^3\mathcal{C}_2(K,d)-\lambda \e^2 C_*(\xi)\right)+O(\delta^{-1}\e^3K(\log K)^2).
    \end{align*}
    Taking $K$ large enough and $\delta$ small enough, we have \[\mathcal{J}_2(0)<\mathcal{J}_2(\delta^{-1}\e \log K).\]

    (3) Consider the variation of $d$, and  fix all the other variables in $A$. Thus one can think of $J(PQ_A+\phi_A)=\mathcal{J}_3(d)$ as a function of $d$ on $[\frac{\log K-\log\log K}{K},\frac{\log K}{K}]$. In this case, we use \eqref{PsiA0}, \eqref{PsiA-PsiA0}, and the estimate of $\mathcal{C}_0(K,d)$ to get
    \begin{align}\label{PsiA0-3}
        \Psi(A_0)=\e^3\frac{|w|^2}{8}\frac{\mathcal{C}_2(K,d)}{|b|^3}-\lambda \e^2C_*(\xi)+O(\delta^{-2}\e^3K(\log K)^2).
    \end{align}
    Note that $|b|=\sqrt{1+d^2}-d=1-d+O(d^2)$. Using the definition of $\mathcal{C}_2(K,d)$ and the estimate of $S_i$ and $S_i(K,d)$ in the Lemma \ref{lem:csc3csc5} and Lemma \ref{lem:alt.sum-d}, we have
\begin{align*}
    \frac{\mathcal{C}_2(K,d)}{|b|^3}&=K^3\left[\frac{3\zeta(3)}{2\pi^3}+(Kd)^{-\frac12}e^{-Kd}+O((Kd)^{-\frac32}e^{-Kd})\right](1+3d+O(d^2))\\
   &=K^3\left[\frac{3\zeta(3)}{2\pi^3}+\frac{9\zeta(3)}{2\pi^3}d+(Kd)^{-\frac12}e^{-Kd}+O(d^2+(Kd)^{-\frac32}e^{-Kd})\right].
\end{align*}
If $d=\frac{\log K}{K}$, then
\begin{align*}
   \frac{\mathcal{C}_2(K,d)}{|b|^3}=K^3\left[\frac{3\zeta(3)}{2\pi^3}+\frac{9\zeta(3)}{2\pi^3}\frac{\log K}{K}+\frac{(\log K)^{-\frac12}}{K}+O\left(\frac{(\log K)^{-\frac32}}{K}\right)\right] .
\end{align*}
If $d=\frac{\log K-\log \log K}{K}$, then
\begin{align*}
 \frac{\mathcal{C}_2(K,d)}{|b|^3}=K^3\left[\frac{3\zeta(3)}{2\pi^3}+\frac{9\zeta(3)}{2\pi^3}\frac{\log K}{K}+\frac{(\log K)^{\frac12}}{K}+O\left(\frac{\log\log K}{K}\right)\right] .
\end{align*}
If $d=\frac{1}{K}[\log K-\frac12\log\log K]$, then
\begin{align*}
 \frac{\mathcal{C}_2(K,d)}{|b|^3}=K^3\left[\frac{3\zeta(3)}{2\pi^3}+\frac{9\zeta(3)}{2\pi^3}\frac{\log K-\frac12\log\log K}{K}+O\left(\frac{1}{K
 }\right)\right].
\end{align*}
Obviously, the third one is strictly less than the first two at least by a term of the order $\e^3K^2\log\log K$ when $K$ is large enough. Consequently, \eqref{PsiA0-3} implies that
\begin{align*}
    \mathcal{J}_3(\tfrac{\log K-\frac12 \log\log K}{K})<\min\{\mathcal{J}_3(\tfrac{\log K}{K}),\mathcal{J}_3(\tfrac{\log K-\log\log K}{K})\}.
\end{align*}

(4) Consider the variation of $\alpha_b$ and $\alpha_w$, and fix all the other variables in $A$.  Recall that our definition of $\theta_*(\xi)$ makes $\alpha_w=\hat \beta$. Denote $\hat \alpha_b=K \alpha_b$. Thus the constraint $\mathscr{C}$ indicates that $|\alpha_w|\leq \delta^{-1/2}K^{-1}\log K$ and $|\hat \alpha_b|\leq \delta^{-1/2}K^{-1}\log K$. In this case, $\Psi(A_0)$ is fixed. We think of $J(PQ_A+\phi_A)=\mathcal{J}_4(\alpha_w,\hat \alpha_b)$ as a function of $\alpha_w,\hat \alpha_b$ on the square
\[\left[-\frac{\log K}{K\delta^{1/2}},\frac{\log K}{K\delta^{1/2}}\right]\times \left[-\frac{\log K}{K\delta^{1/2}},\frac{\log K}{K\delta^{1/2}}\right].\]
Then \eqref{PsiA-PsiA0-all} can be simplified
\begin{align}
\begin{split}
    &\Psi(A)-\Psi(A_0)=\e^3 (\alpha_w,\hat \alpha_b)\mathcal{\hat A}^\gamma (\alpha_w,\hat\alpha_b)^T+O\left(\frac{\e^3 K^3}{\log K}(\alpha_w^2+\hat\alpha_b^2)\right),
    \end{split}
\end{align}
where
\begin{align*}
        \mathcal{\hat A}^\gamma=\begin{pmatrix}
        \frac{5\zeta(3)}{2\pi^3}K^3+O(K\log K)& -\frac{6\zeta(3)}{\pi^3}K^{2}+O(\log K)\\
        -\frac{6\zeta(3)}{\pi^3}K^{2}+O(\log K)& \frac{93\zeta(5)}{8\pi^5}K^{3}+O(K)
    \end{pmatrix}.
    \end{align*}
It is easy to see that $\mathcal{\hat A}^\gamma$ is positive-definite with lowest eigenvalue $$\lambda_\gamma\geq \frac12\min\left\{\frac{5\zeta(3)}{2\pi^3},\frac{93\zeta(5)}{8\pi^5}\right\}K^3$$
if $K$ is large enough. If $|\alpha_w|=\delta^{-1/2}K^{-1}\log K$ or $|\hat \alpha_b|=\delta^{-1/2}K^{-1}\log K$, then
\begin{align*}
\Psi(A)-\Psi(A_0)\geq ~&\e^3 (\alpha_w,\hat \alpha_b)\mathcal{\hat A}^\gamma (\alpha_w,\hat \alpha_b)^T+O\left(\frac{\e^3K^3}{\log K}(\alpha_w^2+\hat\alpha_b^2)\right)\\
\geq~ &\lambda_{\gamma}\e^3(\alpha_w^2+\hat \alpha_b^2)+O\left(\frac{\e^3K^3}{\log K}(\alpha_w^2+\hat\alpha_b^2)\right)\\
\geq~ &\frac14\delta^{-1} \min\left\{\frac{5\zeta(3)}{2\pi^3},\frac{93\zeta(5)}{8\pi^5}\right\}\e^3 K(\log K)^2.
\end{align*}
Note that $J(PQ_A+\phi_A)=C_q+\Psi(A_0)+\Psi(A)-\Psi(A_0)+O(\e^3K(\log K)^2)$. Choosing $\delta$ small and $K$ large, the above analysis implies that
\begin{align*}
    \mathcal{J}_4(0,0)<\mathcal{J}_4(\alpha_w,\hat \alpha_b),\quad  \forall \,|\alpha_w| \text{ or }|\hat \alpha_b|=\delta^{-1/2}K^{-1}\log K.
\end{align*}

Now combining the previous (1)-(4) parts, we know that the infimum of $J(PQ_A+\phi_A)$ must be achieved when the parameters $\e,\xi,a,d,\alpha_b,\alpha_w$ are in the interior of the constraint set. Hence we finish the whole proof.
\end{proof}

\begin{proof}[Proof of Theorem \ref{th1.sector}.]
By Theorem \ref{th8.para}, it follows that $\inf_{A\in \mathscr{C}}J(PQ_A+\phi_A)$ is attained in the interior of $\mathscr{C}$. By Remark \ref{rmk:JPQA-C1} and Proposition \ref{prop.der}, $J(PQ_A+\phi_A)$ is at least $C^1$ on the parameters of $A$. Then the partial derivatives of $J(PQ_A+\phi_A)$ with respect to the six parameters of $A$ are zero at a minimum point in the interior of $\mathscr{C}$. Then using Lemma \ref{le.finite-p}, we find a nontrivial solution to \eqref{210} and it proves Theorem \ref{th1.sector}.
\end{proof}


\section{Critical computations}
In this section, we present essential estimates for $\gamma$ and $H_0^e$, which are necessary for the analyses discussed in the preceding sections. These two functions exhibit certain computational similarities. We provide a detailed computation for $\gamma$ and outline the corresponding steps for $H_0^e$.

\subsection{Estimates on Type I}
We estimate $\gamma(z,p)$ in this subsection. Recall
\begin{align}\label{def:gamma1}
    \gamma(z,p):=\frac{1}{|\bar z e^{2i\theta_0}-p|}-\sum_{j=1}^{K/2-1}\left(\frac{1}{|z e^{4ji\theta_0}-p|}-\frac{1}{|\bar z e^{(4j+2)i\theta_0}-p|}\right).
\end{align}
The following lemma gives a rough bound of $\gamma$ and its derivatives.
\begin{lemma}\label{lem:gamma-first} For any $z,b\in\Sigma_K$ with $b=(|b|e^{i\alpha_b},0)^T$ and $|b|>\frac12$, one has
\begin{align*}
    |\nabla^m\gamma(z,p)|_{p=b}|\leq CK^{m+1},\quad m=0,1,2,3,4.
\end{align*}
\end{lemma}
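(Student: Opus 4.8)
The plan is to establish the bound by summing the $K-1$ terms of \eqref{def:gamma1} and invoking the denominator lower bounds \eqref{lowerbd-|z-b|}. From the elementary estimate $\big|\nabla^m_p\big(|w-p|^{-1}\big)\big|\le C_m|w-p|^{-(m+1)}$, applying this to each summand and using \eqref{lowerbd-|z-b|} bounds $|\nabla^m\gamma(z,p)|_{p=b}|$ by $C\sum_{\ell=1}^{K-1}(\sin\ell\theta_0)^{-(m+1)}$, where the $\bar z$-image of index $j$ accounts for the odd index $\ell=2j+1$ and the $z$-image of index $j$ for the even index $\ell=2j$. Using $\sin t\ge\tfrac2\pi t$ on $[0,\tfrac\pi2]$ together with the symmetry $\sin\ell\theta_0=\sin(K-\ell)\theta_0$ gives $\sin\ell\theta_0\ge c\,\mu_\ell\theta_0$ with $\mu_\ell:=\min\{\ell,K-\ell\}$, so for $m\ge1$
\[
\sum_{\ell=1}^{K-1}\frac1{(\sin\ell\theta_0)^{m+1}}\le\frac{C}{\theta_0^{m+1}}\sum_{\ell=1}^{K-1}\frac1{\mu_\ell^{m+1}}\le\frac{2C\zeta(m+1)}{\theta_0^{m+1}}\le CK^{m+1},
\]
since $\theta_0=\pi/K$. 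This handles $m=1,2,3,4$.

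The case $m=0$ is the only point requiring care: the term-by-term bound gives only $\sum_\ell(\sin\ell\theta_0)^{-1}=O(\theta_0^{-1}\log K)=O(K\log K)$, so the cancellation in \eqref{def:gamma1} must be exploited. Listed in order of the half-angle index $\ell$, the summands of $\gamma$ carry alternating signs $+,-,+,-,\dots$; and with $z=(re^{i\alpha_z},z_3)$, $b=(|b|e^{i\alpha_b},0)$, $c_r=(r-|b|)^2+z_3^2$ and $s_o=4r|b|\le4$, formulas \eqref{z-b-1}--\eqref{z-b-2} read
\[
|\bar z e^{(4j+2)i\theta_0}-b|^2=c_r+s_o\sin^2\!\Big((2j+1)\theta_0-\tfrac{\alpha_z+\alpha_b}2\Big),\qquad
|z e^{4ji\theta_0}-b|^2=c_r+s_o\sin^2\!\Big(2j\theta_0+\tfrac{\alpha_z-\alpha_b}2\Big).
\]
The plan is to pair the positive term of half-angle index $\ell=2k-1$ (the $\bar z$-image with $j=k-1$) with the negative term $\ell=2k$ (the $z$-image with $j=k$), $k=1,\dots,\tfrac K2-1$, leaving one unpaired positive term whose denominator is $\ge C\sin\theta_0$, hence $O(K)$. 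Writing $a_k,b_k$ for the two denominators of the $k$-th pair, the identity $\sin^2X-\sin^2Y=\sin(X-Y)\sin(X+Y)$ yields
\[
b_k^2-a_k^2=s_o\,\sin(\theta_0+\alpha_z)\,\sin\!\big((4k-1)\theta_0-\alpha_b\big),
\]
so $|b_k^2-a_k^2|\le C\theta_0\,|\sin((4k-1)\theta_0-\alpha_b)|$ because $|\alpha_z|\le\theta_0$; using $|\sin2t|\le2|\sin t|$ and \eqref{lowerbd-|z-b|} one gets $|\sin((4k-1)\theta_0-\alpha_b)|\le C(\sin2k\theta_0+\theta_0)\le C(b_k+\theta_0)$, while $a_k\ge C\sin(2k-1)\theta_0$ and $b_k\ge C\sin2k\theta_0$ are both $\ge c\,\mu_k\theta_0$ with $\mu_k:=\min\{k,\tfrac K2-k\}$. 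Hence
\[
\Big|\frac1{a_k}-\frac1{b_k}\Big|=\frac{|b_k^2-a_k^2|}{a_kb_k(a_k+b_k)}\le\frac{C\theta_0b_k+C\theta_0^2}{a_kb_k(a_k+b_k)}\le\frac{C}{\mu_k^2\theta_0}+\frac{C}{\mu_k^3\theta_0},
\]
and summing over $k$ gives $|\gamma(z,b)|\le C\theta_0^{-1}\le CK$.

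The main obstacle is exactly this $m=0$ estimate: the logarithmic loss in the naive bound is genuine, so one cannot afford to discard the cancellation between consecutive terms, and the pairing must be set up carefully — tracking the alternating signs, the fact that the angular offsets $\pm\tfrac12(\alpha_z\pm\alpha_b)$ are $O(\theta_0)$, and the appearance of the gainful factor $\sin(\theta_0+\alpha_z)=O(\theta_0)$ in the difference of squared distances. (As a consistency check, $\gamma(\cdot,b)$ is harmonic in $\Sigma_K$, all its poles lying in adjacent sectors, and on the two flat faces of $\partial\Sigma_K$ it equals $|z-b|^{-1}\le2K$ by the odd symmetry of the $(\cdot)^e$ operation; this reduces the $L^\infty$ bound to the spherical cap, where the same cancellation is nonetheless needed.) The bounds for $m\ge1$ are routine convergent-series estimates, and the refined estimates of $\gamma$ used later rest on the same splitting together with sharper evaluations of the resulting trigonometric sums.
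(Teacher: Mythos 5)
Your bounds for $m\ge 1$ are essentially identical to the paper's: both apply the term-by-term estimate $|\nabla_p^m(|w-p|^{-1})|\le C_m|w-p|^{-(m+1)}$, then the denominator lower bounds \eqref{lowerbd-|z-b|}, and close with the convergent series $\sum_{\ell\ge 1}\ell^{-(m+1)}$.

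For $m=0$, you correctly identify that the naive term-by-term bound loses a $\log K$, and that genuine cancellation must be used; however, your mechanism is different from the paper's. You pair consecutive signed terms, apply the identity $\sin^2 X-\sin^2 Y=\sin(X-Y)\sin(X+Y)$ to extract the small factor $\sin(\theta_0+\alpha_z)=O(\theta_0)$ from the numerator, and then sum. The paper instead observes from \eqref{z-b-1}--\eqref{z-b-2} that the sequence of distances $|\bar ze^{2i\theta_0}-b|<|ze^{4i\theta_0}-b|<|\bar ze^{6i\theta_0}-b|<\cdots$ is monotone increasing while the angular argument stays in $[0,\pi/2]$ (indices $j\le j_0\approx K/4$), and monotone decreasing on $[\pi/2,\pi]$ (indices $j\ge j_1\approx K/4$); it then bounds the two resulting \emph{alternating sums of monotone terms} by their extreme values, with the $O(1)$ middle range treated trivially. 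The paper's route is softer and shorter — no trigonometric identity, only the alternating-series bound — at the cost of the case split around $K/4$; your pairing is more explicit and arguably more robust to the phase offsets $\pm\tfrac12(\alpha_z\pm\alpha_b)$, since you never need the monotonicity to hold strictly. Both are valid. Two small points: (i) your sum after pairing is over $\sum_k \mu_k^{-2}$ plus $\sum_k \mu_k^{-3}$, both $O(1)$ uniformly, so the conclusion $|\gamma(z,b)|\le CK$ is correct once the one unpaired term at $\ell=K-1$ (denominator $\ge C\sin\theta_0$) is added; (ii) the paper's closing display $\sum_{j=0}^{K-1}\csc^{m+1}j\theta_0$ has a typo (the index should start at $j=1$), which your version correctly avoids.
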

\begin{proof}

Let us first prove the case $m=0$.
    Suppose $z=(z_1,z_2,z_3)=(re^{i\alpha_z},z_3)$. We recall \eqref{z-b-1} and \eqref{z-b-2}.
Since $|\alpha_z\pm \alpha_b|<2\theta_0$, one can see that
\[2j\theta_0+\tfrac12\alpha_z-\tfrac12\alpha_b\in [(2j-1)\theta_0,(2j+1)\theta_0],\]
\[(2j+1)\theta_0-\tfrac12\alpha_z-\tfrac12\alpha_b\in [2j\theta_0,(2j+2)\theta_0].\]
Moreover, they both belong to $[0,\tfrac{\pi}{2}]$ when $1\leq j\leq  j_0=\lfloor K/4-1\rfloor$, the largest integer less than or equal to $K/4-1$. Thus, \eqref{z-b-1}, \eqref{z-b-2} and the monotonicity of Sine function on $[0,\tfrac{\pi}{2}]$ imply that
\begin{align*}
    |\bar ze^{2i\theta_0}-b|<|ze^{4i\theta_0}-b|<|\bar ze^{6i\theta_0}-b|<\cdots<|\bar ze^{(4j_0+2)i\theta_0}-b|.
\end{align*}
Therefore
\begin{align}\label{j0-1}
   0< \sum_{j=1}^{j_0}\left(\frac{1}{|z e^{4ji\theta_0}-b|}-\frac{1}{|\bar z e^{(4j+2)i\theta_0}-b|}\right)<\frac{1}{|ze^{4i\theta_0}-b|}-\frac{1}{|\bar z e^{(4j_0+2)i\theta_0}-b|}.
\end{align}

When $j\geq j_1=\lceil K/4 +1/2\rceil$, the least integer greater than or equal to $K/4+1/2$. In this case, we have
$2j\theta_0+\tfrac12\alpha_z-\tfrac12\alpha_b$ and $(2j+1)\theta_0-\tfrac12\alpha_z-\tfrac12\alpha_b$ belong to $[\tfrac{\pi}{2},\pi]$. Thus \eqref{z-b-1} and \eqref{z-b-2} and the monotonicity of Sine function on $[\tfrac{\pi}{2},\pi]$ imply that
\begin{align*}
    |z e^{4j_1i\theta_0}-b|>|\bar z e^{(4j_1+2)i\theta_0}-b|>\cdots>|\bar z e^{2(K-1)i\theta_0}-b|.
\end{align*}
Therefore
\begin{align}\label{j1-1}
    \frac{1}{|ze^{4j_1i\theta_0}-b|}-\frac{1}{|\bar z e^{2(K-1)i\theta_0}-b|}< \sum_{j=j_1}^{K/2-1}\left(\frac{1}{|z e^{4ji\theta_0}-b|}-\frac{1}{|\bar z e^{(4j+2)i\theta_0}-b|}\right)< 0.
\end{align}
For $j_0\leq j\leq j_1$, one has that $ze^{4ji\theta_0}$ and $\bar z e^{(4j+2)i\theta_0}$ are far from $b$. More precisely, $|ze^{4ji\theta_0}-b|\geq |b|$, and $ |ze^{(4j+2)i\theta_0}-b|\geq |b|$.
Since $j_1-j_0\leq 3$, then
\[\left|\sum_{j=j_0+1}^{j_1-1}\left(\frac{1}{|z e^{4ji\theta_0}-b|}-\frac{1}{|\bar z e^{(4j+2)i\theta_0}-b|}\right)\right|\leq \frac{6}{|b|}.\]
Combining the above with \eqref{j0-1} and \eqref{j1-1}, we get
\begin{align}
\begin{split}\label{gamma1<}
    \gamma(z,b)&\leq \frac{1}{|\bar z e^{2i\theta_0}-b|}+\frac{1}{|\bar z e^{2(K-1)i\theta_0}-b|}-\frac{1}{|ze^{4j_1i\theta_0}-b|}+\frac{6}{|b|},\\
    \gamma(z,b)&\geq \frac{1}{|\bar z e^{2i\theta_0}-b|}-\frac{1}{|ze^{4i\theta_0}-b|}+\frac{1}{|\bar z e^{(4j_0+2)i\theta_0}-b|}-\frac{6}{|b|}.
\end{split}
\end{align}
Note that for any $z,b\in \Sigma_K$, one has $1/(2K)\leq |ze^{4ji\theta_0}-b|\leq 2$ for any $j\geq 1$. This also holds for $|\bar z e^{2(K-1)i\theta_0}-b|$ and $|z e^{4i\theta_0}-b|$. Thus, the above inequalities imply that
\[|\gamma(z,b)|\leq CK.\]

For the derivatives of $\gamma$,  using the estimates like \eqref{nabla(z-p)}, it is easy to show that for $m\geq 1$
\begin{align*}
    |\nabla^m \gamma(z,b)|\leq C\sum_{j=0}^{K/2-1}\frac{1}{|\bar z e^{(4j+2)i\theta_0}-b|^{m+1}}+C\sum_{j=1}^{K/2-1}\frac{1}{|z e^{4ji\theta_0}-b|^{m+1}}.
\end{align*}
Invoking \eqref{lowerbd-|z-b|}, one has
$$|\nabla^m\gamma(z,b)|\leq C \sum_{j=0}^{K-1}\csc^{m+1} j\theta_0\leq CK^{m+1}.$$
Hence we finish the proof.
\end{proof}

 In this particular case $z=b$, we have a more precise estimate.
 \begin{lemma}\label{lem:gamma(b,b)} Suppose that $b=(|b|e^{i\alpha_b},0)^T$ satisfies  $|b|>\tfrac12$ and $|\alpha_b|<\frac12\theta_0$. Then
     \begin{align*}
         \gamma(b,b)=\frac{1}{2|b|}\left[\hat S_1(K)+O(K^3\alpha_b^2)\right].
     \end{align*}
     where $\hat S_1(K)$ is defined in Lemma \ref{lem:csc3csc5}.
 \end{lemma}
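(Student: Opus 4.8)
The plan is to evaluate $\gamma(b,b)$ directly from its defining formula \eqref{def:gamma1} by setting $z=b$, computing each distance $|b e^{4ji\theta_0}-b|$ and $|\bar b e^{(4j+2)i\theta_0}-b|$ in closed form, and then Taylor-expanding the resulting trigonometric sum in the small parameter $\alpha_b$. Writing $b=(|b|e^{i\alpha_b},0)^T$, the two families of distances become
\begin{align*}
|b e^{4ji\theta_0}-b|^2 &= 4|b|^2\sin^2(2j\theta_0),\\
|\bar b e^{(4j+2)i\theta_0}-b|^2 &= 4|b|^2\sin^2\!\big((2j+1)\theta_0-\alpha_b\big),
\end{align*}
using $|be^{i\varphi}-b|^2=2|b|^2(1-\cos\varphi)=4|b|^2\sin^2(\varphi/2)$ and noting that the $z_3$-components vanish. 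Note that for $z=b$ the parameter $\alpha_b$ enters the first family not at all (the rotation angle $4j\theta_0$ is independent of $\alpha_b$ after the cancellation $\alpha_z=\alpha_b$) but it does enter the reflected family through $\bar b e^{(4j+2)i\theta_0}-b$, where the reflection turns $\alpha_b\mapsto -\alpha_b$ and produces the shift $(2j+1)\theta_0-\alpha_b$; the $j=0$ term is the leading $\frac{1}{|\bar be^{2i\theta_0}-b|}$ piece and must be kept.

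Thus
\[
\gamma(b,b)=\frac{1}{2|b|}\left[\frac{1}{|\sin(\theta_0-\alpha_b)|}-\sum_{j=1}^{K/2-1}\left(\frac{1}{\sin 2j\theta_0}-\frac{1}{|\sin((2j+1)\theta_0-\alpha_b)|}\right)\right].
\]
Setting $\alpha_b=0$ one recognizes the alternating cosecant sum which (after reindexing over $\theta_0=\pi/K$) equals $\hat S_1(K)$ as defined in Lemma \ref{lem:csc3csc5}; this identification is a matter of matching the odd/even index pattern $\csc\theta_0-\csc 2\theta_0+\csc 3\theta_0-\cdots$ against the definition there. It remains to control the $\alpha_b$-dependence. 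Since $|\alpha_b|<\tfrac12\theta_0$, every argument $(2j+1)\theta_0-\alpha_b$ stays in $(2j\theta_0+\tfrac12\theta_0,\,(2j+2)\theta_0-\tfrac12\theta_0)$, so the sines are bounded away from $0$ on the scale $\sin(j\theta_0)$ and one may expand $\frac{1}{|\sin((2j+1)\theta_0-\alpha_b)|}$ as $\frac{1}{\sin(2j+1)\theta_0}+\alpha_b\frac{\cos(2j+1)\theta_0}{\sin^2(2j+1)\theta_0}+O\!\big(\alpha_b^2\csc^3(2j+1)\theta_0\big)$ (with the obvious modification for the $j=0$ term, where $|\sin(\theta_0-\alpha_b)|=\sin(\theta_0-\alpha_b)$). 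The linear-in-$\alpha_b$ terms sum to something of order $\alpha_b\sum_j \csc^2(j\theta_0)\le C\alpha_b K^2$, but I expect a better bound: because $\gamma(b,b)$ is even in $\alpha_b$ up to the boundary index corrections — more honestly, because the leading odd-in-$\alpha_b$ contribution is $O(\alpha_b K^2)=O(K^3\alpha_b^2)$ once we use $|\alpha_b|\le CK^{-2}\log K\ll K^{-1}$ from \eqref{constraint-m} — the linear term is already absorbed into the claimed error $O(K^3\alpha_b^2)$, and the quadratic remainder is $O(\alpha_b^2\sum_j\csc^3(j\theta_0))=O(\alpha_b^2K^3)$ by the same cosecant-power estimate used in Lemma \ref{lem:gamma-first}. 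Collecting, $\gamma(b,b)=\frac{1}{2|b|}[\hat S_1(K)+O(K^3\alpha_b^2)]$.

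The main obstacle is the bookkeeping near the ``equatorial'' indices $j\approx K/4$, where $\sin(2j\theta_0)$ and $\sin((2j+1)\theta_0-\alpha_b)$ are $O(1)$ rather than small: there one cannot use $\csc(j\theta_0)\sim 1/(j\theta_0)$, but these finitely many ($O(1)$ many, in fact a bounded number after accounting for the $|\alpha_b|<\tfrac12\theta_0$ buffer) terms contribute $O(1)$ to $\gamma(b,b)$ and $O(\alpha_b^2)$ to the error, both harmless. A second, more delicate point is verifying that the alternating partial sum telescopes in the way used in Lemma \ref{lem:gamma-first} so that $\hat S_1(K)$ — rather than a naive $O(K)$ bound — really is the exact leading constant; this requires invoking the monotonicity of $j\mapsto |be^{2ji\theta_0}-b|$ on the first quarter and last quarter of the index range, exactly as in the proof of Lemma \ref{lem:gamma-first}, and then matching the surviving boundary terms with the closed form for $\hat S_1(K)$ from Section \ref{sec:series}. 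Since the statement only asserts the leading term plus an $O(K^3\alpha_b^2)$ error (and $\hat S_1(K)$ itself is of order $K$), none of these local analyses needs to be pushed beyond the crude cosecant-power bounds already established.
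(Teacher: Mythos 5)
Your computation of the distances and the reduction of $\gamma(b,b)$ to the explicit cosecant sum match the paper's proof exactly, and the cubic remainder from Taylor expansion is handled the same way. However, there is a genuine gap in your treatment of the linear-in-$\alpha_b$ term, and the lemma would actually be false without closing it.

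You estimate the linear contribution as $O(\alpha_b K^2)$ and then claim it is ``absorbed'' into $O(K^3\alpha_b^2)$ because $|\alpha_b|\lesssim K^{-2}\log K$. This inequality runs the wrong way: $\alpha_b K^2 = O(K^3\alpha_b^2)$ would require $1=O(K\alpha_b)$, i.e.\ $\alpha_b\gtrsim 1/K$, whereas the constraint gives $\alpha_b\ll 1/K$. Concretely, when $\alpha_b\sim K^{-2}\log K$, your linear bound $\alpha_b K^2\sim\log K$ is much larger than the claimed error $K^3\alpha_b^2\sim K^{-1}(\log K)^2$. So a nonzero linear term of this size would contradict the lemma, not be swallowed by it. You did gesture at the correct mechanism --- that the sum is even in $\alpha_b$ --- but then hedged with ``up to boundary index corrections'' and retreated to the invalid absorption argument. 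The paper instead commits to the exact symmetry: under $j\mapsto K/2-1-j$ one has $(2j+1)\theta_0\mapsto \pi-(2j+1)\theta_0$, and since $\csc(\pi-x)=\csc x$, the sum $\sum_{j=0}^{K/2-1}\csc\big((2j+1)\theta_0-\alpha_b\big)$ is an \emph{exactly} even function of $\alpha_b$, with no boundary corrections at all. This forces the linear term to vanish identically, leaving only the $O(K^3\alpha_b^2)$ remainder. To fix your proof, replace the absorption step with this exact symmetry cancellation.
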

 \begin{proof}
 Using \eqref{z-b-1}, \eqref{z-b-2} and $|\alpha_b|<\tfrac12\theta_0$, we obtain $|b e^{4ji\theta_0}-b|=2|b|\sin 2j\theta_0$ and $|\bar b e^{(4j+2)i\theta_0}-b|=2|b|\sin ((2j+1)\theta_0-\alpha_b)$. Then
 \begin{align*}
     \gamma(b,b)
     &=\frac{1}{2|b|}\sum_{j=0}^{K/2-1}\frac{1}{\sin ((2j+1)\theta_0-\alpha_b)}-\frac{1}{2|b|}\sum_{j=1}^{K/2-1}\sin (2j\theta_0).
 \end{align*}
The following expansion holds uniformly for any $\theta\in (0,\pi)$ and $|\alpha_b|<\min\{\tfrac{\theta}{2},\pi-\tfrac{\theta}{2}\}$,
 \begin{align*}
     &\frac{1}{\sin (\theta-\alpha_b)}=\frac{1}{\sin \theta}+\frac{\cos\theta}{\sin^2 \theta}\alpha_b+O\left(\frac{\alpha_b^2}{\sin^3\theta}\right).
 \end{align*}
 Replacing $\theta=(2j+1)\theta_0$ in the above and summing on $j$ from 0 to $K/2-1$, we have
 \begin{align*}
     \sum_{j=0}^{K/2-1}\frac{1}{\sin ((2j+1)\theta_0-\alpha_b)}=\sum_{j=0}^{K/2-1}\frac{1}{\sin (2j+1)\theta_0}+O(K^3\alpha_b^2).
 \end{align*}
 Here the odd term on $\alpha_b$ vanishes because $\sum_{j=0}^{K/2-1}\csc ((2j+1)\theta_0-\alpha_b)$ is an even function on $\alpha_b$. One can see this by changing $j$ to $K/2-1-j$ in the summation.
 Consequently
 \begin{align*}
     \gamma(b,b)&=\frac{1}{2|b|}\left(\sum_{j=1}^{K-1}\frac{(-1)^{j+1}}{\sin (j\theta_0)}+O(K^3\alpha_b^2)\right)=\frac{1}{2|b|}\left[\hat S_1(K)+O(K^3\alpha_b^2)\right],
 \end{align*}
 where we have used the notations in Lemma \ref{lem:csc3csc5}.
 The proof is complete.
 \end{proof}

\begin{lemma}\label{lem:w_dot_z} Suppose that $w=(|w|e^{i\alpha_w},0)^T$ and $b=(|b|e^{i\alpha_b},0)^T$ satisfy $|b|>\frac12$ and $|\alpha_b|<\frac12\theta_0$. Then
    \begin{align*}
        w\cdot \nabla_z\gamma(b,b)=\frac{|w|}{4|b|^2}\left[-\hat S_1(K)+O(K(\log K) \alpha_w^2+K^3|\alpha_w\alpha_b|+K^3\alpha_b^2)\right]
    \end{align*}
and the same expansion applies to $w\cdot \nabla_p\gamma(b,b)$.
\end{lemma}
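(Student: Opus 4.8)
The plan is to differentiate $\gamma$ term by term, evaluate on the diagonal $z=p=b$, and then extract the leading behaviour from the resulting trigonometric sum. Each summand of $\gamma$ has the form $\pm|\Phi(z)-p|^{-1}$, where $\Phi$ is either the rotation $z\mapsto ze^{4ji\theta_0}=R_{4j\theta_0}z$ or the rotation--reflection $z\mapsto\bar ze^{(4j+2)i\theta_0}=R_{(4j+2)\theta_0}Sz$ with $S=\mathrm{diag}(1,-1,1)$; hence $\nabla_z|\Phi(z)-p|^{-1}=-(\Phi')^{T}(\Phi(z)-p)/|\Phi(z)-p|^3$, with $\Phi'$ the constant linear part of $\Phi$. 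Since $w_3=0$ (because $q$ is even in $z_3$) and $b_3=0$, and all the $\Phi$ preserve the $z_1z_2$-plane, I would carry out the whole computation in $\mathbb{C}$ with $b=|b|e^{i\alpha_b}$, $w=|w|e^{i\alpha_w}$. Using $|be^{4ji\theta_0}-b|=2|b|\sin(2j\theta_0)$, $|\bar be^{(4j+2)i\theta_0}-b|=2|b|\sin((2j+1)\theta_0-\alpha_b)$ (as in the proof of Lemma~\ref{lem:gamma(b,b)}) together with the sum-to-product identities, a short computation should give the closed form
\begin{align*}
w\cdot\nabla_z\gamma(b,b)=\frac{|w|}{4|b|^2}\left[\sum_{j=1}^{K/2-1}\frac{\sin(2j\theta_0+\alpha_w-\alpha_b)}{\sin^2(2j\theta_0)}-\sum_{j=0}^{K/2-1}\frac{\sin((2j+1)\theta_0-\alpha_w)}{\sin^2((2j+1)\theta_0-\alpha_b)}\right].
\end{align*}

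The leading term comes from $\alpha_w=\alpha_b=0$: the bracket then equals $\sum_{j=1}^{K/2-1}\csc(2j\theta_0)-\sum_{j=0}^{K/2-1}\csc((2j+1)\theta_0)$, which is precisely $-\hat S_1(K)$ once $\hat S_1(K)=\sum_{j=1}^{K-1}(-1)^{j+1}\csc(j\theta_0)$ is split according to the parity of $j$ --- exactly the bookkeeping already used in Lemma~\ref{lem:gamma(b,b)}. For the error, the key point is \emph{not} to Taylor-expand the numerators directly but to use $\sin(\theta+\delta)=\sin\theta\cos\delta+\cos\theta\sin\delta$. In the first sum (same shift $\delta=\alpha_w-\alpha_b$ in both slots) this isolates $\cos(\alpha_w-\alpha_b)\sum_j\csc(2j\theta_0)+\sin(\alpha_w-\alpha_b)\sum_j\cos(2j\theta_0)\csc^2(2j\theta_0)$; the second sum vanishes identically by the antisymmetry $j\leftrightarrow K/2-j$ (i.e.\ $x\mapsto\pi-x$), and $\cos(\alpha_w-\alpha_b)=1+O((\alpha_w-\alpha_b)^2)$ with $\sum_j\csc(2j\theta_0)=O(K\log K)$ leaves an error $O(K\log K\,(\alpha_w-\alpha_b)^2)$. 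In the second sum, writing $\beta_j=(2j+1)\theta_0-\alpha_b$, the same identity gives $\cos(\alpha_w-\alpha_b)\sum_j\csc\beta_j-\sin(\alpha_w-\alpha_b)\sum_j\cos\beta_j\csc^2\beta_j$; here $\sum_j\csc\beta_j=\sum_j\csc((2j+1)\theta_0)+O(K^3\alpha_b^2)$ (the linear term in $\alpha_b$ vanishing by evenness, as in Lemma~\ref{lem:gamma(b,b)}), while $\sum_j\cos\beta_j\csc^2\beta_j$ vanishes at $\alpha_b=0$ by the $x\mapsto\pi-x$ symmetry and, after one further expansion in $\alpha_b$ using $\sum_j\csc^m((2j+1)\theta_0)=O(K^m)$ for $m=3,4$ together with $|\alpha_b|<\theta_0/2=O(1/K)$, is $O(K^3|\alpha_b|)$; multiplying by $\sin(\alpha_w-\alpha_b)=O(|\alpha_w|+|\alpha_b|)$ contributes $O(K^3|\alpha_w\alpha_b|+K^3\alpha_b^2)$. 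Collecting everything and using $(\alpha_w-\alpha_b)^2\le 2\alpha_w^2+2\alpha_b^2$ and $K\log K\le K^3$ yields the stated $O(K\log K\,\alpha_w^2+K^3|\alpha_w\alpha_b|+K^3\alpha_b^2)$.

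For $w\cdot\nabla_p\gamma(b,b)$ the only change is $\nabla_p|\Phi(z)-p|^{-1}=+(\Phi(z)-p)/|\Phi(z)-p|^3$ (no $(\Phi')^{T}$), which after the identical algebra produces the same two sums with $\alpha_w-\alpha_b$ replaced by $\alpha_b-\alpha_w$ in the numerators; the leading term ($\alpha_w=\alpha_b=0$) and all the error orders are unaffected, so the same expansion holds. I expect the main obstacle to be purely organizational: keeping track of which symmetric sums cancel, and making sure that every ``power of $K$'' coming from $\sum_{j=1}^{n-1}\csc^m(j\pi/n)\asymp n^m$ is compensated by the smallness $|\alpha_b|=O(1/K)$ --- this is exactly what pins down the coefficient $K^3$ (rather than $K^4$) in front of $\alpha_b^2$. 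Everything else reduces to routine trigonometric identities and the elementary bounds $\sum_{j=1}^{n-1}\csc^2(j\pi/n)=(n^2-1)/3$, $\sum_{j=1}^{n-1}\csc(j\pi/n)=O(n\log n)$, and $\sum_{j=1}^{n-1}\csc^m(j\pi/n)=O(n^m)$.
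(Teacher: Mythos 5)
Your proof is correct and follows essentially the same route as the paper's: you arrive at the same closed form $\frac{|w|}{4|b|^2}(I_1-I_2)$ (the paper's Steps 1--3), with $I_1,I_2$ exactly as written there, and the leading term $-\hat S_1(K)$ drops out by splitting $\hat S_1(K)$ by parity of $j$. The only genuine variation is in the error bookkeeping: the paper records the parity $I_\ell(\alpha_w,\alpha_b)=I_\ell(-\alpha_w,-\alpha_b)$ (via $j\mapsto K/2-j$ for $I_1$ and $j\mapsto K/2-1-j$ for $I_2$), which kills all odd Taylor terms at once, and then reads off the quadratic coefficients from Lemma~\ref{lem:csc3csc5}, while you apply the angle-addition formula term by term and show that the resulting cross sums $\sum_j\cos(\cdot)\csc^2(\cdot)$ vanish individually by the very same involutions. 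The two arguments are equivalent and give the same bound $O(K\log K\,\alpha_w^2+K^3|\alpha_w\alpha_b|+K^3\alpha_b^2)$; your route is a bit more explicit, the paper's more compressed. One small aside on your $\nabla_p$ remark: on the diagonal $z=p=b$ one has $\nabla_z|\bar ze^{i\theta}-p|^{-1}=\nabla_p|\bar ze^{i\theta}-p|^{-1}$ (since $(R_\theta S)^T(\bar be^{i\theta}-b)=-(\bar be^{i\theta}-b)$, with $S=\mathrm{diag}(1,-1,1)$), so only $I_1$ has $\alpha_w-\alpha_b$ replaced by $\alpha_b-\alpha_w$ when passing from $\nabla_z$ to $\nabla_p$; $I_2$ is unchanged. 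This does not affect the final expansion, which is symmetric under that sign swap.
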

\begin{proof}
{\bf Step 1}: It is straightforward to verify that for any $\theta\in [0,2\pi]$,
    \begin{align*}
        \nabla_z |ze^{i\theta}-p|^{-1}=-|ze^{i\theta}-p|^{-3}(z-pe^{-i\theta}).
    \end{align*}
Thus
    \begin{align}\label{wnabla-1}
        w\cdot \nabla_z |ze^{i\theta}-p|^{-1}\big|_{z=b,p=b}=\frac{w\cdot (be^{-i\theta}-b)}{|be^{i\theta}-b|^3}=-\frac{|w|}{(2|b|)^2}\frac{\sin (\tfrac{\theta}{2}+\alpha_w-\alpha_b)}{\sin^2 \tfrac{\theta}{2}},
    \end{align}
    where we have used $|b e^{i\theta}-b|=2|b|\sin \tfrac{\theta}{2}$ and
    \begin{align}
    \begin{split}\label{wbeitheta-1}
        w\cdot (be^{-i\theta}-b)&=|w||b|(\cos (\alpha_b-\theta-\alpha_w)-\cos (\alpha_b-\alpha_w))\\
        &=2|w||b|\sin \tfrac{\theta}{2}\sin (\alpha_b-\alpha_w-\tfrac{\theta}{2}).
        \end{split}
    \end{align}

{\bf Step 2}: It is easy to get that for any $\theta\in [0,2\pi]$
\begin{align*}
        \nabla_z |\bar ze^{i\theta}-p|^{-1}=-|\bar ze^{i\theta}-p|^{-3}(z-\bar p e^{i\theta}).
    \end{align*}
    Thus when $|\alpha_b|<\frac12\theta$
\begin{align}\label{wnabla-2}
        w\cdot \nabla |\bar ze^{i\theta}-p|^{-1}\big|_{z=b,p=b}=\frac{w\cdot (\bar be^{i\theta}-b)}{|\bar be^{i\theta}-b|^3}=-\frac{|w|}{(2|b|)^2}\frac{\sin (\tfrac{\theta}{2}-\alpha_w)}{\sin^2(\tfrac{\theta}{2}-\alpha_b)},
    \end{align}
    where we have used $|\bar b e^{i\theta}-b|=|b|\sin(\frac{\theta}{2}-\alpha_b)$ and
    \begin{align}
    \begin{split}\label{wbbeitheta-2}
        w\cdot (\bar be^{i\theta}-b)&=|w||b|(\cos (\theta -\alpha_b-\alpha_w)-\cos (\alpha_b-\alpha_w))\\
        &=2|w||b|\sin (\alpha_b-\tfrac{\theta}{2})
        \sin (\tfrac{\theta}{2}-\alpha_w).
        \end{split}
    \end{align}


{\bf Step 3}: Plugging in $\theta=4j\theta_0$ in  \eqref{wnabla-1} and $\theta=(4j+2)\theta_0$ in  \eqref{wnabla-2}, taking sum on $j$, we obtain
\begin{align}\label{w_dot_nabla_z}
    w\cdot \nabla_z\gamma(z,p)\big|_{z=b,p=b}=\frac{|w|}{(2|b|)^2}[I_1-I_2],
\end{align}
where
\begin{align*}
    I_1=\sum_{j=1}^{K/2-1}\frac{\sin (2j\theta_0+\alpha_w-\alpha_b)}{\sin^2 2j\theta_0},\quad I_2=\sum_{j=0}^{K/2-1}\frac{\sin ((2j+1)\theta_0-\alpha_w)}{\sin^2 ((2j+1)\theta_0-\alpha_b)}.
\end{align*}
Think of $I_1=I_1(\alpha_w,\alpha_b)$ and $I_2=I_2(\alpha_w,\alpha_b)$ as a function of $\alpha_w$ and $\alpha_b$. We want to do the Taylor expansion of $I_1$ and $I_2$ for $\alpha_b,\alpha_w$. Changing $j$ to $K/2-j$, we find that $I_1(\alpha_w,\alpha_b)=I_1(-\alpha_w,-\alpha_b)$. Changing $j$ to $K/2-1-j$ in $I_2$, we find that $I_2(\alpha_w,\alpha_b)=I_2(-\alpha_w,-\alpha_b)$. In fact, using the estimates in Lemma \ref{lem:csc3csc5}, we obtain that
\begin{align*}
    I_1&=\sum_{j=1}^{K/2-1}\csc 2j\theta_0+O((K\log K)(\alpha_w^2+\alpha_b^2)),\\
    I_2&=\sum_{j=0}^{K/2-1}\csc (2j+1)\theta_0+O(K(\log K) \alpha_w^2+K^3|\alpha_w\alpha_b|+K^3\alpha_b^2).
\end{align*}
Therefore, using $\sum_{j=0}^{K-1}(-1)^{j}\csc j\theta_0=-\hat S_1(K)$ from Lemma \ref{lem:csc3csc5},
\begin{align*}
    I_1-I_2
    &=-\hat S_1(K)+O(K(\log K) \alpha_w^2+K^3\alpha_w\alpha_b+K^3\alpha_b^2).
\end{align*}
Inserting this back to \eqref{w_dot_nabla_z}, we get the expansion of $w\cdot \nabla_z\gamma(b,b)$.

For $w\cdot \nabla_p\gamma(b,b)$, the proof goes like the previous one by using the following two identities
    \begin{align*}
        \nabla_p |ze^{i\theta}-p|^{-1}=|ze^{i\theta}-p|^{-3}(ze^{i\theta}-p),\\
        \nabla_p |\bar ze^{i\theta}-p|^{-1}=|\bar ze^{i\theta}-p|^{-3}(\bar ze^{i\theta}- p ).
    \end{align*}
\end{proof}

\begin{lemma}\label{lem:wTn2w-g}
Let $w=(|w|e^{i\alpha_w},0)^T\in \R^3$ and  $b=(|b|e^{i\alpha_b},0)^T$. If $\alpha_w\leq 1$ and $|\alpha_b|\leq \frac12 \theta_0$, we have
\begin{align*}
    w^T\nabla^2_{z,p}\gamma(b,b)w
    =\frac{|w|^2}{8|b|^3}\left[\hat S_1(K)+\hat S_3(K)+(\alpha_w,\alpha_b)\mathcal{A}^\gamma(\alpha_w,\alpha_b)^T+O(\alpha_w^4K^3+\alpha_b^4K^{7})\right],
\end{align*}
where we used the notations in the Lemma \ref{lem:csc3csc5}.
\begin{align*}
    \mathcal{A}^\gamma
    &=\begin{pmatrix}
        S_3^o(K)-2S_1^o(K)+3\hat S_3(K)& -3\hat S_3(K)+3S_1^o(K)\\
        -3\hat S_3(K)+3S_1^o(K)& \frac32[4S_5^o(K)+S_3^o(K)-3S_1^o(K)]+3\hat S_3(K)
    \end{pmatrix}.
\end{align*}
\end{lemma}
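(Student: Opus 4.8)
The plan is to run the same scheme as in Lemma~\ref{lem:w_dot_z}, now carrying one further derivative. Rewrite
\[
\gamma(z,p)=-\sum_{j=1}^{K/2-1}\frac{1}{|ze^{4ji\theta_0}-p|}+\sum_{j=0}^{K/2-1}\frac{1}{|\bar z e^{(4j+2)i\theta_0}-p|},
\]
and for an orthogonal map $O$ on $\R^3$ (a rotation $R_\theta$, or the reflection $M_\theta z=\bar z e^{i\theta}$, which satisfies $M_\theta^T=M_\theta$) use
\[
\nabla^2_{z,p}\frac{1}{|Oz-p|}=O^T\!\left(\frac{I}{|Oz-p|^3}-\frac{3(Oz-p)\otimes(Oz-p)}{|Oz-p|^5}\right).
\]
Contracting with $w$ on both sides, evaluating at $z=p=b$, and using $|be^{i\theta}-b|=2|b|\sin\tfrac{\theta}{2}$, $|\bar b e^{i\theta}-b|=2|b|\sin(\tfrac{\theta}{2}-\alpha_b)$ together with the elementary inner-product identities for $w\cdot(be^{i\theta}-b)$, $w\cdot R_\theta w$ and their reflected analogues (exactly as in Steps~1--2 of the proof of Lemma~\ref{lem:w_dot_z}), I expect the two closed forms
\[
w^T\nabla^2_{z,p}\frac{1}{|ze^{i\theta}-p|}w\Big|_{z=p=b}=\frac{|w|^2}{8|b|^3\sin^3\tfrac{\theta}{2}}\Big[\tfrac32\cos(2(\alpha_b-\alpha_w))-\tfrac12\cos\theta\Big],
\]
\[
w^T\nabla^2_{z,p}\frac{1}{|\bar z e^{i\theta}-p|}w\Big|_{z=p=b}=\frac{|w|^2}{8|b|^3\sin^3(\tfrac{\theta}{2}-\alpha_b)}\Big[\tfrac32-\tfrac12\cos(\theta-2\alpha_w)\Big].
\]

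Next I substitute $\theta=4j\theta_0$ in the first and $\theta=(4j+2)\theta_0$ in the second, sum over $j$, and first set $\alpha_w=\alpha_b=0$. Using $\tfrac32-\tfrac12\cos 2\phi=1+\sin^2\phi$, each summand splits into $\csc^3+\csc$, and the alternation between the $z$-sum (arguments $2j\theta_0$, even indices) and the $\bar z$-sum (arguments $(2j+1)\theta_0$, odd indices, including $j=0$) collapses the difference into the alternating sums $\hat S_1(K)+\hat S_3(K)$ of Lemma~\ref{lem:csc3csc5}, producing the main term $\tfrac{|w|^2}{8|b|^3}[\hat S_1(K)+\hat S_3(K)]$. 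I then record the parity: replacing $j\mapsto K/2-j$ in the $z$-sum and $j\mapsto K/2-1-j$ in the $\bar z$-sum shows that the full expression is an even function of $(\alpha_w,\alpha_b)$; hence all odd-order contributions vanish and only the quadratic form survives before the quartic remainder.

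To extract that quadratic form I Taylor-expand to second order: the $z$-summands contribute $3(\alpha_b-\alpha_w)^2\csc^3(2j\theta_0)$, and the $\bar z$-summands, after expanding $\tfrac{1+\sin^2((2j+1)\theta_0-\alpha_w)}{\sin^3((2j+1)\theta_0-\alpha_b)}$, contribute the monomials $\alpha_w^2,\alpha_w\alpha_b,\alpha_b^2$ weighted by $\csc((2j+1)\theta_0)$, $\csc^3((2j+1)\theta_0)$ and $\csc^5((2j+1)\theta_0)$. Summing these and reassembling the even/odd-index sums (and the lone $j=0$ term) into the quantities $S_1^o,S_3^o,S_5^o,\hat S_3$ via Lemma~\ref{lem:csc3csc5} yields the matrix $\mathcal{A}^\gamma$ of the statement. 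For the error, the bound $|\alpha_b|\leq\tfrac12\theta_0\leq\tfrac12(2j+1)\theta_0$ gives $\sin((2j+1)\theta_0-\alpha_b)\geq c\sin((2j+1)\theta_0)$, so each $\alpha_b$-differentiation of $\csc^3((2j+1)\theta_0-\alpha_b)$ costs one more power of $\csc$; thus the fourth-order tails are dominated termwise by $C\alpha_w^4\csc^3+C\alpha_b^4\csc^7$, and summing (using $S_3(K)=O(K^3)$, $S_7(K)=O(K^7)$ and absorbing the cross terms $\alpha_w^2\alpha_b^2K^5$ by AM--GM) gives the claimed $O(\alpha_w^4K^3+\alpha_b^4K^7)$.

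The main obstacle I anticipate is purely the bookkeeping in the penultimate step: expanding $\tfrac{1+\sin^2((2j+1)\theta_0-\alpha_w)}{\sin^3((2j+1)\theta_0-\alpha_b)}$ to second order and then correctly regrouping the resulting $\csc^k$-sums, which are split over even versus odd indices, into the precise combinations appearing in $\mathcal{A}^\gamma$ --- a place where sign errors and off-by-one index shifts are easy to make. Everything else (the orthogonal-map differentiation identity, the angle algebra, the reduction of alternating sums via Lemma~\ref{lem:csc3csc5}) is routine and parallels the proofs of Lemmas~\ref{lem:gamma(b,b)} and~\ref{lem:w_dot_z}.
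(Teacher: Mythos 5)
Your proposal mirrors the paper's own proof essentially step for step: the same orthogonal-map Hessian identity, the same two closed forms for $w^T\nabla^2_{z,p}|ze^{i\theta}-p|^{-1}w$ and $w^T\nabla^2_{z,p}|\bar ze^{i\theta}-p|^{-1}w$ (yours agree with the paper's formulas \eqref{2gamma-1}--\eqref{2gamma-2}, and in fact correct a copy-paste typo in the final display of the paper's Step 1), the same reduction $\tfrac32-\tfrac12\cos 2\phi=1+\sin^2\phi$ giving $\csc+\csc^3$, the same parity argument $j\mapsto K/2-j$ and $j\mapsto K/2-1-j$ to kill odd terms, the same second-order Taylor expansion into $\csc,\csc^3,\csc^5$ sums, and the same quartic error control. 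There is nothing missing; this is the paper's argument.
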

\begin{proof}
{\bf Step 1}: Recall
\begin{align*}
    \partial_{z_\ell,p_j}^2| z -p|^{-1}=\frac{1}{|z-p|^3}\left(\delta_{\ell j}-3\frac{(z-p)_\ell(z-p)_j}{|z-p|^2}\right).
\end{align*}
Using $ze^{i\theta}=R_{\theta}z$,
it is easy to verify that
\[(\nabla^2_{z,p}| z e^{i\theta}-p|^{-1})=|ze^{i\theta}-p|^{-3}R_{-\theta}\left(\delta_{\ell j}-3\frac{(ze^{i\theta}-p)_{\ell}(z e^{i\theta}-p)_j}{|ze^{i\theta}-p|^2}\right).\]
For any vector $w=(|w|e^{i\alpha_w},0)\in \R^3$ and $z=b$ with $p=b$,
\begin{align*}
    &w^T(\nabla^2_{z,p}| z e^{i\theta}-p|^{-1})w|_{z=b,p=b}\\
    &=\frac{1}{|be^{i\theta}-b|^3}\left[|w|^2\cos \theta+3 \frac{[w\cdot(be^{i\theta}-b)][w\cdot (be^{-i\theta}-b)]}{|be^{i\theta}-b|^2}\right]\\
    &=\frac{|w|^2}{|be^{i\theta}-b|^3}\left[\cos \theta-3 \sin (\tfrac{\theta}{2}+\alpha_b-\alpha_w)\sin (-\tfrac{\theta}{2}+\alpha_b-\alpha_w)\right]\\
     &=\frac{|w|^2}{| be^{i\theta}-b|^3}\left[\frac32-\frac12\cos (\theta-2\alpha_w)\right],
\end{align*}
where we have used \eqref{wbeitheta-1} in the second equal sign.
When $\theta=4j\theta_0$, we have
\begin{align}
\begin{split}\label{2gamma-1}
    &w^T(\nabla^2_{z,p}| z e^{4ji\theta_0}-p|^{-1})w|_{z=b,p=b}=\frac{|w|^2}{(2|b|)^3}\frac{\left[-\cos 4j\theta_0+3 \cos 2(\alpha_b-\alpha_w)\right]}{2(\sin 2j\theta_0)^3}.
\end{split}
\end{align}

{\bf Step 2}: Similarly, we can compute the other one. Recall that
\[\bar ze^{i\theta}=\begin{pmatrix}
\cos \theta & \sin\theta & 0 \\
\sin \theta & -\cos\theta & 0 \\
0 & 0 & 1
\end{pmatrix} \begin{pmatrix}z_1\\z_2\\z_3\end{pmatrix}.\]
It is easy to verify that
\begin{align*}
(\nabla_{z,p}^2| \bar z e^{i\theta}-p|^{-1})=\frac{1}{|\bar z e^{i\theta}-p|^3}\begin{pmatrix}
\cos \theta & \sin\theta & 0 \\
\sin \theta & -\cos\theta & 0 \\
0 & 0 & 1
\end{pmatrix}\left(\delta_{\ell j}-3\frac{(\bar ze^{i\theta}-p)_\ell(\bar z e^{i\theta}-p)_j}{|\bar z e^{i\theta}-p|^2}\right).
\end{align*}
For any $w=(|w|e^{i\alpha_w},0)^T$, and $z=b$ with $p=b$
\begin{align*}
    w^T(\nabla^2_{z,p}|\bar z e^{i\theta}-p|^{-1})w|_{z=b,p=b}
    &=\frac{1}{|\bar be^{i\theta}-b|^3}\left[|w|^2\cos (\theta-2\alpha_w)+3\frac{[w\cdot (\bar b e^{i\theta}-b)]^2}{|\bar b e^{i\theta}-b|^2} \right]\\
    &=\frac{|w|^2}{|\bar be^{i\theta}-b|^3}\left[\cos (\theta-2\alpha_w)+3|\sin (\tfrac{\theta}{2}-\alpha_w)|^2 \right]\\
     &=\frac{|w|^2}{|\bar be^{i\theta}-b|^3}\left[\frac32-\frac12\cos (\theta-2\alpha_w)\right],
\end{align*}
where we have used \eqref{wbbeitheta-2}.
When $\theta=(4j+2)\theta_0$, we have $|\bar b e^{(4j+2)i\theta_0}-b|=2|b|\sin (2j+1)\theta_0-\alpha_b)$ and
\begin{align}\label{2gamma-2}
     w^T(\nabla^2_{z,p}|\bar z e^{(4j+2)i\theta_0}-p|^{-1})w|_{z=b,p=b}=\frac{|w|^2}{(2|b|)^3}\frac{3-\cos ((4j+2) \theta_0-2\alpha_w)}{2(\sin ((2j+1) \theta_0-\alpha_b))^3}.
\end{align}

{\bf Step 3}: Inserting the above formula \eqref{2gamma-1} and \eqref{2gamma-2} to \eqref{def:gamma1}, we have
\begin{align}\label{gamma2-step3}
    &w^T(\nabla_{z,p}^2\gamma(z,p))w|_{z=b,p=b}=\frac{|w|^2}{(2|b|)^3}[I_1-I_2],
\end{align}
where
\begin{align*}
    I_1=\sum_{j=0}^{K/2-1}\frac{3-\cos ((4j+2)\theta_0-2\alpha_w))}{2(\sin ((2j+1)\theta_0-\alpha_b))^3},  \quad I_2=\sum_{j=1}^{K/2-1}\frac{-\cos 4j\theta_0 +3\cos 2(\alpha_b-\alpha_w)}{2(\sin 2j\theta_0)^3}.
\end{align*}

Think of $I_1=I_1(\alpha_w,\alpha_b)$ and $I_2=I_2(\alpha_w,\alpha_b)$ as a function of $\alpha_w$ and $\alpha_b$. It is easy to know $I_2(\alpha_w,\alpha_b)=I_2(-\alpha_w,-\alpha_b)$ and $I_1(\alpha_w,\alpha_b)=I_1(-\alpha_w,-\alpha_b)$.



For $I_1$, the following expansion holds uniformly for $K$ when $|\alpha_w|\leq 1$ and $|\alpha_b|\leq \frac12\theta_0$,
\begin{align*}
    I_1&=\sum_{j=0}^{K/2-1}\frac{3-\cos (4j+2)\theta_0}{2(\sin (2j+1)\theta_0)^3}+[A_{11}\alpha_w^2+2A_{12}\alpha_w\alpha_b+A_{22} \alpha_b^2]+O(\alpha_w^4K^{3}+ \alpha_b^4K^{7}),
\end{align*}
where
\begin{align*}
    A_{11}&=\sum_{j=0}^{K/2-1}\frac{\cos (4j+2)\theta_0}{(\sin (2j+1)\theta_0)^3},\quad A_{12}=\sum_{j=0}^{K/2-1}\frac{-3(\cos ((2j+1)\theta_0))^2}{(\sin (2j+1)\theta_0)^3},\\
    A_{22}
    &=\sum_{j=0}^{K/2-1}\frac{3[4+(\sin (2j+1)\theta_0)^2-3(\sin (2j+1)\theta_0)^4]}{2(\sin (2j+1)\theta_0)^5}.
\end{align*}

For $I_2$, the following expansion holds uniformly for $K$ when $|\alpha_w|\leq 1$ and $| \alpha_b|\leq \frac12\theta_0$,
\begin{align*}
    I_2=\sum_{j=1}^{K/2-1}\frac{3-\cos 4j\theta_0}{2(\sin 2j\theta_0)^3}+[B_{11}\alpha_w^2+2B_{12}\alpha_w\alpha_b+B_{22} \alpha_b^2]+O(\alpha_w^4K^{3}+ \alpha_b^4K^3),
\end{align*}
where
\begin{align*}
    B_{11}=\sum_{j=1}^{K/2-1}\frac{-3}{(\sin 2j\theta_0)^3}\quad B_{12}=-B_{11},\quad B_{22}=B_{11}.
\end{align*}
Combining the leading terms of $I_1$ and $I_2$, we obtain
\begin{align*}
    \sum_{j=0}^{K/2-1}\frac{1+(\sin (2j+1)\theta_0)^2}{(\sin (2j+1)\theta_0)^3}-\sum_{j=1}^{K/2-1}\frac{1+(\sin 2j\theta_0)^2}{(\sin 2j\theta_0)^3}&=\sum_{j=1}^{K-1}\frac{(-1)^{j+1}}{(\sin (j\theta_0))^3}+\sum_{j=1}^{K-1}\frac{(-1)^{j+1}}{(\sin (j\theta_0))}\\
    &=\hat S_3(K)+\hat S_1(K).
\end{align*}
Here we have used the notations in Lemma \ref{lem:csc3csc5}.
For the quadratic terms, using some trigonometric identities, we can rewrite them as
$A_{11}=S_3^o(K)-2S_1^o(K)$, $A_{12}=-3(S_3^o(K)-S_1^o(K))$, $A_{22}=\frac32[4S_5^o(K)+S_3^o(K)-3S_1^o(K)]$, $B_{11}=-3\hat S_3^e(K)$, $B_{12}=3\hat S_3^e(K)$, $B_{22}=-3\hat S_3^e(K)$. We denote the coefficients of quadratic terms in $w^T\nabla^2_{z,p}\gamma(b,b)w$ is $\mathcal{A}^\gamma$, then $\mathcal{A}^\gamma=A-B$ and consequently
\begin{align*}
    \mathcal{A}^\gamma:
    =\begin{pmatrix}
        S_3^o(K)-2S_1^o(K)+3\hat S_3^e(K)& -3\hat S_3(K)+3S_1^o(K)\\
        -3\hat S_3(K)+3S_1^o(K)& \frac32[4S_5^o(K)+S_3^o(K)-3S_1^o(K)]+3\hat S_3^e(K)
    \end{pmatrix}.
\end{align*}
The proof is complete by plugging in the expansion of $I_1$ and $I_2$ back to \eqref{gamma2-step3}.
\end{proof}

\subsection{Estimates on the Type II}
In this subsection, we will prove the corresponding estimate of $H_0^e$. Since most of the computations are similar to that of $\gamma(z,p)$, we will sketch the proof. Recall
\begin{align*}
    H_0^e(z,p)=\sum_{j=0}^{K/2-1}[H_0(ze^{4j\theta_0},p)-H_0(\bar ze^{(4j+2)\theta_0},p)].
\end{align*}
The following lemma gives a rough bound of $H_0^e$ and its derivatives.
\begin{lemma}\label{lem:H0e-d} For any $z,b\in\Sigma_K$ with $b=(|b|e^{i\alpha_b},0)^T$ and $1-|b|=(1+o_K(1))\frac{\log K}{K}$,
\begin{align*}
        |\nabla^m H_0^e(z,p)|\big|_{p=b}\leq CK^{m+1}/(\log K)^{m+1},\quad m=0,1,2,3,4.
\end{align*}
\end{lemma}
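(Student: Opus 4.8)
The plan is to reduce both the $m=0$ bound and the derivative bounds to a single geometric computation together with interior estimates for harmonic functions. Recall from \eqref{H0-expn} that
\[H_0(z,p)=\big(1-2z\cdot p+|z|^2|p|^2\big)^{-1/2}=\frac{1}{|p|\,|z-p^*|}=\frac{1}{|z|\,|p-z^*|},\]
where $p^*=p/|p|^2$ and $z^*=z/|z|^2$ are the inversions across $\partial B_1$. Since $\Delta_zH_0=0$ away from $z=p^*$ and $\Delta_pH_0=0$ away from $p=z^*$, the finite sum $H_0^e$ is harmonic in $z$, harmonic in $p$, hence jointly harmonic in $(z,p)$, on the complement of the image points $\{b^*e^{-4ji\theta_0},\ \overline{b^*}e^{(4j+2)i\theta_0}\}$ (in the $z$-variable, at $p=b$) and $\{z^*e^{4ji\theta_0},\ \overline{z^*}e^{(4j+2)i\theta_0}\}$ (in the $p$-variable, for $z\in\Sigma_K$). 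Writing $d=(1-|b|^2)/(2|b|)=(1+o_K(1))\tfrac{\log K}{K}$, one has $|b^*|=1/|b|=1+(1+o_K(1))\tfrac{\log K}{K}$ and $|z^*|=1/|z|\ge1$, so all of these image points lie at distance $\gtrsim d$ outside $\overline{B_1}$. Consequently $H_0^e$ is harmonic on a ball of radius $cd$ centred at any $z\in\overline{\Sigma_K}$ (in the $z$-variable) and on a ball of radius $cd$ centred at $b$ (in the $p$-variable), for a suitable small constant $c$.

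For the $m=0$ estimate I would follow the proof of Lemma~\ref{lem:gamma-first} line by line, with $b$ replaced by $b^*$ and an overall factor $1/|b|$. From
\[|ze^{4ji\theta_0}-b^*|^2=(r-|b^*|)^2+z_3^2+4r|b^*|\sin^2\!\big(2j\theta_0+\tfrac{\alpha_z-\alpha_b}{2}\big),\]
and the analogous identity for the barred terms, the elementary bound $|b^*|-r\ge|b^*|-1\gtrsim d$ gives every denominator a lower bound $\ge c\sqrt{d^2+\sin^2(\cdot\,)}\ge cd$. As in Lemma~\ref{lem:gamma-first} the relevant phases increase monotonically through $[0,\pi]$, so the alternating sum decomposes into an increasing block over $0\le j\lesssim K/4$, a decreasing block over $K/4\lesssim j\le K/2-1$, and an $O(1)$ transition; on each monotone block the alternating series of decreasing positive terms is controlled by its largest term, which is $\le C/d$. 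This yields $|H_0^e(z,b)|\le C/d\le CK/\log K$. The very same estimate holds with $z$ ranging over the $cd$-ball about $\overline{\Sigma_K}$ and with $b$ replaced by any $p$ in the $cd$-ball about $b$, since the telescoping argument uses nothing beyond $|z|\le1+cd<|p^*|$ and $|p|\ge\tfrac12$, which remain valid.

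For $m=1,\dots,4$, since $H_0^e$ is harmonic on the $cd$-balls just described and bounded there by $CK/\log K$, the standard interior estimate for harmonic functions gives, for a multi-index of order $m$ in $z$, in $p$, or mixed,
\[|\nabla^m H_0^e(z,p)|_{p=b}\le \frac{C_m}{(cd)^m}\,\|H_0^e\|_{L^\infty}\le \frac{C_m'}{d^{m+1}}\le C\,\frac{K^{m+1}}{(\log K)^{m+1}},\]
which is the assertion; the conversion of $d^{-(m+1)}$ into $K^{m+1}/(\log K)^{m+1}$ uses $d\sim\tfrac{\log K}{K}$.

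The crux is the $m=0$ bound. Bounding $\nabla^mH_0^e$ term by term would only give $\sum_{\ell=0}^{K-1}(d^2+\sin^2\ell\theta_0)^{-(m+1)/2}$, which is of order $K^{m+1}/(\log K)^{\max(m,1)}$, too large by a factor $\log K$; so one genuinely has to exploit the alternating cancellation to produce the $C/d$ bound, and then recover the derivative bounds from elliptic regularity rather than from a direct sum. A secondary, minor point is the robustness of the $m=0$ bound when the domain is enlarged: because $cd\gg\theta_0$, a $cd$-ball about a point of $\Sigma_K$ already contains $\sim\log K$ rotated copies of $\Sigma_K$, but the monotonicity/telescoping argument is insensitive to this (and $H_0^e$ is essentially $4\theta_0$-quasi-periodic in the angular variable), so $|H_0^e|\le C/d$ persists on the whole ball, which is exactly what feeds the elliptic step.
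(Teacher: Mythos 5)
The $m=0$ case is the same in both proofs: bound $H_0$ from above and below via the inverted center $b^*$ and run the telescoping/monotonicity argument of Lemma~\ref{lem:gamma-first}, obtaining $|H_0^e(z,b)|\le C(1-|b|)^{-1}\le CK/\log K$. For $m\ge1$, however, the two routes genuinely differ. The paper bounds each term by $|\nabla^m H_0(ze^{\cdots},b)|\le C|ze^{\cdots}-b^*|^{-m-1}$, sums over $j$, and invokes Lemma~\ref{lem:rough-Sk} to land on $\sum_{j}((1-|b|)^2+\sin^2 j\theta_0)^{-(m+1)/2}\le CK^{m+1}/(\log K)^m$ — which, as you correctly observe, is one factor of $\log K$ short of what the lemma states. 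You instead use the $L^\infty$ bound $\le C/d$ together with the harmonicity of $H_0^e$ in $(z,p)$ on a ball of radius $\sim d$ (all image singularities $b^*e^{-4ji\theta_0}$, $\overline{b^*}e^{(4j+2)i\theta_0}$, $z^*e^{4ji\theta_0}$, $\overline{z^*}e^{(4j+2)i\theta_0}$ stay at distance $\gtrsim d$), and then apply interior estimates for harmonic functions. This yields $C d^{-m}\cdot d^{-1}=Cd^{-(m+1)}\le CK^{m+1}/(\log K)^{m+1}$, matching the statement exactly. In other words, your argument actually delivers the stated bound while the paper's term-by-term proof only delivers the weaker $(\log K)^m$ one; fortunately the weaker bound is all that is used downstream (Lemmas~\ref{lem:phiA-m}, \ref{lem:QA5phiA} only need $|\nabla^mH_0^e|\lesssim K^{m+1}$), so the paper's slippage is harmless, but your route is both sharper and more conceptual.

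The one step worth spelling out more carefully is the extension of the $L^\infty$ bound to the $cd$-ball in $(z,p)$. The telescoping argument as written uses the sector condition $|\alpha_z\pm\alpha_p|<2\theta_0$, and the $cd$-ball, since $cd\gg\theta_0$, sweeps $\alpha_z$ through $O(\log K)$ sectors. Your invocation of periodicity is the right fix — and in fact $H_0^e$ is \emph{exactly} $4\theta_0$-periodic in $\alpha_z$ and in $\alpha_p$ (not just quasi-periodic), by the rotational invariance $H_0(ze^{i\theta},pe^{i\theta})=H_0(z,p)$ together with relabeling $j\mapsto j\pm1$ in the finite sum. Modding out by this periodicity reduces to $|\alpha_z|,|\alpha_p|\le2\theta_0$, for which the interleaving/monotonicity argument goes through with the transitional index set enlarged by $O(1)$ and the constant adjusted; one should also note that the ball admits $|z|$ slightly exceeding $1$, but $|z||p|<1$ still holds (for $c$ small) so the lower bound on the denominator of $H_0$ persists. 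With these two remarks added, the argument is complete.
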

\begin{proof}
The proof is similar to that of $\gamma$ in Lemma \ref{lem:gamma-first}. We sketch the main steps.
By \eqref{H0-expn},  we know $H_0(ze^{4ji\theta_0},b)=(1+|z|^2|b|^2-2z e^{4ji\theta_0}\cdot b)^{-1/2}$, $
    H_0(\bar ze^{(4j+2)i\theta_0},b)=(1+|z|^2|b|^2-2\bar z e^{(4j+2)i\theta_0}\cdot b)^{-1/2}$ and
\begin{align}
\begin{split}\label{z-b-H}
    1+|z|^2|b|^2-2z e^{4ji\theta_0}\cdot b
    &=(1-r|b|)^2+z_3^2|b|^2+4r|b|\sin^2(2j\theta_0+\tfrac12\alpha_z-\tfrac12\alpha_b)
\end{split}
\end{align}
and
\begin{align*}
    &1+|z|^2|b|^2-2\bar z e^{(4j+2)i\theta_0}\cdot b=(1-r|b|)^2+z_3^2|b|^2+4r|b|\sin^2((2j+1)\theta_0-\tfrac12\alpha_z-\tfrac12\alpha_b).
\end{align*}
Since $|\alpha_z\pm \alpha_b|<2\theta_0$, we see that
\begin{align*}
    H_0(z,b)>H_0(\bar ze^{2i\theta_0},b)>H_0(ze^{4j\theta_0},b)>\cdots>H_0(\bar ze^{(4j_0+2)\theta_0},b).
\end{align*}
for $1\leq j\leq  j_0=\lfloor K/4-1\rfloor$ and
\begin{align*}
    H_0(z e^{4j_1i\theta_0},b)<H_0(\bar z e^{(4j_1+2)i\theta_0},b)<\cdots<H_0(\bar z e^{2(K-1)i\theta_0},b).
\end{align*}
for $j\geq j_1=\lceil K/4 +1/2\rceil$.
Therefore, we have
\begin{align*}
   0< \sum_{j=0}^{j_0}\left[H_0(z e^{4ji\theta_0},b)-H_0(\bar z e^{(4j+2)i\theta_0},b)\right]<H_0(z,b)-H_0(\bar z e^{(4j_0+2)i\theta_0},b),
   \end{align*}
   and
   \begin{align*}
   H_0(ze^{4j_1i\theta_0},b)-H_0(\bar z e^{2(K-1)i\theta_0},b)< \sum_{j=j_1}^{K/2-1}\left[H_0(z e^{4ji\theta_0},b)-H_0(\bar z e^{(4j+2)i\theta_0},b)\right]< 0\notag .
\end{align*}

When $j_0\leq j\leq j_1$, we have that $ze^{4ji\theta_0}\cdot b<0$ and $\bar ze^{(4j+2)i\theta_0}\cdot b<0$. This implies $H_0(ze^{4ji\theta_0},b)\leq 1$ and $H_0(ze^{4ji\theta_0},b)\leq 1$.
Since $j_1-j_0\leq 3$, then
combining this with the above two inequalities, we get
\begin{align}
\begin{split}\label{H0e<}
    H_0^e(z,b)&\leq H_0(z,b)-H_0(\bar z e^{(4j_0+2)i\theta_0},b)+6,\\
    H_0^e(z,b)&\geq H_0(z e^{4j_1i\theta_0},b)-H_0(\bar ze^{2(K-1)i\theta_0},b)-6.
\end{split}
\end{align}
Note that for any $z,b\in B_1$, one has $ 1+|z|^2|b|^2-2z\cdot b\leq 4$ and
\begin{align*}
    1+|z|^2|b|^2-2z\cdot b&\geq 1+|z|^2|b|^2-2|z| |b|=(1-|z||b|)^2\geq (1-|b|)^2.
\end{align*}
Therefore $1/2\leq H_0(z,b)\leq (1-|b|)^{-1}$. As a consequence, \eqref{H0e<} implies that
\[|H_0^e(z,b)|\leq C(1-|b|)^{-1}\leq CK/\log K.\]

For the derivative of $H_0^e$. It is straightforward to verify that
\begin{align*}
    \left|\nabla^mH_0(z,p)\big|_{p=b}\right|\leq C |z-b^*|^{-m-1},\quad m=1,2,3,4
\end{align*}
Similar to \eqref{lowerbd-|z-b|}, we can prove
\begin{align*}
    |ze^{4ji\theta_0}-b^*|^2&\geq (1-|b|)^2+\sin^2 2j\theta_0\\
    |ze^{(4j+2)i\theta_0}-b^*|^2&\geq (1-|b|)^2+\sin^2 (2j+1)\theta_0
\end{align*}
for $0\leq j\leq K/2-1$. Thus
\begin{align*}
\left|\nabla^mH_0^e(z,p)\big|_{p=b}\right|
\leq \sum_{j=0}^{K-1}\frac{1}{((1-|b|)^2+\sin^2 j\theta_0)^{\frac{m+1}{2}}}\leq C\frac{K^{m+1}}{(\log K)^m}
\end{align*}
where we apply Lemma \ref{lem:rough-Sk} since $1-|b|=(1+o_K(1))\frac{\log K}{K}$.
\end{proof}

\begin{lemma}\label{lem:H0e(b,b)} Suppose that $b=(|b|e^{i\alpha_b},0)^T$ satisfies  $d=(1-|b|^2)/(2|b|)=(1+o_K(1))\frac{\log K}{K}$ and $|\alpha_b|\leq \frac12 \theta_0$. One has
    \begin{align*}
        H_0^e(b,b)=\frac{1}{2|b|}[S_1(K,d)+O(Kd^{-2}\alpha_b^2)].
    \end{align*}
\end{lemma}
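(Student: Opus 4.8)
The plan is to follow the proof of Lemma~\ref{lem:gamma(b,b)}, with the chordal distances $|be^{4ji\theta_0}-b|$ replaced by the quantities $1+|z|^2|b|^2-2z\cdot p$ that enter the Green's function representation \eqref{H0-expn}. First I specialize \eqref{z-b-H} to $z=p=b$: writing $b=(|b|e^{i\alpha_b},0)^T$ so that $r=|b|$, $\alpha_z=\alpha_b$, $z_3=0$, and using $1-|b|^2=2|b|d$, one gets $1+|b|^4-2be^{4ji\theta_0}\cdot b=4|b|^2(d^2+\sin^2 2j\theta_0)$ and $1+|b|^4-2\bar b e^{(4j+2)i\theta_0}\cdot b=4|b|^2(d^2+\sin^2((2j+1)\theta_0-\alpha_b))$. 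Substituting these into \eqref{H0-expn} and the definition \eqref{def:H0e} of $H_0^e$, and noting that the $j=0$ term of the first sum is $1/(2|b|d)=H_0(b,b)$, I obtain
\[
H_0^e(b,b)=\frac{1}{2|b|}\left[\sum_{j=0}^{K/2-1}\frac{1}{\sqrt{d^2+\sin^2 2j\theta_0}}-\sum_{j=0}^{K/2-1}\frac{1}{\sqrt{d^2+\sin^2((2j+1)\theta_0-\alpha_b)}}\right].
\]

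Next I would remove the $\alpha_b$-dependence from the second sum by Taylor expansion in $\alpha_b$. Set $g_j(\alpha)=(d^2+\sin^2((2j+1)\theta_0-\alpha))^{-1/2}$. Since $(2j+1)\theta_0\in[\theta_0,\pi-\theta_0]$ and $|\alpha_b|\le\frac12\theta_0$, the argument $(2j+1)\theta_0-\alpha$ stays in $[\frac12\theta_0,\pi-\frac12\theta_0]$ for $|\alpha|\le\frac12\theta_0$, so $d^2+\sin^2((2j+1)\theta_0-\alpha)$ is comparable to $d^2+\sin^2(2j+1)\theta_0$ uniformly; a direct computation then gives $|g_j''(\alpha)|\le C(d^2+\sin^2(2j+1)\theta_0)^{-3/2}$. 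Moreover the term linear in $\alpha_b$ drops out because $\sum_{j=0}^{K/2-1}g_j(\alpha_b)$ is even in $\alpha_b$: replacing $j$ by $K/2-1-j$ sends $(2j+1)\theta_0-\alpha_b$ to $\pi-(2j+1)\theta_0-\alpha_b$, whose sine has the same modulus as $\sin((2j+1)\theta_0+\alpha_b)$. Hence
\[
\sum_{j=0}^{K/2-1}\frac{1}{\sqrt{d^2+\sin^2((2j+1)\theta_0-\alpha_b)}}=\sum_{j=0}^{K/2-1}\frac{1}{\sqrt{d^2+\sin^2(2j+1)\theta_0}}+O\!\left(\alpha_b^2\sum_{j=0}^{K-1}\frac{1}{(d^2+\sin^2 j\theta_0)^{3/2}}\right).
\]
By Lemma~\ref{lem:rough-Sk}, which applies because $d=(1+o_K(1))\frac{\log K}{K}$, one has $\sum_{j=0}^{K-1}(d^2+\sin^2 j\theta_0)^{-3/2}=O(Kd^{-2})$, so the error is $O(Kd^{-2}\alpha_b^2)$.

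Finally, combining the two ($\alpha_b$-free) sums, the even indices $2j$ and odd indices $2j+1$ together run over $0,1,\dots,K-1$, so
\[
\sum_{j=0}^{K/2-1}\frac{1}{\sqrt{d^2+\sin^2 2j\theta_0}}-\sum_{j=0}^{K/2-1}\frac{1}{\sqrt{d^2+\sin^2(2j+1)\theta_0}}=\sum_{j=0}^{K-1}\frac{(-1)^j}{\sqrt{d^2+\sin^2 j\theta_0}}=S_1(K,d)
\]
by the definition of $S_1(K,d)$ (whose asymptotics are given in Lemma~\ref{lem:alt.sum-d}). This yields $H_0^e(b,b)=\frac{1}{2|b|}[S_1(K,d)+O(Kd^{-2}\alpha_b^2)]$, as claimed. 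I expect the main obstacle to be the Taylor-remainder bound: one must check carefully that the perturbed denominators $d^2+\sin^2((2j+1)\theta_0-\alpha_b)$ stay comparable to $d^2+\sin^2(2j+1)\theta_0$ (which is exactly where $|\alpha_b|\le\frac12\theta_0$ enters), and then control $\sum_{j=0}^{K-1}(d^2+\sin^2 j\theta_0)^{-3/2}$ by the sharp $d$-dependent estimate of Lemma~\ref{lem:rough-Sk} rather than by the crude $O(K^3)$ bound — this is where the hypothesis $d\sim\frac{\log K}{K}$ is used to get the error down to the stated $O(Kd^{-2}\alpha_b^2)$.
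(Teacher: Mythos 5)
Your proof is correct and follows essentially the same route as the paper: specialize the Green's function representation to $z=p=b$, Taylor-expand the $\alpha_b$-dependent sum (using the $j\mapsto K/2-1-j$ symmetry to kill the linear term), and bound the quadratic remainder by $O(Kd^{-2}\alpha_b^2)$. The only cosmetic differences are that the paper works with the chordal distances $|be^{i\theta}-b^*|$ and $|\bar be^{i\theta}-b^*|$ in place of your $1+|z|^2|b|^2-2z\cdot p$ form (they are the same), and the paper does not spell out the appeal to Lemma~\ref{lem:rough-Sk}, which you correctly supply.
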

\begin{proof}
Note that
\begin{align}
    |be^{i\theta}-b^*|&=(|b|^2+|b|^{-2}-2\cos \theta)^{1/2}=2(d^2+\sin^2\tfrac{\theta}{2} )^{\frac12},\label{|b-b|-1}\\
    |\bar b e^{i\theta}-b^*|&=(|b|^{2}+|b|^{-2}-2\cos(\theta-2\alpha_b))^{\frac12}=2(d^2+\sin^2(\tfrac{\theta}{2}-\alpha_b))^{\frac12}.\label{|bb-b|-2}
\end{align}
Since $H(z,p)=(|p||z-p^*|)^{-1}$ and the definition of $H_0^e(z,p)$ in \eqref{def:H0e}, then
\begin{align*}
    H_0^e(b,b)=\frac{1}{2|b|}\sum_{j=0}^{K/2-1}(d^2+\sin^2 2j\theta_0)^{-\frac12}-(d^2+\sin^2 ((2j+1)\theta_0-\alpha_b))^{-\frac12}.
\end{align*}
The following expansion holds uniformly for $|\alpha_b|<\frac12 \theta_0$
\begin{align*}
    \sum_{j=0}^{K/2-1}(d^2+\sin^2 ((2j+1)\theta_0-\alpha_b))^{-\frac12}=\sum_{j=0}^{K/2-1}(d^2+\sin^2 ((2j+1)\theta_0))^{-\frac12}+O(Kd^{-2}\alpha_b^2).
\end{align*}
Thus
\begin{align*}
    H_0^e(b,b)=\frac{1}{2|b|}\sum_{j=0}^{K-1}\frac{(-1)^j}{(d^2+\sin^2 j\theta_0)^{\frac12}}+O(Kd^{-2}\alpha_b^2)=\frac{1}{2|b|}[S_1(K,d)+O(Kd^{-2}\alpha_b^2)].
\end{align*}
\end{proof}

\begin{lemma}\label{lem:w_dot_zH} Suppose that $b=(|b|e^{i\alpha_b},0)^T$ satisfies $1-|b|=(1+o_K(1))\frac{\log K}{K}$ and $|\alpha_b|\leq \frac12 \theta_0$. One has
    \begin{align*}
        w\cdot \nabla_zH_0^e(z,p)\big|_{z=b,p=b}=\frac{|w|}{4|b|^2}&\left[-S_1(K,d)+d\sqrt{1+d^2}S_3(K,d)+O(Kd^{-1}\alpha_w^2+Kd^{-3}\alpha_b^2)\right],
    \end{align*}
    where $d=(1-|b|^2)/(2|b|)$. The expansion of $w\cdot \nabla_pH_0^e(z,p)\big|_{z=b,p=b}$ is similar to the above.
\end{lemma}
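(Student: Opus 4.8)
The strategy is to run the argument of Lemma \ref{lem:w_dot_z} verbatim, with the Newtonian kernel $|z-p|^{-1}$ replaced by the regular part written in Kelvin form $H_0(z,p)=(|p|\,|z-p^*|)^{-1}$, $p^*=p/|p|^2$, as already used in Lemma \ref{lem:H0e(b,b)}. The only structural novelty is that the denominators now carry the regularising parameter $d$: by \eqref{|b-b|-1}--\eqref{|bb-b|-2} one has $|be^{i\theta}-b^*|^2=4(d^2+\sin^2\tfrac\theta2)$ and $|\bar be^{i\theta}-b^*|^2=4(d^2+\sin^2(\tfrac\theta2-\alpha_b))$, so the resulting sums are $S_1(K,d)$ and $S_3(K,d)$ from Lemma \ref{lem:alt.sum-d} rather than $\hat S_1(K)$, $\hat S_3(K)$. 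I will also use the elementary identities $\sqrt{1+d^2}=(1+|b|^2)/(2|b|)$, $\sqrt{1+d^2}-d=|b|$, $\sqrt{1+d^2}+d=|b|^{-1}$, valid since $d=(1-|b|^2)/(2|b|)$.

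First, exactly as in Steps 1--2 of Lemma \ref{lem:w_dot_z}, using $ze^{i\theta}=R_\theta z$ and $\bar ze^{i\theta}=M_\theta z$ with $M_\theta$ the symmetric reflection matrix of Lemma \ref{lem:wTn2w-g}, together with $H_0(ze^{i\theta},p)=|p|^{-1}|ze^{i\theta}-p^*|^{-1}$, I compute
\begin{align*}
    w\cdot \nabla_z H_0(ze^{i\theta},p)\big|_{z=b,p=b}&=-\frac{1}{|b|}\,\frac{w\cdot(b-\tfrac{b}{|b|^2}e^{-i\theta})}{|be^{i\theta}-b^*|^3},\\
    w\cdot \nabla_z H_0(\bar ze^{i\theta},p)\big|_{z=b,p=b}&=-\frac{1}{|b|}\,\frac{w\cdot(b-\tfrac{\bar b}{|b|^2}e^{i\theta})}{|\bar be^{i\theta}-b^*|^3}.
\end{align*}
Writing $w=(|w|e^{i\alpha_w},0)$, $b=(|b|e^{i\alpha_b},0)$ and expanding the inner products in trigonometric form, then substituting $\theta=4j\theta_0$ into the first identity ($1\le j\le K/2-1$) and $\theta=(4j+2)\theta_0$ into the second ($0\le j\le K/2-1$) and summing, I obtain
\begin{align*}
    w\cdot \nabla_z H_0^e(z,p)\big|_{z=b,p=b}=\frac{|w|}{4|b|^2}\,[\,I_1-I_2\,],
\end{align*}
with $I_1=I_1(\alpha_w,\alpha_b)$ (the $\bar z$-family) and $I_2=I_2(\alpha_w,\alpha_b)$ (the $z$-family) explicit sums whose denominators are $(d^2+\sin^2((2j+1)\theta_0-\alpha_b))^{3/2}$ resp. $(d^2+\sin^2 2j\theta_0)^{3/2}$. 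As in Lemma \ref{lem:w_dot_z}, the substitutions $j\mapsto K/2-1-j$ in $I_1$ and $j\mapsto K/2-j$ in $I_2$ give $I_i(\alpha_w,\alpha_b)=I_i(-\alpha_w,-\alpha_b)$, so the terms linear in $(\alpha_w,\alpha_b)$ in their Taylor expansions vanish.

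The heart of the proof is the identification of the zeroth-order term. At $\alpha_w=\alpha_b=0$ the contribution of a rotation by $\theta$ in \emph{both} families equals $\dfrac{|w|}{4|b|^2}\cdot\dfrac{d-|b|^{-1}\sin^2\tfrac\theta2}{(d^2+\sin^2\tfrac\theta2)^{3/2}}$, and the algebraic identities above yield the decomposition
\begin{align*}
    \frac{d-|b|^{-1}\sin^2\tfrac\theta2}{(d^2+\sin^2\tfrac\theta2)^{3/2}}
    =-\frac{|b|^{-1}}{(d^2+\sin^2\tfrac\theta2)^{1/2}}+\frac{|b|^{-1}d\sqrt{1+d^2}}{(d^2+\sin^2\tfrac\theta2)^{3/2}},
\end{align*}
obtained from $d-|b|^{-1}s=-|b|^{-1}(d^2+s)+|b|^{-1}d(|b|+d)$ and $|b|+d=\sqrt{1+d^2}$. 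Taking $\theta=2j\theta_0$ in the $z$-family and $\theta=(2j+1)\theta_0$ in the $\bar z$-family, the alternating sign from the definition of $H_0^e$ assembles the two pieces into $-S_1(K,d)+d\sqrt{1+d^2}\,S_3(K,d)$, which together with the prefactor $\tfrac{|w|}{4|b|^2}$ gives the claimed leading term.

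Finally, the quadratic remainders are estimated by differentiating the summands twice in $\alpha_w,\alpha_b$ and invoking the rough bounds of Lemma \ref{lem:rough-Sk}. Since $\alpha_b$ only enters the $\bar z$-denominators, its worst second-order contribution comes from twice-differentiating $(d^2+\sin^2(\tfrac\theta2-\alpha_b))^{-3/2}$ against the $O(d)$ part of the numerator, producing sums bounded by $\sum_j d\,(d^2+\sin^2 j\theta_0)^{-5/2}=O(Kd^{-3})$; the $\alpha_w$-derivatives (which hit only numerators, since $|be^{i\theta}-b^*|$ is $\alpha_w$-free) give sums bounded by $\sum_j d\,(d^2+\sin^2 j\theta_0)^{-3/2}=O(Kd^{-1})$, and one checks the mixed and $z$-family pieces are of no larger order; this yields the error $O(Kd^{-1}\alpha_w^2+Kd^{-3}\alpha_b^2)$ after multiplying by $\tfrac{|w|}{4|b|^2}$ and using $|w|\le C$. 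The expansion of $w\cdot\nabla_p H_0^e$ is obtained by the same steps, the only extra bookkeeping being that $\nabla_p$ also hits the factor $|p|^{-1}$ and the Kelvin map $p\mapsto p^*$; both extra contributions are lower order at $z=p=b$ and do not alter the leading term $-S_1(K,d)+d\sqrt{1+d^2}\,S_3(K,d)$. I expect the main obstacle to be precisely this last point — keeping track of the $d$-powers in the error and verifying that the additional $\nabla_p$-terms are genuinely subleading — rather than the differential-geometric identities, which are routine.
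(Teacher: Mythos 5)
Your plan follows the paper's own proof almost line by line: compute the two gradient formulas for the $z$- and $\bar z$-families of $H_0$ in Kelvin form, convert them to trigonometric expressions with $(d^2+\sin^2(\cdot))$ denominators, invoke the symmetry $I_i(\alpha_w,\alpha_b)=I_i(-\alpha_w,-\alpha_b)$ to kill the linear terms, and identify the leading contribution by decomposing the numerator with the identity $|b|+d=\sqrt{1+d^2}$. That is exactly the route taken in the paper.

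There is, however, a concrete factor-of-$|b|$ slip in your central decomposition step. At $\alpha_w=\alpha_b=0$ the contribution of a rotation by $\theta$ is $\frac{|w|}{8|b|^2}\cdot\frac{\cos\theta-|b|^2}{(d^2+\sin^2(\theta/2))^{3/2}}$, and since $\cos\theta-|b|^2=(1-|b|^2)-2\sin^2\tfrac{\theta}{2}=2|b|d-2\sin^2\tfrac{\theta}{2}$, this equals $\frac{|w|}{4|b|^2}\cdot\frac{|b|d-\sin^2(\theta/2)}{(d^2+\sin^2(\theta/2))^{3/2}}$ --- not $\frac{|w|}{4|b|^2}\cdot\frac{d-|b|^{-1}\sin^2(\theta/2)}{(d^2+\sin^2(\theta/2))^{3/2}}$ as you wrote (the two differ by a factor of $|b|$). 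The clean decomposition to use, mirroring the paper's identity $\cos 2\phi-|b|^2=-2(d^2+\sin^2\phi)+2d\sqrt{1+d^2}$, is
\begin{align*}
\frac{|b|d-s}{(d^2+s)^{3/2}}=-\frac{1}{(d^2+s)^{1/2}}+\frac{d\sqrt{1+d^2}}{(d^2+s)^{3/2}},
\end{align*}
obtained from $|b|d-s=-(d^2+s)+d(d+|b|)$ and $d+|b|=\sqrt{1+d^2}$; this carries no stray $|b|^{-1}$ and assembles directly to $-S_1(K,d)+d\sqrt{1+d^2}\,S_3(K,d)$ under the prefactor $\frac{|w|}{4|b|^2}$. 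With the version you wrote, the $|b|^{-1}$ in each piece propagates and you would end up with prefactor $\frac{|w|}{4|b|^3}$, which is off by $|b|^{-1}=1+d+O(d^2)$; since $d\,S_3(K,d)$ is not controlled by the claimed error $O(Kd^{-1}\alpha_w^2+Kd^{-3}\alpha_b^2)$ at $\alpha_w=\alpha_b=0$, the slip does produce a genuinely wrong leading coefficient. The fix is one line as above; the rest of your plan, including the error accounting and the remarks on $\nabla_p$, is sound.
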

\begin{proof}
    Recall that $H_0(z,p)=(|p||z-p^*|)^{-1}$. It is easy to see that
    \begin{align*}
        \nabla_zH_0(ze^{i\theta},p)=-\frac{z-p^*e^{-i\theta}}{|p||ze^{i\theta}-p^*|^3},\quad  \nabla_zH_0(\bar ze^{i\theta},p)=-\frac{z-(\bar p e^{i\theta})^*}{|p||\bar z e^{i\theta}-p^*|^2}.
    \end{align*}
    Then
    \begin{align}\label{wdzH-1}
    \begin{split}
        w\cdot \nabla_zH_0(ze^{i\theta},p)|_{z=b,p=b}
        &=\frac{|w|}{8|b|^2}\frac{(\cos(\alpha_b-\alpha_w-\theta)-|b|^2\cos(\alpha_b-\alpha_w))}{(d^2+\sin^2\tfrac{\theta}{2})^{3/2}},
        \end{split}
    \end{align}
    \begin{align}\label{wdzH-2}
    \begin{split}
        w\cdot \nabla_zH_0(\bar z e^{i\theta},p)\big|_{z=b,p=b}
        &=\frac{|w|}{8|b|^2}\frac{\cos(\theta-\alpha_b-\alpha_w)-|b|^2\cos(\alpha_b-\alpha_w)}{(d^2+\sin^2 (\tfrac{\theta}{2}-\alpha_b))^{3/2}}.
    \end{split}
    \end{align}

Plugging in $\theta= 4j\theta_0 $ in \eqref{wdzH-1} and $\theta=(4j+2)\theta_0$ in \eqref{wdzH-2}, and summing on $j$, we get
    \begin{align*}
       w\cdot \nabla_zH_0^e(z,p)\big|_{z=b,p=b}=\frac{|w|}{8|b|^2}[I_1-I_2],
    \end{align*}
    where
\begin{align*}
    I_1&=\sum_{j=0}^{K/2-1}\frac{\cos (4j\theta_0+\alpha_w-\alpha_b)-|b|^2\cos (\alpha_w-\alpha_b)}{(d^2+\sin^2 2j\theta_0)^{3/2}},\\
       I_2&=\sum_{j=0}^{K/2-1}\frac{\cos ((4j+2)\theta_0-\alpha_w-\alpha_b)-|b|^2\cos (\alpha_w-\alpha_b)}{(d^2+\sin^2 ((2j+1)\theta_0-\alpha_b))^{3/2}}.
\end{align*}
One can compute the Taylor expansion of $I_1$ and $I_2$ with respect to $\alpha_w$, $\alpha_b$.
The leading term in $I_1-I_2$ is
\begin{align*}
    &\sum_{j=0}^{K/2-1}\frac{\cos (4j\theta_0)-|b|^2}{(d^2+\sin^2 2j\theta_0)^{3/2}}- \frac{\cos ((4j+2)\theta_0)-|b|^2}{(d^2+\sin^2 (2j+1)\theta_0)^{3/2}}=-2S_1(K,d)+2d\sqrt{1+d^2}S_3(K,d),
     \end{align*}
     where we used  $ \cos 2\theta -|b|^2=-2(d^2+\sin^2\theta)+2d^2+1-|b|^2$ and $2d^2+1-|b|^2=2d\sqrt{1+d^2}$ and the notation \eqref{def:Sk(n,x)} in Section \ref{sec:series}.
     This completes the proof of $w\cdot \nabla_z H_0^e(z,p)|_{z=b,p=b}$.

   For $w\cdot \nabla_p H_0^e(z,p)|_{z=b,p=b}$,
one can repeat the above proof verbatim.
\end{proof}

\begin{lemma}\label{lem:wTn2w-h}
Suppose that $w=(|w|e^{i\alpha_w},0)^T$ and $b=(|b|e^{i\alpha_b},0)^T$. Assume that $|\alpha_w|\leq 1$ and $|\alpha_b|\leq \frac12\theta_0$, $1-|b|=(1+o_K(1))\frac{\log K}{K}$ as $K\to \infty$. Then
\begin{align*}
    &w^T\nabla^2_{z,p}H_0^e(z,p)w|_{z=b,p=b}\\
    &=\frac{|w|^2}{8|b|^3}\left[S_1-(d+\sqrt{1+d^2})^2S_3+3(d^2+d^4)S_5+O(Kd^{-2}\alpha_w^2+Kd^{-4}\alpha_b^2)\right],
\end{align*}
where $d=(1-|b|^2)/(2|b|)$ and $S_i=S_i(K,d)$ for $i=1,3,5$.
\end{lemma}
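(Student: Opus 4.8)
The plan is to follow the proof of Lemma~\ref{lem:wTn2w-g} almost verbatim, replacing the Newtonian kernel $|z-p|^{-1}$ by the regular part of the Green's function written as
\[
H_0(z,p)=\rho(z,p)^{-1/2},\qquad \rho(z,p):=1-2z\cdot p+|z|^2|p|^2=|p|^2\,|z-p^*|^2 ,
\]
cf.\ \eqref{H0-expn}. The one genuinely new feature compared with the $\gamma$-computation of Lemma~\ref{lem:wTn2w-g} is that $H_0$ depends on $p$ through the symmetric form $\rho$ rather than through $|z-p|$, so the mixed Hessian $\nabla^2_{z,p}H_0$ carries extra ``curvature'' terms coming from $\partial_{p_k}\rho=-2z_k+2|z|^2p_k$ and $\partial^2_{z_\ell p_k}\rho=-2\delta_{k\ell}+4z_\ell p_k$. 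After contracting twice against $w$ and evaluating at $z=p=b$, it is precisely these terms, together with the identity $1-|b|^2=2|b|d$, that generate the coefficient $-(d+\sqrt{1+d^2})^2$ in front of $S_3(K,d)$ and $3(d^2+d^4)$ in front of $S_5(K,d)$ --- both of which degenerate to the $\gamma$-values $-1$ and $0$ as $d\to0$.

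First I would compute, using $ze^{i\theta}=R_\theta z$ and differentiating $\rho^{-1/2}$ in both slots, the quantities $w^T\nabla^2_{z,p}H_0(ze^{i\theta},p)\,w$ and $w^T\nabla^2_{z,p}H_0(\bar ze^{i\theta},p)\,w$, then set $z=b=(|b|e^{i\alpha_b},0)^T$ and $p=b$. Using $\rho(be^{i\theta},b)=4|b|^2(d^2+\sin^2\tfrac{\theta}{2})$ (from \eqref{|b-b|-1}) and the first-order contraction identities already recorded in the proof of Lemma~\ref{lem:w_dot_zH} (see \eqref{wdzH-1}--\eqref{wdzH-2}), the outcome has the shape
\[
w^T\nabla^2_{z,p}H_0(be^{i\theta},b)\,w=\frac{|w|^2}{(2|b|)^3}\sum_{k\in\{1,3,5\}}\frac{n_k(\theta,\alpha_w,\alpha_b)}{(d^2+\sin^2\tfrac{\theta}{2})^{k/2}},
\]
where $n_1,n_3$ are bounded trigonometric expressions and $n_5=(d^2+d^4)\,\tilde n_5$ with $\tilde n_5$ bounded; the $\bar z$-copy has the same form with the denominator shifted, $\tfrac{\theta}{2}\mapsto\tfrac{\theta}{2}-\alpha_b$, and a correspondingly modified numerator. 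Next I substitute $\theta=4j\theta_0$ into the $z$-terms and $\theta=(4j+2)\theta_0$ into the $\bar z$-terms, sum over $0\le j\le K/2-1$ following \eqref{def:H0e}, and write the result as $\tfrac{|w|^2}{(2|b|)^3}(I_1-I_2)$ with $I_1,I_2$ alternating sums that are even in $(\alpha_w,\alpha_b)$ by the reindexing $j\mapsto K/2-1-j$. At $\alpha_w=\alpha_b=0$ each $n_k$ becomes a combination of $1$ and cosines of even multiples of $\theta_0$; applying $\cos 2\phi-|b|^2=-2(d^2+\sin^2\phi)+2d\sqrt{1+d^2}$ and $2d^2+1-|b|^2=2d\sqrt{1+d^2}$ --- the same manipulation used in Lemma~\ref{lem:w_dot_zH} --- trades every cosine for a power of $(d^2+\sin^2\phi)$, so $I_1-I_2$ reduces to a linear combination of $S_1(K,d),S_3(K,d),S_5(K,d)$, and a short bookkeeping pins the three coefficients down to $1$, $-(d+\sqrt{1+d^2})^2$ and $3(d^2+d^4)$, which is the claimed leading term.

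It then remains to control the error from the Taylor expansion of $I_1,I_2$ in $\alpha_w$ and $\alpha_b$, using $|\alpha_w|\le1$ and $|\alpha_b|\le\tfrac12\theta_0$. Each $\alpha_w$-derivative acts on a bounded numerator, so a typical summand contributes at most $C(d^2+\sin^2 j\theta_0)^{-3/2}+C(d^2+d^4)(d^2+\sin^2 j\theta_0)^{-5/2}$, which by Lemma~\ref{lem:rough-Sk} (applicable because $1-|b|=(1+o_K(1))\tfrac{\log K}{K}$, i.e.\ $d\sim\tfrac{\log K}{K}$) sums to $O(Kd^{-2})$; each $\alpha_b$-derivative instead hits a denominator and raises its exponent by two, so the corresponding sum is $O(Kd^{-4})$. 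All monomials of total degree $\ge3$ in $(\alpha_w,\alpha_b)$ are absorbed into the quadratic errors because $|\alpha_w|\le1$ and $|\alpha_b|\le\tfrac12\theta_0\lesssim d$. This yields the stated remainder $O(Kd^{-2}\alpha_w^2+Kd^{-4}\alpha_b^2)$ and completes the proof.

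The main obstacle is the algebra just sketched: one must differentiate $\rho^{-1/2}$ correctly in the $p$-slot --- the $|z|^2|p|^2$ cross term being the delicate piece, since it is the sole source of the new $S_3$ and $S_5$ contributions --- and then verify that, after the $\cos\leftrightarrow(d^2+\sin^2)$ substitutions, the several ``degree-$3$/degree-$5$'' pieces recombine into exactly $S_1-(d+\sqrt{1+d^2})^2S_3+3(d^2+d^4)S_5$ with no leftover term; in particular the two $d$-dependent coefficients must come out precisely so that the expression reduces to the $\gamma$-result of Lemma~\ref{lem:wTn2w-g} as $d\to0$. Given Lemma~\ref{lem:rough-Sk}, the remaining estimates are routine.
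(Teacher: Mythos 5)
Your proposal is correct and mirrors the paper's own proof: compute the contracted Hessian $w^T\nabla^2_{z,p}H_0(ze^{i\theta},p)w$ and its $\bar z$-counterpart at $z=p=b$, substitute $\theta=4j\theta_0$ and $(4j+2)\theta_0$, form $I_1-I_2$, use the reindexing $j\mapsto K/2-1-j$ to kill odd terms in $(\alpha_w,\alpha_b)$, trade $\cos 2\phi$ and $\sin^2 2\phi$ for powers of $(d^2+\sin^2\phi)$ via $\cos 2\phi-|b|^2=-2(d^2+\sin^2\phi)+2d\sqrt{1+d^2}$ to assemble $S_1,S_3,S_5$, and bound the quadratic remainders with Lemma~\ref{lem:rough-Sk}.

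One small imprecision worth flagging: you describe the raw contracted Hessian as having a $k=1$ piece and a $k=5$ numerator proportional to $(d^2+d^4)$; neither is true at that stage. As in the paper's \eqref{2H-1}--\eqref{2H-2}, the direct contraction produces only $k=3$ and $k=5$ denominators, with the $k=5$ numerator being $-\tfrac34\sin^2\theta$. The $S_1$ contribution and the factor $(d^2+d^4)$ only appear after the trigonometric substitution $\tfrac14\sin^2 2j\theta_0=(1+2d^2)(d^2+\sin^2 j\theta_0)-(d^2+\sin^2 j\theta_0)^2-(d^2+d^4)$, which simultaneously sheds extra $S_3$ and $S_1$ terms. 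Since you clearly intend to apply that substitution and do so correctly, this is a matter of exposition rather than a gap. The error analysis is also sound; note only that for $I_1$ (the $z$-copies) the parameter $\alpha_b$ sits purely in the numerator, so that block is actually $O(Kd^{-2}\alpha_b^2)$ rather than $O(Kd^{-4}\alpha_b^2)$ — harmless, since the latter dominates and is all the statement requires.
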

\begin{proof}
{\bf Step 1}: It is easy to see that
\begin{align*}
    \partial_{z_l,p_j}^2H_0(z,p)=\frac{1}{(|p|\left|z-p^*\right|)^3}\left(\delta_{\ell j}-2p_jz_\ell+3|z|^2\frac{(p^*-z)_\ell(z^*-p)_j}{\left|z-p^*\right|^2}\right),
\end{align*}
where $z^*=\frac{z}{|z|^2}$.
Then for any vector $w=(|w|e^{i\alpha_w},0)\in \R^3$,
\begin{align*}
    &w^T(\nabla^2_{z,p}H_0( z e^{i\theta},p)w|_{z=b,p=b}\\
    &=\frac{1}{(|b||be^{i\theta}-b^*|)^3}\left[|w|^2\cos \theta-2(b\cdot w)^2+3 |b|^2\frac{[w\cdot(b^*e^{i\theta}-b)][w\cdot (b^*e^{-i\theta}-b)]}{|be^{i\theta}-b^*|^2}\right].
\end{align*}

To simplify this, we shall use the computation in $\eqref{wdzH-1}$ to achieve
\begin{align*}
    &{[w\cdot(b^*e^{i\theta}-b)][w\cdot (b^*e^{-i\theta}-b)]}\\
    &=\frac{|w|^2}{|b|^2}(\cos (\theta+\alpha)-|b|^2\cos(\alpha))(\cos(-\theta+\alpha)-|b|^2 \cos \alpha)\\
    &=\frac{|w|^2}{|b|^2}\left[\cos (\theta+\alpha)\cos (-\theta +\alpha)-2|b|^2\cos^2 \alpha \cos \theta+|b|^4\cos^2\alpha \right]\\
    &=\frac{|w|^2}{|b|^2}\left[-\sin^2\theta+\cos^2\alpha (1+|b|^4-2|b|^2\cos \theta)\right],
\end{align*}
where $\alpha=\alpha_b-\alpha_w$. Using this computation and  \eqref{|b-b|-1}, we have
\begin{align}\label{2H-1}
    w^T(\nabla^2_{z,p}H_0( z e^{i\theta},p)w|_{z=b,p=b}
    =\frac{|w|^2}{8|b|^3}\left[\frac{\cos \theta+|b|^2\cos^2 (\alpha_b-\alpha_w)}{(d^2+\sin^2\frac{\theta}{2})^{3/2}}-\frac{3\sin^2\theta}{4(d^2+\sin^2\frac{\theta}{2})^{5/2}}\right].
\end{align}

{\bf Step 2}: Similarly, we can compute the other one.
For any $w=(|w|e^{i\alpha_w},0)^T$,
\begin{align}
\begin{split}\label{2H-2}
        &w^T\nabla^2_{z,p}H_0(\bar z e^{i\theta},p)w|_{z=b,p=b}
    \\
    &=\frac{1}{(|b||\bar be^{i\theta}-b^*|)^3}\left[|w|^2\cos (\theta-2\alpha_w)-2(b\cdot w)^2+3|b|^2\frac{[w\cdot ((\bar b e^{i\theta})^*-b)]^2}{|\bar b e^{i\theta}-b^*|^2} \right]\\
    &=\frac{|w|^2}{8|b|^3}\left[\frac{\cos(\theta-2\alpha_w)-2|b|^2 \cos^2\alpha}{(d^2+\sin^2 (\frac{\theta}{2}-\alpha_b))^{3/2}}+\frac{3\left[\cos (\theta-\alpha_b-\alpha_w)-|b|^2 \cos \alpha\right]^2}{4(d^2+\sin^2(\frac{\theta}{2}-\alpha_b))^{5/2}}\right]\\
    &=:\frac{|w|^2}{8|b|^3}f(\alpha_b,\alpha_w,\theta),
\end{split}
\end{align}
where $\alpha=\alpha_b-\alpha_w$ and we have used the computation in \eqref{wdzH-2} and \eqref{|bb-b|-2} in the last step.


{\bf Step 3}:
Inserting $\theta=4j\theta_0$ in \eqref{2H-1} and $\theta=(4j+2)\theta_0$ in \eqref{2H-2}, summing on $j$, we obtain
\begin{align}\label{wn2H}
    w^T\nabla^2_{z,p}H^e_0(z,p)w|_{z=b,p=b}=\frac{|w|^2}{(2|b|)^3}[I_1-I_2],
\end{align}
where
\begin{align*}
    I_1&=\sum_{j=0}^{K/2-1}\frac{\cos 4j\theta_0+|b|^2\cos^2 (\alpha_b-\alpha_w)}{(d^2+\sin^2 2j\theta_0)^{3/2}}-\frac{3\sin^24j\theta_0}{4(d^2+\sin^22j\theta_0)^{5/2}},\\
    I_2&=\sum_{j=0}^{K/2-1}f(\alpha_b,\alpha_w,(4j+2)\theta_0).
\end{align*}
Here $f(\alpha_b,\alpha_w,\theta)$ is defined in  \eqref{2H-2}.

Next, we need to compute $I_1$ and $I_2$. Again, we have $I_j(\alpha_b,\alpha_w)=I_j(-\alpha_b,-\alpha_w)$ for $j=1,2$.
It is easy to have the expansion of $I_1$ as
  \begin{align*}
        I_1=\sum_{j=0}^{K/2-1}\frac{\cos (4j\theta_0)+|b|^2}{(d^2+\sin^2 (2j\theta_0))^{3/2}}-\sum_{j=0}^{K/2-1}\frac{3\sin^2(4j\theta_0)}{4(d^2+\sin^2(2j\theta_0))^{5/2}}+O(Kd^{-2}(\alpha_b^2+\alpha_w^2)).
    \end{align*}
After some lengthy calculation, the expansion of $I_2$ is
   \begin{align*}
        I_2=&\sum_{j=0}^{K/2-1}\frac{\cos ((4j+2)\theta_0)+|b|^2}{(d^2+\sin^2 ((2j+1)\theta_0))^{3/2}}-\sum_{j=0}^{K/2-1}\frac{3\sin^2 ((4j+2)\theta_0)}{(d^2+\sin^2 ((2j+1)\theta_0))^{5/2}}\\
        &+O(Kd^{-2}\alpha_w^2+Kd^{-4}\alpha_b^2).
    \end{align*}
Using the expansion of $I_1$ and $I_2$
, we have
\begin{align*}
   I_1-I_2=\sum_{j=0}^{K-1}\frac{(-1)^j(\cos 2j\theta_0+|b|^2)}{(d^2+\sin^2 j\theta_0)^{3/2}}-\frac34\sum_{j=0}^{K-1}\frac{(-1)^j\sin^2 2j\theta_0}{(d^2+\sin^2 j\theta_0)^{5/2}}+O(Kd^{-2}\alpha_w^2+Kd^{-4}\alpha_b^2).
\end{align*}
The leading term in the above can be rewritten in terms of $S_i(K,d)$ defined in \eqref{def:Sk(n,x)}.
Using $\cos 2j\theta_0=1+2d^2-2(d^2+\sin^2 j\theta_0)$ and
\begin{align*}
    \frac14\sin^2 2j\theta_0&=\sin^2 j\theta_0\cos^2 j\theta_0=\sin^2 j\theta_0-\sin^4 j\theta_0\\
    &=(1+2d^2)(d^2+\sin^2 j\theta_0)-(d^2+\sin^2 j\theta_0)^2 -d^4-d^2,
\end{align*}
we obtain
\begin{align*}
&\sum_{j=0}^{K-1}\frac{(-1)^j(\cos 2j\theta_0+|b|^2)}{(d^2+\sin^2 j\theta_0)^{3/2}}-\frac34\sum_{j=0}^{K-1}\frac{(-1)^j\sin^2 2j\theta_0}{(d^2+\sin^2 j\theta_0)^{5/2}}\\
    &=(1+2d^2)S_3-2S_1+[|b|^2-3(1+2d^2)]S_3+3S_1+3(d^2+d^4)S_5\\
    &=S_1+[|b|^2-2(1+2d^2)]S_3+3(d^2+d^4)S_5,
\end{align*}
where $S_i=S_i(K,d)$.

Since $|b|=\sqrt{1+d^2}-d$, then $|b|^2-2(1+2d^2)=-(d+\sqrt{1+d^2})^2$. The proof is complete.
\end{proof}

\section{Some estimates on finite sum}\label{sec:series}

This section will prove several estimates of finite sums involving the trigonometric function. These estimates are needed for the previous section.
For any even integer $n\geq 2$, any integer $k\geq 1$ and $x\in \R$, we define
\begin{align}
    S^o_k(n,x)&:=\sum_{j=0}^{n/2-1}(x^2+\sin^2(\tfrac{(2j+1)\pi}{n}))^{-\frac{k}{2}},\quad
    S^e_k(n,x):=\sum_{j=0}^{n/2-1}(x^2+\sin^2(\tfrac{2j\pi}{n}))^{-\frac{k}{2}},\\
      S_k(n,x)&:=S^e_k(n,x)-S^o_k(n,x)=\sum_{j=0}^{n-1}(-1)^{j} (x^2+(\sin (\tfrac{j\pi}{n}))^2)^{-\frac{k}{2}}.\label{def:Sk(n,x)}
\end{align}
Here $S^o$ denotes the sum of odd parts, and $S^e$ denotes the sum of even parts. Note that when $x=0$, the sum in $S_k^e$ has a singular term, thus we also define\footnote{Note that the order of difference in $\hat S_k(n,x)$ is different from that in $\hat S_k(n,x)$. The purpose of doing this is to make both of them positive when $x=0$. }
\begin{align}
\hat S^e_k(n,x)&:=\sum_{j=1}^{n/2-1}(x^2+\sin^2(\tfrac{2j\pi}{n}))^{-\frac{k}{2}},\\
      \hat S_k(n,x):=S^o_k(n,x,\theta)&-\hat S^e_k(n,x)=\sum_{j=1}^{n-1}(-1)^{j+1} (x^2+(\sin (\tfrac{j\pi}{n}))^2)^{-\frac{k}{2}}.\label{def:hatSk(n,x)}
\end{align}

 If $x=0$, we denote $S_k^o(n,0)$ as $S^o_k(n)$. The same rule applies to $S^e_k$, $S_k$, $\hat S^e_k$, and $\hat S_k$.


\begin{lemma}\label{lem:csc3csc5}
Suppose $n$ is an even number. As $n\to \infty$,
\begin{align*}
  S_3^o(n)&=\sum_{j=0}^{n/2-1}\csc^3\tfrac{(2j+1)\pi}{n}=\frac{7\zeta(3)}{4\pi^3}n^3+O(n\log n),\\
     S_5^o(n)&=\sum_{j=0}^{n/2-1}\csc^5\tfrac{(2j+1)\pi}{n}=\frac{93\zeta(5)}{48\pi^5}n^5+O(n^3),\\
         \hat S^e_3(n)&=\sum_{j=1}^{n/2-1}\csc^3\tfrac{2j\pi}{n}=\frac{\zeta(3)}{4\pi^3}n^3+O(n\log n),\\
         \hat S_1(n)&=\sum_{j=1}^{n-1}(-1)^{j+1}\csc\tfrac{j\pi}{n}=\frac{n}{\pi}\log 4+O(1).
\end{align*}
\end{lemma}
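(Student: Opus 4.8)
The plan is to reduce each of these four finite sums to a known asymptotic series by comparing $\csc(\pi t)$ near its poles with $1/(\pi t)$ and exploiting the symmetry $j \mapsto n-j$. For the first three sums the key observation is that the summands blow up only near $j=0$ and $j=n$ (for $S_3^o$, $S_5^o$) or near $j=0$, $j=n/2$, $j=n$ (for $\hat S_3^e$, after relabeling $j' = 2j$), so the dominant contribution comes from a bounded neighborhood of these poles, where one can Taylor-expand. Concretely, for $S_3^o(n) = \sum_{j=0}^{n/2-1}\csc^3\frac{(2j+1)\pi}{n}$, I would split the sum into $j$ small (say $j \le n/4$) and $j$ large, using $\csc\frac{(2j+1)\pi}{n} = \csc\frac{(n-2j-1)\pi}{n}$ to fold the large-$j$ part onto the small-$j$ part, doubling the leading term. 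On the small-$j$ range, write $\csc\frac{(2j+1)\pi}{n} = \frac{n}{(2j+1)\pi}\bigl(1 + O((j/n)^2)\bigr)$, so that $\csc^3\frac{(2j+1)\pi}{n} = \frac{n^3}{(2j+1)^3\pi^3}\bigl(1 + O((j/n)^2)\bigr)$. Summing the main term over all odd positive integers gives $\sum_{j\ge 0}(2j+1)^{-3} = (1 - 2^{-3})\zeta(3) = \frac{7}{8}\zeta(3)$, and after doubling (from the fold) we get $2 \cdot \frac{n^3}{\pi^3} \cdot \frac{7\zeta(3)}{8} = \frac{7\zeta(3)}{4\pi^3}n^3$; the error terms $\sum_{j} \frac{n^3}{(2j+1)^3}\cdot \frac{j^2}{n^2} \sim n \sum \frac{1}{2j+1}$ produce the $O(n\log n)$ remainder, and the tail $j \sim n/4$ contributes only $O(1)$. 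The computation of $S_5^o(n)$ is identical with exponent $5$: $\sum_{j\ge 0}(2j+1)^{-5} = (1-2^{-5})\zeta(5) = \frac{31}{32}\zeta(5)$, doubled to $\frac{31}{16}\zeta(5)\cdot\frac{n^5}{\pi^5} = \frac{93\zeta(5)}{48\pi^5}n^5$, with error $O(n^3)$ coming from the next Taylor coefficient. For $\hat S_3^e(n) = \sum_{j=1}^{n/2-1}\csc^3\frac{2j\pi}{n}$, substitute $k = 2j$ ranging over even integers $2,4,\dots,n-2$; this is $\sum$ over a sublattice, and the poles are at $k=0$ and $k=n$, with $\csc\frac{k\pi}{n}\approx \frac{n}{k\pi}$ near $k=0$; folding and summing $\sum_{j\ge 1}(2j)^{-3} = \frac{1}{8}\zeta(3)$ gives $2\cdot\frac{n^3}{\pi^3}\cdot\frac{\zeta(3)}{8} = \frac{\zeta(3)}{4\pi^3}n^3$ with the same $O(n\log n)$ error.

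The alternating sum $\hat S_1(n) = \sum_{j=1}^{n-1}(-1)^{j+1}\csc\frac{j\pi}{n}$ requires a different treatment because cancellation, not a single pole, governs the leading behavior — indeed the answer $\frac{n}{\pi}\log 4 = \frac{2n\log 2}{\pi}$ is only linear in $n$, far smaller than the $n\log n$ that $\sum|\csc\frac{j\pi}{n}|$ would give. Here I would use the integral-representation / Abel-summation strategy alluded to in the introduction, or more directly recognize that $\sum_{j=1}^{n-1}(-1)^{j+1}\csc\frac{j\pi}{n}$ is a Riemann-type sum for a principal-value integral after separating the endpoint singularities. A cleaner route: pair consecutive terms $\csc\frac{(2j-1)\pi}{n} - \csc\frac{2j\pi}{n}$, each difference being $O(n/j^2)$ away from the poles, so the bulk sum converges; the singular endpoint behavior near $j=0$ and $j=n$ must be extracted by comparing with $\sum (-1)^{j+1}\frac{n}{j\pi}$, whose alternating-harmonic structure produces precisely $\frac{n}{\pi}\cdot 2\log 2 = \frac{n}{\pi}\log 4$. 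The cleanest justification invokes the known finite-sum evaluation: by \cite{blagouchine2024finite} (the same reference already used in Section 2 for $\sum\csc\frac{j\pi}{m}$) one has an exact formula, and the alternating version $\sum_{j=1}^{n-1}(-1)^{j+1}\csc\frac{j\pi}{n}$ can be obtained from it by writing $(-1)^{j+1} = 1 - 2\cdot\mathbf{1}_{j \text{ even}}$ and using $\sum_{j \text{ even}}\csc\frac{j\pi}{n} = \hat S_1^e$-type sums plus $\sum_{\ell=1}^{n/2-1}\csc\frac{2\ell\pi}{n} = \sum_{\ell=1}^{n/2-1}\csc\frac{\ell\pi}{(n/2)}$, reducing to the non-alternating asymptotic $\frac{2n}{\pi}\bigl(\log\frac{2n}{\pi}+\gamma_0\bigr)$ applied at both $n$ and $n/2$; the $\log\frac{2n}{\pi}$ terms cancel in the difference, leaving $\frac{n}{\pi}\log 4 + O(1)$.

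The main obstacle is the $\hat S_1(n)$ estimate: it is the only sum where the leading term arises from delicate cancellation rather than a local expansion at a pole, so one must track the constant $\log 4$ through a subtraction of two quantities each of size $n\log n$. The safest execution is to reduce it, via the even/odd splitting $\hat S_1(n) = 2\sum_{\ell=1}^{n/2-1}\csc\frac{2\ell\pi}{n} \cdot(-1) + \sum_{j=1}^{n-1}\csc\frac{j\pi}{n}$ — more carefully, $\sum_{j \text{ odd}}\csc\frac{j\pi}{n} - \sum_{j \text{ even}}\csc\frac{j\pi}{n}$ — and then apply the Blagouchine asymptotic $\sum_{j=1}^{m-1}\csc\frac{j\pi}{m} = \frac{2m}{\pi}(\log\frac{2m}{\pi}+\gamma_0) + O(1/m)$ with $m = n$ and $m = n/2$; the difference is $\frac{2n}{\pi}(\log\frac{2n}{\pi}+\gamma_0) - 2\cdot\frac{n}{\pi}(\log\frac{n}{\pi}+\gamma_0) + O(1) = \frac{2n}{\pi}\log 2 + O(1) = \frac{n}{\pi}\log 4 + O(1)$, which is exactly the claim. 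Once the framework for folding at poles and the Blagouchine reduction are in place, all four estimates are routine; I would present the pole-expansion argument once in detail for $S_3^o$ and indicate the trivial modifications for $S_5^o$ and $\hat S_3^e$, then handle $\hat S_1$ separately via the even/odd reduction.
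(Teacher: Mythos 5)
Your proposal is correct, and it takes a genuinely different route from the paper. The paper proves all four asymptotics uniformly by Euler's integral representation $\pi\csc a\pi=\int_0^\infty x^{a-1}/(1+x)\,dx$: summing the geometric series inside the integral gives closed integral formulas for $\sum_j\csc(\phi+2j\pi/n)$, differentiating in $\phi$ produces $\csc^3$ and $\csc^5$ via $(\csc)''$ and $(\csc)^{(4)}$, and the leading constants $\tfrac72\zeta(3)$, $\tfrac{93}{4}\zeta(5)$, $\log 4$ come from evaluating $\int_0^\infty t^{2k}/\sinh t\,dt$ and $\int_0^\infty(1-e^{-t})/\sinh t\,dt$. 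You instead isolate the poles directly: fold $j\mapsto n-j$ to reduce to $k\le n/2$, Taylor-expand $\csc^p(k\pi/n)=\bigl(\tfrac{n}{k\pi}\bigr)^p\bigl(1+O((k/n)^2)\bigr)$, and sum the main term against $\sum(2j+1)^{-p}=(1-2^{-p})\zeta(p)$ or $\sum(2j)^{-p}=2^{-p}\zeta(p)$, with the first correction feeding the stated $O(n\log n)$ or $O(n^3)$ remainder; for $\hat S_1$ the reduction $\hat S_1(n)=\sum_{j=1}^{n-1}\csc\tfrac{j\pi}{n}-2\sum_{\ell=1}^{n/2-1}\csc\tfrac{\ell\pi}{n/2}$ and the Blagouchine expansion cleanly produce $\tfrac{n}{\pi}\log 4+O(1)$. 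Your pole-expansion proof is more elementary and arguably more transparent for Lemma~\ref{lem:csc3csc5} in isolation; the paper's integral-representation machinery has the advantage that it is reused essentially verbatim in Lemma~\ref{lem:alt.sum-d} for the $d$-shifted sums $S_k(K,d)$, where a local Taylor expansion would be much harder to control uniformly in the second parameter. Both are valid proofs; one small caution is to make the $\csc x=x^{-1}+\tfrac{x}{6}+O(x^3)$ expansion explicit and check that the middle fixed-point term (when $n\equiv 2\pmod 4$ for the odd sums, or $n\equiv 0\pmod 4$ for the even sum) contributes only $O(1)$, which is easy but should be stated.
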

\begin{proof}
The proof here is partially inspired by \cite{Watson1916sum,blagouchine2024finite}.
    Using the classical result of Euler
\begin{align*}
    \pi \csc a\pi =\int_{0}^\infty \frac{x^{a-1}}{1+x}dx,\quad 0<a<1,
\end{align*}
one can derive that
\begin{align*}
     &\sum_{j=0}^{n/2-1}\csc \left(\phi+\tfrac{2j\pi }{n}\right)=\frac{1}{\pi}\int_0^\infty \frac{x^{\frac{\phi}{\pi}}}{x(1+x)}\sum_{j=0}^{n/2-1}x^{\frac{2j}{n}}dx=\frac{1}{\pi}\int_0^\infty \frac{x^{\frac{\phi}{\pi}}(x-1)}{x(1+x)(x^{\frac{2}{n}}-1)}dx\\
    &=\frac{n}{2\pi}\int_{-\infty}^{\infty} \frac{e^{\frac{n\phi y}{2\pi}}(e^{\frac{ny}{2}}-1)}{(1+e^{\frac{ny}{2}})(e^y-1)}dy=\frac{n}{2\pi}\int_0^\infty \frac{[e^{\frac{n\phi y}{2\pi}}+e^{y-\frac{n\phi y}{2\pi}}](e^{\frac{yn}{2}}-1)}{(1+e^{\frac{yn}{2}})(e^y-1)}dy,
\end{align*}
where we make a change of variable $x=e^{\frac{ny}{2}}$, split the integral and change the integral on $(-\infty,0)$ to $(0,\infty)$. Furthermore, if we set $y=2t$, then
\begin{align}\label{cscphi}
    \sum_{j=0}^{n/2-1}\csc \left(\phi+\tfrac{2j\pi }{n}\right)=\frac{n}{\pi}\int_0^\infty \frac{\tanh \frac{nt}{2}}{\sinh t}\cosh\left(\tfrac{n\phi}{\pi}t-t\right)dt.
\end{align}
Setting $\phi=\pi/n$, we have
\begin{align}\label{csc1-o}
\sum_{j=0}^{n/2-1}\csc\tfrac{(2j+1)\pi}{n}=\frac{n}{\pi}\int_0^\infty \frac{\tanh \frac{n}{2}t}{\sinh t}dt.
\end{align}
  Taking derivative  of \eqref{cscphi} on $\phi$ twice and using $(\csc x)''=2\csc^3 x-\csc x$, we get
\begin{align*}
    \sum_{j=0}^{n/2-1}2\csc^3\left(\phi+\tfrac{2j\pi}{n}\right)-\csc\left(\phi+\tfrac{2j\pi }{n}\right)=\left(\frac{n}{\pi}\right)^3\int_0^\infty \frac{t^2\tanh \frac{nt}{2}}{\sinh t}\cosh \left(\tfrac{n\phi}{\pi}t-t\right)dt.
\end{align*}
Setting $\phi=\pi/n$,
\begin{align*}
    \sum_{j=0}^{n/2-1}\csc^3\tfrac{(2j+1)\pi}{n}&=\frac12 \sum_{j=0}^{n/2-1}\csc\frac{(2j+1)\pi}{n}+\frac12\left(\frac{n}{\pi}\right)^3\int_0^\infty \frac{t^2\tanh \frac{n}{2}t}{\sinh t}dt.
\end{align*}
To compute its leading order. First, it is easy to show $\sum_{j=0}^{n/2-1}\csc\frac{(2j+1)\pi}{n}=O(n\log n)$. Second,
\begin{align*}
    \int_0^\infty \frac{t^2\tanh \frac{n}{2}t}{\sinh t}dt=\int_0^\infty \frac{t^2}{\sinh t}\left(1+\frac{-2}{e^{nt}+1}\right)dt.
\end{align*}
For the first part,
\begin{align*}
    \int_0^\infty \frac{t^2}{\sinh t}dt=\int_0^\infty \frac{2t^2e^{-t}}{1-e^{-2t}}dt=2\int_0^\infty t^2 \sum_{j=0}^\infty e^{-(2j+1)t}dt=4\sum_{j=0}^\infty (2j+1)^{-3}=\frac{7}{2}\zeta(3),
\end{align*}
where $\zeta$ is the Riemann zeta function.
For the second part, using $\sinh t\geq t$ for $t>0$
\begin{align*}
    \int_0^\infty \frac{t^2}{\sinh t}\frac{1}{1+e^{nt}}dt\leq \int_0^\infty te^{-nt}dt=n^{-2}.
\end{align*}
Summarizing the above analysis,
\begin{align*}
  \sum_{j=0}^{n/2-1}\csc^3\tfrac{(2j+1)\pi}{n}=\frac{7\zeta(3)}{4\pi^3}n^3+O(n\log n).
\end{align*}
Differentiating \eqref{cscphi} five times on $\phi$ and
using $\frac{d^4}{dx^4}\csc x=(24-20 (\sin x)^2+(\sin  x)^4)(\csc x)^5$, one has
\begin{align*}
     \sum_{j=0}^{n/2-1}\csc^5\tfrac{(2j+1)\pi}{n}&=\frac{1}{24}\left(\frac{n}{\pi}\right)^5\int_0^\infty \frac{t^4\tanh \frac{n}{2}t}{\sinh t}dt+O(n^3)=\frac{93\zeta(5)}{48\pi^5}n^5+O(n^3).
\end{align*}
This finishes the odd sum. For the even sum, we can compute similarly.
\begin{align*}
    \sum_{j=1}^{n/2-1}\csc \left(\phi+\tfrac{2j\pi}{n}\right)=\frac{n}{\pi}\int_0^\infty \frac{\sinh [(\frac{n}{2}-1)t]}{(\sinh t)( \cosh \frac{nt}{2})}\cosh \left(\frac{n\phi }{\pi}t\right)dt.
\end{align*}
Letting $\phi=0$, we have
\begin{align}\label{csc1-e}
\sum_{j=1}^{n/2-1}\csc \tfrac{2j\pi}{n} =\frac{n}{\pi}\int_0^\infty \frac{\sinh[(\frac{n}{2}-1)t]}{(\sinh t)(\cosh \frac{n}{2}t)}dt.
\end{align}
Taking derivative on $\phi$ twice, we get
\begin{align*}
    \sum_{j=1}^{n/2-1}2\csc^3\left(\phi+\tfrac{2j\pi }{n}\right)-\csc\left(\phi+\tfrac{2j\pi }{n}\right)=\left(\frac{n}{\pi}\right)^3\int_0^\infty \frac{t^2\sinh [(\frac{n}{2}-1)t]}{(\sinh t)( \cosh \frac{nt}{2})}\cosh \left(\frac{n\phi}{\pi}t\right)dt.
\end{align*}
Setting $\phi=0$,
\begin{align*}
    \sum_{j=1}^{n/2-1}\csc^3\tfrac{2j\pi}{n}=\frac12\sum_{j=1}^{n/2-1}\csc\tfrac{2j\pi}{n}+\frac{1}{2}\frac{n^3}{\pi^3}\int_0^\infty \frac{t^2\sinh [(\tfrac{n}{2}-1)t]}{(\sinh t)( \cosh \tfrac{n}{2}t)}dt.
\end{align*}
Compute the leading order of them, we have
\begin{align*}
    \sum_{j=1}^{n/2-1}\csc^3\tfrac{2j\pi}{n}=\frac{\zeta(3)}{4\pi^3}n^3+O(n\log n).
\end{align*}

Using \eqref{csc1-o} and \eqref{csc1-e}, one has
\begin{align*}
    \sum_{j=1}^{n-1}(-1)^{j+1}\csc\frac{j\pi}{n}=\frac{n}{\pi}\int_0^{\infty}\frac{\sinh \frac{n}{2}t-\sinh (\frac{n}{2}-1)t}{\sinh t \cosh \frac{n}{2}t}dt.
\end{align*}
It is easy to verify the following identity
\begin{align*}
    \sinh \tfrac{n}{2}t-\sinh (\tfrac{n}{2}-1)t=(\cosh \tfrac{n}{2}t)(1-e^{-t})+\frac12e^{-\frac{n}{2}t}(e^{t}+e^{-t}-2).
\end{align*}
Note that
\begin{align*}
    \int_0^\infty \frac{\frac12e^{-\frac{n}{2}t}(e^t+e^{-t}-2)}{\sinh t \cosh \frac{n}{2}t}dt=2\int_0^\infty\frac{e^t-1}{(1+e^{nt})(e^{t}+1)}dt=O(n^{-1}).
\end{align*}
Therefore
\begin{align*}
     \sum_{j=1}^{n-1}(-1)^{j+1}\csc\tfrac{j\pi}{n}=\frac{n}{\pi}\int_0^\infty\frac{1-e^{-t}}{\sinh t}dt+O(1)=\frac{n}{\pi}\log 4+O(1).
\end{align*}

\end{proof}

\begin{lemma}\label{lem:rough-Sk}
When $x\in[0,1]$,
\begin{align*}
    S^o_k(n,x)+S^e_k(n,x)=\sum_{j=0}^{n-1}(x^2+\sin^2\tfrac{j\pi}{n})^{-\frac{k}{2}}\leq C \begin{cases}n|\log x|&\text{if }k=1,\\ nx^{1-k}&\text{if }k\geq 2.\end{cases}
\end{align*}
\end{lemma}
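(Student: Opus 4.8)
# Proof Proposal for Lemma \ref{lem:rough-Sk}

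The plan is to compare the finite sum $\sum_{j=0}^{n-1}(x^2+\sin^2\tfrac{j\pi}{n})^{-k/2}$ with the corresponding Riemann integral $n\int_0^1 (x^2+\sin^2 \pi t)^{-k/2}\,dt = \frac{n}{\pi}\int_0^\pi (x^2+\sin^2\theta)^{-k/2}\,d\theta$. Since the summand is symmetric under $j\mapsto n-j$, it suffices to control $2\sum_{0\le j\le n/2}(x^2+\sin^2\tfrac{j\pi}{n})^{-k/2}$ and then, by a second symmetry ($\theta\mapsto\pi-\theta$), to work on $[0,\pi/2]$, where $\sin$ is monotone. On this interval one has the elementary two-sided bound $\tfrac{2}{\pi}\vartheta \le \sin\vartheta \le \vartheta$ for $\vartheta\in[0,\pi/2]$, so $x^2+\sin^2\tfrac{j\pi}{n}$ is comparable (up to absolute constants) to $x^2 + (j/n)^2$. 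Hence, up to a universal multiplicative constant, the whole sum is bounded by $\sum_{j=0}^{n}\big(x^2+(j/n)^2\big)^{-k/2}$.

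Next I would separate the $j=0$ term (which contributes $x^{-k}$, or is absent/replaced in the $\hat S$ variants, but here is simply $x^{-k}$) and for $j\ge 1$ compare $\sum_{j=1}^n (x^2+(j/n)^2)^{-k/2}$ with $n\int_0^1 (x^2+s^2)^{-k/2}\,ds$ by monotonicity of $s\mapsto (x^2+s^2)^{-k/2}$ on $(0,\infty)$: a standard integral-comparison gives
\[
\sum_{j=1}^n \Big(x^2+\tfrac{j^2}{n^2}\Big)^{-k/2} \le n\int_0^1 (x^2+s^2)^{-k/2}\,ds + (x^2)^{-k/2}.
\]
It then remains to evaluate or estimate $n\int_0^1(x^2+s^2)^{-k/2}\,ds$ for $x\in[0,1]$. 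Substituting $s = x\tau$ turns this into $n x^{1-k}\int_0^{1/x}(1+\tau^2)^{-k/2}\,d\tau$. For $k=1$ the integral $\int_0^{1/x}(1+\tau^2)^{-1/2}\,d\tau = \operatorname{arcsinh}(1/x) = \log(1/x + \sqrt{1+x^{-2}}) \le C|\log x|$ for $x\in(0,1]$, giving the bound $Cn|\log x|$ (and the $x^{-1}$ from the $j=0$ term is absorbed since $x^{-1}\le \tfrac{1}{\pi}\cdot n$ only when $x\ge 1/n$; for $x<1/n$ one argues directly that the $j=0$ term $x^{-1}$ already dominates but is still $\le C n|\log x|$ — this edge case needs a one-line check). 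For $k\ge 2$ the integral $\int_0^{1/x}(1+\tau^2)^{-k/2}\,d\tau \le \int_0^\infty (1+\tau^2)^{-k/2}\,d\tau = C_k < \infty$ is a finite constant, so we get $C_k n x^{1-k}$; again the isolated $j=0$ term $x^{-k}\le x^{1-k}$ for $x\le 1$ is absorbed (with a constant that may depend on $n$ through $nx^{1-k}\ge x^{1-k}$, which holds since $n\ge 1$).

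The only genuinely delicate point is the boundary regime where $x$ is small relative to $1/n$ (so that the discrete sum "sees" only $O(1)$ terms near $j=0$ before the $x^2$ floor stops mattering): there the Riemann-sum/integral comparison is still valid but the dominant contribution is the single term $x^{-k}$ rather than the integral, and one must check that $x^{-k}$ is consistent with the claimed right-hand side — for $k\ge 2$ this is immediate since $x^{-k}=x^{-1}\cdot x^{1-k}\le n x^{1-k}$ when $x\ge 1/n$, while for $x<1/n$ one notes $\sin^2\tfrac{j\pi}{n}\gtrsim (j/n)^2 \ge 1/n^2 \ge x^2$ for all $j\ge 1$, so all terms except $j=0$ are bounded by $(j/n)^{-k}$ and their sum is $O(n^k \zeta(k)) = O(n)\cdot O(n^{k-1})\le Cnx^{1-k}$; similarly for $k=1$ one checks $n\log n \le C n|\log x|$ when $x < 1/n$. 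I expect this case analysis to be the main (minor) obstacle; everything else is the routine integral comparison sketched above.
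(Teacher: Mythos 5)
Your proposal is correct and arrives at the stated bound, but it takes a genuinely different route from the paper. The paper picks $j_0 = \lfloor nx\rfloor$ and splits the (symmetrized) sum directly: the near terms $j\le j_0$ are each at most $x^{-k}$ and there are $O(nx)$ of them, giving $O(nx^{1-k})$; the far terms $j>j_0$ are bounded by $\csc^k\tfrac{j\pi}{n}\lesssim (n/j)^k$ and then $n^k\sum_{j>j_0}j^{-k}$ is summed directly, yielding $n|\log x|$ for $k=1$ and $nx^{1-k}$ for $k\ge 2$. You instead first replace $\sin^2\tfrac{j\pi}{n}$ by $(j/n)^2$ (same comparison the paper uses implicitly via $\csc$), then compare the sum over $j\ge 1$ with the Riemann integral $n\int_0^1(x^2+s^2)^{-k/2}\,ds$, and evaluate it by the substitution $s=x\tau$, which cleanly separates the $x^{1-k}$ prefactor from a bounded (or $|\log x|$) integral. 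Both arguments rest on the same transition scale $j\sim nx$; the paper's is a bit more elementary (no integral-comparison step), while yours makes the source of the $x^{1-k}$ scaling more transparent.

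One caveat on your edge-case discussion: the step ``the isolated $j=0$ term $x^{-k}\le x^{1-k}$ for $x\le 1$'' is backwards — for $x\in(0,1)$ one has $x^{-k}>x^{1-k}$ — and indeed the $j=0$ term $x^{-k}$ genuinely exceeds $nx^{1-k}$ when $x<1/n$. The inequality in the lemma is therefore not literally true for $x\lesssim 1/n$ (or for $x$ near $1$ when $k=1$, where $|\log x|\to 0$). But this is not a defect of your argument relative to the paper's: the paper's proof has exactly the same implicit restriction (its near-sum bound needs $j_0+1\lesssim nx$, hence $nx\gtrsim 1$), and in every application of this lemma $x=d\sim (\log K)/K\gg 1/K$, so both proofs are valid in the regime that matters.
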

\begin{proof}
It is easy to see the following
\begin{align*}
    S^o_k(n,x)+S^e_k(n,x)=\sum_{j=0}^{n-1}(x^2+\sin^2\tfrac{j\pi}{n})^{-\frac{k}{2}}=2\sum_{j=0}^{n/2-1}(x^2+\sin^2\tfrac{j\pi}{n})^{-\frac{k}{2}}.
\end{align*}
We divide the sum into two parts. Let $j_0=\lfloor nx \rfloor$, the greatest integer less than or equal to $nx$. Then,
    \begin{align*}
        \sum_{j=0}^{j_0}(x^2+\sin^2\tfrac{j\pi}{n})^{-\frac{k}{2}}&\leq x^{-k} nx =Cnx^{1-k},\\
         \sum_{j=j_0}^{n-1}(x^2+\sin^2\tfrac{j\pi}{n})^{-\frac{k}{2}}&\leq \sum_{j=j_0}^{n-1} \csc^{k} \tfrac{j\pi}{n}\leq Cn^k\sum_{j_0}^{n-1}j^{-k}\leq C\begin{cases}n|\log x|&\text{if }k=1,\\ nx^{1-k}&\text{if }k\geq 2.\end{cases}
    \end{align*}
\end{proof}
\begin{lemma}\label{S-integral} Suppose that $n$ is even. The following formula holds
    \begin{align*}
     S_1(n,x)=\frac{2n}{\pi}\int_{\arcsinh x}^\infty\frac{\csch nt}{\sqrt{\sinh^2 t-x^2}}dt.
    \end{align*}
\end{lemma}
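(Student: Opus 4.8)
The plan is to derive an integral representation for $S_1(n,x)=S_1^e(n,x)-S_1^o(n,x)$ by the same Euler/Mellin device used in Lemma \ref{lem:csc3csc5}, but now keeping the parameter $x$. The starting point is the generalization of Euler's identity: for $x\geq 0$ and $a\in(0,1)$ one has a closed form for $\int_0^\infty t^{a-1}(t^2+x^2)^{-1/2}\,dt$ type integrals, or more directly the identity $(\sin^2\phi+x^2)^{-1/2}=\tfrac{2}{\pi}\int_0^\infty \csch(s)\cos(\text{something})\,ds$ after a hyperbolic substitution. Concretely, I would first establish the one-line identity
\begin{align*}
\frac{1}{\sqrt{x^2+\sin^2\phi}}=\frac{2}{\pi}\int_{\arcsinh x}^{\infty}\frac{\cos(2\phi\,\xi\,/\,?)\ \cdots}{\sqrt{\sinh^2 t - x^2}}\,dt,
\end{align*}
i.e.\ express each summand as an integral whose $\phi$-dependence sits in a factor of the form $\cosh$ or $\cos$ of a linear function of $\phi$; this is the analogue of $\pi\csc a\pi=\int_0^\infty x^{a-1}(1+x)^{-1}dx$ with the ``$1$'' replaced by a quantity involving $x$. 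The cleanest route is: substitute $u=\sin^2\phi$, write $(x^2+u)^{-1/2}$ via its Laplace/Mellin transform, and recognize the resulting kernel.

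Second, once each term $(x^2+\sin^2\tfrac{j\pi}{n})^{-1/2}$ is written as such an integral with a $\phi=j\pi/n$ dependent oscillatory factor, I would sum the alternating series $\sum_{j=0}^{n-1}(-1)^j(\cdots)$ under the integral sign. The alternating geometric-type sum $\sum_{j=0}^{n-1}(-1)^j e^{i j \cdot(\text{linear in } t)}$ telescopes to a ratio of hyperbolic functions, exactly as in the passage from the display after \eqref{cscphi} to \eqref{csc1-o}; the key algebraic simplification is that the alternating sign turns $\coth$-type denominators into $\csch(nt)$-type ones (cf.\ the computation $\sinh\tfrac n2 t-\sinh(\tfrac n2-1)t = (\cosh\tfrac n2 t)(1-e^{-t})+\cdots$ at the end of Lemma \ref{lem:csc3csc5}, which produced the $1/\cosh$ and ultimately the clean $\log 4$). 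After this resummation the $j$-dependence is gone and one is left with $S_1(n,x)=\tfrac{2n}{\pi}\int_{\arcsinh x}^\infty \csch(nt)(\sinh^2 t-x^2)^{-1/2}\,dt$, which is the claimed formula; the lower limit $\arcsinh x$ appears naturally as the branch point where $\sinh^2 t = x^2$.

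Third, I would verify the boundary/convergence details: the integrand is integrable near $t=\arcsinh x$ (a $(t-\arcsinh x)^{-1/2}$ singularity, integrable) and decays like $e^{-nt}$ at infinity, so the interchange of sum and integral is justified by absolute convergence (dominated convergence on each of the finitely many terms). A sanity check at $x=0$: the formula should reduce to $S_1(n,0)=\hat S_1(n)$ minus the singular $j=0$ term handling — actually $S_1(n,0)$ has the $j=0$ term equal to $1$ and is not the same as $\hat S_1(n)$, so I would instead cross-check against \eqref{csc1-o}, \eqref{csc1-e} directly, confirming that $\tfrac{2n}{\pi}\int_0^\infty \csch(nt)\csch(t)\,dt$ matches the $x\to 0$ limit of the right-hand side after subtracting the divergence, consistent with the finite-sum identities already proved.

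The main obstacle will be getting the elementary integral representation of $(x^2+\sin^2\phi)^{-1/2}$ in precisely the form whose $\phi$-dependence is a single exponential $e^{c\phi t}$ (so that the alternating sum over $j$ closes up), together with pinning down the correct lower limit $\arcsinh x$ and the correct normalizing constant $\tfrac{2n}{\pi}$. Everything after that is a mechanical repetition of the hyperbolic-sum manipulation already carried out in Lemma \ref{lem:csc3csc5}; the novelty is purely in carrying the extra parameter $x$ through the contour/substitution step, so I expect the write-up to be short once the first identity is in hand.
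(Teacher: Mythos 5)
Your proposal takes a genuinely different route from the paper, but as written it has a gap in the key step that would not close up without substantial modification.

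The paper's own proof is a direct contour-integration argument: one integrates $\csc(nz)/\sqrt{x^2+\sin^2 z}$ around a contour enclosing the poles $z=j\pi/n$, $0\le j\le n-1$, and deforms to the branch cut $\{\,\mathrm{Re}\,z=0,\ |\mathrm{Im}\,z|\ge\arcsinh x\,\}$ of $\sqrt{x^2+\sin^2 z}$; the residues of $\csc(nz)$ are $(-1)^j/n$, which produce the alternating sum, and the branch-cut integral gives the stated formula directly. Your plan instead tries to replicate the Euler–Mellin device used in Lemma~\ref{lem:csc3csc5}: first write each term $(x^2+\sin^2\phi)^{-1/2}$ as an integral whose $\phi$-dependence is ``a single exponential $e^{c\phi t}$'' and then sum the alternating series under the integral sign.

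The difficulty is that the identity you are looking for, in the form you describe, does not exist: $(x^2+\sin^2\phi)^{-1/2}$ is $\pi$-periodic in $\phi$, and an integral of the form $\int K(t)\,e^{c\phi t}\,dt$ over a continuous parameter $t$ cannot be periodic in $\phi$ (the Fourier transform of a periodic function is a sum of point masses, not an absolutely continuous measure). So the first ``one-line identity'' on which your whole argument rests would fail at the structural level, not just in the bookkeeping of constants. What does exist is a two-layer version of your idea: the Heine expansion $(x^2+\sin^2\phi)^{-1/2}=\tfrac{2}{\pi}\sum_{k\in\mathbb{Z}}Q_{k-1/2}(1+2x^2)\,e^{2ik\phi}$ (a genuine Fourier \emph{series}), followed by the Mehler--Dirichlet integral representation $Q_{k-1/2}(1+2x^2)=\int_{\arcsinh x}^\infty e^{-2kt}(\sinh^2 t-x^2)^{-1/2}\,dt$ for $k\ge 0$. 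After plugging $\phi=j\pi/n$ and summing over $j$ one finds the discrete Dirichlet sum is nonzero only when $k\equiv n/2\pmod n$, and the surviving terms form a geometric series $\sum_{l\ge 0}e^{-(2l+1)nt}=\tfrac12\csch nt$ under the integral, which recovers the formula. So a corrected version of your route works and is a legitimate alternative to contour integration; but it requires the Legendre $Q$-functions as an intermediate step, a separate discrete Fourier argument to pick out the surviving frequencies, and a second summation under the integral — none of which is a ``mechanical repetition'' of the $\csc$ computation in Lemma~\ref{lem:csc3csc5} as you predicted. The contour-deformation proof in the paper avoids all of this and is the shorter of the two.
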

\begin{proof}

By the periodic and even property, we assume $\theta\in (0,\pi/n)$.
To prove the first one, one can apply Cauchy's residue theorem to
\[\frac{\csc(nz)}{\sqrt{x^2+\sin^2 z}}\]
in the complex plane. It has poles $z=j\pi/n$ for $j=0,1,\cdots n-1$ in $[0,\pi)$. Choose a branch of  ${\sqrt{x^2+\sin^2 z}}$ by removing the rays $[i\arcsinh x,i\infty)+l\pi$ and $(-i\infty,-i\arcsinh x]+l\pi$ for all $l\in \mathbb{Z}$. Draw an integration contour including the poles of $\csc(nz)$ in $[0,\pi)$ and deform it along the branch cut of $\sqrt{x^2+\sin^2 z}$. See figure \ref{fig:contour}. We omit the details.

\begin{figure}[ht]
    \centering
    \includegraphics[width=0.35\textwidth]{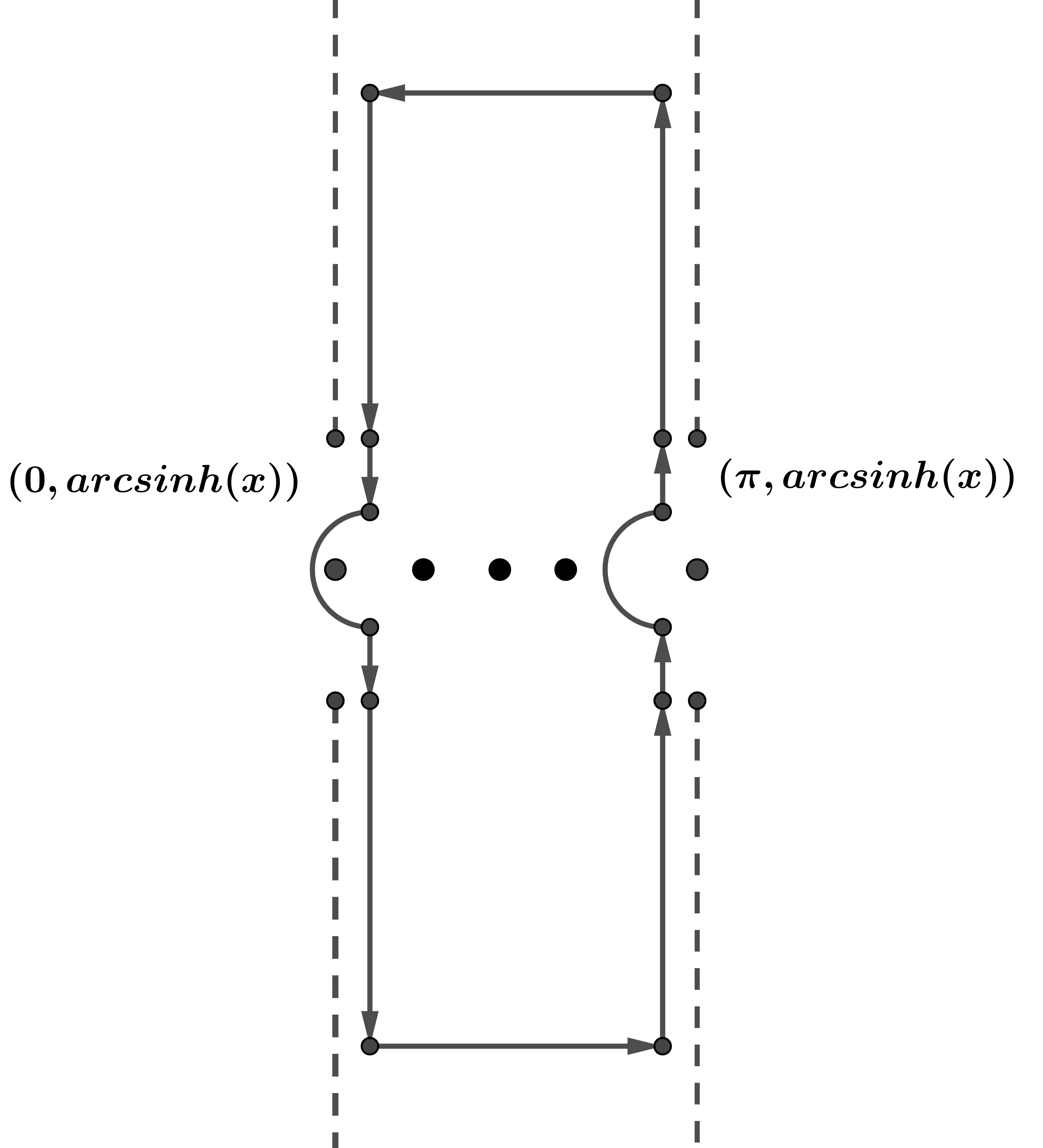}
    \caption{The illustration of the integration contour in Lemma \ref{S-integral}.}
    \label{fig:contour}
\end{figure}
\end{proof}

\begin{lemma}\label{lem:alt.sum-d}
    Suppose that $n$ is even and  $x=(1+o_n(1))\frac{\log n}{n}$ as $n\to \infty$. Then
\begin{align*}
    S_1(n,x)=\sqrt{\frac{8}{\pi  }}n(nx)^{-\frac12}e^{-nx}(1+O((nx)^{-1})),\\
    S_3(n,x)=\sqrt{\frac{8}{\pi  }}n^3(nx)^{-\frac32}e^{-nx}(1+O((nx)^{-1})),\\
    S_5(n,x)=\frac13\sqrt{\frac{8}{\pi  }}n^5(nx)^{-\frac52}e^{-nx}(1+O((nx)^{-1})).
\end{align*}
\end{lemma}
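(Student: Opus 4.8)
The starting point is the integral representation of $S_1$ furnished by Lemma~\ref{S-integral},
\[
S_1(n,x)=\frac{2n}{\pi}\int_{\arcsinh x}^{\infty}\frac{\csch(nt)}{\sqrt{\sinh^2 t-x^2}}\,dt ,
\]
which I intend to treat as a Laplace-type integral. Write $t_0=\arcsinh x$. Since $\csch y=2e^{-y}(1+O(e^{-2y}))$ for large $y$, the weight $\csch(nt)=2e^{-nt_0}e^{-n(t-t_0)}(1+O(e^{-2nt_0}))$ forces the mass of the integral to live within $t-t_0\lesssim 1/n$ of the lower endpoint; because $nt_0=n\arcsinh x=(1+o(1))\log n\to\infty$ under the hypothesis $x=(1+o_n(1))\tfrac{\log n}{n}$, the contribution of any region $t\ge t_0+\delta$ is $O\!\left(e^{-n(t_0+\delta)}\right)$, which is exponentially smaller than the claimed main term $\asymp n(nx)^{-1/2}e^{-nx}$. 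So the first step is to localize to $t-t_0$ small and expand: with $s=t-t_0$ one has $\sinh t=x\cosh s+\sqrt{1+x^2}\sinh s$, hence
\[
\sinh^2 t-x^2=\sqrt{1+x^2}\,s\,\bigl(2x+\sqrt{1+x^2}\,s\bigr)\bigl(1+O(s)\bigr),
\]
uniformly for $x\in[0,1]$ and $s$ small.

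Substituting this, the leading integral is $\int_0^{\infty}\frac{e^{-ns}}{\sqrt{s}\,\sqrt{2x+\sqrt{1+x^2}\,s}}\,ds$. Because the $e^{-ns}$ weight is concentrated on the scale $s\sim 1/n\ll x$ (again $nx\to\infty$), one replaces $2x+\sqrt{1+x^2}\,s$ by $2x$ at the cost of a relative error $O((nx)^{-1})$, reducing matters to the Euler integral $\int_0^{\infty}s^{-1/2}e^{-ns}\,ds=\sqrt{\pi/n}$. Collecting prefactors, and using $\arcsinh x=x+O(x^3)$ so that $e^{-nt_0}=e^{-nx}(1+O(nx^3))=e^{-nx}(1+o((nx)^{-1}))$ together with $\sqrt{1+x^2}=1+O(x^2)$, yields
\[
S_1(n,x)=\frac{2n}{\pi}\cdot 2e^{-nx}\cdot\frac{1}{\sqrt{2x}}\sqrt{\frac{\pi}{n}}\,\bigl(1+O((nx)^{-1})\bigr)=\sqrt{\tfrac{8}{\pi}}\,n(nx)^{-1/2}e^{-nx}\bigl(1+O((nx)^{-1})\bigr),
\]
which is the first asymptotic.

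For $S_3$ and $S_5$ I will use the differential recursion $S_{k+2}(n,x)=-\tfrac{1}{kx}\,\partial_x S_k(n,x)$, which is immediate from $\partial_x\bigl(x^2+\sin^2\tfrac{j\pi}{n}\bigr)^{-k/2}=-kx\bigl(x^2+\sin^2\tfrac{j\pi}{n}\bigr)^{-(k+2)/2}$. To apply it rigorously at the level of integrals, first recast the $S_1$-representation in a form with smooth $x$-dependence via the substitution $u=\sqrt{\sinh^2 t-x^2}$,
\[
S_1(n,x)=\frac{2n}{\pi}\int_0^{\infty}\frac{\csch\!\bigl(n\arcsinh\sqrt{u^2+x^2}\bigr)}{\sqrt{u^2+x^2}\,\sqrt{1+u^2+x^2}}\,du ,
\]
so that $\partial_x$ (equivalently $\partial_{x^2}$) passes under the integral. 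The dominant contribution to the differentiated integral comes from the derivative landing on the $\csch(n\,\cdot)$ factor, which brings down a factor of order $n$; the same endpoint Laplace analysis (now with integrals of the shape $\int_0^\infty e^{-nx\cosh\tau}\cosh^{-m}\tau\,d\tau$, evaluated by expanding $\cosh\tau\approx 1+\tau^2/2$) then gives
\[
S_3(n,x)=\sqrt{\tfrac{8}{\pi}}\,n^3(nx)^{-3/2}e^{-nx}\bigl(1+O((nx)^{-1})\bigr),\qquad
S_5(n,x)=\tfrac13\sqrt{\tfrac{8}{\pi}}\,n^5(nx)^{-5/2}e^{-nx}\bigl(1+O((nx)^{-1})\bigr),
\]
the extra $\tfrac13$ in $S_5$ being precisely the $\tfrac1k$ with $k=3$ in the recursion. (As a consistency check one may simply differentiate the $S_1$-asymptotic term by term: the $O((nx)^{-1})$ error in $S_3,S_5$ then arises from hitting the algebraic prefactor $x^{-1/2}$, whose logarithmic derivative $-\tfrac1{2x}$ is $O((nx)^{-1})$ relative to the exponent's derivative $-n$.)

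The part requiring the most care — and hence the main obstacle — is the uniform control of the remainder at order $O((nx)^{-1})$. Concretely one must: (i) bound the non-local tail $t\ge t_0+\delta$ (with $\delta=\delta_n\to 0$, $n\delta_n\to\infty$) uniformly in $x$ in the window $x\asymp \log n/n$, so that replacing the range $[0,\delta_n]$ by $[0,\infty)$ in the Euler/Laplace integrals is harmless; (ii) justify differentiation under the integral sign — this is exactly why the smoothed representation above is preferable to a direct contour representation of $S_3$, which would involve a non-integrable jump $(\sinh^2 t-x^2)^{-3/2}$ across the branch cut and therefore a Hadamard finite part; and (iii) verify that all secondary corrections — those coming from $\coth(nt)-1$, from $\sqrt{1+x^2}-1$, from $\arcsinh x-x$, and from the $O(s)$ factor in the local expansion of $\sinh^2 t-x^2$ — are $o((nx)^{-1})$ in the prescribed regime, so that they are absorbed into the stated error. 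Once these uniformity points are settled, the three displayed asymptotics follow.
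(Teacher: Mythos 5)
Your argument is correct and follows the same route as the paper: both start from the integral representation in Lemma~\ref{S-integral}, both localize the Laplace-type integral near the lower endpoint where $nt=(1+o(1))\log n$, and both obtain $S_3$, $S_5$ from $S_1$ via the differential recursion $S_{k+2}=-\tfrac{1}{kx}\partial_x S_k$. The only difference is cosmetic: the paper substitutes $\sinh\phi=x\cosh u$ and reads off $\mathrm{BesselK}_0(nx)$ (plus its derivatives) and its known expansion, whereas you carry out the endpoint Laplace expansion directly and land on the Euler integral $\int_0^\infty s^{-1/2}e^{-ns}\,ds=\sqrt{\pi/n}$; these are the same asymptotic computation packaged slightly differently.
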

\begin{proof}
 It follows from Lemma \ref{S-integral} that
\begin{align*}
    &S_1(n,x)=\sum_{j=0}^{n-1}(-1)^j (x^2+(\sin \frac{j\pi}{n})^2)^{-\frac12}=\frac{2n}{\pi}\int_{\arcsinh x}^\infty \frac{\csch n\phi}{ \sqrt{\sinh^2 \phi-x^2}}d\phi.
\end{align*}
Making a change of variable $\sinh \phi=x\cosh u$ to the above formula, one has
\begin{align}\label{alt.sum-1/2}
    S_1(n,x)=\frac{2n}{\pi}\int_0^\infty \frac{\csch (n \arcsinh (x\cosh u))}{\sqrt{1+x^2\cosh ^2u}}du.
\end{align}


First, the integral is very small when $u$ is away from $0$. In fact, since $\arcsinh t\sim t$ as $t\to 0$ and $\cosh(2)>3$, then $\csch (n \arcsinh (x\cosh u))\leq   Ce^{-3nx} $ if $u>2$ and  $n$ is large enough. Thus
    \begin{align}\label{csch/sqrt}
        \int_{2}^\infty \frac{\csch (n \arcsinh (x\cosh u))}{\sqrt{1+x^2\cosh ^2u}}du\leq Ce^{-3nx}\int_{2}^\infty \frac{1}{\sqrt{1+x^2\cosh^2 u}}du.
    \end{align}
Making a change of variable $\tanh u=t$, we obtain
\begin{align*}
    \int_0^\infty \frac{1}{\sqrt{1+x^2\cosh^2 u}}du=\int_0^1 \frac{1}{\sqrt{(1-t^2)(1+x^2-t^2)}}dt=\frac{\mbox{EllipticK}((1+x^2)^{-\frac12})}{\sqrt{1+x^2}},
\end{align*}
where
\begin{align}
\label{2.elliptic-1}
\mbox{EllipticK}(\sigma)=\int_0^1\frac{dt}{\sqrt{(1-t^2)(1-\sigma^2t^2)}}.
\end{align}
Using \cite[Formula 900.05]{byrd1971handbook} we have
\begin{equation}
\label{2.elliptic}
\mbox{EllipticK}(\sigma)=\sum_{\ell=0}^\infty\left(\begin{matrix}
-\frac12\\ \ell
\end{matrix}\right)^2\left[\log\frac{4}{\sqrt{1-\sigma^2}}-b_\ell\right](1-m^2)^\ell,
\end{equation}
with
$$b_0=0,\quad b_\ell=2\sum_{j=1}^{2\ell}\frac{(-1)^{j-1}}{j}=b_{\ell-1}+\frac{2}{2\ell(2\ell-1)}.$$
Using \eqref{2.elliptic}, we have
\begin{align}
\label{2.elliptic-2-est}
\frac{\mbox{EllipticK}((1+x^2)^{-\frac12})}{\sqrt{1+x^2}}=-\log x+\log 4+O(x^2)\quad \text{as } x\to 0.
\end{align}
Insert the above estimates back to \eqref{csch/sqrt},
   \begin{align*}
        \int_{2}^\infty \frac{\csch (n \arcsinh (x\cosh u))}{\sqrt{1+x^2\cosh ^2u}}du \leq Ce^{-3nx}\int_{2}^\infty \frac{1}{\sqrt{1+x^2\cosh^2 u}}du\leq C|\log x| e^{-3nx}.
    \end{align*}

Second, for $u\in [0,2]$, we recall the Taylor expansion
\begin{align}
\begin{split}\label{3-taylor}
    \arcsinh(t)&=t-\frac{t^3}{6}+O(t^5) \quad \text{as }t\to 0,\\
    \csch t&=2e^{-t}+2e^{-3t}+O(e^{-5t}), \quad \text{as }t\to \infty,\\
    \frac{1}{\sqrt{1+t^2}}&=1-\frac{1}{2}t^2+O(t^4),\quad \text{as }t\to 0.
\end{split}
\end{align}
Then we have the following expansion as $n\to \infty$
\begin{align*}
     \frac{\csch (n \arcsinh (x\cosh u))}{\sqrt{1+(x\cosh u)^2}}&=(2+O(nx^3))e^{-nx\cosh u}
\end{align*}
holds uniformly for $u\in [0,2]$.
Using this expansion, we have
\begin{align*}
    \int_0^{2} \frac{\csch (n \arcsinh (x\cosh u))}{\sqrt{1+x^2\cosh ^2u}}du
    &=(2+O(nx^3))\int_0^{2}e^{-nx\cosh u}du\\
    &=(2+O(nx^3))\int_0^\infty e^{-nx\cosh u}du+O(e^{-3nx})\\
    &=(2+O(nx^3))\mbox{BesselK}_0(nx)+O\left(e^{-3nx}\right),
\end{align*}
where $\mbox{BesselK}_\alpha(t)$ is the modified Bessel function.
\begin{align*}
    \mbox{BesselK}_\alpha(t)=\int_0^\infty e^{-t\cosh u }\cosh (\alpha u)du.
\end{align*}
 It is well known that
\[\mbox{BesselK}_0(t)=\sqrt{\frac{\pi}{2}}t^{-\frac12}e^{-t}(1+\frac{a_1}{t}+\frac{a_2}{t^2}+O(t^{-3})),\quad \text{as }t\to \infty.\]
where $a_1,a_2$ are some constants.
Since $nx\to \infty$ as $n\to \infty$, we have
\begin{align*}
    \int_0^2 \frac{\csch (n \arcsinh (x\cosh u))}{\sqrt{1+x^2\cosh ^2u}}du&=\sqrt{2\pi}(nx)^{-\frac12}e^{-nx}(1+O((nx)^{-1})).
\end{align*}
Combining the two integral estimates and using $|\log x|e^{-2nx}=O((nx)^{-3/2})$, we get the asymptotic of $S_1$.


To get the asymptotic of $S_3$, we
take the derivative of \eqref{alt.sum-1/2} on $x$
\begin{align}
\begin{split}\label{alt.sum-3/2}
    S_3(n,x)&=-\frac{1}{x}\frac{d}{dx}S_1(n,x)=\frac{2n}{\pi }\int_0^\infty \left(\frac{-1}{x}\frac{d}{dx}\right)\frac{\csch (n \arcsinh (x\cosh u))}{\sqrt{1+x^2\cosh ^2u}}du.
    \end{split}
\end{align}
It is straightforward to get
\begin{align*}
    \left(\frac{-1}{x}\frac{d}{dx}\right)\frac{\csch (n \arcsinh (x\cosh u))}{\sqrt{1+x^2\cosh ^2u}}=\frac{n}{x}\cdot\frac{f_1(n,x,u)}{1+x^2\cosh^2u}+  \frac{f_2(n,x,u)}{(1+x^2\cosh^2 u)^{3/2}},
\end{align*}
where
 \[f_1(n,x,u)=\coth(n \arcsinh (x\cosh u))\csch(n \arcsinh (x\cosh u))\cosh u,\]
\[f_2(n,x,u)=\csch(n \arcsinh (x\cosh u))\cosh^2u.\]
We shall study the right-hand side of \eqref{alt.sum-3/2} as we did for $S_1$. First, if $u>2$
\begin{align*}
    \left(\frac{-1}{x}\frac{d}{dx}\right)\frac{\csch (n \arcsinh (x\cosh u))}{\sqrt{1+x^2\cosh ^2u}}&\leq Ce^{-3nx}\left(\frac{n}{x}\frac{\cosh u}{1+x^2\cosh^2u}+\frac{\cosh^2u}{(1+x^2\cosh^2u)^{3/2}}\right)\\
    &\leq Ce^{-3nx}\frac{n}{x}\frac{\cosh u}{1+x^2\cosh^2u}.
\end{align*}
Making a change of variable $\cosh u=t^{-1}$,
\begin{align*}
    \int_2^\infty \frac{\cosh u}{1+x^2\cosh^2u}du=\int_0^{1/\cosh 2}\frac{1}{(x^2+t^2)\sqrt{1-t^2}}dt\leq C\int_0^{1/\cosh 2}\frac{1}{x^2+t^2}dt\leq \frac{C}{x}.
\end{align*}
Thus
\begin{align*}
    \int_2^\infty \left(\frac{-1}{x}\frac{d}{dx}\right)\frac{\csch (n \arcsinh (x\cosh u))}{\sqrt{1+x^2\cosh ^2u}}du\leq Cn x^{-2}e^{-3nx}.
\end{align*}
Second, consider $u\in [0,2]$. Using the expansion \eqref{3-taylor}, we obtain
\begin{align*}
    \left(\frac{-1}{x}\frac{d}{dx}\right)\frac{\csch (n \arcsinh (x\cosh u))}{\sqrt{1+x^2\cosh ^2u}}=2nx^{-1}e^{-nx\cosh u}\cosh u(1+O(nx^3)),
\end{align*}
holds uniformly for $u\in [0,2]$. Thus
\begin{align*}
    \int_0^2\left(\frac{-1}{x}\frac{d}{dx}\right)&\frac{\csch (n \arcsinh (x\cosh u))}{\sqrt{1+x^2\cosh ^2u}}du=2nx^{-1}(1+O(nx^3))\int_0^2 e^{-nx \cosh u}\cosh udu\\
    &=2nx^{-1}(1+O(nx^3))\int_0^\infty e^{-nx \cosh u}\cosh udu+O(nx^{-1}e^{-3nx})\\
    &=-2nx^{-1}(1+O(nx^3))\mbox{BesselK}'_0(nx)+O(nx^{-1}e^{-3nx}),
\end{align*}
where $\mbox{BesselK}'_\alpha(t)=\tfrac{d}{dt}\mbox{BesselK}_\alpha(t)$. It follows from the expansion of $\mbox{BesselK}_0(t)$ that
\begin{align*}
   \mbox{BesselK}'_0(t) =-\sqrt{\frac{\pi}{2}}t^{-\frac12}e^{-t}(1+O(t^{-1})),\quad \text{as }t\to \infty.
\end{align*}
Therefore
\begin{align*}
   \int_0^2\left(\frac{-1}{x}\frac{d}{dx}\right)\frac{\csch (n \arcsinh (x\cosh u))}{\sqrt{1+x^2\cosh ^2u}}du=\sqrt{2\pi}nx^{-1}(nx)^{-\frac12}e^{-nx}(1+O((nx)^{-1})).
\end{align*}
Combining the above estimates with the one in $(2,\infty)$, we get the asymptotic of $S_3$.

The proof of $S_5$ can be obtained similarly to $S_3$. We omit the details.
\end{proof}

\medskip

\begin{center}
{\bf Acknowlegdement}
\end{center}
The research of L. Sun is partially supported by the National Natural Science Foundation of China(Grant No.\,1247012316), and the Strategic Priority Research Program of the Chinese Academy of Sciences (No.\,XDB0510201), and CAS Project for Young Scientists in Basic Research Grant (No.\,YSBR-031). The research of J.C. Wei is supported by GRF New frontier in singular limits of nonlinear partial differential equations. The research of W. Yang is supported by the National
Key R\&D Program of China 2022YFA1006800, NSFC China No.
12171456, NSFC, China No. 12271369, FDCT No. 0070/2024/RIA1, Start-up
Research Grant No. SRG2023-00067-FST,
Multi-Year Research Grant No. MYRG-GRG2024-00082-FST and UMDF-TISF/2025/006/FST.

\appendix
\section{Some technical computations in Section 2}\label{appendix-A}
In this section, we shall provide the details for the proof of \eqref{2.psi-m2-0} in Section 2.

\begin{proof}
[Proof of \eqref{2.psi-m2-0}.]
Indeed, we have
\begin{equation}
\label{2.connect}
\begin{aligned}
&\left\|5\sum_{j=1}^m\frac{3^\frac14\mu_m^{\frac12}}{|x-\xi_{j,0}|}U^4+\left(1-\sum_{j=1}^m\zeta_j\right)E\right\|_{L_w^q}\\
&\leq \left\|5\sum_{j=1}^m \frac{3^\frac14\mu_m^{\frac12}}{|z-\xi_{j,0}|}U^4-5\left(1-\sum_{j=1}^m\zeta_j\right)\sum_{j=1}^m \frac{3^\frac14\mu_m^{\frac12}}{|z-\xi_{j,0}|}U^4\right\|_{L_w^q}\\
&\quad+\left\|5\left(1-\sum_{j=1}^m\zeta_j\right)\sum_{j=1}^m3^\frac14\mu_m^{\frac12}\left(\frac{U^4}{|z-\xi_{j,0}|}-\frac{U^4}{|z-\xi_{j}|}\right)\right\|_{L_w^q}\\
&\quad+\left\|\left(1-\sum_{j=1}^m\zeta_j\right)\left(
5\sum_{j=1}^m \frac{3^\frac14\mu_m^{\frac12}}{|z-\xi_{j}|}U^4+E\right)\right\|_{L_w^q}.
\end{aligned}
\end{equation}
It is not difficult to check that
\begin{equation}
\label{2.connect-1}
\left\|\sum_{j=1}^m \frac{3^\frac14\mu_m^{\frac12}U^4}{|z-\xi_{j,0}|}-\left(1-\sum_{j=1}^m\zeta_j\right)\sum_{j=1}^m \frac{3^\frac14\mu_m^{\frac12}U^4}{|z-\xi_{j,0}|}\right\|_{L_w^q}\leq \frac{C}{(\log m)^2},
\end{equation}
and
\begin{equation}
\label{2.connect-2}
\left\|\left(1-\sum_{j=1}^m\zeta_j\right)\sum_{j=1}^m3^\frac14\mu_m^{\frac12}\left(\frac{U^4}{|z-\xi_{j,0}|}-\frac{U^4}{|z-\xi_{j}|}\right)\right\|_{L_w^q}\leq \frac{C}{(\log m)^2}.
\end{equation}
Consider the last term. Recall that the support of $1-\sum_{j=1}^m\zeta_j$ is $\bigcap_{j=1}^m\left\{|z-\xi_j|>\frac{\eta}{m}\right\}$. Thus
\begin{equation*}
\begin{aligned}
\left|5\sum_{j=1}^m \frac{3^\frac14\mu_m^{\frac12}}{|z-\xi_{j}|}U^4+E\right|
\leq C\frac{\mu_m}{(1+|z|^2)^\frac32}\left(\sum_{j=1}^{m}\frac{1}{|z-\xi_j|}\right)^2.
\end{aligned}
\end{equation*}
We write $z=(r\cos\theta,r\sin\theta,z_3)$ as Section 2.2, by \eqref{2.com-1} we have
\begin{equation}
\label{2.com-3}\sum_{j=1}^m\frac{1}{|z-\xi_j|}
=\frac{1}{2(r\sqrt{1-\mu_m^2})^\frac12}\sum_{j=0}^{m-1}\frac{1}{\left(h^2+\sin^2\left(\frac{j\pi}{m}-\frac{\theta}{2}\right)\right)^\frac12},
\end{equation}
where $h$ is given in \eqref{2.com-2}. It is not difficult to check that
$$\sum_{j=1}^m\frac{1}{|z-\xi_j|}\leq Cm\log m,~\mbox{for}~z\in \bigcap_{j=1}^m\left\{|z-\xi_j|>\frac{\eta}{m}\right\} ~ \mbox{with}~ h\leq Cm^{-\frac12}.$$
Next we claim that for $h\in \left(m^{-\frac12},\frac13\right)$, it holds that
\begin{equation}
\label{2.claim-h}
\sum_{j=0}^{m-1}\frac{1}{\left(h^2+\sin^2\left(\frac{j\pi}{m}-\frac{\theta}{2}\right)\right)^\frac12}\leq Cm\log\frac{1}{h}.
\end{equation}
By symmetry, we may assume that $\theta\in\left(0,\frac{\pi}{m}\right)$. We set $j_h$ to be the greatest number such that $\frac{j\pi}{m}-\frac{\theta}{2}\geq h$. Then we have
\begin{align*}
\sum_{j=0}^{m-1}\frac{1}{\left(h^2+\sin^2\left(\frac{j\pi}{m}-\frac{\theta}{2}\right)\right)^\frac12}
\leq~&2\sum_{j=0}^{\frac{m}{2}-1}\frac{1}{\left(h^2+\sin^2\left(\frac{j\pi}{m}-\frac{\theta}{2}\right)\right)^\frac12}\\
\leq~&2\sum_{j=0}^{j_h}\frac{1}{h}
+2\sum_{j=j_h}^{\frac{m}{2}-1}\frac{1}{\sin\left(\frac{j\pi}{m}-\frac{\theta}{2}\right)}\\
\leq~&C\frac{j_h}{h}+Cm\log\frac{m}{j_h}\leq Cm\log\frac{1}{h}.
\end{align*}
Hence, we proved the claim \eqref{2.claim-h}. While for $h\geq\frac13$, we have
\begin{equation}
\label{2.claim-h2}
\left(1-\sum_{j=1}^m\zeta_j\right)\sum_{j=1}^{m}\frac{1}{|z-\xi_j|}\leq Cm.
\end{equation}
Using \eqref{2.com-3}, \eqref{2.claim-h} and \eqref{2.claim-h2} we derive that
\begin{equation}
\label{2.psi-m2-est}
\begin{aligned}
&\left\|\left(1-\sum_{j=1}^m\zeta_j\right)\frac{\mu_m}{(1+|z|^2)^\frac32}\left(\sum_{j=1}^{m}\frac{1}{|z-\xi_j|}\right)^2\right\|_{L_w^q}\\
&\leq \frac{C}{(\log m)^2}\left(\int_{\mathbb{R}^3}\frac{(1+|z|)^{5q-6}}{(1+|z|)^{5q}}dz\right)^\frac1q+\frac{C}{(\log m)^2}\left(\int_{m^{-\frac12}}^{\frac13}s\left(\log\frac{1}{s}\right)^qds\right)^\frac1q\\
&\quad +C\left|\left\{z\mid (r-\sqrt{1-\mu_m^2})^2+z_3^2\leq Cm^{-1}\right\}\right|^\frac1q\\
&\leq\frac{C}{(\log m)^2}.
\end{aligned}
\end{equation}
By \eqref{2.psi-m2-est} and \eqref{2.connect}-\eqref{2.connect-2} we get \eqref{2.psi-m2-0}.
\end{proof}

\small
\bibliographystyle{plainnat}
\bibliography{ref.bib}
\end{document}